\theoremstyle{definition}
\newtheorem{definition}{Definition}[section]
\newaliascnt{lemma}{definition}
\newaliascnt{theorem}{definition}
\newaliascnt{corollary}{definition}
\newaliascnt{proposition}{definition}
\newaliascnt{assumption}{definition}
\newaliascnt{remark}{definition}
\newaliascnt{example}{definition}
\newaliascnt{conjecture}{definition}
\newtheoremstyle{remark2}{}{}{}{}{\bfseries}{.}{ }{}
\theoremstyle{remark2}
\newenvironment{remark}
  {\pushQED{\qed}\remarkx}
  {\popQED\endremarkx}
\theoremstyle{plain}
\newtheorem{lemma}[lemma]{Lemma}
\newtheorem{theorem}[theorem]{Theorem}
\newtheorem{corollary}[corollary]{Corollary}
\newtheorem{proposition}[proposition]{Proposition}
\newtheorem{assumption}[assumption]{Assumption}
\newcommand{\nablasym}{{\varepsilon}}
\newcommand{\R}{\ensuremath{\mathbb{R}}}
\newcommand{\N}{\ensuremath{\mathbb{N}}}
\newcommand{\abs}[1]{\left|{#1}\right|}
\newcommand{\norm}[2][]{\left\|#2\right\|_{{#1}}} %
\newcommand{\inner}[3][]{\left\langle #2,#3 \right \rangle_{#1}} %
\newcommand{\pd}[2][]{\frac{\partial #1}{\partial #2}} %
\DeclareMathOperator*{\argmin}{arg\,min}
\DeclareMathOperator{\diver}{div}
\DeclareMathOperator{\cof}{cof}
\DeclareMathOperator{\tr}{tr}
\DeclareMathOperator{\dist}{dist}
\DeclareMathOperator{\supp}{supp}
\DeclareMathOperator{\Lip}{Lip}
\newcommand{\bog}{\mathcal{B}}
\newcommand{\eps}{\varepsilon}
\numberwithin{equation}{section}
\newcommand{\weakto}{\rightharpoonup}
\title[A variational approach to FSI]{A variational approach to hyperbolic evolutions and fluid-structure interactions}
\author{B.~Bene\v{s}ov\'{a}, M.~Kampschulte, S.~Schwarzacher}
\address{Department of Mathematics and Physics, Charles University Prague,  \nolinkurl{benesova@karlin.mff.cuni.cz}, \nolinkurl{kampschulte@karlin.mff.cuni.cz}, \nolinkurl{schwarz@karlin.mff.cuni.cz} }
\date{\today}
\begin{document}
 \begin{abstract}
We introduce a two time-scale scheme which allows to extend the method of minimizing movements to hyperbolic problems. This method is used to show the existence of weak solutions to a fluid-structure interaction problem between a nonlinear, visco-elastic, $n$-dimensional bulk solid governed by a hyperbolic evolution and an incompressible fluid governed by the ($n$-dimensional) Navier-Stokes equations for $n\geq 2$. %
 \end{abstract}

 \maketitle

 \tableofcontents

\section{Introduction}
\label{sec-intro}

In the seminal work~\cite{degiorgiNewProblemsMinimizing1993} De Giorgi introduced a {\em variational approach} to construct solutions to gradient flows. %
The method, known as {\em minimizing movements}, is an incremental time-stepping scheme in which discrete in time approximations are constructed using step-wise minimization. %

In this work, we present two extensions to De Giorgi's approach. %
The first (developed in Section~\ref{sec:qs}) is a minimizing movements approximation for systems of \emph{partial differential equations} %
that are coupled over a \emph{common, co-evolving interface influencing their domain of definition}. %
The second (developed in Section~\ref{sec:so}) is a two time-scale method that allows to approximate {\em hyperbolic evolutions} via De Giorgi's scheme. Both provide new view points on the {\em existence theory} for hyperbolic evolutions as well as geometrically coupled PDEs respectively.
 
Furthermore, the methods are compatible. In this paper they are applied to prove the existence of a weak solutions for a wide class of {\em fluid-structure interaction problems}. In particular %
the methods are equipped with the necessary analysis to gain the existence of a weak solution describing the coupling between the {\em unsteady incompressible Navier-Stokes equations}~(\ref{strong2}), (\ref{strong3}), with a hyperbolic evolution describing the deformation of a {\em visco-elastic bulk solid} characterized by a {\em non-convex stored energy functional}~\eqref{strong1}\footnote{See \eqref{st-venant} for our prototypical energy for the solid deformation.}. %

The present work focuses on {\em PDEs related to continuum mechanics} and in particular fluid-structure interactions. However, the developed variational approximation theory turns out to be rather flexible: In this work it is demonstrated that the methods are suitable for coupled systems of PDEs with very different non-linear characteristics, geometric constraints and coupling conditions, non-monotone operators, hyperbolic evolutions, variable in time geometries and material time derivatives. Hence it well might have the potential for a wide range of other applications within the range of continuum mechanics and possibly beyond.  %

Fluid-structure interactions are concerned with the way the movement of a fluid influences the deformation of a solid body and vice versa. A scheme is developed that can handle fluid-structure interactions in a general setting; i.e.\ allowing for both the fluid and the structure to have full dimension and permitting large, unrestricted forces and deformations, which in particular preclude approaches relying on linearization or convexity.

Though the literature on fluid-structure interaction is rich and diverse, the mathematical theory of elastic, large deformation bulk solids interacting with viscous fluids is rather unexplored. 
However, as far as the theoretical analysis is concerned, a large amount of research concentrates on rigid bodies surrounded by a fluid \cite{desjardinsExistenceWeakSolutions1999,Fei03,
Takahashi03,GalSil07,GalSil09,Gal13,GGH13,TakTucWei15,Gal16bf,CheSar17} or plates or shells (lower dimensional solids, usually additionally restricted to deform in a prescribed direction) interacting with fluids~ \cite{grandmontExistenceGlobalStrong2016,muhaExistenceWeakSolution2013,muhaNonlinear3DFluidstructure2013,muhaFluidstructureInteractionIncompressible2015,muhaExistenceWeakSolution2016,lengelerWeakSolutionsIncompressible2014,chemetovWeakstrongUniquenessFluidrigid2019}. Analytical results concerning the existence theory for the interaction of a fluid with an elastic body of the same dimension are few and generally include some smallness condition that restricts the problem to the regime of small deformations; cf. e.g. \cite{DesGra01,grandmontExistenceThreeDimensionalSteady2002,galdiSteadyFlowNavier2009}.
Mathematical works that are allowing for bulk solids with large deformations can be found in the community related to computer simulations~(see for instance \cite{quarteroni2000computational,GalRan08,RichterFluidStructureInteractions17} and the references there).
Finally, we wish to mention that due to the high relevance of applications related to fluid-structure interactions (e.g.\ Airoplanes, blood vessels and heart valves, pipes, ect.) the physical and engineering literature is numerous. %

The mathematical description of fluid-structure interaction is given through a \emph{system of mutually coupled (hyperbolic) PDEs} that is required to satisfy additional \emph{geometric restrictions}. To be more precise, the {\em common interface between the fluid and the solid is a part of the solution} and hence has to be constructed simultaneously. Moreover, as the solid is allowed to perform large structural deformations, it is necessary to assure that the solutions we consider possess deformations that are known to be injective a-priori.
This is not only essential in order for the solutions to have physical meaning, but also for the coupling to the fluid variables to be well-posed.

By reflecting these geometric restrictions, the operator appearing in the equations describing the movement of the solid is necessarily  \emph{non-monotone} and possesses a \emph{non-convex} domain of definition.%

In order to handle these (highly nonlinear) limitations %
we develop a methodology that focuses on the underlying \emph{energetic} structure of the PDEs and a \emph{variational} point of view. %
The existence theory is built upon the following three ideas each introduced in a separate section.
\begin{enumerate}
    \item[Sec.~\ref{sec:qs}] {\bf Minimizing movements for fluid-structure interaction}: We introduce an adaptation of De Giorgis \emph{minimizing  movements} \cite{degiorgiNewProblemsMinimizing1993,ambrosioGradientFlows2005} scheme. This is used to show the existence of weak solutions to the fluid-structure interaction problem in the simplified {\em parabolic} case where inertial effects are omitted. Consequently the fluid motion is quasi-steady following Stokes equation and the solid evolves along a gradient flow. In more detail, we construct an approximate, time-discretized solution by %
    iteratively solving a {\em coupled minimization problem}. %
    Each minimization produces the subsequent deformation and thus a new interface to be used in the next time step to separate the fluid and the solid. %
    The new variational approach we introduce is essential, as it can cope both with the non-monotonicity of the involved operators and the non-convex nature of the underlying state space. Moreover, used properly, it will also automatically induce the correct coupling conditions between the fluid and solid velocities and forces at their common interface. %
    
   While the minimizing movements method and its variations are commonly used for quasi-static evolution problems in the field of viscoelastic solids (see e.g. \cite{kruzik2019mathematical}), it has, to the best of our knowledge, never been applied to fluid-structure interaction before.
    \item[Sec.~\ref{sec:so}]
     {\bf Minimizing movements for hyperbolic evolutions}: The original minimizing movements scheme is restricted to gradient flows. 
    We propose an extension to this scheme to prove existence for hyperbolic equations. The method is built on an approach involving\emph{two time-scales}: The larger \emph{acceleration time-scale} and the smaller {\em velocity time-scale}. Consequently, the hyperbolic second time-derivative is approximated by a double difference quotient with respect to the different scales. This provides the setting of a minimizing movements iteration for all fixed positive {\em acceleration scales}. In doing so, we ``combine the best of two worlds'', by first using the variational approach to handle all the non-linearities during the construction of approximate solutions and then the limits of the Euler-Lagrange equations for the hyperbolic evolution to gain respective a-priori estimates that are uniform with respect to both scales.  %

    \item[Sec.~\ref{sec:full}] {\bf Bulk elastic solids coupled to Navier-Stokes equations}:
    In this section the ideas from the previous two sections are combined to establish the main result of the paper, which is the existence of solutions to a problem involving a solid (governed by a hyperbolic PDE) interacting with the {\em unsteady incompressible Navier-Stokes equations}. %
  
    In order to adapt the method to the \emph{distinctly Eulerian} description of the fluid by its {\em velocity} and {\em pressure} in the time changing domain, we use an {\em Lagrangian approximation of the material time derivative} on the acceleration scale. This is done by using the method of characteristics and constructing a \emph{flow map} for short times. Since flow-velocity and flow map are inextricably linked and both are additionally connected to the changing fluid domain, the main difficulty here is that the three have to be constructed simultaneously. In particular we introduce a discretized flow-map in the discretized variable domain %
    for which we prove various natural regularity and approximability results. This allows for a coupling between the hyperbolic minimizing movements scheme for the solid deformation and the respective semi-Lagrangian approximation of the unsteady Navier Stokes equation.   
    
   Some of the ideas in this section are inspired by the works  \cite{Pir82,gigliVariationalApproachNavier2012} on the Navier-Stokes equations. In particular, in~\cite{gigliVariationalApproachNavier2012} the authors apply minimizing movements to the construction of solutions to the Navier-Stokes equations. Their approach however strongly depends on having a fixed fluid domain.

\end{enumerate}

In sum, we intend to introduce a new viewpoint on fluid-structure interaction and hyperbolic problems in general. In particular (for the first time) we prove existence of a bulk, large deformations interaction problem between a viscoelastic solid and the incompressible Navier Stokes equation until the point of self-contact of the fluid boundaries.

\subsection{Setup}
\label{sec:setup}

We consider the following set-up for the fluid-structure interaction problem: The fluid together with the elastic structure are both confined to a container $\Omega \subset \R^n$ that is fixed in time. The deformation of the solid is at any instant of time $t$ described via the deformation function $\eta(t):= \eta(t,.) : Q   \to \Omega$. Here $Q$ is a given reference configuration of the solid. 
We assume that both $Q,\Omega \subset \R^n$ are Lipschitz-domains.%
Here, $n\geq 2$ is the dimension of the problem with $n=2$ corresponding to the planar case and $n=3$ to the bulk case. The fluid velocity
is defined in the variable in time domain $\Omega(t):=\Omega\setminus \eta(t,Q)$.
It is determined by its velocity $v(t): \Omega(t) \to \R^n$ and its pressure $p:\Omega(t)\to \R$. 
Thus, the solid is described in \emph{Lagrangian} and the fluid in \emph{Eulerian} coordinates. Observe, that a similar configuration has already been studied for linear elasticity in~\cite{desjardinsWeakSolutionsFluidelastic2001}. We refer to Figure \ref{fig:scheme} for a better orientation.

\begin{figure}[!ht]
\label{fig:scheme} 
\begin{center}
    \includegraphics[width=0.7\paperwidth]{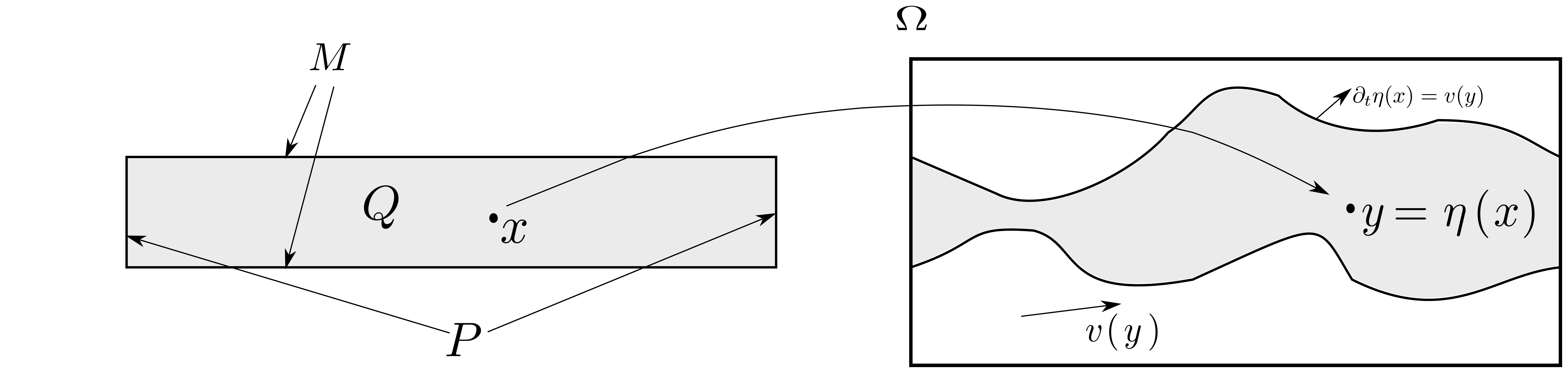}
    \end{center}
    \caption{A scheme of the geometry of the fluid-structure interaction. The reference configuration is at the left while at the right we depict the situation in a given time instant $t$ (the actual configuration).}
\end{figure}
For setting up the evolution equation, we will need the basic physical balances to be fulfilled. As we are not modeling any thermal effects, this reduces to the balance of momentum for both the fluid and the structure together with suitable conditions on their mutual boundary, as well as conservation of mass. In the interior these balance equations read in strong formulation as 
\begin{align}
\label{strong1}
\rho_s\partial_t^2\eta+\diver \sigma &=\rho_s f\circ \eta &\text{ in } Q, \\
\label{strong2}
\rho_f (\partial_t v +[\nabla v]v)&=\nu \Delta v  -\nabla p +\rho_f f     &\text{ on } \Omega(t), \\
\diver v&=0 &\text{ on } \Omega(t).\label{strong3}
\end{align}
Here, $\sigma$ is the {\em first Piola--Kirchhoff stress tensor} of the solid, $\nu$ is the {\em viscosity constant} of the fluid, %
$\rho_s$ and $\rho_f$ are the {\em densities} of the solid and fluid respectively and $f$ is the actual applied force in the current (Eulerian) configuration. Thus, the fluid is assumed to be Newtonian with the \emph{Navier-Stokes equation} modeling its behavior. For the solid, we will restrict ourselves to materials for which the first Piola--Kirchhoff stress tensor $\sigma$ can be derived from underlying \emph{energy and dissipation potentials}; i.e. 
\begin{align*}
{\diver} \sigma := DE(\eta) +  D_2 R(\eta,\partial_t\eta)
\end{align*} %
with $E$ being the energy functional describing the elastic properties while $R$ is the dissipation functional used to model the viscosity of the solid. Here $D$ denotes the Fr\'{e}chet derivative and $D_2$ the Fr\'{e}chet derivative with respect to the second argument. Such materials are often called \emph{generalized standard materials} \cite{halphenMateriauxStandardGeneralises1975,nguyenStabilityNonlinearSolidMechanics2000,kruzik2019mathematical}. For the analysis performed in this paper, quite general forms of $E$ and $R$ can be admitted (see \autoref{subsec:restrict} below). The prototypical examples for the potentials are the following:
\begin{align}
\label{kelvinVoigt}
R(\eta,\partial_t\eta) &:= \int_Q |(\nabla \partial_t \eta)^T \nabla \eta + (\nabla \eta)^T (\nabla \partial_t \eta)|^2 dx = \int_Q |\partial_t (\nabla \eta^T \nabla \eta)|^2 dx  \\ \label{st-venant}
E(\eta) &:= \begin{cases}
 \int_Q \frac{1}{8}|\nabla \eta^T \nabla \eta-I|_\mathcal{C} + \frac{1}{(\det \nabla \eta)^a} +\frac{1}{q} \abs{\nabla^2 \eta }^q dx & \text{if $\det \nabla \eta > 0$ a.e. in $Q$ } \\
+ \infty & \text{otherwise} 
\end{cases}
\end{align}
where we use the notation $|\nabla \eta^T \nabla \eta-I|_\mathcal{C}:= \big(\mathcal{C} (\nabla \eta^T \nabla \eta- I) \big) \cdot \big( \nabla \eta^T \nabla \eta- I \big)$ where $\mathcal{C}$ is the positive definite tensor of elastic constants and $q>n$ and $a>\frac{qn}{q-n}$. 

Notice that in \eqref{st-venant} the first term corresponds to the Saint Venant-Kirchhoff energy, the second models the resistance of the solids to infinite compression and the last is a regularisation term.

From an analytical point of view, the potentials \eqref{kelvinVoigt}-\eqref{st-venant} already carry the \emph{full difficulty} that we will need to cope with. Namely, any deformation $\eta$ of finite energy will necessarily be a 
local diffeomorphism; in fact we will even strengthen this condition in the presented analysis and construct weak solutions to \eqref{strong1}-\eqref{strong3} for which $\eta$ is \emph{globally {injective}}. This \emph{geometrical restriction} is necessary not only from the point of view of physics of solids per se but also essential to properly set-up the fluid-structure interaction problem. Moreover, $E(\eta)$ from \eqref{st-venant} is neither convex nor quasi-convex. And for physical reasons explained below the dissipation potential $R$ has to depend on the state $\eta$ and cannot depend just on $\partial_t \eta$. We discuss the modeling issues in \autoref{subsec:restrict} 
while we explain the mathematical difficulties in \autoref{subsec:results} 
below.

Additionally, we impose coupling conditions between $\eta$ and $v$ on their common interface; %
namely, we will assume the \emph{continuity of deformation} (i.e.\ no-slip conditions adapted to the moving domain) as well as \emph{traction on the boundary between the fluid and the solid}. We denote by $M$ the portion of the boundary of $Q$ that is mapped to the contact interface between the fluid and the solid. While $Q$ is only assumed to be a Lipschitz-domain, we assume that the pieces of its boundary that belong to $M$ are additionally $C^2$. %
\begin{align}
\label{couplin1} v(t,\eta(x))&=\partial_t \eta(t,x) &\text{ in }[0,T]\times M, \\ \label{coupling2}
\sigma(t,x) n(x)& = \big(\nu \nablasym v(t,\eta(t,x)) + p(t, \eta(t,x))I\big) \hat{n}(t,\eta(t,x)) &\text{ in }[0,T]\times M,
\end{align}  
where $n(x)$ is the unit normal to $M$ while $\hat{n}(t,\eta(t,x)):=\cof(\nabla \eta(t,x))n(x)$ is the normal transformed to the actual configuration and $\nablasym v := \nabla v + (\nabla v)^T$ is the symmetrized gradient. 
Additionally, there are second order Neumann-type zero boundary conditions arising from the second gradient.\footnote{Specifically, these naturally occur while minimizing and can be seen as a kind of integrability condition for $\sigma$. I.e.\ for $\sigma$ to be defined as a measure, we need that $\inner{DE(\eta)}{\phi_\delta} \to 0$ for $\phi_\delta$ a regularized version of $\xi \delta (1-\dist(.,\partial Q)/\delta)^+$ and some $\xi \in C_0^\infty(M)$ extended constantly along the normal direction. For our example energy this simply reduces to $\pd[^2\eta]{n^2} = 0$ on $M$.} %

Finally, we will prescribe Dirichlet boundary conditions on  $P:=\partial Q \setminus M$, i.e. %
\begin{align}
\eta(x,t) = \gamma(x) \qquad \text{ in } [0,T] \times P
\end{align}
for some fixed boundary displacement $\gamma:P \to \Omega$. Together with the injectivity of deformations, we will encode this condition in the set of admissible deformatione $\mathcal{E}$ (See \autoref{rem:ciarletNecas} for the precise definition).

We close the system by introducing initial conditions for $v,\eta,\partial_t\eta$:
\begin{align}
\label{eq:initial}
\begin{aligned}
\eta(0,x)&=\eta_0(x)\text{ for }x\in Q
\\
\partial_t\eta(0,x)&=\eta_*(x)\text{ for }x\in Q
\\
\Omega(0)&=\Omega\setminus \eta_0(Q)
\\
v(0,y)&=v_0(y)\text{ for all }y\in \Omega(0). 
\end{aligned}
\end{align}

\subsection{Main result}  
\label{subsec:results}

The final objective of this paper is to prove existence of weak solutions to the system \eqref{strong1}-\eqref{strong3} subject to the coupling conditions \eqref{couplin1}-\eqref{coupling2} and the remaining boundary and initial conditions detailed in the previous subsection. 

As is customary in fluid-structure interaction problems (see e.g.~\citation{grandmontExistenceThreeDimensionalSteady2002,lengelerWeakSolutionsIncompressible2014}),  the weak formulation is designed in such a way that the coupling conditions are realized by choosing \emph{well-fitted test functions}. Indeed, we have the following definition:

\begin{definition} 
\label{def:NSweakSol}
Let $f\in C^0([0,\infty)\times\Omega)$, $v_0\in L^2(\Omega)$, $\eta_*\in L^2(Q)$ and $\eta_0\in \mathcal{E}$, such that $v_0\circ\eta_0=\eta_*$. 
We call\footnote{We use standard notation for Bochner spaces over Lebesgue spaces and Sobolev spaces with time changing domains.  By the subscript $\diver$ we mean the respective solenoidal subspace: $W^{1,2}_{\diver}(\Omega(t);\R^n)=\{v\in W^{1,2}(\Omega(t);\R^n)|\diver v=0\}$.} $\eta:[0,T]\times Q\to \Omega$, $v:[0,T]\times \Omega(t)\to \mathbb{R}^n$ and $p:[0,T]\times \Omega(t)\to \mathbb{R}$, where $\Omega(t) := \Omega\setminus \eta(t,Q)$, a a weak solution to the fluid-structure interaction problem \eqref{strong1}--\eqref{eq:initial}, if the following holds:

The deformation satisfies $\eta\in L^2([0,T];W^{2,q}(Q;\R^n))\cap W^{1,2}([0,T];W^{1,2}(Q;\R^n))$, $\eta(0)=\eta_0$ such that $\partial_t\eta\in C_w([0,T];L^2(\Omega))$. The velocity satisfies $v\in L^2([0,T];W^{1,2}_{\diver}(\Omega(\cdot);\R^n))$ %
 and the pressure satisfies\footnote{From the given weak formulation one can deduce some more regularity of the pressure. However as is known from the non-variable theory, regularity of the pressure in time can be obtain merely in a negative Sobolev space. See the estimates in \autoref{subsec:proofNS}, Step 3b, which show that the pressure is in the respective natural class.} $p\in D'([0,T]\times \Omega)$ with $\supp(p)\subset [0,T]\times \overline{\Omega(t)}$.
 
For all
\[(\phi,\xi)\in L^2([0,T];W^{2,q}(Q;\R^n)) \cap W^{1,2}([0,T];W^{1,2}(Q;\R^n))\times C^\infty([0,T];C^\infty_0(\Omega;\R^n))
\]
 satisfying $\xi(T)=0$, $\phi(t)=\xi(t) \circ \eta(t)$ on $Q$, $\phi(t) = 0$ on $P$ for all $t \in [0,T]$, we require that
 \begin{align*}
  & \int_0^T -\rho_s \inner[Q]{\partial_t \eta}{\partial_t \phi} - \rho_s \inner[\Omega(t)]{v}{\partial_t \xi- v \cdot \nabla \xi} + \inner{DE(\eta)}{\phi} + \inner{D_2R(\eta,\partial_t \eta)}{\phi} + \nu \inner[\Omega(t)]{\nablasym v}{\nablasym \xi} dt\\
  &\,\,= \int_0^T \inner[\Omega(t)]{p}{ \diver \xi} + \rho_s \inner[Q]{f\circ \eta}{\phi} + \rho_f \inner[\Omega(t)]{f}{\xi} dt - \rho_s \inner[Q]{\eta_*}{\phi(0)} - \rho_f \inner[\Omega(0)]{v_0}{\xi(0)}
 \end{align*} 

and that $\partial_t \eta(t) = v(t) \circ \eta(t)$ on $M$, $\eta(t) \in \mathcal{E}$ and $v(t)|_{\partial \Omega} = 0$ for almost all $t \in [0,T]$. 
\end{definition}

We can then formulate our main theorem as 

\begin{theorem}[Existence of weak solutions] \label{thm:NSexistence}
Assume that $E$ satisfies Assumption~\ref{ass:energy} and $R$ satisfies Assumption~\ref{ass:dissipation} given in \autoref{subsec:restrict}.
 Then for any $\eta_0\in \operatorname{int}(\mathcal{E})=\mathcal{E} \setminus \partial \mathcal{E}$ (see \autoref{rem:ciarletNecas}) with $E(\eta_0) < \infty$, any $\eta_*\in L^2(Q; \R^n)$, $v_0\in L^2(\Omega(0); \R^n)$ and any right hand side $f\in C^0([0,\infty)\times \Omega; \R^n)$ there exists a $T >0$ such that a weak solution to \eqref{strong1}-\eqref{eq:initial} according to \autoref{def:NSweakSol} exists on $[0,T)$. Here either $T=\infty$ or the time $T$ is the time of the first contact of the free boundary of the solid body either with itself or $\partial \Omega$ (i.e.\ $\eta(T) \in \partial \mathcal{E}$). %
 
 Moreover, the solution satisfies the energy inequality~\eqref{eq:energ}  and the respective a-priori estimates~\eqref{eq:apri}; for additional regularity of the pressure see~(\ref{eq:press}).%
\end{theorem}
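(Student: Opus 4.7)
The plan is to combine the three ingredients advertised in Sections~\ref{sec:qs}--\ref{sec:full}: a coupled minimizing movements iteration that simultaneously produces solid deformation, fluid velocity and the correct interface conditions; a two time-scale extension that lets the hyperbolic solid inertia and the unsteady fluid inertia appear as a double difference quotient in a purely variational scheme; and a discrete semi-Lagrangian flow map that encodes the convective term $[\nabla v]v$ as a material time derivative on the fine scale. Concretely, I would fix an acceleration step size $h>0$ and a velocity step size $\tau=h/N$ with $N\gg 1$, and build a two-index iteration $(\eta^{k,j},v^{k,j},\Phi^{k,j})$. Between consecutive acceleration steps the current solid velocity $\zeta^k:=(\eta^k-\eta^{k-1})/h$ is frozen, and $N$ inner $\tau$-steps are run whose cumulative effect defines the next $\zeta^{k+1}$; the resulting discrete object then approximates $\partial_t^2\eta$ as a genuine double difference quotient.

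At the $j$-th inner step the new pair $(\eta^{k,j+1},v^{k,j+1})$ is obtained by minimizing
\[
\tfrac{\rho_s}{2\tau}\norm[L^2(Q)]{\eta-\eta^{k,j}-\tau\zeta^k}^2 + \tfrac{\rho_f}{2\tau}\norm[L^2(\Omega^{k,j})]{v\circ\Phi^{k,j}-v^{k,j}}^2 + E(\eta) + \tau R\bigl(\eta^{k,j},\tfrac{\eta-\eta^{k,j}}{\tau}\bigr) + \tfrac{\nu\tau}{2}\norm[L^2(\Omega^{k,j+1})]{\nablasym v}^2
\]
over pairs $(\eta,v)$ with $\eta\in\mathcal{E}$, $\diver v=0$, $v|_{\partial\Omega}=0$ and the kinematic coupling $v\circ\eta=(\eta-\eta^{k,j})/\tau$ on $M$. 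Here $\Phi^{k,j}$ is a discrete backwards flow map built in tandem with the iteration; its consistent construction on the variable-in-time domain, together with the proof of its regularity and approximation properties, is the delicate piece developed in Section~\ref{sec:full}. The Euler--Lagrange equation of this problem reproduces a discrete coupled version of~\eqref{strong1}--\eqref{strong3}: variations of $\eta$ vanishing on $M$ give the solid equation; variations of $v$ vanishing on $\eta(M)$ give a discrete incompressible flow equation with the pressure arising by de~Rham from the divergence constraint; and joint variations $\phi=\xi\circ\eta$ produce the traction coupling~\eqref{coupling2} automatically, while~\eqref{couplin1} is baked into the admissible set.

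Testing the minimality inequality against the trivial competitor $(\eta^{k,j},v^{k,j})$ yields a discrete energy inequality which, telescoped over both scales, provides uniform bounds on $\eta$ in $L^\infty W^{2,q}\cap W^{1,2}W^{1,2}$, on $\partial_t\eta$ in $L^\infty L^2$, and on $v$ in $L^\infty L^2\cap L^2 W^{1,2}$. Coercivity of $E$ from Assumption~\ref{ass:energy}, together with the $\det\nabla\eta$-term and the Ciarlet--Ne\v{c}as condition encoded in $\mathcal{E}$, keeps $\eta$ strictly inside $\mathcal{E}$ on an interval of positive length; the existence time $T$ in the theorem is defined as the first time this control fails. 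The inner limit $\tau\to 0$ at fixed $h$ is then carried out first: it produces an $h$-approximation in which the discrete material derivative of $v$ can be identified with $\partial_t v+[\nabla v]v$ via convergence of $\Phi^{k,j}$ to the flow of $v$ on the moving geometry, which is essentially the payoff of setting up the semi-Lagrangian apparatus of Section~\ref{sec:full}.

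The main obstacle is the outer limit $h\to 0$. At this stage $\partial_t^2\eta$ is only available as a double difference quotient and cannot be passed to the limit directly; following the strategy of Section~\ref{sec:so} I would keep $\partial_t\eta$ only in $C_w([0,T];L^2)$ and identify the inertia by a discrete integration by parts against test functions with compact support in $[0,T)$, so that one time derivative is shifted onto the test function. Strong compactness of $\eta$ in $C([0,T];W^{1,\infty})$, obtained from the $W^{2,q}$-regularization with $q>n$ combined with Aubin--Lions, is what allows one to pass to the limit in the moving domain $\Omega(t)$, in the non-monotone and non-convex operators $DE(\eta)$ and $D_2R(\eta,\partial_t\eta)$, in the composition $v(t)\circ\eta(t)$ on $M$, and in the convective term. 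The pressure is finally reconstructed by a moving-domain de~Rham / Bogovski\u{\i} argument, giving the distributional regularity claimed in the theorem, and the stopping criterion $\eta(T)\in\partial\mathcal{E}$ combined with the energy inequality yields the dichotomy $T=\infty$ or first self-contact.
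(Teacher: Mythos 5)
Your overall architecture --- coupled minimizing movements, two time scales $\tau\ll h$, semi-Lagrangian flow map, shifting one time derivative onto the test function --- matches the paper's plan. But there is a genuine gap in how you claim to obtain the a-priori bounds, and it is precisely the gap that the two-scale method is designed to close.

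You assert that comparing the iterative minimizer with the competitor $(\eta^{k,j},v^{k,j})$ and ``telescoping over both scales'' yields uniform-in-$h$ bounds on the kinetic energies. This cannot work. The comparison gives an estimate schematically of the form
\[
E(\eta^{k,j+1}) + \frac{\tau\rho_s}{2h}\norm[Q]{\tfrac{\eta^{k,j+1}-\eta^{k,j}}{\tau}-w^{k,j}}^2 + \cdots \;\le\; E(\eta^{k,j}) + \frac{\tau\rho_s}{2h}\norm[Q]{w^{k,j}}^2 + \cdots
\]
After expanding the square and absorbing the cross term by Young's inequality, the coefficient of $\norm{\tfrac{\eta^{k,j+1}-\eta^{k,j}}{\tau}}^2$ on the left is \emph{strictly smaller} than that of $\norm{w^{k,j}}^2$ on the right, and no choice of Young weight removes the deficit. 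Since on the $(l{+}1)$-th acceleration interval the data $w$ is $\partial_t\eta^{(h)}$ from the $l$-th interval, iterating over $T/h$ intervals compounds this mismatch geometrically and the resulting bound blows up as $h\to 0$. (This is exactly the obstruction recorded in \autoref{rem:IntroChainRule} and in the discussion at the start of \autoref{sec:so}.) The uniform-in-$h$ estimate only comes from testing the \emph{continuous-in-time} time-delayed equation with $(\partial_t\eta,v)$, where $\frac{\rho_s}{h}\langle\partial_t\eta-w,\partial_t\eta\rangle \ge \frac{\rho_s}{2h}\bigl(\norm{\partial_t\eta}^2-\norm{w}^2\bigr)$ has matching coefficients, so telescoping works (\autoref{lem:TDaPosterioriEstimate}, \autoref{lem:NSfullIterationAPriori}). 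To perform that test you need $\partial_t\eta$ to be admissible against $DE(\eta)\in W^{-2,q}$, and this is why the scheme must include the $h$-scaled higher-order regularizers $E_h$, $R_h$ and $h\norm{\nabla^{k_0}u}^2$, which your proposal never introduces. So the ordering of limits --- first $\tau\to 0$ at fixed $h$, derive the energy inequality, then $h\to 0$ --- is not a convenience; it is the load-bearing step, and the proposal as written does not supply it.

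Two further points. Your inertia weights $\frac{\rho_s}{2\tau}$ and $\frac{\rho_f}{2\tau}$ are missing the $1/h$ factor (they should be $\frac{\rho_s}{2h\tau}\norm{\eta-\eta^{k,j}-\tau\zeta^k}^2$ and $\frac{\rho_f\tau}{2h}\norm{v\circ\Phi^{k,j}-w^k}^2$); with your scaling the Euler--Lagrange equation produces $\rho_s(\partial_t\eta-\zeta^k)\to 0$ rather than $\frac{\rho_s}{h}(\partial_t\eta-w)\to\rho_s\partial_t^2\eta$, so no acceleration survives the limits. Also the fluid inertia must compare against the data $w^k$ from the previous $h$-interval transported by the flow, not against the previous $\tau$-step $v^{k,j}$, or the acceleration-scale difference quotient is lost. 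Finally, you understate the work needed in the limit $h\to0$: $C^0 W^{1,\infty}$-compactness of $\eta^{(h)}$ does not suffice to pass $DE(\eta^{(h)})\to DE(\eta)$; one needs strong $W^{2,q}$-convergence via a Minty argument using Assumption~S6, and compactness for the convective term requires the tailored Aubin--Lions lemma (\autoref{lem:NSAubinLions}) acting on the $h$-averaged momentum $\tilde u^{(h)}(t)=\fint_{-h}^0(\rho^{(h)}u^{(h)})(t+s)\,ds$, because the time-derivative estimate \autoref{prop:NSh-m-estimateFluid} lives in a solenoidal dual space that changes with both $t$ and $h$. The uniform Lipschitz and determinant bounds on the discrete flow map (\autoref{prop:regularityOfV}, \autoref{lem:TDphiRegularity}), which themselves rely on the $h$-scaled fluid regularizer giving $u^{(h)}\in L^2 W^{k_0,2}$, are the glue that makes the semi-Lagrangian construction close; without them $\Phi^{(\tau)}$ is merely a composition of an unbounded number of maps and cannot be controlled.
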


Let us remark that the assumptions on the energy and dissipation functional are in particular satisfied by the model case energies \eqref{kelvinVoigt}-\eqref{st-venant}. (See \autoref{subsec:example})

The coupled system possesses a natural energy inequality which reads
\begin{align}
\label{eq:energ}
\begin{aligned}
&\phantom{{}={}}E(\eta(t))+\rho_s\int_Q\frac{\abs{\partial_t\eta(t)}^2}{2} dx+\rho_f\int_{\Omega(t)}\frac{\abs{v(t)}^2}{2}\, dy 
\\
& + \int_0^t 2R(\eta(s),\partial_t\eta(s))\,+\nu\int_{\Omega(s)}\abs{\nablasym v(s)}^2\, dy\, ds
\\
& \leq E(\eta_0)+\rho_s\int_Q\frac{\abs{\eta_*}^2}{2} dx+\rho_f\int_{\Omega(0)}\frac{\abs{v_0}^2}{2}\, dy
\\
& +\int_0^t\rho_s\int_Q f(s)\circ\eta(s)\cdot \partial_t\eta(s)\, dx+\rho_f \int_{\Omega(s)}f(s)\cdot v(s)\, dy\, ds.
\end{aligned}
\end{align}
As is usual in evolution-equations, this inequality holds as an equality for sufficiently regular solutions, i.e.\ if $(\partial_t \eta,v)$ can be used as a pair of test functions.%

We introduce the methodology of the proof of Theorem \ref{thm:NSexistence} along three main ideas, with each of them being of independent interest and each explained in a separate section. The \autoref{sec:qs} and \autoref{sec:so} can be read pretty much independently. The last section \autoref{sec:full} however, relies on both preceding sections. 
We will at this point shortly comment on them and then refer the reader to the respective parts of the paper in which the analysis is carried out in full rigor. 

\noindent \textbf{\autoref{sec:qs}: Minimizing movements for fluid-structure interactions.}
In this section we consider a reduced parabolic system of equations with the inertial terms omitted, namely
\begin{align}
\label{strong1-parabolic}
\diver \sigma(\eta) &=\rho_s f\circ \eta &\text{ in } Q, \\
0&=\nu \Delta v  -\nabla p +\rho_f f     &\text{ on } \Omega(t), \\
\diver v&=0 &\text{ on } \Omega(t),\label{strong3-parabolic}
\end{align}
together with the same coupling and boundary conditions as before. 

We construct a weak solution to \eqref{strong1-parabolic}-\eqref{strong3-parabolic} by an implicit-explicit time-discretization scheme that exploits the \emph{variational structure} of the problem, namely that both the stress tensor for the solid as well as the {fluid} have a potential. %

Indeed, let us split $[0,T]$ into $N$ equidistant time steps of length $\tau$. Assume, for $k\in \{0, \ldots, N-1\}$, that $\eta_k$ is given and denote $\Omega_{k} = \Omega \setminus \eta_{k}(Q)$. We then define $\eta_{k+1},v_{k+1}$ to be the minimizer of the following functional
\begin{align} \label{eq:introDiscrete}
E(\eta) + \tau R\left(\eta_k,\frac{\eta-\eta_k}{\tau}\right) +  \frac{\tau \nu}{2} \norm[\Omega_k]{\nabla v}^2 - \tau \rho_s \inner[Q]{f\circ \eta_k}{\frac{\eta-\eta_k}{\tau}} - \tau \rho_f\inner[\Omega_k]{f}{v} \longrightarrow \text{min}
\end{align}
under all $\eta \in \mathcal{E}$, $v \in W^{1,2}(\Omega_k;\R^n)$ satisfying $\diver v = 0$, $v|_{\partial \Omega} = 0$ and the affine coupling condition $v \circ \eta_k=\frac{\eta-\eta_{k}}{\tau}$ in $M$. Minimizing with respect to this coupling is essential here, as it not only represents the discretized version of the coupling of velocities \eqref{couplin1}, but also will result in the equality of tractions \eqref{coupling2} via the minimization procedure. %

Searching for time-discrete approximations of the weak solution to \eqref{strong1-parabolic} alone via a minimization problem similar to the one above is actually well known and heavily used in the mathematics of continuum mechanics of solids. The method is known as the \emph{method of minimizing movements} %
or, in particularly in the engineering literature, %
also called the \emph{time-incremental problem.} As far as the authors are aware this method has not been applied to the theory of fluid-structure interaction problems before.

The advantage of the variational approach in contrast to directly solving the corresponding Euler-Lagrange equations is twofold. Not only do we deal with the non-convexity of $E$ and the underlying non-convex space $\mathcal{E}$ in a natural way, but also we automatically gain an \emph{energetic a-prori estimate}. Indeed comparing the value of the functional in \eqref{eq:introDiscrete} in $(\eta_{k+1}, v_{k+1})$ with its value for $(\eta_{k}, 0)$ and iterating, we get

 \begin{align}
 \label{apriori-intro}
  &\phantom{{}={}}\underbrace{E(\eta_{k+1})}_{\text{Final energy}} + \underbrace{\sum_{l=0}^{k} \tau \left[R\left(\eta_l, \frac{\eta_{l+1}-\eta_l}{\tau}\right) + \frac{\nu}{2} \norm[\Omega_k]{\nablasym v_{l+1}}^2 \right]
   }_{\text{$1/2$ of Dissipation}} \\ \nonumber
   &\leq \underbrace{E(\eta_0)}_{\text{Initial energy}} + \underbrace{\sum_{l=0}^{k} \tau \left[\rho_s \inner[Q]{f\circ \eta_l}{\frac{\eta_{l+1}-\eta_l}{\tau}}+ \rho_f \inner{f}{v_{l+1}}\right] }_{\text{Work from forces}}.
 \end{align}
Starting from this energetic estimate some other analytic tools need to be developed: In order to deduce a-priori estimates a Korn's inequaliy (\autoref{lem:globalKorn}) estimating fluid and solid velocity simultaneously is introduced. The limit equation is then established via the usual weak compactness results, the so-called Minty-Method (See \autoref{prop:QSELeq}) and a subtle approximation of test-functions (See \autoref{lem:approxTestFcts}). The latter is necessary due to the fact that the fluid-domain (the part where the test-function is supposed to be solenoidal) is a part of the solution.

 \begin{remark}[Chain rule] \label{rem:IntroChainRule} %
 If we considered Euler-Lagrange equations instead of the variational problem \eqref{eq:introDiscrete} the standard way to obtain a-priori estimates  would be testing with $(\eta_{k+1}-\eta_k)$ and $v_{k+1}$. To obtain similar estimate to before, we would then need to use a discrete variant of a chain-rule to see that 
 \[
E(\eta_{k+1})-E(\eta_k) \leq \inner{DE(\eta_{k+1})}{\eta_{k+1}-\eta_k}.
 \]
 While this inequality is valid if $E$ is convex, it is generally not true otherwise. In particular note (see also \autoref{subsec:restrict}), that convexity of $E$ is ruled out for physical reasons in mechanics of solids.
 \end{remark}

\noindent \textbf{\autoref{sec:so}: Minimizing movements for hyperbolic evolutions.} 

As we have seen in the previous step, discretizing the parabolic equation via a variational scheme is essential to deal with the non-convexities involved in the problem. However, once an inertial, or hyperbolic, term is present, comparing values in the minimization problem no longer leads to useful estimates. Instead, upon presence of this term, it seems more advantageous to obtain a-priori estimates from the Euler-Lagrange equation, which in turn is incompatible with the non-convexity of the energy again. 
To overcome this difficulty, we consecutively approximate %
using \emph{two different time-scales}: the {\em velocity scale} $\tau$ and the {\em acceleration scale} $h$. Keeping the acceleration scale fixed at first, we may use the strategy from the previous step and obtain, after passing to the limit $\tau \to 0$, a \emph{continuous in time} equation from which a second set of a-priori estimates can be seen.

In order to explain the concept, we us first illustrate this procedure on the  solid alone: We thus consider $\eta: [0,T]\times Q \to \R^n$, evolving according to
 \[DE(\eta) + DR(\eta, \partial_t \eta) - f\circ \eta = \rho_s \partial_t^2 \eta\]
 without any coupling with a fluid, but with otherwise similar initial data to before.
 
 We now want to turn this hyperbolic problem into a sequence of short time consecutive parabolic problems, by replacing the second time derivative $\partial_t^2 \eta$ with a difference quotient and solving what we will call the \emph{time-delayed problem}
 \begin{align} \label{eq:introTimeDelayed}
  DE(\eta(t)) + DR(\eta(t), \partial_t \eta(t)) - f\circ \eta(t)  = \frac{\partial_t \eta(t) - \partial_t \eta(t-h)}{h}
 \end{align}
 for some fixed $h$. 
 
 Considered on a short interval of length $h$, the term $\partial_t \eta(t-h)$ can be seen as fixed given data. Then on this interval the problem is parabolic and can be solved using a similar minimizing movements approximation as to what was described before, where for fixed $h$ we pick $\tau << h$ and solve a problem similar to %
 \begin{align*}
E(\eta) + \tau R\left(\eta_k,\frac{\eta-\eta_k}{\tau}\right) - \tau \rho_s \inner[Q]{f\circ \eta_k}{\frac{\eta-\eta_k}{\tau}}+ \frac{1}{2h} \norm{\frac{\eta-\eta_k}{\tau} - \partial_t \eta(\tau k -h)}^2 \longrightarrow \text{min.}
\end{align*} 

Upon sending $\tau \to 0$ using the same techniques as before, we then obtain a weak solution to \eqref{eq:introTimeDelayed} on $[0,h]$ which can be used as data on $[h,2h]$ and so on, until we have derived a solution on $[0,T)$.

Now, as the time-delayed equation \eqref{eq:introTimeDelayed} is continuous, we avoid the problem with the chain rule (\autoref{rem:IntroChainRule}) and  we can test\footnote{In order to guarantee that $(\partial_t\eta)$ is a admissible test-function we have to include a parabolic regularizer in the dissipation functional of the solid. As the resulting estimate is only needed for $h>0$ and independent of the regularizer, we can choose it in such a way that it vanishes in the limit $h\to 0$.} with $\partial_t \eta$ and obtain an energy inequality for the time-delayed problem in the form of
\begin{align*}
 E(\eta(t)) + \rho_s \fint_{t-h}^t \frac{\norm[Q]{\partial_t \eta(s)}^2}{2} ds +\int_0^t 2 R(\eta,\partial_t \eta) ds \leq E(\eta_0) + \rho_s \frac{\norm[Q]{\eta_*}^2}{2} + \int_0^t \inner[Q]{f\circ \eta}{\partial_t \eta} ds
\end{align*}
Similar to before we can then turn this into an a-priori estimate for the hyperbolic problem, in order to finally converge the acceleration scale and send $h \to 0$.

We will discuss this in full rigour and prove existence for solutions to the hyperbolic evolution of a solid in \autoref{sec:so}.
 
 \begin{remark}[Previous works]
While our approach to existence of solutions to hyperbolic evolutions seems to be new, we have to mention that previous variational approaches for similar PDEs have been proposed. However, they either rely on convexity or, more general, \emph{polyconvexity} (which is the convexity on minors) \cite{demouliniVariationalApproximationScheme2001,miroshnikovVariationalApproximationScheme2012} or use more explicit schemes which don't work well with the injectivity considerations \cite{demouliniWeakSolutionsClass2000}. In particular let us notice that, for the structure, the scheme proposed here is fully implicit which has the advantage that we are assured that the constructed solution is, even in the discrete setting, fulfilling all relevant non-linear constraints at all times, in particular global (almost-)injectivity.\footnote{To be more precise, we will consider the Ciarlet-Ne\v{c}as condition, which guarantees injectivity up to a set of measure zero and which has the important property of being preserved under the relevant convergence.} %
 \end{remark}

\noindent \textbf{\autoref{sec:full}: Bulk elastic solids coupled to Navier-Stokes equations}

In this section we combine the previous two sections and apply the two time-scale approach to the fluid-structure interaction problem. The main obstacle here lies in the Eulerian description of the fluid. What turned out to be natural is to approximate the \emph{material derivative of the fluid velocity} $(\partial_tv+[\nabla v]v)$ by a time-discrete differential quotient. This is done by subsequently introducing a flow map $\Phi_s(t): \Omega(t) \to \Omega(t+s)$ fulfilling $\partial_s \Phi_s(t,y)=v(t+s,\Phi_s(t,y))$ (resp. a discrete version of this) and $\Phi_0(t,y) = y$ in both the discrete and the time-delayed approximation layers. It means that $\Phi$ \emph{transports} the domain of the fluid along with its velocity. 

In particular the fluid analogue of the difference quotient in the time-delayed problem will be a ``material difference quotient'' in the size of the acceleration scale $h$, which is essentially of the form
\[
\frac{v(t,\Phi_h(t-h,y))-v(t-h,y)}{h}.
\]
As $\Phi$ and $v$ are inseperably linked we already need to construct discrete counterparts of both alongside each other in the $\tau$ scale. This discrete construction of the highly nonlinear $\Phi$ and its subsequent convergence are one of the main additions in the proof and one of technical centerpieces of the paper.%

As in Step 2, an approximation of the final energy estimates~\eqref{eq:energ} is obtained only \emph{after the first limit passage} $\tau \to 0$. More precisely, they are again obtained by testing the coupled Euler-Lagrange equation of the continuous in time quasi-steady approximation with the fluid and solid velocities. %
The energy inequality obtained by testing differs from \eqref{eq:energ} by replacing the kinetic energies with their moving averages. From this we can then derive an a-priori estimate and start on convergence. A particular difficulty in this is the material derivative of the fluid-velocity. Here we need to derive a modified Aubin-Lions lemma in order to obtain stronger convergence for a time-average approximation of $u$ which is the natural quantity in this context.

\begin{remark}[Previous variational approaches to Navier-Stokes]
 While the minimizing movements method seem to be new in the field of fluid-structure interactions, it has been previously used to show existence of solutions to the Navier-Stokes equation. In particular we want to highlight \cite{gigliVariationalApproachNavier2012} as an inspiration. There the authors also employ flow maps to obtain the material derivative, but as they work on a fixed domain, they do not need to construct them iteratively but can instead rely on the respective existence theory for the Stokes-problem. As an indirect consequence, their minimization happens on what we would consider the $h$-level, which makes it incompatible with our way of handling the solid evolution. Instead it is more of an improvement of the numerical scheme \cite{Pir82} which is has been developed much earlier.
\end{remark}

\subsection{Mechanical and analytical restrictions on the energy/dissipation functional}
\label{subsec:restrict}

As introduced in Section \ref{sec:setup}, we consider solid materials for which the stress tensor can be determined by prescribing two functionals; \emph{the energy and dissipation functional}. Materials admitting such modeling are called \emph{generalized standard materials} \cite{halphenMateriauxStandardGeneralises1975,nguyenStabilityNonlinearSolidMechanics2000,kruzik2019mathematical} %
and many available rheological models fall into this frame~\cite{kruzik2019mathematical}. Nonetheless, the two functionals cannot be chosen completely freely, but have to comply with certain physical requirements. We summarize these at this point.

As in examples \eqref{kelvinVoigt} and \eqref{st-venant}, we will for the sake of discussion assume that the energy and dissipation functional have a density, i.e.
\begin{equation}
E(\eta) = \int_Q e(\nabla \eta, \nabla^2 \eta) dx \qquad R(\eta, \partial_t \eta) = \int_Q r(\nabla \eta, \partial_t \nabla \eta) dx,
\label{eq_densities}
\end{equation}
for all smooth vector fields $\eta: Q \to \R^n$.

Here, the energy density depends on the first and second gradient\footnote{The formalism of our proofs naturally also allow for dependence on material and spatial positions $x$ and $\eta(x)$, but the latter dependence is non-physical and the former does not add much to the discussion. Nevertheless we emphasize that our results hold also for inhomogeneous materials.} of the deformation which puts us into the class of so-called non-simple (or second grade) materials (See the pioneering work \cite{toupinElasticMaterialsCouplestresses1962} as well as \cite{silhavyPhaseTransitionsNonsimple1985,friedTractionsBalancesBoundary2006} for later development). In fact, allowing for the energy density to depend on higher order gradients puts us beyond the standard theory of hyperelasticty but allows us to bring along more regularity to the problem. 

Any admissible $e(F,G)$ in \eqref{eq_densities} should satisfy the frame-indifference
\[
e(RF, GR) = e(F, G) \qquad \forall F \in \R^{n \times n}, G \in \R^{n \times n \times n},
\]
for any proper rotation $R$. In other words, the energy remains unchanged upon a change of observer. Moreover, any physical energy will blow up if the material is to be extended or compressed infinitely i.e.
\[
e(F,G) \to \infty \qquad \text{if $|F| \to \infty$ or $\det F \to 0$}
\]
and it should prohibit change of orientation for the deformations i.e.\ 
\[
e(F,G) = \infty \qquad \text{if $\det F \leq 0$.}
\]
From an analytical point of view these basic requirement have the following consequences: $e(F,G)$ \emph{cannot be a convex function} of the first variable and $e(F,G)$ \emph{cannot be bounded}. As a result considering the variational approach in the parabolic fluid-structure interaction as well as the two-scale approximation in the hyperbolic case are essential. 

Going even further, while these conditions lead to non-convexities, they are at the same time beneficial. Assuming appropriate growth conditions, in particular for $\det F \to 0$, we will be able to deduce a uniform lower bound on the determinant of $\nabla \eta$ in the style of \cite{healeyInjectiveWeakSolutions2009}. This will not only result in a meaningful boundary for the fluid domain, but also help us to readily switch between Lagrangian and Eulerian descriptions of the solid velocity.

As for the dissipation potential, we will also need it to independent of the observer, i.e.\ for all smoothly time-varying proper rotations $R(t)$, and all smooth time-dependent $F: [0,T] \to \R^n$
\[
r(RF, \partial_t(RF)) = r(F, \partial_t F).
\]
This restriction implies \cite{antmanPhysicallyUnacceptableViscous1998} that $r$ \emph{cannot} depend only on $\partial_t \eta$ but needs to depend on $\eta$, too. This in turn, will require us to use \emph{fine Korn type inequalities} \cite{neffKornFirstInequality2002,pompeKornFirstInequality2003} to deduce a-priori estimates (as already in \cite{mielkeThermoviscoelasticityKelvinVoigtRheology2019}). %
We also note that both for physical as well as for analytic reasons, $r$ should be non-negative and convex in the second variable. We additionally require $R$ to be a quadratic form in its second variable.\footnote{See \autoref{rem:SOdissipation} for possible relaxation of the dissipation potential.}

Taking into account these, as well as some analytical requirements, we will now detail our set-up for the deformation.
\begin{definition}[Domain of definition] \label{rem:ciarletNecas}
The set of internally injective functions in $W^{2,q}(Q; \Omega)$ (and satisfying the Dirichlet boundary condition) used for minimization in \eqref{eq:introDiscrete} can be expressed as 
\begin{align}
\label{eq:etaspace}
  \mathcal{E} := \left\{\eta \in W^{2,q}(Q;\Omega): \abs{\eta(Q)} = \int_Q \det \nabla \eta \,dx, \eta|_{P} = \gamma(x)\right\}.
\end{align}
\end{definition}
Here, the equality $\abs{\eta(Q)} = \int_Q \det \nabla \eta \,dx$ is termed the \emph{Ciarlet-Ne\v{c}as condition} which has been proposed in \cite{ciarletInjectivitySelfcontactNonlinear1987} and, as has also been proved there, it assures that any $C^1$-local homeomorphism is globally injective except for possible touching at the boundary. Working with this equivalent condition bears the advantage that it is easily seen to be preserved under weak convergence in  $ W^{2,q}(Q;\Omega)$.
\begin{remark}
Of particular interest is the topology of $\mathcal{E}$. It is easy to see that the set is a closed subset of the affine space $W_\gamma^{2,q}(Q;\Omega)$ i.e.\ $W^{2,q}(Q;\Omega)$ with fixed boundary conditions. As a subset of this topological space it has both interior points (denoted by $\operatorname{int}(\mathcal{E})$) and a boundary $\partial \mathcal{E}$. As we construct our approximative solutions by minimization over $\mathcal{E}$, it is crucial to know if $\eta_k \in \operatorname{int}(\mathcal{E})$ as only then we are allowed to test in all directions and have the full Euler-Lagrange equation we need.

Luckily however $\operatorname{int}(\mathcal{E})$ and $\partial \mathcal{E}$ are easily quantifiable. As long as $\det \nabla \eta > 0$, which is true for finite energy, we are able to vary in all directions, if and only if $\eta|_{M}$ is injective and does not touch $\partial \Omega$. Thus $\partial \mathcal{E}$, or rather the part that is relevant for us, consists precisely of the $\eta$ which have a collision.
\end{remark}
Throughout the paper, for the elastic energy potential defined on $\mathcal{E}$ we impose the following assumptions.
\begin{assumption}[Elastic energy] 
\label{ass:energy}
We assume that $q>n$ and $E:\mathcal{E} \to \overline{\R}$ satisfies:
\begin{enumerate}
 \item[S1] Lower bound: There exists a number $E_{min} > -\infty$ such that 
 \[E(\eta) \geq E_{min} \text{ for all } \eta \in W^{2,q}(Q; \R^n)\]
 \item[S2] Lower bound in the determinant: For any $E_0>0$ there exists $\epsilon_0 >0$ such that $\det \nabla \eta \geq \epsilon_0$ for all $\eta \in \{\eta \in W^{2,q}(Q; \R^n):E(\eta) <E_0\}$.
 \item[S3] Weak lower semi-continuity: If $\eta_l \rightharpoonup \eta$ in $W^{2,q}(Q; \R^n)$ then $E(\eta) \leq \liminf_{l\to\infty} E(\eta_l)$.
 \item[S4] Coercivity: All sublevel-sets $\{\eta \in \mathcal{E} :E(\eta) <E_0\}$ are bounded in $W^{2,q}(Q; \R^n)$.
 \item[S5] Existence of derivatives: For finite values $E$ has a well defined derivative which we will formally denote by 
 \[DE:  \{ \eta \in \mathcal{E}: E(\eta) < \infty\} \to (W^{2,q}(Q; \R^n))'\]
 Furthermore on any sublevel-set of $E$, $DE$ is bounded and continuous with respect to strong $W^{2,q}$-convergence.
 \item[S6] Monotonicity and Minty type property: If $\eta_l \rightharpoonup \eta$ in $W^{2,q}(Q; \R^n)$, then \[\liminf_{l\to \infty} \inner{DE(\eta_l)-DE(\eta)}{(\eta_l-\eta)\psi} \geq 0 \text{ for all }\psi\in C^\infty_0(Q;[0,1]).\] Additionally if $\limsup_{l\to \infty} \inner{DE(\eta_l)-DE(\eta)}{(\eta_l-\eta)\psi} \leq 0$ then $\eta_l\to \eta$ in $W^{2,q}(Q; \R^n)$.
\end{enumerate}
\end{assumption}
Let us shortly elaborate on the above stated assumptions. As elastic energies are generally bounded from below, assumption S1 is a natural one. Similarly, assumption S5 is to be expected as we need to take the derivative of the energy to determine a weak version of the Piola-Kirchhoff stress tensor. Assumptions S3 and S4 are standard in any variational approach as they open-up the possibility for using the direct method of the calculus of variations. Assumption S6 effectively means that the energy density has to be convex in the highest gradient (but of course not convex overall) and allows us to get weak solutions and not merely measure valued ones (as in the case of a solid material in \cite{demouliniVariationalApproximationScheme2001}). Finally, assumption S2 is probably the most restricting one and, to the authors' knowledge, necessitates the use of second-grade elasticity when, combined with an energy density $e$ which blows up sufficiently fast as $\det F \to 0$ (see \cite{healeyInjectiveWeakSolutions2009}). This is, in particular, the case for the model energy \eqref{st-venant}.

For the dissipation functional we have the following assumption:
\begin{assumption}[Dissipation functional]
The dissipation $R:\mathcal{E}\times W^{1,2}(Q;\R^n)\to \mathbb{R}$ satisfies
\label{ass:dissipation}
\begin{enumerate}
  \item[R1] Weak lower semicontinuity: If $b_l \rightharpoonup b$ in $W^{1,2}$ then
  \[\liminf_{l\to\infty} R(\eta,b_l) \geq R(\eta,b)\]
  \item[R2] Homogeneity of degree two: The dissipation is homogeneous of degree two in its second argument , i.e.
  \[R(\eta, \lambda b) = \lambda^2 R(\eta, b) \qquad \forall \lambda \in \R\]
  In particular, this implies $R(\eta,b) \geq 0$ and $R(\eta,0) = 0$.
  \item[R3] Energy-dependent Korn-type inequality: Fix $E_0 >0$. Then there exists a constant $c_K = c_K(E_0) >0$ such that for all $\eta \in W^{2,q}(Q;\R^n)$ with $E(\eta) \leq E_0$ and all $b \in W^{1,2}(Q;\R^n)$ with $b|_{P} = 0$ we have
  \[c_K \norm[W^{1,2}(Q)]{b}^2 \leq R(\eta,b).\]
  \item[R4] Existence of a continuous derivative: The derivative $D_2R(\eta,b) \in (W^{1,2}(Q))'$ given by
  \[ \frac{d}{d\epsilon} |_{\epsilon = 0} R(\eta,b+\epsilon \phi) =: \inner{D_2R(\eta,b)}{ \phi}\]
  exists and is weakly continuous in its two arguments. Due to the homogeneity of degree two this in particular implies
  \[\inner{D_2R(\eta,b)}{ b } = 2R(\eta,b).\]
 \end{enumerate}
\end{assumption}
Again some remarks are in order. As above assumption R4 is natural as we need do be able to evaluate the actual stress. Assumption R2, on the other hand, reflects the fact that we are considering viscous dissipation. Assumption R1 is again important from the point of view of calculus of variations. Assumption R3 is a coercivity assumption in a sense and needs to be stated in this rather weak form %
to satisfy frame indifference. Indeed, our model dissipation \eqref{kelvinVoigt} satisfies this assumption as shown in, e.g., \cite{mielkeThermoviscoelasticityKelvinVoigtRheology2019} relying on quite general Korn's inequalities due to \cite{neffKornFirstInequality2002,pompeKornFirstInequality2003}.

\subsection{Outlook and further applicability}
Each of the three parts outlined has the potential for further generalizations. We include some of the possible extensions at this point. 

\vspace{1ex}

\noindent \textbf{More general material laws for fluids:} Within this work we cover incompressible Newtonian fluids. However, more general fluid laws could be considered with the main requirement being that the stress tensor posesses a potential that can be used in place of $\nu\norm{\nablasym v}^2$. Examples include non-Newtonain incompressible fluids (power law fluids), or even Newtonian compressible fluids, for which fluid-structure interactions involving simplified elastic models have alread been studied~\cite{lengelerWeakSolutionsIncompressible2014a,breitCompressibleFluidsInteracting2018}. The latter include the added difficulty of density as an additional state variable.

\vspace{1ex}

\noindent \textbf{Evolution in solid mechanics including inertia:} While the hyperbolic evolution of solid materials including inertia has already been studied \cite{demouliniWeakSolutionsClass2000,demouliniVariationalApproximationScheme2001}, the scheme presented here has the potential to provide generalizations of those works. First, in the case when higher gradients are present in the energy, it might be possible to prove existence of injective solutions in full dimension. So far, such results have been limited to special geometries only, like the radial symmetric case \cite{miroshnikovVariationalApproximationScheme2012}. Moreover, it might be possible to prove existence of measure valued solutions to elastodynamics even for quasiconvex energies.

\vspace{1ex}

\noindent \textbf{Non-quadratic dissipation of the solid:}
As is the case for fluids, so it also holds for solids that quadratic dissipation potentials are the most common. They are however by far not the only ones. While seems to be relatively straightforward to extend our proofs to other, strictly convex potentials, a more complicated, but more interesting case might be that of rate-independent problems, i.e. problems for which $R$ only has linear growth in its second argument. These have already been studied extensively in the case of quasistatic evolutions of solids \cite{mielkeRateIndependentSystems2015}. The main problem there is that their invariance under reparametrisation allows for sudden jumps, something that could be mitigated by considering inertial effects or coupling with a fluid.

\vspace{1ex}

\noindent \textbf{Contact conditions:} The time-stepping existence scheme introduced here, i.e.\ the minimization, produces results \emph{for arbitrarily large times}, over the point of self touching. This allows to gain a global in time object. However in order to show that this object has a meaning as a solution to the given contact-problem, further work needs to be done.%
Even for the case of an elastic solid deformations alone the understanding of self-touching of the solid (or alternatively touching the container) is still unclear from a mathematical point of view; in particular understanding the \emph{contact force} on the solid that is preventing the self-penetration. Some contact-forces have been identified in the fully static problem only recently in \cite{palmerInjectivitySelfcontactSecondgradient2017} and a generalization to the quasi-static situation is due to \cite{kromerQuasistaticViscoelasticitySelfcontact2019}. Generalizations to fluid-structure interaction or even hyperbolic evolution, however, remain widely open.

A different point of view would be to use the fluid as a damping substance. It can be shown (in case the solid surface is smooth) that the contact is prevented by the incompressible fluid. Indeed, the works of Hillariet and Hesla \cite{hillairetLackCollisionSolid2007,hesla2004collisions}, see also \cite{grandmontExistenceGlobalStrong2016} show that two smooth objects can not touch each other in case there is a viscous incompressible fluid in between.\footnote{More precisely, the no-contact result relays on the assumption of 1) smooth surfaces of the solids, 2) bounds on the time derivative of the fluid velocity (which for large times and for the Navier Stokes equations is only proven in 2D), 3) the no-slip boundary conditions of the fluid velocity, which means the equality of the velocities at the interface (as we consider in this work).} Certainly, once self-contact of the solid can be excluded a-priori (due to the fluid surrounding it) global solutions are available.%

\vspace{1ex}

\noindent \textbf{Limit passage to simpler geometries:} Mathematically, fluid-stucture evolution is much better understood if the solid object is of lower dimension (see e.g.~\cite{muhaExistenceWeakSolution2013,lengelerWeakSolutionsIncompressible2014}). 

While passage to lower dimensional objects for solid materials has been studied (see e.g. \cite{ledretQuasiconvexEnvelopeSaint1995,frieseckeHierarchyPlateModels2006}), coupling with the fluid and passing to the limit in a similar manner is another relevant topic.

\vspace{1ex}

\noindent \textbf{Coupling to other physical phenomena:} Here we consider  a strictly mechanical system but coupling to further physical phenomena such as heat transfer is worthwhile studying. Indeed, as heat transfer happens in the actual configuration even results for the solid alone are sparse, we refer to \cite{mielkeThermoviscoelasticityKelvinVoigtRheology2019} for a recent work in the quasi-steady case. The methods from Step 2 in this work might allow a generalization to the case with inertia both for the solid alone and, ultimatively, also fluid-structure interaction.

\vspace{1ex}

\noindent \textbf{Analytical bounds on numerical approximations of the scheme:}
The schemes we are proposing are constructive in their very nature. Thus one can construct a numerical approximation without having to deviate much from the main ideas. There are certain difficulties related to minimizing a non-convex functional and the fact that we are dealing with changing domains, but both have been dealt with before. Instead the main point of interest will be the rate of convergence of any such scheme. Fundamentally our method relies on energy bounds on the acceleration scale which we are only able to obtain after convergence of the velocity scale and which may not hold for any approximation. Proving convergence of a numerical scheme would thus neccessitate first finding discrete bounds on that level. %

\subsection{Notation}
Let us detail the notation, some of which we have already used. Throughout the paper we will deal with a number of quantities that may depend on either or both of the parameters $\tau$ and $h$, corresponding to our two timescales. Whenever this dependence is relevant, i.e. when we are varying the parameter or converging to the corresponding limit, we will indicate it by a superscript, e.g. $\eta^{(\tau)} \to \eta$. There will be no context where both are relevant at the same time, but we note that in Sections \ref{sec:so} and \ref{sec:full}, anything that depends on $\tau$ is also depending on the larger scale $h$. Any such dependence may however be combined with a sequence index as well, as for example in $\eta_k^{(\tau)}$.

Of particular interest here is the fluid domain, which technically is always determined through the deformation of the solid alone. However since this deformation turn may in turn depend on those parameters, a sequence index and on time, we choose to copy this dependence over in the notation. For example we might write
$\Omega_l^{(h)}(t) := \Omega \setminus \eta_l^{(h)}(t,Q)$ or similar, whenever applicable.

Coordinate in the reference domain $Q$ will always be denoted by $x$ and coordinates in the physical domain $\Omega$ by $y$.

In order to avoid confusion with changing domains, we will almost always write out all time-integrations and try to only use norms and inner products in a spatial sense. We will also try to give the relevant domain if there is any chance of doubt. In particular a subscript $A$ will always denote the $L^2$ norm or inner product with respect to this domain, i.e.
\begin{align*}
 \norm[A]{f}^2 := \int_A \abs{f}^2 dx \text{ and } \inner[A]{f}{g} = \int_A f\cdot g dx.
\end{align*}
For other norms we will always specify the domain (but ommit the codomain), e.g. we will write $\norm[W^{1,2}(\Omega(t))]{v(t)}$.
The only exceptions to this involve the linear operators $DE(\eta)$ and $D_2R(\eta,b)$ which invariably have their domain of definition associated to them. We will thus simply write $\inner{DE(\eta)}{\phi}$ and $\inner{D_2(\eta,b)}{\phi}$ to denote the underlying $W^{2,q}(Q;\R^n) \times W^{-2,q}(Q;\R^n)$ and $W^{1,2}(Q;\R^n) \times W^{-1,2}(Q;\R^n)$-pairings. Since these only ever occur as linear operators and should in particular never be treated as functions, there should be no confusion in this.

\section{Minimizing movements for fluid-structure interactions.}
\label{sec:qs}

 Within this section, we provide existence of weak solutions to the \emph{parabolic} fluid-structure interaction problem, i.e. the inertia-less balances \eqref{strong1-parabolic}-\eqref{strong3-parabolic} together with the coupling conditions \eqref{couplin1}-\eqref{coupling2} as well as Dirichlet boundary conditions for the deformation and a Navier boundary condition stemming from the higher gradients in the energy. This has the interesting difficulty that in fluid structure interaction a naturally Lagrangian solid needs to be coupled with a naturally Eulerian fluid on a variable domain. In other works on fluid-structure interactions, this is usually done by ``normalizing'' the fluid domain, using what is called an``arbitrary Lagrangian-Eulerian map''. We will not do so, but instead use a variational approach to deal with the full problem in a varying domain.

We shall work with the following weak formulation:

\begin{definition}[Weak solution to the parabolic problem]
\label{def:QSWeakSol}
  We call the pair $(\eta, v)$ a weak solution to the \emph{parabolic fluid structure interaction problem}, if it satisfies
  \begin{align*}
    \eta & \in L^\infty([0,T];\mathcal{E}) \quad &&\text { and }  \partial_t\eta \in  L^2([0,T];W^{1,2}(Q; \R^n)), \\
    v &\in L^2([0,T];W^{1,2}(\Omega(t); \R^n)) \quad &&\text { and } \diver v(t) =0  \quad \text{ for a.a. $t \in [0,T]$,}
  \end{align*}
and there exists a $p \in \mathcal{D}'([0,T] \times \Omega)$ with $\supp p \subset [0,T] \times \Omega(t)$, such that they satisfy the weak equation
  \begin{align}
  \label{eq:QSEL}
  \begin{aligned}
  & \int_0^T \inner{DE(\eta)}{\phi} + \inner{D_2R(\eta,\partial_t \eta)}{\phi}
    + \inner[\Omega(t)]{\nablasym v}{\nablasym \xi}-\inner{p}{\diver\xi}\, dt
     \\
   &\quad 
   = \int_0^T\rho_f\inner[\Omega(t)]{f}{\xi} +  \rho_s\inner[Q]{f\circ \eta }{\phi} dt
   \end{aligned}
  \end{align}
  for all $\phi \in L^2([0,T];W^{2,q}(Q;\R^n))$, with $\phi|_P = 0$ and $\xi \in C_0([0,T];W^{2,q}_0(\Omega;\R^n))$ such that $\phi = \xi \circ \eta$ on $Q$ and as before we set $\Omega(t) := \Omega \setminus \eta(t,Q)$.
  Moreover, the initial condition for $\eta$ is satisfied in the sense that
  \begin{align*}
  \lim_{t\to 0}\eta(t)=\eta_0 \quad \text{ in } L^2(Q; \R^n).
  \end{align*}
\end{definition}

The main goal of this section is to prove existence of weak solutions to the parabolic fluid-structure interaction problem. In particular, we show the following theorem:  

\begin{theorem}[Existence of a parabolic fluid structure interaction] \label{thm:QSexistence}
 Assume that the energy $E$ fulfills \autoref{ass:energy} and the dissipation $R$ fulfills \autoref{ass:dissipation}. Further let $\eta_0 \in \mathcal{E}$ with $E(\eta_0) < \infty$ and $f \in L^\infty(\Omega;\R^n)$. Then there exists a maximal time $T_{\max}> 0$ such that on the interval $[0,T_{\max})$ a weak solution to to the parabolic fluid-structure interaction problem in the sense of \autoref{def:QSWeakSol} exists. 
 
 For the maximal time, we have $T_{\max} = \infty$, or $\liminf_{t\to T_{\max}} E(\eta(t)) = \infty$, or $T_{\max}$ is the time of the first collision of the solid with either itself or the container, i.e.\ the continuation $\eta(T_{\max})$ exists and $\eta(T_{\max}) \in \partial \mathcal{E}$. Furthermore we have $p \in L^2(0,T;L^\infty(\Omega(t)))+L^\infty(0,T;L^2(\Omega(t)))$ for all $T<T_{\max}$.
\end{theorem}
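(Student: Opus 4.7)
The plan is to construct weak solutions via the minimizing-movements scheme \eqref{eq:introDiscrete} and pass to the limit as $\tau\to 0$. First I would fix $\tau$ and solve the discrete minimization problem inductively: given $\eta_k^{(\tau)} \in \mathcal{E}$ of finite energy and $\Omega_k^{(\tau)} = \Omega\setminus \eta_k^{(\tau)}(Q)$, I seek a minimizer $(\eta_{k+1}^{(\tau)}, v_{k+1}^{(\tau)})$ of the functional in \eqref{eq:introDiscrete} over pairs $(\eta,v)$ with $\eta \in \mathcal{E}$, $v \in W^{1,2}_{\diver}(\Omega_k^{(\tau)};\R^n)$, $v = 0$ on $\partial\Omega$, and the affine trace constraint $v \circ \eta_k^{(\tau)} = (\eta-\eta_k^{(\tau)})/\tau$ on $M$. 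Existence is by the direct method: the Korn assumption R3 applied to $(\eta-\eta_k^{(\tau)})/\tau$ together with a standard Korn inequality on $\Omega_k^{(\tau)}$, coupled through the affine trace on $M$, yields coercivity (after using S4) in $W^{2,q}(Q)\times W^{1,2}(\Omega_k^{(\tau)})$; weak lower semicontinuity is S3 together with R1; the admissible set is weakly closed because the Ciarlet--Ne\v{c}as condition is preserved under weak $W^{2,q}$-limits and, since $q>n$, the affine trace constraint passes to the limit via uniform convergence of $\eta$.

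Next I would exploit the minimizing property. Comparing with the trial pair $(\eta_k^{(\tau)}, 0)$ and iterating in $k$ produces the telescoping discrete energy estimate \eqref{apriori-intro}, giving uniform-in-$\tau$ bounds on $\sup_k E(\eta_k^{(\tau)})$, on $\sum_k \tau\, R(\eta_k^{(\tau)},(\eta_{k+1}^{(\tau)}-\eta_k^{(\tau)})/\tau)$ (upgraded via R3 to an $L^2_t W^{1,2}_x$ bound on the discrete time-derivative of $\eta^{(\tau)}$), and on $\sum_k \tau\, \norm[\Omega_k^{(\tau)}]{\nablasym v_{k+1}^{(\tau)}}^2$ (upgraded to a full $W^{1,2}$-bound by the coupled fluid--solid Korn inequality announced in the introduction). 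I then define the piecewise-affine interpolant $\eta^{(\tau)}(t)$ and the piecewise-constant interpolants $\bar\eta^{(\tau)}$, $v^{(\tau)}$. An Aubin--Lions argument combined with $q>n$ gives strong convergence $\eta^{(\tau)} \to \eta$ in $C([0,T];C^1(\overline{Q}))$, so that the moving domains $\Omega_k^{(\tau)}(t)$ converge to $\Omega(t)$ in a usable sense and $v^{(\tau)}$ (extended by zero off its domain) has a weak $L^2_t W^{1,2}_x$-limit $v$ supported on $\Omega(t)$ and divergence-free there. A discrete pressure $p_k^{(\tau)}$ is recovered as a Lagrange multiplier for $\diver v=0$ by a Ne\v{c}as-type argument, yielding a discrete Euler--Lagrange equation.

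The hard part will be the limit passage in $\inner{DE(\eta^{(\tau)})}{\phi}$, since $DE$ is non-monotone and $E$ non-convex. My plan is to invoke S6: testing the discrete Euler--Lagrange equation with $(\eta^{(\tau)}-\eta)\psi$ for a cutoff $\psi \in C_0^\infty(Q;[0,1])$, all other terms vanish in the limit thanks to the strong and weak convergences above (the dissipation term via R4, the fluid contributions via the domain convergence and the affine trace, and the forcing via strong convergence of $\eta^{(\tau)}$), so $\limsup_\tau \inner{DE(\eta^{(\tau)}) - DE(\eta)}{(\eta^{(\tau)}-\eta)\psi}\le 0$. Combined with the monotonicity half of S6 this upgrades to local strong $W^{2,q}$-convergence of $\eta^{(\tau)}$, which identifies the limit of $DE(\eta^{(\tau)})$. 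To match Definition~\ref{def:QSWeakSol}, given a limit test pair $(\phi,\xi)$ I would approximate $\xi$ on $\Omega(t)$ by solenoidal $\xi^{(\tau)}$ on $\Omega_k^{(\tau)}$ realizing $\phi = \xi^{(\tau)} \circ \eta_k^{(\tau)}$, using the changing-domain test-function approximation flagged in the introduction, and then pass to the limit in the discrete Euler--Lagrange equation. Since the scheme restarts from any $\eta(t^\ast)\in \operatorname{int}(\mathcal{E})$ with finite energy, the maximal time $T_{\max}$ is either $+\infty$, a time of energy blow-up, or a first-collision time $\eta(T_{\max})\in \partial\mathcal{E}$, as claimed. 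The stated pressure regularity follows from the momentum equation by inverting the divergence on time slices via a Bogovski\u{\i}-type operator applied to the remaining terms.
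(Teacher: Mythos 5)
Your proposal follows essentially the same route as the paper: iterative minimization, telescoping energy estimate by comparing with $(\eta_k^{(\tau)},0)$, global-velocity Korn argument, strong $C^0_tC^{1,\alpha^-}_x$-convergence of the interpolants, the Minty-type argument via S6 after approximating coupled test pairs, reconstruction of the pressure at the limiting level by a Bogovski\u{\i}-type argument, and a continuation argument for $T_{\max}$. One small remark: the intermediate step of introducing a discrete Lagrange multiplier $p_k^{(\tau)}$ is superfluous and is not used in the paper -- the discrete Euler--Lagrange identity is obtained directly against solenoidal test pairs, and the pressure is recovered only once, after the limit; also, the paper obtains the uniform-in-time strong convergence via explicit near-H\"older estimates and a weak Arzel\`a--Ascoli argument rather than by invoking an Aubin--Lions lemma, though the two are interchangeable here.
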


In order to prove Theorem \ref{thm:QSexistence}, we shall exploit the natural gradient flow-structure of the parabolic fluid-structure evolution. Indeed, at the heart of the proof is the construction of time-discrete approximations via variational problems inspired by DeGiorgi's \emph{minimizing movements} method \cite{degiorgiNewProblemsMinimizing1993} given in \eqref{eq:QSdiscreteProblem}. We refer to \autoref{subsec:QSproof} for a detailed proof of Theorem \ref{thm:QSexistence} and to Section \ref{subsec:QSproperties} for the preliminary material. 

\begin{remark}[Maximal existence time]
The maximal existence time in Theorem \eqref{thm:QSexistence} is not only given by possible collisions but also by a possible blow-up of the energy due to the acting forces. It is quite notable, that such a situation cannot appear in the full (hyperbolic) model \autoref{lem:NSfullIterationAPriori}. The reason is that the acting forces can be compared to the intertial term instead of the dissipative one.
\end{remark} %

\subsection{Preliminary analysis}
\label{subsec:QSproperties}

We will start this section with discussing the relevant geometry of the fluid-solid coupling and derive some necessary properties for the coupled system that will also be of use for the full Navier-Stokes system in \autoref{sec:full}.

\begin{lemma} \label{lem:calEclosed}
The set $\mathcal{E}$ defined in \eqref{eq:etaspace} is closed under weak $W^{2,q}$ convergence.
\end{lemma}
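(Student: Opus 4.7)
My plan is to use the compact Sobolev embedding $W^{2,q}(Q) \hookrightarrow\hookrightarrow C^{1,\alpha}(\overline{Q})$, valid since $q > n$, which upgrades the weak convergence $\eta_k \weakto \eta$ in $W^{2,q}$ to uniform convergence of both $\eta_k \to \eta$ and $\nabla \eta_k \to \nabla \eta$ on $\overline{Q}$. From this, most of the conditions in the definition of $\mathcal{E}$ pass to the limit essentially for free: the Dirichlet boundary condition $\eta|_P = \gamma$ is preserved by continuity of the trace, the codomain condition $\eta(\overline{Q}) \subset \overline{\Omega}$ follows from the closedness of $\overline{\Omega}$, and $\int_Q \det \nabla \eta_k \, dx \to \int_Q \det \nabla \eta \, dx$ by uniform convergence of the Jacobian determinant.

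The only nontrivial ingredient is the Ciarlet-Ne\v{c}as equality $|\eta(Q)| = \int_Q \det \nabla \eta \, dx$. First I would establish upper semicontinuity of $\eta \mapsto |\eta(Q)|$ under uniform convergence: writing $N_\varepsilon(A)$ for the open $\varepsilon$-neighborhood of a set $A$, the bound $\|\eta_k - \eta\|_{C^0} < \varepsilon$ forces $\eta_k(Q) \subset N_\varepsilon(\eta(Q))$ for large $k$. Since $\eta(\overline{Q})$ is compact, continuity of Lebesgue measure on the decreasing family $\{N_\varepsilon(\eta(Q))\}_{\varepsilon \searrow 0}$ yields $\limsup_k |\eta_k(Q)| \leq |\eta(Q)|$. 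Combining this with Ciarlet-Ne\v{c}as on each $\eta_k$ and the convergence of the Jacobian integrals gives one inequality:
\[
\int_Q \det \nabla \eta \, dx \;=\; \lim_k \int_Q \det \nabla \eta_k \, dx \;=\; \lim_k |\eta_k(Q)| \;\leq\; |\eta(Q)|.
\]

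For the reverse inequality I would invoke the classical area formula $|\eta(Q)| \leq \int_Q |\det \nabla \eta| \, dx$, valid for any Lipschitz map. Since each $\eta_k$ is an orientation-preserving deformation in $\mathcal{E}$ (so $\det \nabla \eta_k \geq 0$), uniform convergence of the Jacobians yields $\det \nabla \eta \geq 0$ a.e.\ in the limit, hence $\int_Q |\det \nabla \eta| \, dx = \int_Q \det \nabla \eta \, dx$. This gives $|\eta(Q)| \leq \int_Q \det \nabla \eta \, dx$, which together with the preceding step produces the equality characterising membership in $\mathcal{E}$.

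The only mildly subtle point is the upper semicontinuity of the image measure, which has no matching lower bound for merely continuous or weak convergence; this is precisely why the compact embedding $W^{2,q} \hookrightarrow\hookrightarrow C^{1,\alpha}$ made available by the assumption $q > n$ is essential for the argument.
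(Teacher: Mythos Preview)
Your proof is correct and shares its first steps with the paper's (Rellich to get $C^{1,\alpha}$ convergence on a subsequence, hence uniform convergence of $\nabla\eta_k$ and $\det\nabla\eta_k$; trace continuity for the boundary condition). The difference lies in how the Ciarlet--Ne\v{c}as equality is verified for the limit. The paper argues directly that $|\eta_l(Q)|\to|\eta(Q)|$: since $\partial Q$ has finite $(n{-}1)$-dimensional measure and the $\eta_l$ are uniformly $C^1$, the image boundaries $\partial(\eta_l(Q))\subset\eta_l(\partial Q)$ are Lebesgue-null, so uniform convergence of $\eta_l$ forces convergence of the image measures, and the equality passes to the limit on both sides at once.

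Your route instead splits the equality into two inequalities: upper semicontinuity of $|\eta_k(Q)|$ under uniform convergence handles one direction, and the area formula together with $\det\nabla\eta\geq 0$ handles the other. This is arguably tidier, since the lower-semicontinuity half of the paper's convergence $|\eta_l(Q)|\to|\eta(Q)|$ is left rather implicit there. Two small points are worth flagging: (i) your upper-semicontinuity step silently uses $|\overline{\eta(Q)}|=|\eta(Q)|$, i.e.\ that $\eta(\partial Q)$ is Lebesgue-null --- precisely the fact the paper invokes explicitly; (ii) the claim that elements of $\mathcal{E}$ are orientation-preserving is not literally part of the set's definition \eqref{eq:etaspace}. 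It is the standard reading of the Ciarlet--Ne\v{c}as class and holds in every application of the lemma in the paper (finite-energy deformations satisfy $\det\nabla\eta>0$ by Assumption~S2), but strictly speaking you are importing it as an extra hypothesis. The paper's direct argument avoids this by never appealing to the sign of the Jacobian.
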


\begin{proof}
 The boundary condition holds as $W^{2,q}(Q;\R^n)$ has a continuous trace operator. The only thing left to check is the Ciarlet-Ne\v{c}as-condition. Assume that $\eta_l \rightharpoonup \eta$. Using Rellich's compactness theorem, for a subsequence $\eta_l \to \eta$ in $C^{1,\alpha}$ for some $\alpha > 0$. This implies that $\det \nabla \eta_l \to \det \nabla \eta$ uniformly, which yields that $\int_Q \det \nabla \eta_l dx \to \int_Q \det \nabla \eta dx$.
 
 Moreover, the $C^{1,\alpha}$ bounds imply uniform convergence of all minors of $\nabla \eta_l$. Thus since $\partial Q$ is of finite $n-1$ dimensional measure, so will be $\partial \eta(Q)$ and $\partial \eta_l(Q)$. But then $\abs{\eta_l(Q)} \to \abs{\eta(Q)}$ by the uniform convergence of $\eta_l$.
\end{proof}

\subsubsection*{Injectivity and boundary regularity of the solid}

Further, we discuss the injectivity of deformations in $\mathcal{E}$ up-to-the boundary. In fact, any $\eta \in \mathcal{E}$ is injective on $Q$ but not necessarily on $\bar{Q}$, so collisions are in principle possible. Nonetheless, we ca exclude them for short times as shown via the following two lemmas as well as Corollary \ref{cor:QSshortTimeNoCollision}.
\begin{lemma}[Local injectivity of the boundary]
 For any $E_0 < \infty$ there exists a $\delta_0 > 0$ such that all $\eta \in \mathcal{E}$ with $E(\eta) < E_0$ are locally injective with radius $\delta_0$ at the boundary, i.e.
 \[ \eta(x_0) \neq \eta(x_1) \text{ for all } x_0,x_1 \in \partial Q, \abs{x_0-x_1} < \delta_0.\]
\end{lemma}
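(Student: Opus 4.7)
The natural approach is a contradiction argument based on compactness of the sublevel set together with a uniform invertibility of $\nabla\eta$. By assumption~S4 the set $\mathcal{S}:=\{\eta\in\mathcal{E}:E(\eta)\leq E_0\}$ is bounded in $W^{2,q}(Q;\R^n)$. Since $q>n$ and $Q$ is Lipschitz, the compact embedding $W^{2,q}(Q)\hookrightarrow C^{1,\alpha}(\bar Q)$ (with $\alpha=1-n/q$) makes $\mathcal{S}$ precompact in $C^{1,\alpha}$; in particular the $\nabla\eta$ enjoy a uniform modulus of continuity $\omega(\cdot)$ on $\mathcal{S}$. Combined with the uniform determinant bound $\det\nabla\eta\geq\varepsilon_0(E_0)$ furnished by~S2, this produces a constant $c_*=c_*(E_0)>0$ with $|\nabla\eta(x)w|\geq c_*|w|$ for all $\eta\in\mathcal{S}$, $x\in\bar Q$ and $w\in\R^n$.

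Now suppose the conclusion were false. Then one could find $\eta_k\in\mathcal{S}$ and distinct $x_0^k,x_1^k\in\partial Q$ with $|x_0^k-x_1^k|\to 0$ and $\eta_k(x_0^k)=\eta_k(x_1^k)$; after extracting a subsequence, $\eta_k\to\eta_\infty$ in $C^{1,\alpha}$ and $x_0^k,x_1^k\to x^*\in\partial Q$. The crucial geometric input is that the Lipschitz domain $Q$ is quasi-convex: there exist a constant $C_Q$ (depending only on $Q$) and arc-length parametrized Lipschitz paths $\gamma_k:[0,L_k]\to\bar Q$ from $x_0^k$ to $x_1^k$ with $L_k\leq C_Q|x_1^k-x_0^k|$. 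In particular the $\gamma_k$ shrink uniformly to $x^*$.

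Applying the chain rule to $\eta_k\circ\gamma_k$ (which is Lipschitz since $\eta_k\in C^1(\bar Q)$) and the fundamental theorem of calculus, one rewrites
\[
0 \;=\; \eta_k(x_1^k)-\eta_k(x_0^k) \;=\; \nabla\eta_\infty(x^*)\bigl(x_1^k-x_0^k\bigr)+R_k,
\]
with
\[
R_k \;=\; \int_0^{L_k}\!\bigl[\nabla\eta_k(\gamma_k(s))-\nabla\eta_\infty(x^*)\bigr]\gamma_k'(s)\,ds.
\]
Uniform convergence $\nabla\eta_k\to\nabla\eta_\infty$ on $\bar Q$ together with continuity of $\nabla\eta_\infty$ at $x^*$ yields $|R_k|\leq\omega_k L_k\leq\omega_k C_Q|x_1^k-x_0^k|$ with $\omega_k\to 0$, whereas the uniform invertibility gives $|\nabla\eta_\infty(x^*)(x_1^k-x_0^k)|\geq c_*|x_1^k-x_0^k|$. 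Dividing by $|x_1^k-x_0^k|$ forces $c_*\leq\omega_k C_Q$, which is impossible for large $k$. The threshold $\delta_0$ depends only on $c_*$, $C_Q$ and the modulus $\omega$, hence only on $E_0$ and $Q$.

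The main obstacle is arranging the short connecting curve $\gamma_k$ inside $\bar Q$ with a constant $C_Q$ that is genuinely independent of $\eta_k$ and of how close $x_0^k,x_1^k$ happen to be. This is where the Lipschitz regularity of $\partial Q$ is used (and the $C^2$ assumption on $M$ could be used to sharpen the constants on that part of the boundary): quasi-convexity of Lipschitz domains is standard and the constant depends solely on the Lipschitz geometry of $Q$, which is fixed throughout.
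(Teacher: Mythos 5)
Your proof is correct but follows a genuinely different route from the paper's. The paper's proof is an orientation argument: it projects $D\eta$ onto the contact plane $A := D\eta(x_0)\,T_{x_0}\partial Q$, notes that the projected tangent map $\phi_x \colon T_x\partial Q \to A$ is orientation preserving for $x$ near $x_0$ (by uniform continuity of $D\eta$, the regularity of $\partial Q$, and the determinant lower bound), and then observes that at the second preimage $x_1$ of a self-contact point the orientation imparted on $A$ by $D\eta(x_1)$ must be \emph{opposite}, since the two boundary sheets approach $A$ from opposite sides; hence $x_1$ cannot lie in that neighbourhood. Your argument instead uses the \emph{full-matrix} uniform coercivity $\abs{\nabla\eta\,w}\geq c_*\abs{w}$ (deduced from S2, S4 and the Morrey embedding) together with quasi-convexity of the Lipschitz domain $Q$: connecting $x_0$ to $x_1$ by a path in $\overline Q$ of length at most $C_Q\abs{x_1-x_0}$ and applying the fundamental theorem of calculus yields $\abs{\eta(x_1)-\eta(x_0)}\geq(c_*-\omega\,C_Q)\abs{x_1-x_0}>0$ once the oscillation $\omega$ of $\nabla\eta$ along the path is small. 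This route is purely metric rather than topological; in fact the compactness/contradiction scaffolding is not even needed, since the sublevel set has a uniform H\"older modulus $\omega(\delta)\lesssim\delta^{\alpha^-}$ and one can simply solve $\omega(C_Q\delta_0)\,C_Q<c_*$ for an explicit $\delta_0$. It also establishes a slightly stronger fact -- a uniform local bi-Lipschitz bound on all of $\overline Q$, with no special role for $\partial Q$ -- whereas the paper's argument is specifically about the orientation flip between two touching boundary sheets and thus isolates the geometric mechanism preventing self-contact more explicitly. Both approaches are correct; yours is arguably cleaner and more quantitative, the paper's more geometrically illuminating.
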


\begin{proof}
 Assume that there are two points $x_0,x_1 \in \partial Q$ such that $\eta(x_0) = \eta(x_1)$. Now using embedding theorems and \autoref{ass:energy}, S2 and S4, $E(\eta) < E_0$ implies that $D\eta$ is uniformly continuous and there exists a uniform lower bound on $\det D \eta$. This also results in a uniform continuity of $(D\eta)^{-1} = \frac{\cof D\eta}{\det D\eta}$.

 Let $A$ denote the contact plane spanned by $D\eta(x_0) v$, $v$ tangential to $\partial Q$ in $x_0$ and denote the projection onto this plane using $P_A$. Then for any $x\in \partial Q$, the linear map $\phi_x: v\mapsto P_A D\eta(x) v$ maps the tangential space $T_x \partial Q$ to $A$. In particular for $x_0$, we have $P_A D\eta(x_0) = D\eta(x_0)$ and thus $\phi_{x_0}$ is an isomorphism with determinant bounded from below. Now from the regularity of $\partial Q$ ($T_x\partial Q$ does not change fast depending on $x$) and uniform continuity of $D\eta$, we get that the same has to hold in a $\delta_0$ neighborhood of $x_0$. Further, if we orient the tangential spaces through the exterior normal and $A$ through the orientation inherited from $D\eta(x_0)$, then $\phi_x$ has to be orientation preserving in this neighborhood. So $x_1$ cannot lie in this neighborhood, as a simple geometrical argument shows that the orientation imparted on $A$ through $D\eta(x_1)$ is opposite to the one chosen though $D\eta(x_0)$.
\end{proof}

\begin{remark} %
 The preceeding proof is much easier to formulate in the case $n=2$ as one can deal with tangential vectors directly: Consider the positively oriented unit tangentials $\tau_x$ at $x\in \partial Q$. Then $D\eta(x_0) \tau_{x_0}$ and $D\eta(x_1) \tau_{x_1}$ point in opposite directions and their length is bounded from below. But if $x_0$ and $x_1$ are close, then so are the $\tau_{x_i}$ and the $D\eta(x_i)$, which leads to a contradiction.
\end{remark}

\begin{proposition}[Short time global injectivity preservation] \label{prop:shortInjectivity}
 Fix $E_0 < \infty$ and $\varepsilon_0 > 0$ and let $\delta_0$ be given by the previous lemma. Then there exists a $\gamma_0 >0$ such that for all $\eta_0 \in \mathcal{E}$ with $E(\eta_0)< E_0$ and
 \begin{align} \label{eq:boundaryDistanceCond}
   \abs{\eta_0(x_0) - \eta_0(x_1)} > \varepsilon_0 \text{ for all } x_0,x_1 \in \partial Q, \abs{x_0-x_1} \geq \delta_0
 \end{align}
 we have that for all $\eta \in \mathcal{E}$ with $E(\eta) < E_0$ and $\norm[L^2]{\eta_0-\eta} < \gamma_0$ it holds that
 \[ \abs{\eta(x_0) - \eta(x_1)} > \frac{\varepsilon_0}{2} \text{ for all } x_0,x_1 \in \partial Q, \abs{x_0-x_1} \geq \delta_0\]
\end{proposition}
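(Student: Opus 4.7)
The strategy is an interpolation argument: the energy bound $E(\eta), E(\eta_0) < E_0$ forces $\eta, \eta_0$ into a $W^{2,q}$-bounded set by assumption S4, and since we assume closeness in $L^2$, interpolation will upgrade this to closeness in $C^0(\overline{Q})$. Once uniform closeness is established, the conclusion is immediate from the triangle inequality applied on the boundary.

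More precisely, the plan is as follows. First, by S4, there is a constant $M = M(E_0)$ such that $\|\eta\|_{W^{2,q}(Q)}, \|\eta_0\|_{W^{2,q}(Q)} \leq M$. In particular
\[
\|\eta - \eta_0\|_{W^{2,q}(Q)} \leq 2M.
\]
Since $q > n$, the embedding $W^{2,q}(Q) \hookrightarrow C^0(\overline{Q})$ is continuous, and by a standard Gagliardo--Nirenberg type interpolation between $L^2(Q)$ and $W^{2,q}(Q)$ there exist $\theta \in (0,1)$ and $C = C(Q,q,n)$ such that
\[
\|\eta - \eta_0\|_{C^0(\overline{Q})} \leq C \, \|\eta - \eta_0\|_{L^2(Q)}^{\theta}\, \|\eta - \eta_0\|_{W^{2,q}(Q)}^{1-\theta} \leq C (2M)^{1-\theta} \gamma_0^{\theta}.
\]
We then choose $\gamma_0 = \gamma_0(E_0, \varepsilon_0) > 0$ small enough that the right-hand side is at most $\varepsilon_0/4$.

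Finally, for any $x_0, x_1 \in \partial Q$ with $|x_0 - x_1| \geq \delta_0$, assumption \eqref{eq:boundaryDistanceCond} on $\eta_0$ together with the triangle inequality gives
\[
|\eta(x_0) - \eta(x_1)| \geq |\eta_0(x_0) - \eta_0(x_1)| - 2\|\eta - \eta_0\|_{C^0(\overline{Q})} > \varepsilon_0 - \frac{\varepsilon_0}{2} = \frac{\varepsilon_0}{2},
\]
which is the desired conclusion.

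I do not expect any serious obstacle here. The only thing one has to be slightly careful about is that the interpolation constant $C$ and the exponent $\theta$ depend only on $Q, n, q$ and not on $\eta$ or $\eta_0$, so that $\gamma_0$ can be chosen uniformly over the admissible class. Everything else is standard consequence of the a priori regularity encoded in Assumption~\ref{ass:energy}.
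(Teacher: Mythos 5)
Your proof is correct, and it takes a genuinely different route from the paper's.

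The paper argues by contradiction: assuming $|\eta(x_0)-\eta(x_1)|\le\varepsilon_0/2$ for some bad pair, it first deduces a pointwise lower bound $|\eta_0(x_0)-\eta(x_0)|\ge\varepsilon_0/4$, then invokes the uniform equicontinuity of the energy sublevel set (via $W^{2,q}\hookrightarrow C^{1,\alpha}$ plus S4) to propagate this lower bound to a fixed-radius neighbourhood $B_r(x_0)\cap Q$, and finally integrates to produce a positive lower bound on $\|\eta-\eta_0\|_{L^2}$, which becomes $\gamma_0$. You instead go forward: a Gagliardo--Nirenberg interpolation between $L^2$ and $W^{2,q}$ on the bounded Lipschitz domain $Q$ (valid since $q>n$, with constants depending only on $Q,n,q$) upgrades $L^2$-closeness plus the uniform $W^{2,q}$-bound from S4 to $C^0$-closeness, and then the triangle inequality finishes. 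The two arguments draw on the same ingredients (energy bound implies uniform $C^0$-compactness), but yours is direct and yields an explicit rate $\gamma_0\sim\varepsilon_0^{1/\theta}$ rather than just existence of some $\gamma_0$, while the paper's avoids citing an interpolation inequality at the cost of the contradiction-plus-ball construction. Both are valid; your version is arguably cleaner, and the dependence of $\theta,C$ only on $Q,n,q$ that you flag is indeed the one thing that needs to be tracked, and it holds.
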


\begin{proof}
 Let $\eta_0$ be as prescribed and pick $\eta \in \mathcal{E}$, $E(\eta) < E_0$ with $\abs{\eta(x_0) - \eta(x_1)} \leq \frac{\varepsilon_0}{2}$ for two points $x_0,x_1$ with $\abs{x_0-x_1} \geq \delta_0$. But then 
 \begin{align*}
  \abs{\eta_0(x_0) - \eta(x_0)} + \abs{\eta_0(x_1) -\eta(x_1)} \geq \abs{\eta_0(x_0)-\eta_0(x_1)} - \abs{\eta(x_0)-\eta(x_1)} > \frac{\varepsilon_0}{2}
 \end{align*}
 So, without loss of generality, we can assume that $\abs{\eta_0(x_0) - \eta(x_0)} \geq \frac{\varepsilon_0}{4}$. But then since $\eta_0$ and $\eta$ are uniformly continuous independent of the choice of $\eta$, there exists an $r>0$ such that $\abs{\eta_0(x)-\eta(x)} \geq \frac{\varepsilon_0}{8}$ for all $x\in B_r(x_0) \cap Q$. Thus
 \[
  \norm[L^2]{\eta_0-\eta} \geq \sqrt{ \left(\frac{\varepsilon_0}{8}\right)^2 \abs{B_r(x_0) \cap Q}} =: \gamma_0 > 0 \qedhere
 \]
\end{proof}
Since we are concerned with variable in time domains for the fluid flow, we recall here the quantification of uniform regular domains. Later we will use several analytical results which will be used uniformly with respect to these quantifications.
\begin{definition}
\label{def:cka}
For $k\in\N$ and $\alpha\in[0,1]$. We call $\Omega\subset\R^n$ a $C^{k,\alpha}-$domain with {\em characteristics} $L,r$, if for all $x\in \partial \Omega$ there is a  $C^{k,\alpha}$-difeomorphismn $\phi_x:B_1(0)\to B_r(x))$, such that $\phi_x:B^-_1(0)\to  B_r(x)\cap \Omega$, $\phi_x:B^+_1(0)\to  B_r(x)\cap \Omega^c$ and $\phi_x(0)=x$. We require that it can be written as a graph over a direction $e_x\in \mathcal{S}^{n-1}$. This means that for $(z',z_n)\in B_1(0)$ we may write $\phi_x(z)=\phi_x((z',0))+ re_xz_n$. And that it satisfies the bound:
\[
\norm[C^{\alpha,k}(B_1(0))]{\phi_x}+\norm[C^{\alpha,k}(B_r(x))]{\phi_x^{-1}}\leq L.
\]  
\end{definition}

Collecting the regularity that comes from the energy bounds lead to an important (locally) uniform estimate on the $C^{1,\alpha}$ regularity of the fluid domains.

\begin{corollary}[Uniform $C^{1,\alpha}$ domains] \label{cor:c1aBoundary}
Fix $E_0 < \infty$, $\eta_0 \in \mathcal{E}$ with $E(\eta_0)< E_0$ and satisfying \eqref{eq:boundaryDistanceCond} for some $\varepsilon_0 > 0$. Then for all $\eta \in \mathcal{E}$ with $E(\eta) <E_0$ and $\norm[L^2]{\eta_0-\eta} < \gamma_0$ for $\gamma_0$ from \autoref{prop:shortInjectivity} we have that $\Omega_{\eta} := \Omega \setminus \eta(Q)$ is a $C^{1,\alpha}$-domain with characteristics $L,r$ depending only on $E_0, \eta_0$ and $\varepsilon_
0$.
\end{corollary}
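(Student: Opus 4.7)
The plan is to combine the uniform $C^{1,\alpha}$ regularity of $\eta$ provided by the energy bound with the two injectivity results just established. First I would invoke \autoref{ass:energy}, S4, to see that $\{\eta \in \mathcal{E} : E(\eta) < E_0\}$ is bounded in $W^{2,q}(Q;\R^n)$; since $q > n$, Morrey's embedding promotes this to a uniform bound in $C^{1,\alpha}(\overline{Q};\R^n)$ with $\alpha = 1-n/q$. Combined with the pointwise lower bound $\det\nabla\eta \geq \epsilon_0$ coming from S2, the inverse function theorem then yields an $\eta$-uniform $C^{1,\alpha}$-estimate on any local inverse of $\eta$.

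Next I would decompose $\partial\Omega_{\eta}$ into the fixed piece contained in $\partial\Omega$ (away from where the solid is attached) and the moving piece $\eta(M)$, and build boundary charts separately. On the container side the charts are $\eta$-independent and inherited from $\partial\Omega$ together with the fixed Dirichlet datum $\gamma$. For a point $y_0 = \eta(x_0)$ with $x_0\in M$ I would take a $C^2$-chart of $M$ near $x_0$ — available with $\eta$-independent characteristics since $M$ is $C^2$ and fixed — compose it with $\eta$ to obtain a $C^{1,\alpha}$-graph parametrization of $\eta(M)$ over the tangent plane at $y_0$, and extend this parametrization normally to a full diffeomorphism $\phi_{y_0}$ as required in \autoref{def:cka}. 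The $C^{1,\alpha}$-norms of $\phi_{y_0}$ and $\phi_{y_0}^{-1}$ are then controlled purely in terms of $E_0$.

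The main technical point is to choose the radius $r$ of the chart uniformly, so that $B_r(y_0)$ meets only one sheet of $\eta(M)$. Locally, the previous lemma guarantees injectivity of $\eta|_{\partial Q}$ on $\delta_0$-balls in $Q$, and the uniform inverse estimate above converts this into injectivity of $\eta$ on an image-side ball $B_{r_1}(y_0)$ with $r_1 = r_1(E_0) > 0$. Globally, \autoref{prop:shortInjectivity} — and this is precisely where the hypothesis $\norm[L^2]{\eta - \eta_0} < \gamma_0$ enters — gives $|\eta(x_0) - \eta(x_1)| \geq \varepsilon_0/2$ whenever $x_0, x_1 \in \partial Q$ satisfy $|x_0 - x_1| \geq \delta_0$, so setting $r := \min(r_1, \varepsilon_0/4)$ rules out that a distant part of $\eta(M)$ folds back into $B_r(y_0)$.

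The hard part will be the somewhat pedantic matching of these charts at the seam $\gamma(\partial M \cap \partial P) \subset \partial\Omega$ where the container boundary meets the elastic boundary. Since $\gamma$ and $\partial\Omega$ are fixed while $\eta$ is only allowed to vary away from $P$, the seam itself is $\eta$-independent, and matching charts across it can be constructed once and for all from the data $\eta_0$, absorbing the required transversal smoothing into the constant $L$. Every constant entering the construction ultimately depends only on $E_0$, $\eta_0$ and $\varepsilon_0$, giving the desired uniform characteristics $L, r$.
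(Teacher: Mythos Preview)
The paper does not give a proof of this corollary; it is stated as an immediate consequence of the preceding local and global injectivity results together with the sentence ``Collecting the regularity that comes from the energy bounds lead to an important (locally) uniform estimate on the $C^{1,\alpha}$ regularity of the fluid domains.'' Your proposal is correct and supplies exactly the details the paper suppresses: the uniform $C^{1,\alpha}$ bound on $\eta$ via S4 and Morrey, the uniform local inverse via S2, the construction of charts on $\eta(M)$ and on $\partial\Omega$, and the crucial use of the local injectivity lemma together with \autoref{prop:shortInjectivity} to obtain a uniform chart radius $r$. This is the intended argument, so there is nothing to compare.
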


\subsubsection*{Global velocity and a global Korn inequality}

A particular useful tool when dealing with fluid structure interaction in the bulk is the global Eulerian velocity field, which is defined on the unchanging domain $\Omega$. In particular this allows us to circumvent the problem of talking about convergence on a changing domain.

\begin{definition}[The global velocity field] \label{def:globalVelocity}
 Let $\eta \in \mathcal{E}$ be a given deformation with $E(\eta) = 0$. Let $v\in W^{1,2}(\Omega;\R^n)$ be a divergence free fluid velocity and $b \in W^{1,2}(Q;\R^n)$ a solid velocity. Furthermore assume the usual coupling condition, $v \circ \eta = b$ on $\partial Q \setminus P$. Then the corresponding global velocity $u \in W^{1,2}_0(\Omega;\R^n)$ is defined by
 \begin{align*}
  u(y) := \begin{cases}
           v(y) &\text{ if } y\in \Omega_\eta := \Omega \setminus \eta(Q) \\
           b \circ \eta^{-1}(y) &\text{ if } y \in \eta(Q).
          \end{cases}
 \end{align*}
 \end{definition}
 Note that this definition does not involve a reference time scale directly. The solid velocity $b$ is equally allowed to be a time derivative $b:=\partial_t \eta$ or a discrete derivative $b:=\frac{\eta_{k+1}-\eta_k}{\tau}$. Furthermore this definition is invertible. Given $u$ and knowing $\eta$, both $v$ and $b$ can be reconstructed and those reconstructed velocities will satisfy the coupling condition.

The only bounds on the velocities that will be available to us are in form of a bounded dissipation. As this dissipation is given in form of a  symmetrised derivative, we will need to use a Korn-inequality, the constants of which generally depend on the domain. However, another benefit of the global velocity and its constant domain is that the Korn-inequalities for solid and fluid can be merged into a global Korn-inequality.

\begin{lemma}[Global Korn inequality]
\label{lem:globalKorn}
Fix $E_0 > 0$. Then there exists $c_{gK} = c_{gK}(E_0) > 0$ such that for any $\eta \in \mathcal{E}$ with $E(\eta) < E_0$ and any $b \in W^{1,2}(Q;\R^n)$ and $u \in W^{1,2}(\Omega;\R^n)$ satisfying the coupling condition 
\[u \circ \eta = b \text{ in Q}\]
and with boundary data $u|_{\partial \Omega} = 0$ and $b|_{P} = 0$, we have
\[
c_{gK} \norm[W^{1,2}(\Omega)]{u}\leq \frac{\nu}{2} \norm[\Omega_{\eta}]{\nablasym u} + R(\eta,b).
\]
where we define $\Omega_\eta = \Omega \setminus \eta(Q)$. In particular this implies a bound on $\norm[W^{1,2}(\Omega_\eta)]{v} \leq \norm[W^{1,2}(\Omega)]{u}$.
\end{lemma}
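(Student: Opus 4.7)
The plan is to reduce the mixed fluid-solid estimate to a single Korn--Poincar\'e inequality on the \emph{fixed} container $\Omega$. The coupling condition $u\circ\eta = b$ on $Q$, combined with $u|_{\partial\Omega}=0$ and $b|_P=0$, guarantees that the global field from \autoref{def:globalVelocity} lies in $W^{1,2}_0(\Omega;\R^n)$. Hence the classical Korn--Poincar\'e inequality on the fixed Lipschitz domain $\Omega$ yields a constant $C_\Omega$, depending only on $\Omega$, with
\[
\|u\|_{W^{1,2}(\Omega)}^2 \leq C_\Omega \int_\Omega |\nablasym u|^2\, dy.
\]
Since $\eta\in\mathcal{E}$ satisfies the Ciarlet--Ne\v{c}as condition, the sets $\Omega_\eta$ and $\eta(Q)$ are essentially disjoint and cover $\Omega$, so this integral splits as $\|\nablasym u\|_{\Omega_\eta}^2 + \|\nablasym u\|_{\eta(Q)}^2$.

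Next I would control the solid contribution by $R(\eta,b)$ via a change of variables. By S4 the bound $E(\eta)\leq E_0$ forces $\eta$ to be uniformly bounded in $W^{2,q}(Q)$, and since $q>n$ the embedding $W^{2,q}\hookrightarrow C^{1,\alpha}$ gives a uniform $L^\infty$ bound on $\nabla\eta$; S2 gives a uniform positive lower bound $\det\nabla\eta \geq \epsilon_0(E_0)$. Consequently $\eta$ is a bi-Lipschitz diffeomorphism onto its image with bi-Lipschitz constants depending only on $E_0$. Differentiating $u\circ\eta = b$ yields $\nabla u\circ\eta = \nabla b\,(\nabla\eta)^{-1}$, and changing variables $y=\eta(x)$ gives
\[
\int_{\eta(Q)} |\nablasym u(y)|^2\, dy \leq C(E_0)\int_Q |\nabla b|^2\, dx \leq \frac{C(E_0)}{c_K}\, R(\eta,b),
\]
where the final inequality uses the Korn-type estimate R3 (applicable because $b|_P=0$).

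Combining the three displays produces $\|u\|_{W^{1,2}(\Omega)}^2 \leq C'(\|\nablasym u\|_{\Omega_\eta}^2 + R(\eta,b))$ with $C'$ depending only on $E_0$ and on the fixed data $\Omega,\nu,c_K$. Taking square roots and applying $\sqrt{a+b}\leq \sqrt{a}+\sqrt{b}$ together with a Young-type estimate of the form $\sqrt{R}\leq \tfrac{1}{2}(1+R)$ then produces the stated linear-quadratic form of the inequality, after absorbing the scaling factor $\nu/2$ into $c_{gK}$. The final bound $\norm[W^{1,2}(\Omega_\eta)]{v}\leq \norm[W^{1,2}(\Omega)]{u}$ is immediate from $v=u|_{\Omega_\eta}$.

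The main obstacle I foresee is verifying the uniformity of the change-of-variables constant $C(E_0)$ in the middle step. This however is an immediate consequence of \autoref{ass:energy}: S4 provides the $W^{2,q}$-boundedness of $\eta$, S2 keeps the Jacobian uniformly away from zero, and the subcritical integrability $q>n$ upgrades this to uniform control of $\nabla\eta$ and $(\nabla\eta)^{-1}$ in $L^\infty$. Crucially, no delicate compactness or geometric argument about the potentially degenerating fluid domain $\Omega_\eta$ is required, precisely because the Korn--Poincar\'e inequality is applied on the fixed container $\Omega$ and not on $\Omega_\eta$ itself.
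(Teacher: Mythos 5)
Your proof takes essentially the same route as the paper: apply the classical Korn--Poincar\'e inequality on the \emph{fixed} container $\Omega$ to the global field $u\in W^{1,2}_0(\Omega)$, split $\int_\Omega|\nablasym u|^2$ into fluid and solid parts, change variables on the solid part using the uniform $C^1$/determinant bounds from S2 and S4, and invoke R3. One small remark: your final manoeuvre with $\sqrt{a+b}\le\sqrt a+\sqrt b$ and $\sqrt R\le\tfrac12(1+R)$ introduces an additive constant that cannot be absorbed into $c_{gK}$ (consider $u\equiv0$); in fact the inequality that both your argument and the paper's actually establish, and which is used downstream, is the dimensionally consistent squared form $c_{gK}\norm[W^{1,2}(\Omega)]{u}^2\le \tfrac{\nu}{2}\norm[\Omega_\eta]{\nablasym u}^2+R(\eta,b)$, so that last step should simply be dropped.
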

\begin{proof} 
On the reference domain $Q$ we have per chain rule $\nabla b = \nabla (u\circ \eta) = (\nabla u) \circ \eta \cdot \nabla \eta$. Using this, we can estimate in analogy to \autoref{prop:W2qIsomorph}, as $\eta$ is a diffeomorphism:
\begin{align*}
 \int_{\Omega \setminus \Omega_\eta} \abs{\nabla u}^2 dy &= \int_Q \abs{(\nabla u) \circ \eta}^2 \det \nabla \eta dx = \int_Q \abs{(\nabla b) \cdot (\nabla \eta)^{-1}}^2 \det \nabla \eta dx \\
 \leq \int_Q \abs{\nabla b}^2 \frac{\abs{\cof \nabla \eta}^2}{\det \nabla \eta} dx &\leq \frac{\norm[C^1]{\eta}^{2n-2}}{\epsilon_0} \int_Q \abs{\nabla b}^2 dx \leq \frac{\norm[C^1]{\eta}^{2n-2}}{\epsilon_0} c_K R(\eta,b)
\end{align*}
where $\epsilon_0 > 0$ is the uniform lower bound on $\det \nabla \eta$ as given in Assumption S2, $\norm[C^1]{\eta}$ is uniformly bounded by embeddings and we use the Korn-type inequality from Assumption R3.

But now we can apply Korn's inequality to the fixed domain $\Omega$ to get a constant $c_\Omega$, for which
\begin{align*}
 c_\Omega \norm[W^{1,2}(\Omega)]{u}^2 &\leq \norm[\Omega]{\nablasym u}^2 = \norm[\Omega_\eta]{\nablasym u}^2 + \norm[\Omega \setminus \Omega_\eta]{\nablasym u}^2 
  \\&\leq \norm[\Omega_\eta]{\nablasym u}^2 + \frac{\norm[C^1]{\eta}^{2n-2}}{\epsilon_0} c_K R(\eta,b).
\end{align*}
Collecting all the constants then proves the lemma.
\end{proof}

\subsection{Proof of \autoref{thm:QSexistence}} 
\label{subsec:QSproof}

As mentioned before, we will show \autoref{thm:QSexistence} in several steps using a time-discretisation in the form of a minimizing movements iteration. 

\subsubsection*{Step 1: Existence of the discrete approximation}

For this we will fix a time-step size $\tau$. Setting $\eta_0^{(\tau)}:=\eta_0$ and assuming $\eta^{(\tau)}_k \in \mathcal{E}$ given we define $(\eta^{(\tau)}_{k+1},v^{(\tau)}_{k+1})$ as solutions to the following problem
\begin{align} \label{eq:QSdiscreteProblem} & \text{Minimize} &  &E(\eta) + \tau R\left(\eta_k^{(\tau)},\frac{\eta-\eta_k^{(\tau)}}{\tau}\right) + \tau \frac{\nu}{2} \norm[\Omega_k^{(\tau)}]{\nablasym v}^2  \\ \nonumber & & & \phantom{E(\eta)} -\rho_s \tau\inner[Q]{f\circ\eta_k^{(\tau)}}{\frac{\eta-\eta_k^{(\tau)}}{\tau}} - \rho_f \tau\inner[\Omega_k^{(\tau)}]{f}{v} \\
& \text{subject to } & &\eta \in \mathcal{E}, v \in W^{1,2}(\Omega_k^{(\tau)};\R^n) \text{ with } \diver v = 0, v|_{\partial \Omega} = 0 \nonumber \\ & & &\text{ and }\frac{\eta-\eta_k^{(\tau)}}{\tau} = v \circ \eta_k^{(\tau)} \text{ in } M.\footnotemark \nonumber
\end{align}
\footnotetext{In this part of the proof we will generally assume that the solid is free of collisions, i.e.\ $\eta_k \notin \partial \mathcal{E}$ or in other words that $\eta_k|_{M}$ is injective and does not map to $\partial \Omega$. We will show in \autoref{cor:QSshortTimeNoCollision} that for small enough $T$ this will always be the case. In principle though, it is possible to extend the coupling condition to all situations by using the global velocity field $u:\Omega \to \R^n$ instead.}

We then repeat this process until we reach $k\tau > T$. 

We will now show that this problem has a (not necessarily unique) solution and that the sought minimizer satisfies an Euler-Lagrange equation which already is a discrete approximation of our problem.

\begin{remark}
In \eqref{eq:QSdiscreteProblem} we minimize over the sum of the energy and the dissipation needed to reach the current step from the last one. It should be noted that the scheme is thus implicit in the deformation (what we would consider the single \emph{state variable} of the system) at which the energy is evaluated, while it is in a sense explicit in the dissipation. Here we understand both the solid dissipation and the Stokes potential as dissipative terms and both the change in deformation as well as the velocity as distinct, secondary \emph{rate variables} which can be recovered from knowing successive states.%

The implicit and explicit use of state can be motivated from the point of view of the Euler-Lagrange equations; as they are exactly the Fr\'echet derivatives with respect to the rate variables that appear there. Explicit-implicit schemes are commonly used in fluid-structure interactions~(see e.g.\ \cite{muhaExistenceWeakSolution2013}). Moreover, it is the common way to produce solutions in solid mechanics if the dissipation depends on the state variables \cite{kruzik2019mathematical}. 

For us however there is another important point to this, as the correct coupling of the fluid and solid variables is crucial in fluid-structure interaction. Within the proposed variational approach, it is important that we impose a coupling condition as equality of approximate velocities also in an \emph{implicit-explicit} fashion; i.e.\ we keep the geometry explicit while the rates are implicit. An equality of tractions then needs not be imposed but follows automatically from the variational approach. 
\end{remark}

\begin{proposition}[Existence of solutions to \eqref{eq:QSdiscreteProblem}]\label{prop:QSdiscreteELequation} 
Assume that $\eta^{(\tau)}_k \in \mathcal{E}$ with $E(\eta^{(\tau)}_k)<\infty$. Then the iterative problem \eqref{eq:QSdiscreteProblem} has a minimizer, i.e.\ $\eta^{(\tau)}_{k+1}$ and $v^{(\tau)}_{k+1}$ are defined. Furthermore if $\eta^{(\tau)}_{k+1} \notin \partial \mathcal{E}$ (i.e. $\eta^{(\tau)}_{k+1}$ is injective on $\overline{Q}$) the minimizers obey the following Euler-Lagrange equation:
 \begin{align*}
&\phantom{{}={}}\inner{DE(\eta^{(\tau)}_{k+1})}{\phi} + \inner{D_2R\left(\eta^{(\tau)}_{k},\frac{\eta^{(\tau)}_{k+1}-\eta^{(\tau)}_{k}}{\tau}\right)}{\phi}+ \nu\inner[\Omega_{k}^{(\tau)}]{\nablasym v^{(\tau)}_{k+1}}{\nabla \xi} \\
&= \rho_f\inner[\Omega_{k}^{(\tau)}]{f}{\xi} + \rho_s\inner[Q]{f\circ \eta^{(\tau)}_k}{\phi}.
\end{align*}
 for any $\phi \in W^{2,q}(Q;\R^n)$, $\phi|_P = 0$ and $\xi \in W^{1,2}(\Omega_k^{(\tau)};\R^n)$, $\diver \xi = 0$, $\xi|_{\partial \Omega} = 0$ such that $\xi \circ \eta = \phi$ in $\partial M$.
\end{proposition}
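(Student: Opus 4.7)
My plan is to attack the two assertions in turn, with the direct method of the calculus of variations handling existence and a carefully constrained variational perturbation yielding the Euler--Lagrange system. Both pieces rely crucially on the variational structure of \eqref{eq:QSdiscreteProblem} and on the preliminary material of \autoref{subsec:QSproperties}; in particular the non-convexity of $\mathcal{E}$ makes closedness (\autoref{lem:calEclosed}) rather than convexity the key compactness ingredient, and the global Korn inequality (\autoref{lem:globalKorn}) is what ties the two velocity variables together coercively.

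\textbf{Existence of a minimizer.}
First I would verify that the admissible set is non-empty by choosing the trivial competitor $(\eta_k^{(\tau)},0)$, whose value is finite because $E(\eta_k^{(\tau)})<\infty$. Next I would show that the functional $J(\eta,v)$ in \eqref{eq:QSdiscreteProblem} is bounded below and coercive on the admissible set: S1 gives $E\geq E_{\min}$, R2 gives $R\geq 0$, and the Stokes potential is non-negative, so only the force terms need to be absorbed. Applying Cauchy--Schwarz together with R3 and \autoref{lem:globalKorn} to the global velocity field associated to $v$ and $b=(\eta-\eta_k^{(\tau)})/\tau$ (\autoref{def:globalVelocity}), a Young inequality yields
\[
\rho_s\tau\inner[Q]{f\circ\eta_k^{(\tau)}}{b}+\rho_f\tau\inner[\Omega_k^{(\tau)}]{f}{v}\;\leq\;\tfrac12\big(\tau R(\eta_k^{(\tau)},b)+\tau\tfrac{\nu}{2}\norm[\Omega_k^{(\tau)}]{\nablasym v}^2\big)+C(f,\tau).
\]
Combined with S4 this gives uniform bounds on a minimizing sequence $(\eta_j,v_j)$ in $W^{2,q}(Q;\R^n)\times W^{1,2}(\Omega_k^{(\tau)};\R^n)$. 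Passing to a subsequence, I get $\eta_j\rightharpoonup\eta$ in $W^{2,q}$ (hence strongly in $C^{1,\alpha}$ by Rellich, since $q>n$) and $v_j\rightharpoonup v$ in $W^{1,2}$. Admissibility of the limit then follows term by term: $\eta\in\mathcal{E}$ by \autoref{lem:calEclosed}, the Dirichlet condition on $P$ and the incompressibility/no-slip conditions pass to the weak limit, and the coupling equality $v_j\circ\eta_k^{(\tau)}=(\eta_j-\eta_k^{(\tau)})/\tau$ on $M$ survives because $\eta_k^{(\tau)}$ is fixed, so left-composition is a continuous linear operation on traces, while the right-hand side converges strongly in $W^{1,2}(Q)$. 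Lower semicontinuity of $J$ comes from S3 for $E$, R1 for $R$, convexity of $v\mapsto\tfrac12\norm[\Omega_k^{(\tau)}]{\nablasym v}^2$, and strong convergence (via compact embeddings) in the linear force terms.

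\textbf{Euler--Lagrange equation.}
Under the hypothesis $\eta_{k+1}^{(\tau)}\in\operatorname{int}(\mathcal{E})$, I would produce admissible first variations as follows. Given test pairs $(\phi,\xi)$ satisfying $\phi|_P=0$, $\diver\xi=0$, $\xi|_{\partial\Omega}=0$ and the linearised coupling $\phi=\xi\circ\eta_k^{(\tau)}$ on $M$, consider the curve $\epsilon\mapsto(\eta_{k+1}^{(\tau)}+\epsilon\tau\phi,\,v_{k+1}^{(\tau)}+\epsilon\xi)$. By the interior hypothesis (which preserves the Ciarlet--Ne\v{c}as condition and keeps $\det\nabla\eta$ bounded below) this curve stays in the admissible set for $|\epsilon|$ small, the coupling constraint is preserved because both sides were affine in $\epsilon$, and the boundary conditions are manifestly preserved. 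Differentiating $J$ at $\epsilon=0$ using S5 for $DE$, R4 combined with R2 for $D_2R$, linearity of the forcing and the standard symmetric-gradient identity, then dividing through by $\tau$, produces precisely the stated Euler--Lagrange equation (noting $\inner{\nablasym v}{\nabla\xi}=\inner{\nablasym v}{\nablasym\xi}$ by symmetry).

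\textbf{Main obstacle.}
The genuine difficulty I anticipate is not the routine weak-compactness argument but the handling of the coupling constraint throughout. On the existence side, showing that the trace identity $v\circ\eta_k^{(\tau)}=(\eta-\eta_k^{(\tau)})/\tau$ on $M$ is closed under the available convergence requires the fact that $\eta_k^{(\tau)}$ is a fixed $C^{1,\alpha}$-diffeomorphism onto its image with $\det\nabla\eta_k^{(\tau)}$ bounded below, which is in turn guaranteed by S2 together with S4; without this one would have to cope simultaneously with a moving interface and a linking condition. On the Euler--Lagrange side, the subtlety is that variations are forced to be paired: one cannot vary $\eta$ and $v$ independently, and the non-convex, geometrically constrained nature of $\mathcal{E}$ is precisely what makes the interior hypothesis $\eta_{k+1}^{(\tau)}\notin\partial\mathcal{E}$ indispensable for having enough admissible directions to obtain the full equation rather than a variational inequality.
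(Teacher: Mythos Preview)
Your proposal is correct and follows essentially the same strategy as the paper: the direct method for existence, and a paired perturbation $(\eta_{k+1}^{(\tau)}+\varepsilon\phi,\,v_{k+1}^{(\tau)}+\varepsilon\xi/\tau)$ for the Euler--Lagrange equation, with the interior hypothesis $\eta_{k+1}^{(\tau)}\notin\partial\mathcal{E}$ providing enough room to vary. The one point where you diverge is in passing to the limit in the fluid velocity: you extract a weak $W^{1,2}$-limit of $v_j$ directly and verify that the coupling constraint survives via continuity of the trace and of composition with the fixed diffeomorphism $\eta_k^{(\tau)}$, whereas the paper instead assumes without loss of generality that each $\tilde{v}_l$ is the Stokes minimizer for the fixed boundary datum $g_l=(\tilde{\eta}_l-\eta_k^{(\tau)})\circ(\eta_k^{(\tau)})^{-1}/\tau$ and then invokes continuity of the Stokes solution operator in $L^2(\partial\Omega_k^{(\tau)})\to W^{1,2}(\Omega_k^{(\tau)})$ to obtain \emph{strong} $W^{1,2}$-convergence of $v_l$. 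Your route is more elementary and avoids quoting Stokes theory, at the cost of having to argue that the trace identity on $M$ is weakly closed; the paper's route buys strong convergence of $v$, which is not strictly needed here (weak lower semicontinuity of the convex Stokes potential suffices) but makes the closure of the constraint immediate.
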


\begin{proof} First we investigate existence using the direct method. The class of admissible functions is non-empty, since $(\eta_k^{(\tau)},0)$ is a possible competitor with finite energy. Next we show that the functional is bounded from below. As energy and dissipation have lower bounds per assumption, the only problematic terms are those involving the force $f$. For those we note that per the weighted Young's inequality and using Assumption R3 it holds that
\begin{align*}
 \abs{\inner[Q]{f\circ \eta_k^{(\tau)}}{\frac{\eta-\eta_k^{(\tau)}}{\tau}}} &\leq \frac{\delta}{2} \norm[Q]{\frac{\eta-\eta_k^{(\tau)}}{\tau}}^2 + \frac{1}{2\delta} \norm[Q]{f \circ \eta_k^{(\tau)}}^2 \\
 &\leq \frac{\delta}{2 c_K} R\left(\eta_k^{(\tau)},\frac{\eta-\eta_k^{(\tau)}}{\tau}\right) + \frac{1}{2\delta} \norm[Q]{f \circ \eta_k^{(\tau)}}^2
\end{align*}
and equally, using \autoref{lem:globalKorn}
\begin{align*}
 \abs{\inner[\Omega_k^{(\tau)}]{f}{v}} \leq \frac{\delta}{2} \norm[\Omega_k^{(\tau)}]{v}^2 + \frac{1}{2\delta} \norm[\Omega_k^{(\tau)}]{f}^2 \leq \frac{\delta}{2c_{gK}} \left( \norm[\Omega_k^{(\tau)}]{\nablasym v}^2 +  R\left(\eta_k^{(\tau)},\frac{\eta-\eta_k^{(\tau)}}{\tau}\right) \right)+ \frac{1}{2\delta} \norm[\Omega_k^{(\tau)}]{f}^2.
\end{align*}
Now if we choose $\delta$ small enough, e.g. $\delta := \frac{\min(c_K,c_{gK})}{2}$, all $v$ and $\eta$-dependent terms can be absorbed to get the lower bound
\begin{align} \label{eq:QSlowerBound}
&\phantom{{}={}} E(\eta) +\tau R\left(\eta_k,\frac{\eta-\eta_k^{(\tau)}}{\tau}\right) + \tau\frac{\nu}{2} \norm[\Omega_k^{(\tau)}]{\nablasym v}^2 - \rho_f \tau\inner[\Omega_k^{(\tau)}]{f}{v} -\rho_s \tau\inner[Q]{f\circ\eta_k^{(\tau)}}{\frac{\eta-\eta_k^{(\tau)}}{\tau}}\\ \nonumber
 &\geq E(\eta) + \frac{\tau}{2} R\left(\eta_k^{(\tau)},\frac{\eta-\eta_k^{(\tau)}}{\tau}\right) + \tau\frac{\nu}{4} \norm[\Omega_k^{(\tau)}]{\nablasym v}^2 - \tau\frac{\max(\rho_f,\rho_s)}{2\delta} \left(\norm[Q]{f\circ \eta_k^{(\tau)}}^2 + \norm[\Omega_k^{(\tau)}]{f}^2 \right)\\ \nonumber
 &\geq E_{\min} - \tau\frac{\max(\rho_f,\rho_s)}{2\delta} \left(\norm[Q]{f\circ \eta_k^{(\tau)}}^2 + \norm[\Omega_k^{(\tau)}]{f}^2 \right)
\end{align}

 Thus a minimizing sequence $\tilde{\eta}_l, \tilde{v}_l$ exists and along that sequence, energy and dissipation are bounded. So by coercivity of the energy we know that $\tilde{\eta_l}$ is bounded in $W^{2,q}(Q;\Omega)$ and using the usual compactness theorems we may extract a subsequence (not relabeled) and a limit $\eta$ for which
 \begin{align*}
  \tilde{\eta}_l &\rightharpoonup \eta &&\text{ in } W^{2,q}(Q;\Omega) \\
  \tilde{\eta}_l &\to \eta &&\text{ in } C^{1,\alpha^-}(Q;\Omega) \text{ for  $0< \alpha^-< \alpha := 1-\frac{n}{q}$.}
 \end{align*}
 By \autoref{lem:calEclosed} we know that $\eta \in \mathcal{E}$. We also know that $E$ and $R$ are lower semicontinuous with respect to the above convergence by Assumptions S3 and R1 respectively.
 
 Next we pass to the limit with the fluid velocity. With no loss of generality, we may assume $\tilde{v}_l$ to be the minimizer of the functional in \eqref{eq:QSdiscreteProblem} holding the deformation $\tilde{\eta}_l$ fixed. As the functional in \eqref{eq:QSdiscreteProblem} is convex with respect to the velocity, minimizing is \emph{equivalent} to solving the appropriate Euler-Lagrange equation, in other words, it is equivalent to finding a weak solution to the following classical Stokes boundary value problem: %
 \begin{align*}
  \begin{cases}
   -\nu \Delta \tilde{v}_l + \nabla p = \rho_f f & \text{ in } \Omega_k^{(\tau)} \\
   \diver \tilde{v}_l = 0 & \text{ in } \Omega_k^{(\tau)}\\
   \tilde{v}_l =  g_l := \frac{(\tilde{\eta}_l - \eta_k^{(\tau)})\circ (\eta_{k}^{(\tau)})^{-1} }{\tau} & \text{ in } \partial \Omega_k^{(\tau)} \cap \partial \eta_k^{(\tau)}(Q) \\
   \tilde{v}_l = 0 &\text{ in } \partial \Omega
  \end{cases}
 \end{align*}

 Now since $\eta_k$ is a fixed diffeomorphism, and $\tilde{\eta}_l$ converges uniformly the boundary data $g_l$ in this problem converges uniformly to $g := \frac{(\eta-\eta_k^{(\tau)}) \circ (\eta_k^{(\tau)})^{-1}}{\tau}$ as well. Furthermore, the solution operator $L^2(\partial \Omega_k^{(\tau)};\R^n) \to W^{1,2}(\Omega;\R^n)$ associated with this boundary value problem is continuous, which implies the existence of a limit $v \in W^{1,2}(\Omega_k^{(\tau)};\R^n)$ with $\tilde{v}_l \to v$ in $W^{1,2}(\Omega_k^{(\tau)};\R^n)$. Then per construction $\eta,v$ satisfy the compatibility condition and since $\norm[\Omega_k^{(\tau)}]{\nablasym v}$ is lower semicontinuous and all terms involving $f$ are continous, the pair $\eta,v$ is indeed a minimizer to the problem.
 
 Next let us derive the Euler-Lagrange equation. Let $(\eta_{k+1}^{(\tau)},v_{k+1}^{(\tau)})$ be a minimizer and $\phi \in C^\infty(Q;\R^n)$ as well as $\xi \in W^{1,2}(\Omega_{k}^{(\tau)};\R^n)$. We require the perturbation $(\eta_{k+1}^{(\tau)}+\varepsilon \phi,v_{k+1}^{(\tau)}+\varepsilon \xi/\tau)$ to also be admissible\footnote{The different scaling of $\phi$ and $\xi/\tau$ with respect to $\tau$ used here allows us to remove most occurrences of $\tau$ in the Euler-Lagrange equation. This does not matter as long as $\tau$ is fixed, but it turns out to be the correct scaling when we take the limit $\tau\to 0$.} for all small enough $\varepsilon$. From this we immediately get the conditions $\diver \xi = 0$, $\xi|_{\partial\Omega} = 0$, $\phi_P = 0$ and for the coupling we require
 \[\left(v_{k+1}^{(\tau)}+\varepsilon\frac{\xi}{\tau}\right) \circ \eta_k^{(\tau)} = \frac{\eta_{k+1}^{(\tau)}+\varepsilon\phi-\eta_{k}^{(\tau)}}{\tau}\]
 on $M$ which reduces to $\xi \circ \eta_k^{(\tau)} = \phi$. 

 Now since we assume $\eta^{(\tau)}_{k+1} \notin \partial \mathcal{E}$, for small enough $\varepsilon$, we have $\eta_{k+1}^{(\tau)}+\varepsilon \phi \in \mathcal{E}$.
 Thus we are allowed to take the first variation with respect to $(\phi,\xi/\tau)$ which immediately results in the weak formulation.
\end{proof}

Now let us give some a-priori estimates on the solutions derived in this way. This is possible because of the fact that we have a minimizer and not just a weak solution to the equation. 

\begin{lemma}[Parabolic a-priori estimates]\label{lem:QSdiscreteAPrioriEstimates} 
We have
 \begin{align*}
 &\phantom{{}={}} E(\eta_{k+1}^{(\tau)}) + \tau R\left(\eta_k^{(\tau)},\frac{\eta_{k+1}^{(\tau)}-\eta_k^{(\tau)}}{\tau}\right) + \tau \nu\norm[\Omega_k^{(\tau)}]{\nablasym v_{k+1}^{(\tau)}}^2 
 \\
 &\leq E(\eta_{k}^{(\tau)}) + \tau\rho_f \inner[\Omega_k]{f}{v_{k+1}^{(\tau)}} + \tau\rho_s  \inner[Q]{f\circ\eta_k^{(\tau)}}{\frac{\eta_{k+1}^{(\tau)}-\eta_k^{(\tau)}}{\tau}}
 \end{align*}

Furthermore take a number $E_0$ such that $E_0 > E(\eta_0)$. Then there exists a time $T > 0$ depending only on $E_0$ and the difference $E_0-E(\eta_0)$ as well as $\norm[L^\infty(\Omega)]{f}$, such that for all $\tau>0$, and all $N \in \N$ with $N\tau \leq T$ we have
 \begin{align*}
 E(\eta_{N}^{(\tau)}) + \frac{\tau}{2}\sum_{k=1}^{N} \left[ \frac{\nu}{2}\norm[\Omega_{k-1}^{(\tau)}]{\nablasym v_{k}^{(\tau)}}^2 +R\left(\eta_{k-1}^{(\tau)},\frac{\eta_{k}^{(\tau)}-\eta_{k-1}^{(\tau)}}{\tau}\right) \right] \leq  E_0
\end{align*}
\end{lemma}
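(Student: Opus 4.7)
For the single-step estimate, I would invoke the minimality of $(\eta_{k+1}^{(\tau)}, v_{k+1}^{(\tau)})$ in \eqref{eq:QSdiscreteProblem} by testing against the admissible competitor $(\eta_k^{(\tau)}, 0)$. This pair is admissible since $\eta_k^{(\tau)} \in \mathcal{E}$ is given, the zero velocity is divergence-free with vanishing boundary trace, and the coupling $\frac{\eta - \eta_k^{(\tau)}}{\tau} = v \circ \eta_k^{(\tau)}$ on $M$ holds trivially, both sides being zero. Evaluating the functional at this competitor collapses it to $E(\eta_k^{(\tau)})$: the dissipation vanishes by Assumption R2, the symmetric-gradient term vanishes, and both force terms carry a vanishing factor. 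Rearranging delivers the claimed discrete energy-dissipation balance.

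For the iterated estimate, I would sum the single-step inequality for $k = 0,\ldots,N-1$, telescope the energy contributions, and then absorb the accumulated force-work. This produces
\begin{align*}
E(\eta_N^{(\tau)}) + \sum_{k=0}^{N-1}\tau\Bigl[ R\bigl(\eta_k^{(\tau)}, \tfrac{\eta_{k+1}^{(\tau)}-\eta_k^{(\tau)}}{\tau}\bigr) + \tfrac{\nu}{2}\norm[\Omega_k^{(\tau)}]{\nablasym v_{k+1}^{(\tau)}}^2\Bigr] \leq E(\eta_0) + \sum_{k=0}^{N-1}\tau\, W_k,
\end{align*}
where each $W_k$ collects the two force-work terms. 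Exactly as in the lower bound \eqref{eq:QSlowerBound}, I would split each $W_k$ by weighted Young, then control $\norm[W^{1,2}(Q)]{(\eta_{k+1}^{(\tau)}-\eta_k^{(\tau)})/\tau}^2$ by $c_K(E_0)^{-1} R$ via Assumption R3, and $\norm[W^{1,2}(\Omega_k^{(\tau)})]{v_{k+1}^{(\tau)}}^2$ via the global Korn inequality \autoref{lem:globalKorn}. Choosing the Young parameter $\delta := \tfrac12 \min(c_K(E_0), c_{gK}(E_0))$ allows half of each dissipation and Stokes contribution to absorb the forcing, leaving a residual bounded by $C(E_0)\,\norm[L^\infty(\Omega)]{f}^2\,(|Q|+|\Omega|)\cdot N\tau$.

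The main obstacle is that the Korn constants $c_K(E_0)$ and $c_{gK}(E_0)$ are only available while the discrete energies genuinely stay below $E_0$, so the absorption step is circular unless one tracks this uniformly. I would therefore run the argument as an induction on $N$ with the invariant $E(\eta_j^{(\tau)})\leq E_0$ for all $j\leq N$. Setting
\begin{align*}
T := \frac{E_0 - E(\eta_0)}{C(E_0)\,\norm[L^\infty(\Omega)]{f}^2\,(|Q|+|\Omega|)},
\end{align*}
the accumulated forcing stays below $E_0 - E(\eta_0)$ as soon as $N\tau \leq T$, which closes the induction and yields the claimed uniform-in-$\tau$ a priori bound. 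Note that both parameters in the definition of $T$ depend only on $E_0$, $E_0 - E(\eta_0)$, and $\norm[L^\infty(\Omega)]{f}$, as required.
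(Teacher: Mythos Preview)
Your proposal is correct and follows essentially the same approach as the paper: compare the minimizer with the admissible competitor $(\eta_k^{(\tau)},0)$ for the single-step estimate, then telescope, absorb the force terms via weighted Young together with the Korn-type inequalities (R3 and \autoref{lem:globalKorn}), and close the circularity of the $E_0$-dependent Korn constants by an induction on $N$ with the invariant $E(\eta_j^{(\tau)})\le E_0$. The paper organizes the absorption at the single-step level (via \eqref{eq:QSlowerBound} leading to \eqref{eq:QSsingleStepLower}) before summing, whereas you sum first and absorb afterwards, but this is only a cosmetic reordering.
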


\begin{proof}[Proof of \autoref{lem:QSdiscreteAPrioriEstimates}]
 As before, for fixed $k$, we may compare the value of the cost functional in \eqref{eq:QSdiscreteProblem}  at the minimizer with its value for the pair $(\eta_{k}^{(\tau)},0)$. As $R(\eta_k^{(\tau)},0) = 0$ and the terms involving $v$ vanish for $v=0$, the comparison yields the first line.
 
 Now we proceed by induction over $N$. Assume that $E(\eta_{N-1}^{(\tau)}) \leq E_0$ and let $c_{gK}$ be the Korn-constant corresponding to $E_0$ from \autoref{lem:globalKorn}. Using \eqref{eq:QSlowerBound} again, we end up with
 \begin{align} \label{eq:QSsingleStepLower}
 &\phantom{{}={}} E(\eta_{k+1}^{(\tau)}) + \frac{\tau}{2} R\left(\eta_k^{(\tau)},\frac{\eta_{k+1}^{(\tau)}-\eta_k^{(\tau)}}{\tau}\right) + \tau\frac{\nu}{4} \norm[\Omega_k^{(\tau)}]{\nablasym v^{(\tau)}_k}^2 \\
  & \nonumber \leq E(\eta_k^{(\tau)}) + \tau \frac{1}{2\delta} \left(\norm[Q]{f \circ \eta_k^{(\tau)}}^2 + \norm[\Omega_k^{(\tau)}]{f}^2\right)  \leq E(\eta_k^{(\tau)}) + \tau \frac{\max(\rho_f,\rho_s)}{2\delta} \norm[L^\infty(\Omega)]{f }^2
 \end{align}
 where for all $k\in \{1,...,N\}$ the $\delta$ does only depend on $c_{gK}$ and $c_K$ and thus only on $E_0$.

Hence we may sum this estimate over $k$, yielding
 \begin{align*}
  &\phantom{{}={}} E(\eta_{N}^{(\tau)}) + \frac{\tau}{2}\sum_{l=1}^{N} \left[ \frac{\nu}{2}\norm[\Omega_{k-1}^{(\tau)}]{\nablasym v_{k}^{(\tau)}}^2 +R\left(\eta_{k-1}^{(\tau)},\frac{\eta_{k}^{(\tau)}-\eta_{k-1}^{(\tau)}}{\tau}\right) \right] \\
  \nonumber &\leq E(\eta_0) + N \tau \frac{\max(\rho_f,\rho_s)}{2\delta} \norm[L^\infty(\Omega)]{f}^2 \leq E_0
 \end{align*}
 assuming that $N\tau \leq T$ for $T>0$ given by $\frac{T \max(\rho_f,\rho_s)}{2\delta} \norm[L^\infty(\Omega)]{f}^2 = E_0 - E(\eta_0)$. But then in particular $E(\eta_{N}^{(\tau)}) \leq E_0$ and we can continue the induction until $N\tau$ reaches $T$.
\end{proof}

\begin{remark} 
Clearly, the maximal length of the time-interval on which the a-priori estimates are true depends on the choice of $E_0$ and could be, thus, optimized. However, we do not enter this investigation here, since, later we may prolong the solution to the maximal existence time.

Let us also mention that a slightly better single-step estimate could have been gotten by comparing with $(\eta_{k},\tilde{v})$, where $\tilde{v}$ is the minimizer of $\frac{\nu}{2}\norm{\nablasym v}^2+\inner{f}{v}$ under $\diver v=0$ and $0$-boundary conditions. 
\end{remark}

\subsubsection*{Step 2: Time-continuous approximations and their properties}

Now as a next step, we use these iterative solutions to construct approximations of the continuous problem. At this point, we will completely switch over to the global velocity $u$. We will also approximate the deformation $\eta$ in two different ways, a piecewise constant approximation, which we will need to keep track of the fluid-domain, and a piecewise affine approximation, which will give us the correct time derivative $\partial_t \eta$. To be more precise, we define:

\begin{definition}[Discrete parabolic approximation]
 For some $E_0 > 0$ fix $T>0$ as given by \autoref{lem:QSdiscreteAPrioriEstimates}. We now define the piecewise constant $\tau$-approximation as
 \begin{align*}
  \eta^{(\tau)}(t,x) &:= \eta_k^{(\tau)}(x) &\text{ for }& t\in [\tau k,\tau (k+1)), x\in Q \\
  u^{(\tau)}(t,y) &:= v_k^{(\tau)}(y) &\text{ for }& t\in [\tau k,\tau (k+1)), y\in \Omega_k^{(\tau)}\\
  u^{(\tau)}(t,y) &:= \frac{(\eta_{k+1}^{(\tau)} - \eta_{k}^{(\tau)}) \circ (\eta_k^{(\tau)})^{-1}(y)}{\tau}&\text{ for }&t\in [\tau k,\tau (k+1)), y\in \eta_{k}(\overline{Q})
  \\
  \Omega^{(\tau)}(t)&:=\Omega_k^{(\tau)} &\text{ for }&t\in [\tau k,\tau (k+1))
  \end{align*}
  where $(\eta^{(\tau)}_k,v^{(\tau)}_k)$ is the iterative solution for timestep $\tau$. We also define the affine approximation for $\eta$ as
  \begin{align*}
  \tilde{\eta}^{(\tau)}(t,.) &:= ((k+1)-t/\tau)\eta_k^{(\tau)} - (t/\tau-k)\eta_{k+1}^{(\tau)} &\text{ for }& t\in [\tau k,\tau (k+1)), x\in Q.
 \end{align*}
\end{definition}
 
 Note that $\tilde{\eta}^{(\tau)}$ is Lipschitz-continuous in time, $\tilde{\eta}^{(\tau)}(k\tau) = \eta^{(\tau)}(k\tau)$ for all $k\in \{0,...,N\}$ and 
 \[\partial_t \tilde{\eta}^{(\tau)}(t) = \frac{1}{\tau} (\eta_{k+1}^{(\tau)} - \eta_k^{(\tau)}) = u^{(\tau)}(t)\circ \eta^{(\tau)}(t)\]
 for all $t \in (\tau k,\tau(k+1))$. Also from this point on, we will only work with the global velocity field $u^{(\tau)}$ as given in \autoref{def:globalVelocity}

\begin{lemma}[Basic a-priori estimates]
 For any fixed upper energy bound $E_0$ and the resulting $T$ from \autoref{lem:QSdiscreteAPrioriEstimates} there exists a constant $C$ independent of $\tau$ such that
 \begin{align*}
 E(\eta^{(\tau)}(t))  + \int_0^t R(\eta^{(\tau)}(t),\partial_t \tilde{\eta}^{(\tau)}(t)) + \frac{\nu}{2} \norm[\Omega^{(\tau)}(t)]{\nablasym u^{(\tau)}}^2 dt \leq E_0.
\end{align*}
for all $t \in[0,T]$ as well as
\begin{align*}
\sup_{t\in[0,T]}\norm[W^{2,q}(Q)]{\eta^{(\tau)}(t)} \leq C,\quad 
\int_0^T \norm[W^{1,2}(Q)]{\partial_t \tilde{\eta}^{(\tau)}}^2 dt  \leq C, \text{ and }
\int_0^T \norm[W^{1,2}(\Omega)]{u^{(\tau)}}^2 dt \leq C.
\end{align*}
\end{lemma}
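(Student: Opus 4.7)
The plan is to transfer the discrete bound of Lemma \ref{lem:QSdiscreteAPrioriEstimates} to the piecewise-constructed approximations, and then extract the three norm estimates from Assumptions S4, R3 and Lemma \ref{lem:globalKorn}. By construction, for $s \in [l\tau,(l+1)\tau)$ one has $\eta^{(\tau)}(s)=\eta_l^{(\tau)}$, $\partial_t\tilde{\eta}^{(\tau)}(s)=(\eta_{l+1}^{(\tau)}-\eta_l^{(\tau)})/\tau$, and $u^{(\tau)}(s)|_{\Omega^{(\tau)}(s)}=v_{l+1}^{(\tau)}$. Hence both $R(\eta^{(\tau)},\partial_t\tilde{\eta}^{(\tau)})$ and $\norm[\Omega^{(\tau)}(s)]{\nablasym u^{(\tau)}}^2$ are constant on each such interval and coincide with their discrete counterparts, so integration converts the $\tau$-weighted sums of Lemma \ref{lem:QSdiscreteAPrioriEstimates} into the corresponding time integrals. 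The first displayed inequality follows (up to a harmless enlargement of the constant on the right-hand side, needed only to match the prefactors $\tfrac{1}{2}$ and $\tfrac{\nu}{2}$ appearing in the two statements).

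The $W^{2,q}$-bound on $\eta^{(\tau)}(t)$ is then immediate: the energy inequality gives the pointwise estimate $E(\eta^{(\tau)}(t))\leq E_0$ uniformly in $\tau$ and $t$, and the coercivity Assumption S4 converts this into a uniform $W^{2,q}$-bound. For the solid rate, note that every iterate satisfies $\eta_k^{(\tau)}|_P=\gamma$, so $\partial_t\tilde{\eta}^{(\tau)}(t)|_P=0$, and the energy-dependent Korn-type inequality R3 applies pointwise in time with constant $c_K=c_K(E_0)$. This yields $\norm[W^{1,2}(Q)]{\partial_t\tilde{\eta}^{(\tau)}(t)}^2 \leq c_K^{-1}\, R(\eta^{(\tau)}(t),\partial_t\tilde{\eta}^{(\tau)}(t))$; integrating in time and using the first inequality on the right-hand side yields the claim.

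For the fluid velocity I would invoke the global Korn inequality of Lemma \ref{lem:globalKorn} (in its quadratic form, as actually established inside its proof) with $\eta=\eta^{(\tau)}(t)$, $b=\partial_t\tilde{\eta}^{(\tau)}(t)$ and $u=u^{(\tau)}(t)$. The required coupling $u^{(\tau)}(t)\circ\eta^{(\tau)}(t)=\partial_t\tilde{\eta}^{(\tau)}(t)$ on $M$ is built directly into the definition of $u^{(\tau)}$ via the constraint in \eqref{eq:QSdiscreteProblem}, and $c_{gK}=c_{gK}(E_0)$ is independent of $\tau$. Integrating the resulting pointwise inequality in $t \in [0,T]$ and estimating its right-hand side by the first displayed inequality controls $\int_0^T \norm[W^{1,2}(\Omega)]{u^{(\tau)}}^2\,dt$ by a multiple of $E_0$. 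No genuine obstacle arises; the whole argument is a uniform-in-$\tau$ bookkeeping exercise, and the only point worth stressing is the crucial fact that the constants produced by S4, R3 and Lemma \ref{lem:globalKorn} depend solely on the fixed energy level $E_0$, which is exactly what keeps every estimate uniform throughout the iteration.
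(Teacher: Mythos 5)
Your proof follows essentially the same route as the paper's (which is considerably terser): translate the discrete energy estimate into the piecewise-constant interpolations, then obtain the three norm bounds from S4, R3, and the global Korn inequality of \autoref{lem:globalKorn}. You are correct to flag the factor-of-$2$ mismatch between the discrete bound of \autoref{lem:QSdiscreteAPrioriEstimates} (which controls only $\tfrac{\tau}{2}\sum[\cdots]$) and the claimed continuous inequality (which bounds $\int_0^t[\cdots]$); the paper silently absorbs this into the constant, and your note about a ``harmless enlargement'' is the honest reading.
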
 

\begin{proof}
The first statement is a direct translation of \autoref{lem:QSdiscreteAPrioriEstimates} while the latter ones follow from this. In particular, since $E(\eta^{(\tau)}(t)) < E_0$ on any of its constant intervals and thus on all of $[0,T]$ its supremum is bounded. Similarly the two integral inequalities follow from the boundedness of the dissipation combined  with the Korn-inequalities R2 and \autoref{lem:globalKorn}.
\end{proof}

 \begin{lemma}[Energy and H\"{o}lder-estimates] \label{lem:QSdiscreteHoelderEstimate}
  For any fixed upper energy bound $E_0$ and the resulting $T$ from \autoref{lem:QSdiscreteAPrioriEstimates}, there exists a constant $C$ independent of $\tau<<1$ such that we have the following estimates:
  \begin{enumerate}
   \item For all $t\in[0,T]$
  \[\norm[W^{1,2}(Q)]{\eta^{(\tau)}(t)-\tilde{\eta}^{(\tau)}(t)} \leq C \sqrt{\tau}\]
   \item $E(\eta^{(\tau)}(t))$ is nearly monotone, i.e.\ for any $t>t_0$, $t,t_0\in[0,T]$ with $t-t_0 \geq \tau$ we have
  \[
   E(\eta^{(\tau)})(t) - E(\eta^{(\tau)})(t_0) \leq C(t-t_0)
  \]
  \item $\eta^{(\tau)}(t)$ is nearly H\"older-continuous in $W^{1,2}(Q)$, i.e.\ for any $t>t_0$, $t,t_0\in[0,T]$ with $t-t_0 > \tau$ we have
  \[\norm[W^{1,2}(Q)]{\eta^{(\tau)}(t)-\eta^{(\tau)}(t_0)} \leq C\sqrt{t-t_0} \]
  \end{enumerate}
 \end{lemma}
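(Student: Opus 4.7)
The plan is to derive all three estimates by combining the single-step energy comparison built into \autoref{lem:QSdiscreteAPrioriEstimates} (in particular the one-step inequality \eqref{eq:QSsingleStepLower}) with the Korn-type inequality in Assumption R3. Throughout, I fix $E_0$ and the corresponding $T$, and use $C$ to denote a constant depending only on $E_0$, $\norm[L^\infty(\Omega)]{f}$, and the data.

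\textbf{Step 1 (estimate for $\eta^{(\tau)} - \tilde{\eta}^{(\tau)}$).} On each subinterval $t \in [k\tau, (k+1)\tau)$, a direct computation gives
\[
\eta^{(\tau)}(t) - \tilde{\eta}^{(\tau)}(t) = (t/\tau - k)\bigl(\eta_{k+1}^{(\tau)} - \eta_{k}^{(\tau)}\bigr),
\]
with $\abs{t/\tau-k} \leq 1$. Thus the claim reduces to the single-step bound on $\norm[W^{1,2}(Q)]{\eta_{k+1}^{(\tau)} - \eta_{k}^{(\tau)}}$. The first line of \autoref{lem:QSdiscreteAPrioriEstimates} (or equivalently \eqref{eq:QSsingleStepLower}) gives $\tau R\bigl(\eta_k^{(\tau)}, (\eta_{k+1}^{(\tau)} - \eta_k^{(\tau)})/\tau\bigr) \leq C$, and then the Korn-type inequality R3 yields
\[
\norm[W^{1,2}(Q)]{\eta_{k+1}^{(\tau)} - \eta_k^{(\tau)}}^2 = \tau^2 \norm[W^{1,2}(Q)]{\tfrac{\eta_{k+1}^{(\tau)} - \eta_k^{(\tau)}}{\tau}}^2 \leq \frac{\tau^2}{c_K}\, R \leq C\tau,
\]
which is (1).

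\textbf{Step 2 (near monotonicity of the energy).} The one-step inequality \eqref{eq:QSsingleStepLower} gives $E(\eta_{k+1}^{(\tau)}) \leq E(\eta_{k}^{(\tau)}) + C\tau$ for every admissible index $k$. For $t_0 < t$ with $t - t_0 \geq \tau$, let $k_0, k$ be such that $t_0 \in [k_0\tau, (k_0+1)\tau)$ and $t \in [k\tau, (k+1)\tau)$; telescoping yields
\[
E(\eta^{(\tau)}(t)) - E(\eta^{(\tau)}(t_0)) = E(\eta_k^{(\tau)}) - E(\eta_{k_0}^{(\tau)}) \leq C(k - k_0)\tau \leq 2C(t-t_0),
\]
where the last step uses $(k - k_0)\tau \leq (t-t_0) + \tau \leq 2(t-t_0)$.

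\textbf{Step 3 (near H\"older continuity).} With $k, k_0$ as above I telescope and apply Cauchy--Schwarz:
\[
\norm[W^{1,2}(Q)]{\eta^{(\tau)}(t) - \eta^{(\tau)}(t_0)}^2 \leq (k-k_0)\sum_{j=k_0+1}^{k}\norm[W^{1,2}(Q)]{\eta_j^{(\tau)} - \eta_{j-1}^{(\tau)}}^2.
\]
Writing $\norm[W^{1,2}(Q)]{\eta_j^{(\tau)} - \eta_{j-1}^{(\tau)}}^2 = \tau^2\norm[W^{1,2}(Q)]{(\eta_j^{(\tau)} - \eta_{j-1}^{(\tau)})/\tau}^2$ and invoking R3 followed by the global bound on the dissipation from \autoref{lem:QSdiscreteAPrioriEstimates}, we obtain
\[
(k-k_0)\tau \sum_{j=k_0+1}^{k}\tau\norm[W^{1,2}(Q)]{\tfrac{\eta_j^{(\tau)} - \eta_{j-1}^{(\tau)}}{\tau}}^2 \leq \frac{(k-k_0)\tau}{c_K}\sum_{j=1}^{N} \tau\, R\bigl(\eta_{j-1}^{(\tau)},\tfrac{\eta_j^{(\tau)}-\eta_{j-1}^{(\tau)}}{\tau}\bigr) \leq C(k-k_0)\tau,
\]
and $(k-k_0)\tau \leq 2(t-t_0)$ as before, which gives (3).

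I do not expect a genuine obstacle here; all three estimates are essentially bookkeeping on top of the single-step energy bound of \autoref{lem:QSdiscreteAPrioriEstimates} and the Korn inequality R3. The only point requiring a bit of care is the off-by-one when $t_0, t$ lie in the interior of discretisation intervals, which is precisely why we need the hypothesis $t - t_0 \geq \tau$ in (2) and (3) to absorb the extra $\tau$ into $t-t_0$.
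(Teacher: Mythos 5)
Your proof is correct and follows essentially the same route as the paper: the single-step energy comparison \eqref{eq:QSsingleStepLower} plus the two-sided energy bound controls $\tau R(\eta_k^{(\tau)},(\eta_{k+1}^{(\tau)}-\eta_k^{(\tau)})/\tau)$, Korn's inequality R3 converts this into the $W^{1,2}$ increment bound $C\sqrt{\tau}$, and then telescoping with Cauchy--Schwarz (together with the $t-t_0\geq\tau$ hypothesis to absorb the discretisation offset) gives (2) and (3). The only cosmetic difference is in part (3), where you invoke the global dissipation bound $\sum_j\tau R\leq C$ from \autoref{lem:QSdiscreteAPrioriEstimates}, while the paper keeps the telescoping energy sum $\sum(E(\eta_k^{(\tau)})-E(\eta_{k+1}^{(\tau)}))$ and bounds it by $E_0-E_{\min}$; both are equivalent.
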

 
 \begin{proof}
 Consider the lower bound on a single step given in \eqref{eq:QSsingleStepLower}. Singling out the dissipation of the solid material, and dropping some terms with compatible sign, we get using the Korn's inequality R3
 \begin{align*}
  c_K \frac{1}{\tau} \norm[W^{1,2}(\Omega)]{\eta^{(\tau)}_{k+1}-\eta^{(\tau)}_k}^2 \leq \tau R\left(\frac{\eta^{(\tau)}_{k+1}-\eta^{(\tau)}_k}{\tau}\right) \leq 2\left(E(\eta^{(\tau)}_k) - E(\eta^{(\tau)}_{k+1})  + \tau \frac{\max(\rho_f,\rho_s)}{2\delta} \norm[\infty]{f}^2\right)
 \end{align*}
 Now as the energy is bounded uniformly from above and from below, we can derive that
 \begin{align*}
  \norm[W^{1,2}(Q)]{\eta^{(\tau)}_{k+1}-\eta^{(\tau)}_k} \leq C\sqrt{\tau}
 \end{align*}
 for some constant $C$ depending only on $E_0$ and $f$. In particular, due to the definition of  $\tilde{\eta}^{(\tau)}(t)$ iand $\eta^{(\tau)}(t)$ this implies (1).
 
 Equally, reordering the terms in a different way, we get
 \begin{align*}
  E(\eta^{(\tau)}_{k+1}) - E(\eta^{(\tau)}_k) \leq \tau \frac{\max(\rho_f,\rho_s)}{2\delta} \norm[\infty]{f}^2.
 \end{align*}
 Now fix $T \geq t>t_0\geq 0$ and let $M:= \lfloor \frac{t}{\tau} \rfloor, N:= \lfloor \frac{t_0}{\tau} \rfloor$. Summing up the inequality yields
 \begin{align*}
  E(\eta^{(\tau)}(t)) - E(\eta^{(\tau)}(t_0)) \leq \tau(M-N) \frac{\max(\rho_f,\rho_s)}{2\delta} \norm[\infty]{f}^2
 \end{align*}
 Now either $\tau \leq t-t_0 < 2\tau$, in which case $\tau(M-N) < 2\tau < 2(t-t_0)$ or $t-t_0 \geq 2\tau$ and thus $\tau(M-N) < (t-t_0)+\tau < \frac{3}{2}(t-t_0)$, so this estimate proves (2).\footnote{The lower bound on $t_0-t$ is somewhat arbitrary and is only due to the jumps in the piecewise constant approximation. As we are generally interested in $\tau\to 0$ for fixed $t,t_0$, it will not bother us much.}

 Finally we again use the first estimate and H\"{o}lder's inequality to sum up the distances:
 \begin{align*}
  &\phantom{{}={}} \norm[W^{1,2}(Q)]{\eta^{(\tau)}(t)-\eta^{(\tau)}(t_0)} \leq \sum_{k=N}^{M-1}\norm[W^{1,2}(Q)]{\eta^{(\tau)}_{k+1}-\eta^{(\tau)}_k} \\
  &\leq \sqrt{\sum_{k=N}^{M-1} \tau}\sqrt{ \sum_{k=N}^{M-1} \frac{1}{\tau} \norm[W^{1,2}(Q)]{\eta^{(\tau)}_{k+1}-\eta^{(\tau)}_k}^2}\\
  &\leq \sqrt{\tau(M-N)}\sqrt{ \sum_{k=N}^{M-1} 2\left(E(\eta^{(\tau)}_k) - E(\eta^{(\tau)}_{k+1})  + \tau \frac{\max(\rho_f,\rho_s)}{2\delta} \norm[\infty]{f}^2\right)}\\
  & \leq c\sqrt{t-t_0} \sqrt{ E(\eta^{(\tau)}(t_0))-E(\eta^{(\tau)}(t)) + (t-t_0) \frac{\max(\rho_f,\rho_s)}{2\delta} \norm[\infty]{f}^2}
 \end{align*}
 which proves (3).
 \end{proof}

A direct consequence of the last estimate is that the solid cannot move much in short time. In particular, this implies the following result on injectivity:
 
\begin{corollary}[Short-time collision exclusion] \label{cor:QSshortTimeNoCollision}
 If $\eta_0$ is injective (i.e.\ $\eta_0 \notin \partial\mathcal{E}$) then there exists $T>0$ such that for all $\tau$ small enough and all $t\in [0,T]$, the deformations $\eta^{(\tau)}(t)$ and $\tilde{\eta}^{(\tau)}(t)$ are injective (i.e.\ not in $\partial\mathcal{E}$).
\end{corollary}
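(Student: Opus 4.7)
The strategy is to show that for $T$ small enough the approximations $\eta^{(\tau)}(t)$ and $\tilde\eta^{(\tau)}(t)$ stay sufficiently close to $\eta_0$ in $L^2(Q)$ that the property of being in $\operatorname{int}(\mathcal{E})$ is preserved by means of \autoref{prop:shortInjectivity}.

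First I would fix $E_0 > E(\eta_0)$ and the associated existence time $T_0 > 0$ from \autoref{lem:QSdiscreteAPrioriEstimates}, so that the uniform $W^{2,q}$-bound and the Hölder estimates of \autoref{lem:QSdiscreteHoelderEstimate} apply on $[0,T_0]$. Since $\eta_0 \in \operatorname{int}(\mathcal{E})$, $\eta_0$ is injective on $\overline{Q}$ and $\eta_0(\overline{Q}) \cap \partial\Omega = \emptyset$. Taking $\delta_0$ as the local boundary-injectivity radius associated with $E_0$, and using compactness of $\{(x_0,x_1) \in \partial Q \times \partial Q : |x_0-x_1| \geq \delta_0\}$ together with continuity of $\eta_0$, one extracts $\varepsilon_0 > 0$ so that \eqref{eq:boundaryDistanceCond} holds for $\eta_0$.

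Next I would feed $E_0,\varepsilon_0,\delta_0$ into \autoref{prop:shortInjectivity} to obtain a threshold $\gamma_0 > 0$. Items (1) and (3) of \autoref{lem:QSdiscreteHoelderEstimate} combine (using also $\eta^{(\tau)}(t)=\eta_0$ for $t<\tau$) to give
\begin{align*}
\norm[L^2(Q)]{\eta^{(\tau)}(t)-\eta_0} + \norm[L^2(Q)]{\tilde\eta^{(\tau)}(t)-\eta_0} \leq C\bigl(\sqrt{t}+\sqrt{\tau}\bigr),
\end{align*}
so that choosing $T \leq T_0$ with $C\sqrt{T} < \gamma_0/2$ and restricting to $\tau$ sufficiently small keeps both terms below $\gamma_0$. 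Then \autoref{prop:shortInjectivity} preserves a separation of $\varepsilon_0/2$ on the coarse regime $|x_0-x_1| \geq \delta_0$, while the local boundary-injectivity lemma covers the fine regime $|x_0-x_1| < \delta_0$. Combined with the uniform lower bound $\det\nabla\eta^{(\tau)}(t) \geq \epsilon_0$ from Assumption~S2 and the Ciarlet--Nečas condition built into $\mathcal{E}$, this yields global injectivity of $\eta^{(\tau)}(t)$ on $\overline{Q}$ for all $t\in[0,T]$.

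It remains to (a) exclude contact with $\partial\Omega$ and (b) transfer everything to the affine interpolant $\tilde\eta^{(\tau)}$. For (a), interpolating the uniform $W^{2,q}$-bound against the $\sqrt{t}$-Hölder estimate in $W^{1,2}$ yields $C^0$-continuity in time uniformly in $\tau$; since $\dist(\eta_0(\overline{Q}),\partial\Omega) > 0$, shrinking $T$ keeps all images strictly inside $\Omega$. For (b), boundary injectivity transfers verbatim through the same $L^2$-bound, but membership $\tilde\eta^{(\tau)}(t)\in\mathcal{E}$ must be reproved, which is the one non-routine point: the Ciarlet--Nečas condition is \emph{not} stable under affine interpolation of admissible deformations. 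I would argue that $\nabla\tilde\eta^{(\tau)}(t)$ is a convex combination of $\nabla\eta_k^{(\tau)}$ and $\nabla\eta_{k+1}^{(\tau)}$, which by interpolation between the $W^{2,q}$-bound and the $\sqrt{\tau}$-$W^{1,2}$-closeness are uniformly $C^0$-close and both have determinant $\geq \epsilon_0$; hence $\det\nabla\tilde\eta^{(\tau)}(t) > 0$ pointwise for $\tau$ small. Combined with the global injectivity on $\overline{Q}$ just established, the change-of-variables formula then recovers the Ciarlet--Nečas equality, placing $\tilde\eta^{(\tau)}(t)$ in $\operatorname{int}(\mathcal{E})$ as required.
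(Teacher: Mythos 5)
Your proof follows the same essential route as the paper's much terser argument: use the near-H\"{o}lder estimate of \autoref{lem:QSdiscreteHoelderEstimate} to bound $\norm[L^2]{\eta^{(\tau)}(t)-\eta_0}$ and $\norm[L^2]{\tilde\eta^{(\tau)}(t)-\eta_0}$ by a quantity that shrinks with $T$ and $\tau$, then invoke \autoref{prop:shortInjectivity} with the threshold $\gamma_0$. The paper stops there in a single sentence; you make explicit the steps it suppresses — extracting $\varepsilon_0$ from the injectivity of $\eta_0$ by compactness, excluding contact with $\partial\Omega$, and treating the affine interpolant separately. Your flag about the affine interpolant is a genuine point: $\tilde\eta^{(\tau)}(t)$ is a convex combination, and neither the Ciarlet--Ne\v{c}as equality nor the energy bound $E<E_0$ transfers automatically, so \autoref{prop:shortInjectivity} does not literally apply as stated.

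One imprecision worth fixing: you first write ``\autoref{prop:shortInjectivity} preserves a separation'' for $\tilde\eta^{(\tau)}(t)$ and only afterward reprove its $\mathcal{E}$-membership, which looks circular as phrased. The cleaner route (which is also what the paper tacitly relies on) is to observe that the \emph{proof} of \autoref{prop:shortInjectivity} and of the local-boundary-injectivity lemma uses only two ingredients: a uniform $W^{2,q}$-bound (hence uniform equicontinuity of the deformation and its gradient via Morrey) and a lower bound on $\det\nabla\eta$. Both pass to $\tilde\eta^{(\tau)}(t)$ without invoking $\mathcal{E}$-membership — the $W^{2,q}$-bound because the norm is convex, the determinant bound by the $C^0$-closeness of $\nabla\eta_k^{(\tau)}$ and $\nabla\eta_{k+1}^{(\tau)}$ that you already derive. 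State that observation first; then boundary separation for $\tilde\eta^{(\tau)}(t)$ follows directly, and your subsequent recovery of Ciarlet--Ne\v{c}as from injectivity plus positive determinant is a corollary rather than a prerequisite. With that reordering the argument is complete and rigorous, and in fact supplies detail the paper's one-line proof omits.
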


\begin{proof}
 If we choose $T$ small enough, then the near H\"{o}lder continuity from \autoref{lem:QSdiscreteHoelderEstimate} implies that $\norm[Q]{\eta_0-\eta_k^{(\tau)}}$ is uniformly small. In particular we can choose it to be smaller than the constant $\gamma_0$ from \autoref{prop:shortInjectivity} which then results in injectivity.
\end{proof}

In the following, we take $T$ small enough, so that both,  the a-priori estimates \autoref{lem:QSdiscreteAPrioriEstimates} hold and we may assume injectivity.

\subsubsection*{Step 3: Existence and regularity of limits}

As a next step, we will derive limiting objects for $\tau \to 0$ for the deformation and the global velocity, as well as their mode of convergence.

 \begin{proposition}[Convergence of the time-discrete scheme]\label{prop:QSconvergence}
  There exists a (not relabeled) subsequence $\tau \to 0$ and a limit 
  \[\eta\in C^{1/2}([0,T];W^{1,2}(Q;\R^n))\cap C_w([0,T];W^{2,q}(Q;\R^n))\cap C^0([0,T];C^{1,\alpha^-}(Q))
  \]
for $\alpha=1-\frac{n}{q}$ and $u\in L^2(0,T;W^{1,2}(\Omega;\R^n))$,
   such that 
  \begin{align*}
  \tilde{\eta}^{(\tau)} &\to \eta \text{ in }C^{(1/2)^{-}}([0,T];W^{1,2}(Q;\R^n)) \\
   \eta^{(\tau)}, \tilde{\eta}^{(\tau)} &\rightharpoonup^* \eta \text{ in } L^\infty([0,T];W^{2,q}(Q;\R^n)) %
   \\
   u^{(\tau)} &\rightharpoonup u \text{ in } L^{2}([0,T];W^{1,2}(\Omega;\R^n)) \\
   \partial_t \tilde{\eta}^{(\tau)} &\rightharpoonup \partial_t\eta \text{ in } L^{2}([0,T];W^{1,2}(Q;\R^n)).
  \end{align*} 
 Furthermore we have
    \[\partial_t \eta = u \circ \eta \text{ in } [0,T] \times Q\]
and that $\eta^{(\tau)}$ converges uniformly to $\eta$ in the following sense: For all $r>0$, there exists a $\delta_r>0$, such that for all $\tau<\delta_r$ and all $\abs{x_1-x_2}+\abs{t_1-t_2}^\frac{1}{\alpha+n}\leq r^{\alpha^-}$ we have
  \[
  \abs{\nabla (\eta^{(\tau)}(t_1,x_1)-\eta(t_2,x_2))}+ \abs{\eta^{(\tau)}(t_1,x_1)-\eta(t_2,x_2)}\leq Cr^{\alpha^-},
  \]
  for all $0<\alpha^-<1-\frac{n}{q}$.
  And 
  \begin{align}
  \label{eq:uniformconv}
   \tilde{\eta}^{(\tau)}\to \eta \in C^0([0,T];C^{1,\alpha^-}(Q))\text{ and }
  \eta^{(\tau)}\to \eta \in L^\infty([0,T];C^{1,\alpha^-}(Q)).
  \end{align}
 \end{proposition}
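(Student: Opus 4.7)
My plan is to follow the classical compactness scheme: extract weak limits from the a priori bounds of Lemmas~\ref{lem:QSdiscreteAPrioriEstimates} and~\ref{lem:QSdiscreteHoelderEstimate}, upgrade to strong convergence via Arzel\`a--Ascoli, and finally identify the composition $u\circ\eta$ as the limit of $u^{(\tau)}\circ\eta^{(\tau)}$. This last identification is the main obstacle.

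First, applying Banach--Alaoglu to the uniform bounds $\sup_t\norm[W^{2,q}(Q)]{\eta^{(\tau)}(t)}\le C$, $\int_0^T\norm[W^{1,2}(Q)]{\partial_t\tilde\eta^{(\tau)}}^2\,dt\le C$ and $\int_0^T\norm[W^{1,2}(\Omega)]{u^{(\tau)}}^2\,dt\le C$ (the latter via the global Korn inequality, Lemma~\ref{lem:globalKorn}), I extract weak-$*$ limits $\eta$ and a weak limit $u$ along a common subsequence. The bound $\norm[W^{1,2}(Q)]{\eta^{(\tau)}(t)-\tilde\eta^{(\tau)}(t)}\le C\sqrt\tau$ from Lemma~\ref{lem:QSdiscreteHoelderEstimate}(1) forces $\eta^{(\tau)}$ and $\tilde\eta^{(\tau)}$ to share the same limit $\eta$; closedness of $\mathcal{E}$ under weak $W^{2,q}$-convergence (Lemma~\ref{lem:calEclosed}) gives $\eta(t)\in\mathcal{E}$ for a.e.~$t$, and the weak limit of $\partial_t\tilde\eta^{(\tau)}$ is $\partial_t\eta$ by the commutation of weak limits with distributional derivatives.

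To upgrade this, I use that $\tilde\eta^{(\tau)}$ is uniformly bounded in $L^\infty([0,T];W^{2,q}(Q))$ and uniformly $1/2$-H\"older in $W^{1,2}(Q)$, the latter inherited from Lemma~\ref{lem:QSdiscreteHoelderEstimate}(3) applied to $\eta^{(\tau)}$ (the restriction $t-t_0\ge\tau$ becomes irrelevant as $\tau\to 0$, and the piecewise-affine interpolation satisfies the same modulus up to a constant). The compact embedding $W^{2,q}(Q)\hookrightarrow\hookrightarrow C^{1,\alpha^-}(Q)$ for any $\alpha^-<1-n/q$ combined with Arzel\`a--Ascoli yields $\tilde\eta^{(\tau)}\to\eta$ in $C^0([0,T];C^{1,\alpha^-}(Q))$, and the $\sqrt\tau$ estimate transfers this to $\eta^{(\tau)}$, giving~\eqref{eq:uniformconv}. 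Interpolating Gagliardo--Nirenberg style between the uniform $W^{2,q}$ control in space and the $C^{1/2}$ control in $W^{1,2}$ in time produces the mixed modulus with the stated exponent $1/(\alpha+n)$. Interpolating once more the strong $C^0_tC^{1,\alpha^-}_x$-convergence against the uniform $C^{1/2}_tW^{1,2}_x$ bound gives convergence in $C^{(1/2)^-}([0,T];W^{1,2}(Q))$; lower semicontinuity of the H\"older seminorm then provides $\eta\in C^{1/2}([0,T];W^{1,2}(Q))$, and the $C_w([0,T];W^{2,q}(Q))$ regularity follows from the uniform $W^{2,q}$ bound combined with strong $C^0_tW^{1,2}_x$-continuity by a standard density argument.

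The main obstacle is the identification $\partial_t\eta=u\circ\eta$. By construction, $\partial_t\tilde\eta^{(\tau)}(t,x)=u^{(\tau)}(t,\eta^{(\tau)}(t,x))$ on each cell $(k\tau,(k+1)\tau)\times Q$. For a test function $\phi\in C^\infty_c((0,T)\times Q)$, the change of variables $y=\eta^{(\tau)}(t,x)$ gives
\[
\int_0^T\!\!\int_Q \partial_t\tilde\eta^{(\tau)}\cdot\phi\,dx\,dt
=\int_0^T\!\!\int_\Omega u^{(\tau)}\cdot\psi^{(\tau)}\,dy\,dt,
\]
where $\psi^{(\tau)}(t,y):=\mathbf{1}_{\eta^{(\tau)}(t,Q)}(y)\,\phi(t,(\eta^{(\tau)}(t))^{-1}(y))\,\bigl|\det\nabla(\eta^{(\tau)}(t))^{-1}(y)\bigr|$. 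The uniform lower bound on $\det\nabla\eta^{(\tau)}$ from Assumption~S2 together with the strong $C^0_tC^{1,\alpha^-}_x$-convergence of $\eta^{(\tau)}$ give strong convergence of the inverses and their Jacobians, while the indicator functions converge in $L^p_{t,y}$ for any $p<\infty$ because the boundaries $\partial\eta^{(\tau)}(t,Q)$ are uniformly $C^{1,\alpha^-}$ by Corollary~\ref{cor:c1aBoundary}. Hence $\psi^{(\tau)}$ converges strongly in $L^2([0,T]\times\Omega)$, and pairing this with the weak $L^2$-convergence of $u^{(\tau)}$ identifies the limit as $\int_0^T\!\int_Q(u\circ\eta)\cdot\phi\,dx\,dt$. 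Since $\phi$ is arbitrary, $\partial_t\eta=u\circ\eta$ a.e., completing the proof.
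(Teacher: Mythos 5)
Your proof is correct and follows the same overall compactness skeleton as the paper (weak limit extraction from the energy bounds, Arzel\`a--Ascoli via the H\"older estimate, uniqueness of limits for the two interpolants). The one place where you take a genuinely different route is the identification $\partial_t\eta=u\circ\eta$. The paper splits
\[
 u^{(\tau)}\circ\eta^{(\tau)} \;=\; u^{(\tau)}\circ\eta \;+\; \bigl(u^{(\tau)}\circ\eta^{(\tau)}-u^{(\tau)}\circ\eta\bigr),
\]
passes to the limit in the first term by a change of variables with the \emph{fixed} limit diffeomorphism $\eta$, and kills the remainder by a line integral along $\pi_s=s\eta^{(\tau)}+(1-s)\eta$, which costs $\|\nabla u^{(\tau)}\|_{L^2}\cdot\sup|\eta^{(\tau)}-\eta|$. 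You instead push the change of variables through with the \emph{moving} diffeomorphism $\eta^{(\tau)}$, placing the indicator, pullback, and Jacobian all on the test-function side:
\[
 \int_Q u^{(\tau)}\circ\eta^{(\tau)}\cdot\phi\,dx=\int_\Omega u^{(\tau)}\cdot\psi^{(\tau)}\,dy,
 \qquad
 \psi^{(\tau)}=\mathbf{1}_{\eta^{(\tau)}(t,Q)}\,(\phi\circ(\eta^{(\tau)})^{-1})\,|\det\nabla(\eta^{(\tau)})^{-1}|,
\]
and conclude by strong $L^2$-convergence of $\psi^{(\tau)}$ (uniform $L^\infty$ bound via $\det\nabla\eta^{(\tau)}\ge\epsilon_0$ from S2, pointwise a.e.\ convergence from the uniform $C^1$-convergence of $\eta^{(\tau)}$, dominated convergence) paired with the weak $L^2$-convergence of $u^{(\tau)}$. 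This is a clean duality version of the paper's argument: it avoids the gradient of $u^{(\tau)}$ entirely and requires only its weak $L^2$-limit, at the price of controlling the convergence of the pushed-forward test function including the indicator functions (for which the paper's own observation $\int_0^T|\Omega^{(\tau)}(t)\triangle\Omega(t)|\,dt\to0$ suffices). Both arguments use the same regularity inputs; yours is somewhat more robust if one wanted to weaken the a priori bound on $\nabla u^{(\tau)}$. One small point worth spelling out rather than leaving implicit: the uniform $C^{1/2}_t W^{1,2}_x$ bound on $\tilde\eta^{(\tau)}$ for increments $<\tau$ does follow from Lemma~\ref{lem:QSdiscreteHoelderEstimate}(1), since for $|t-t_0|\le\tau$ the affine interpolation gives $\|\tilde\eta^{(\tau)}(t)-\tilde\eta^{(\tau)}(t_0)\|_{W^{1,2}}\le\frac{|t-t_0|}{\tau}\,C\sqrt{\tau}\le C\sqrt{|t-t_0|}$, so the lower semicontinuity step for $\eta\in C^{1/2}$ is justified.
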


 \begin{proof}
  We proceed with a weak version of Arzela-Ascoli theorem. Let $\{t_i\}_{i\in\N} \subset [0,T]$ be a countable dense set. By the upper bound on the energy and the coercivity, we have a uniform bound on $\norm[W^{2,q}(Q)]{\eta^{(\tau)}(t)}$ and by the usual diagonal argument we can pick a subsequence of $\tau$ (not relabeled) and limits $\eta(t_i)$ such that $\eta^{(\tau)}(t_i) \rightharpoonup \eta(t_i)$ in $W^{2,q}(Q;\R^n)$ and uniformly strongly in $W^{1,2}(Q;\R^n)$ for all $i \in \N$. Then by the convergence of norms, the H\"{o}lder-continuity from \autoref{lem:QSdiscreteHoelderEstimate} (3) carries over to 
  \[\norm[W^{1,2}(Q)]{\eta(t_i)-\eta(t_j)} \leq C \sqrt{\abs{t_i-t_j}}\quad \forall i,j \in \N.\]
  This means that $\eta$ has a unique extension onto $[0,T]$ in the space $C^{1/2}([0,T];W^{1,2}(Q;\R^n))$.
  
  By the usual compactness arguments %
  \[\tilde{\eta}^{\tau} \to \eta \in C^{1/2^-}([0,T];W^{1,2}(Q;\R^n)).\]
  
  Now pick $t\in [0,T]$ and a new sequence $\{t_i\}_{i\in\N} \subset [0,T]$, $t_i\to t$. Due to the uniform $W^{2,q}(Q;\R^n)$-bounds resulting from the bounded energy, the sequence $\{\eta(t_i)\}_{i \in  \N}$ has a weakly converging subsequence which, by the uniqueness of limits, must converge to $\eta(t)$ weakly in $W^{2,q}(Q;\R^n)$. As the original sequence $\{t_i\}_{i\in\N}$ was arbitrary this means that $\eta$ is weakly $W^{2,q}(Q;\R^n)$-continuous. By the same argument, for any subsequence of $\tau$'s there exists a sub-subsequence such that $\eta^{(\tau)}(t) \rightharpoonup \eta(t)$ in $W^{2,q}(Q;\R^n)$ and thus $\eta^{(\tau)} \rightharpoonup \eta$ in $W^{2,q}(Q;\R^n)$ pointwise. By \autoref{lem:QSdiscreteHoelderEstimate} (1), we know that $\tilde{\eta}^{(\tau)}(t)$ converges to the same limit as $\eta^{(\tau)}(t)$ in a $W^{1,2}(Q;\R^n)$-sense. Since $\tilde{\eta}^{(\tau)}(t)$ satisfies the same $W^{2,q}(Q;\R^n)$ bounds, we can then also prove weak $W^{2,q}(Q;\R^n)$ convergence by the same argument.

   Next we interpolate in order to proof that $\eta\in C^0([0,T]; C^{1,\alpha}(Q;\R^n))$. Actually we show more, namely that $\nabla \eta$ is H\"older continuous in time-space.\footnote{Note that due to the zero boundary values on $P$ the estimates on the continuity of $\eta$ follow directly by the gradient estimates.} For that we take $(s_1,x_1),(s_2,x_2)\in [0,T]\times Q$ with $B_r\ni x_1,x_2$ a ball with radius $\abs{x_1-x_2}\leq r$ and $\abs{s_1-s_2}\leq r^{2\alpha +n}$.
  
 \begin{align}
 \label{eq:etacont}
 \begin{aligned}
 &\phantom{{}={}} \abs{\nabla \eta(s_1,x_1)-\nabla \eta(s_2,x_2)}
 \\
 &\leq \abs{\nabla \eta(s_1,x_1)-\fint_{B_r}\nabla \eta(s_1)\,dx}+\abs{\fint_{B_r}\nabla (\eta(s_1)-\eta(s_2))\,dx}+\abs{\nabla \eta(s_2,x_2)-\fint_{B_r}\nabla \eta(s_2)\,dx}
 \\ 
 &\leq Cr^{\alpha} +  \abs{s_2-s_2}\abs{\fint_{s_1}^{s_2} \fint_{B_r} \partial_t \nabla \eta \,dx \,ds}  \leq Cr^{\alpha} +  \abs{s_2-s_2}\left(\fint_{s_1}^{s_2} \fint_{B_r} \abs{\partial_t \nabla \eta} \,dx \,ds\right)^{1/2} \\
 &\leq Cr^{\alpha} +  \frac{\abs{s_2-s_2}^{1/2}}{r^{n/2}} \left(\int_{s_1}^{s_2} \int_{B_r} \abs{\partial_t \nabla \eta}^2 \,dx \,ds \right)^{1/2} 
\leq  Cr^{\alpha}+C\abs{\frac{s_1-s_2}{r^n}}^\frac12\leq Cr^{\alpha}.
\end{aligned}
 \end{align} 
  
   Now let $0< \alpha^- < \alpha = 1- \frac{n}{q}$. 
   For $r>0$ we may choose a finite subset $\{t_i\}_{i=1}^{m_r}$ such that for every $t\in [0,T]$ there exists a $t_i$ such that $\abs{t_i-t}\leq r^{2\alpha^-+n}$.
Using the compactness result of Arzela-Ascoli we may choose a subsequence of $\eta^{(\tau)}$ such that for all $\delta_r>0$ and for all $\tau\leq \delta_r$
   \[
   \max_{i\in\{1,...,m_r\}}\norm[C^{1,\alpha^-}(Q)]{\eta^{(\tau)}(t_i)-\eta(t_i)}\leq 1,
   \]
where we may assume that $\delta_\tau<r^{2\alpha^-+n}$.

Now for all $(s_1,x_1),(s_2,x_2)\in [0,T]\times Q$ with $B_r\ni x_1,x_2$ a ball with radius $\abs{x_1-x_2}\leq r$ and $\abs{s_1-s_2}\leq r^{2\alpha^- +n}$ there is a $t_i\in [s_1,s_2]$ such that analogous to the previous %
 \begin{align*}
 &\abs{\nabla \eta^{(\tau)}(s_1,x_1)-\nabla \eta(s_2,x_2)}
\leq \abs{\nabla \eta^{(\tau)}(s_1,x_1)-\fint_{B_r}\nabla \eta(s_1)\,dx}+\abs{\fint_{B_r}\nabla (\eta^{(\tau)}(s_1)-\eta^{(\tau)}(t_i))\,dx}
\\
&\quad  +\abs{\fint_{B_r}\nabla (\eta^{(\tau)}(t_i)-\eta(t_i))\,dx} 
 +\abs{\nabla \eta(s_2,x_2)-\fint_{B_r}\nabla \eta(t_i)\,dx}
 \\
 &\leq Cr^{\alpha}+\abs{\Big(\fint_{B_r}\abs{\nabla \eta(s_1)}^2\,dx\Big)^\frac{1}{2}-\Big(\fint_{B_r}\abs{\nabla \eta(s_2)}^2\,dx\Big)^\frac{1}{2}}
\leq  Cr^{\alpha}%
 \end{align*}

  Finally, we use the uniform $L^2([0,T];W^{1,2}(\Omega;\R^n))$ bound on $u^{\tau}$ derived through \autoref{lem:QSdiscreteAPrioriEstimates} and \autoref{lem:globalKorn} to extract another subsequence such that $u^{\tau}$ converges weakly in the space $L^2([0,T];W^{1,2}(\Omega;\R^n))$ to some limit $u$. Equally, the uniform $L^2([0,T];W^{1,2}(Q;\R^n))$ bound on $\partial_t \tilde{\eta}^{\tau}$ implies existence of a subsequence weakly converging to the weak time-derivative $\partial_t \eta$.

Directly from the definiton, we see that
  \[\partial_t \tilde{\eta}^{\tau}(t) = u^{\tau}(t) \circ \eta^{\tau}(t)\]
  for almost all times $t$. So in particular for all $\phi \in C_0^\infty([0,T] \times Q;\R^n)$
  \begin{align*}
   &\phantom{{}={}}\int_0^T \inner[Q]{\partial_t \tilde{\eta} }{\phi} dt \leftarrow \int_0^T \inner[Q]{\partial_t \tilde{\eta}^{(\tau)} }{\phi} dt =\int_0^T \inner[Q]{u^{(\tau)} \circ \eta^{(\tau)} }{\phi} dt \\
   &= \int_0^T \inner[Q]{u^{(\tau)} \circ \eta }{\phi} dt  + \int_0^T \inner[Q]{u^{(\tau)} \circ \eta^{(\tau)} -u^{(\tau)} \circ \eta  }{\phi}dt
  \end{align*}
  Now the first integral converges to $\int_0^T \inner[Q]{u \circ \eta}{\phi} dt$ as $\eta$ is a diffeomorphism, while the second vanishes in the limit by the following argument: Let $\pi_s(t,x) := s\eta^{(\tau)}(t,x) + (1-s)\eta(t,x))$. Then
  \begin{align*}
   &\phantom{{}={}}\abs{u^{(\tau)}(t,\eta^{(\tau)}(t,x)) -u^{(\tau)}(t, \eta(t,x))}^2 = \abs{ \int_0^1 \pd{s} u^{(\tau)}(t,\pi_s(t,x)) ds}^2 \\
   &\leq \int_0^1 \abs{\nabla u^{(\tau)}(t, \pi_s(t,x)) \cdot (\eta^{(\tau)}(t,x)-\eta(t,x)) }^2 ds \\
   &\leq \int_0^1 \abs{\nabla u^{(\tau)}(t, \pi_s(t,x))}^2 ds \sup_{t\in[0,T], x\in Q} \abs{\eta^{(\tau)}(t,x)-\eta(t,x) }^2.
  \end{align*}
  Now, as $\eta^{(\tau)}(t)$ and $\eta(t)$ are both diffeomorphisms with lower bound on the determinant and uniformly close gradients, the linear interpolation $\pi_s$ also has to be a similar diffeomorphism. %
  So integrating the equation yields
  \begin{align*}
   &\phantom{{}={}}\int_0^T \int_Q \abs{u^{(\tau)}(t,\eta^{(\tau)}(t,x)) -u^{(\tau)}(t, \eta(t,x))}^2 dx dt\\
   &\leq  \int_0^T \int_Q \int_0^1 \abs{\nabla u^{(\tau)}(t, \pi_s(t,x))}^2 ds dx dt \sup_{t\in[0,T], x\in Q} \abs{\eta^{(\tau)}(t,x)-\eta(t,x) }^2 \\
   &\leq c \int_0^T \int_\Omega \abs{\nabla u^{(\tau)}}^2 dx dt \sup_{t\in[0,T], x\in Q} \abs{\eta^{(\tau)}(t,x)-\eta(t,x) }^2.
  \end{align*}
  Here the first term is uniformly bounded and the second converges to $0$, by the uniform convergence of $\eta^{(\tau)}$ outlined above.
  
  Thus we have $\partial_t \eta = u \circ \eta$ almost everywhere. %
 \end{proof} 
 
\subsubsection*{Step 4: Convergence of the equation}

Using the convergences we derived in Proposition \ref{prop:QSconvergence}, we proceed by showing that the discrete Euler-Lagrange equations from \autoref{prop:QSdiscreteELequation} converge to the equation of a weak solution. This is not a straightforward task, as we have to deal with coupled pairs of test functions with the coupling being non-linearly dependent on the deformation. We will deal with this issue by focusing on a global test function $\xi$ on $\Omega$ from which we derive the test functions on the discrete level. In order to do so, we need to be able to approximate the test functions smoothly while also maintaining the coupling condition. This is shown in Proposition \ref{lem:approxTestFcts}. 

For the approximation of test-functions we make use of a Bogovski\u{\i}-type theorem.
\begin{theorem}[Bogovski\u{\i}-Operator {\cite[Theorem 2.4]{borchersEquationsRotDiv1990}}]
\label{thm:sohr}
Let $\Omega$ be a bounded Lipschitz domain, then there is a linear operator $\bog:\{g\in C^\infty_0(\Omega)\,|\int_\Omega g\, dy=0\}\to C^\infty_0(\Omega)$,
such that
\[
\diver\bog(g)=g.
\]
Moreover, for $k\in \{0,1,2,...\}$ and $a\in (1,\infty)$ the operator extends to Sobolev-spaces in the form of $\bog:\{g\in W^{k-1,a}_0(\Omega)\,|\int_\Omega g\, dy=0\}\to W^{k,a}_0(\Omega)$, such that
\[
\norm[W^{k,a}_0(\Omega)]{\bog(g)}\leq c\norm[W^{k-1,a}_0(\Omega)]{g},
\]
where the constant just depending on $k,a,n$ and $\Omega$. 
\end{theorem}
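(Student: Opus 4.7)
The plan is to follow Bogovski\u{\i}'s classical construction, first on star-shaped Lipschitz domains and then on general bounded Lipschitz domains by a decomposition argument. As a first reduction, observe that any bounded Lipschitz domain $\Omega$ can be written as a finite union $\Omega = \bigcup_{j=1}^{N} \Omega_j$ of Lipschitz subdomains each of which is star-shaped with respect to some ball $B_j \subset \Omega_j$; this is standard and follows from a finite cover of $\partial\Omega$ by local Lipschitz graphs together with an interior piece. Hence it suffices to construct $\mathcal B$ on each $\Omega_j$ and then glue via a suitable partition.

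On a star-shaped Lipschitz domain $U$ (with respect to a ball $B$) the operator is given by the explicit integral formula
\begin{equation*}
\bog(g)(x) \;=\; \int_{U} g(y)\, \frac{x-y}{\abs{x-y}^{n}} \int_{\abs{x-y}}^{\infty} \omega\!\left(y + r\frac{x-y}{\abs{x-y}}\right) r^{n-1}\, dr\; dy,
\end{equation*}
where $\omega \in C^{\infty}_{0}(B)$ is a fixed cutoff with $\int \omega = 1$. A direct computation, using the zero-mean assumption $\int g\,dy = 0$ to discard a boundary contribution at $r=\infty$, shows that $\diver \bog(g) = g$ in $U$, and that $\bog(g)$ is compactly supported in $U$ whenever $g$ is. I would then split the kernel into a regular part (integrable, mapping $L^a \to L^a$ trivially) and a principal value part whose kernel is homogeneous of degree $-n$ and has mean zero on spheres; this is the classical Calder\'on--Zygmund structure, giving the $L^a \to W^{1,a}_0$ estimate for $a \in (1,\infty)$. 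The higher-order bounds $\norm[W^{k,a}_0(U)]{\bog(g)} \leq c\norm[W^{k-1,a}_0(U)]{g}$ are obtained by differentiating the formula: one derivative can be passed onto $g$ via integration by parts (the boundary term again vanishes by compact support), after which the same singular integral theory applies to higher derivatives; iterating gives all $k$.

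To pass from star-shaped pieces to the full Lipschitz domain $\Omega$, fix a smooth partition of unity $\{\chi_j\}$ subordinate to $\{\Omega_j\}$ and, given $g \in C^\infty_0(\Omega)$ with $\int_\Omega g\, dy = 0$, decompose $g = \sum_j g_j$ where each $g_j$ is supported in $\Omega_j$ and has zero mean on $\Omega_j$. The latter is the only delicate point: one cannot simply take $g_j = \chi_j g$, since the means $\int \chi_j g$ need not vanish. Instead one redistributes these means using a finite chain of pairwise overlapping pieces, inductively defining correctors supported in the overlaps $\Omega_j \cap \Omega_{j+1}$ so that after subtraction each $g_j$ has zero integral on $\Omega_j$ while the $W^{k-1,a}$ norms remain controlled by $\norm[W^{k-1,a}_0(\Omega)]{g}$. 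Then $\bog(g) := \sum_j \bog_{\Omega_j}(g_j)$ solves $\diver \bog(g) = g$, lies in $C^\infty_0(\Omega)$, depends linearly on $g$, and inherits the Sobolev estimates from the star-shaped case.

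The main obstacle I anticipate is the Calder\'on--Zygmund analysis of the kernel at the top-order level: verifying the cancellation property that makes the principal-value part bounded on $L^a$, and then controlling commutators when the derivatives are distributed across both the kernel and $g$ to obtain the $W^{k,a}$ bound. The mean-redistribution step in the patching is bookkeeping-heavy but conceptually routine once one has the star-shaped estimate.
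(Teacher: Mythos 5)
The paper does not prove this theorem; it is quoted verbatim from Borchers and Sohr (their Theorem~2.4) and used as a black box. Your sketch reconstructs the standard Bogovski\u{\i}/Borchers--Sohr argument faithfully: the explicit kernel on star-shaped domains, the Calder\'on--Zygmund analysis of the degree $-n$, mean-zero principal part, and the decomposition of a Lipschitz domain into finitely many pieces star-shaped with respect to balls, with a mean-redistribution along a chain of overlaps to make each piece have zero integral. That is precisely the route taken in the cited reference, so there is nothing to compare against a different proof in the paper itself.

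Two small points worth flagging in your sketch. First, the statement as recorded allows $k=0$, i.e.\ $\bog\colon W^{-1,a}_0 \to L^a$; your argument only addresses $k\geq 1$, and the $k=0$ case requires a duality argument (one views $\bog$ via its transpose against test functions) rather than differentiating the kernel. Second, the step where you ``pass one derivative onto $g$ by integration by parts'' is not quite how the higher-order bound is proved: the kernel is too singular for that manipulation to be immediate, and the standard treatment instead differentiates under the integral sign and identifies the resulting top-order term as a Calder\'on--Zygmund operator plus bounded correction. These are exactly the technicalities worked out in the Borchers--Sohr paper and also, in a slightly different form, in Galdi's book; since the paper cites the result, no gap in the paper's argument is at stake.
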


Next we introduce the following approximation result. It is introduced in order to approximate test-functions and later in section 4 in order to extend the Aubin-Lions lemma to the variable domain set up. The proof is quite involving and for that reason put in the appendix (see \autoref{app:testfcts}). 
\begin{proposition}[Approximation of test functions] \label{lem:approxTestFcts}
 Fix a function 
\[
\eta \in L^\infty([0,T];\mathcal{E}) \cap W^{1,2}([0,T];W^{1,2}(Q;\R^n)) \text{ with } \sup_{t\in T} E(\eta(t)) < \infty,\]
such that $\eta(t) \notin \partial \mathcal{E}$ for all $t \in [0,T]$.
As before we set $\Omega(t)=\Omega \setminus \eta(t,Q)$.
 Let $\mathcal{T}_\eta$ be the set admissible test functions, which is defined as
\begin{align*}
\mathcal{T}_\eta&:=\{(\phi,\xi)\in W^{1,2}([0,T];W^{1,2}(Q;\R^n))
 \times L^2([0,T];W^{1,2}_0(\Omega);\R^n))\}
\\
&\qquad\text{ s.t. } \phi = \xi \circ \eta \text{ on }[0,T]\times Q\text{ and } \diver\xi(t)=0\text{ in } \Omega(t)\}.
\end{align*}
 Then the set 
 \begin{align*}
\tilde{\mathcal{T}}_\eta &:= \{(\phi,\xi) \in \mathcal{T}_\eta, \xi \in C^\infty([0,T];C^\infty_0( \Omega;\R^n)) \, |\,  \diver \xi(t,y) = 0 
\text{ for all }t\in [0,T]\text{ and all }y 
\\
&\qquad \text{ with } \dist(y,\Omega(t)) < \varepsilon \text{ for some } \varepsilon > 0  \}
 \end{align*}
 is dense in $\mathcal{T}_\eta$
in the following sense:

For $\epsilon$ sufficiently small there is a linear defined on $\tilde{\mathcal{T}}_\eta$,  $(\xi,\phi)\mapsto (\phi_\eps,\xi_\eps)$, such that $(\phi_\eps,\xi_\eps)\in \tilde{\mathcal{T}}_\eta$ and it satisfies the following 
 \begin{align*}
 \diver(\xi_\eps(t,y))&=0\text{ for all }y\in \Omega\text{ with }\dist(y,\Omega(t))\leq \epsilon
 \end{align*}
 If $\xi\in L^b([0,T];W^{k,a}(\Omega))$, for $k\in \N$, $a\in (1,\infty)$ and $b\in [1,\infty]$, then
 \begin{align*}
 \xi_\eps&\to \xi \text{ in } L^b([0,T];W^{k,a}(\Omega)).
 \end{align*}
 If additionally $\eta \in L^b([0,T]; W^{k,a}(Q;\R^n))$, with $a=2$, if $k\geq 3$, then
 \begin{align*}
 \phi_\eps&\to \phi  \text{ in }L^b([0,T]; W^{k,a}(Q;\R^n))\cap W^{1,2}([0,T];W^{1,2}(Q;\R^n))
  \end{align*}

In case, $\partial_t\xi \in L^2([0,T];W^{1,2}(\Omega))$, we find that $\partial_t\xi_\eps\to \partial_t\xi$ in in $L^2([0,T];W^{1,2}(Q))$. If additionally $\xi\in L^\infty([0,T];W^{3,a}(\Omega))$ with $a>n$ and $\partial_t\xi \in L^2([0,T];W^{1,2}(\Omega))$, we find that
 $\partial_t\phi_\eps\to \partial_t\phi$ in $L^2([0,T];W^{1,2}(Q))$.
 
  Moreover, the following bounds are satisfied for every time-instance where the right hand side is bounded:
  \begin{align*}
    \norm[W^{1,2}(\Omega)]{\xi_\eps}&\leq c\norm[W^{1,2}(\Omega))]{\xi}
   \\
  \norm[L^2(\Omega)]{\xi_\eps-\xi}
  &\leq c\eps^\frac{2}{n+2}\norm[W^{1,2}(\Omega)]{\xi},
  \\ 
  \norm[W^{k,a}(\Omega)]{\xi_\eps}&\leq c(\eps)\norm[L^2(\Omega)]{\xi}
  \\
   \norm[W^{k,a}(Q)]{\phi_\eps}&\leq c\norm[C^k(\Omega)]{\xi}\norm[W^{k,a}(Q)]{\eta}\leq c(\eps)\norm[L^2(\Omega)]{\xi}\norm[W^{k,a}(Q)]{\eta}
 \end{align*}
 where the constant $c$ depends on the bounds of $\eta\in L^\infty([0,T];\mathcal{E}) \cap W^{1,2}([0,T];W^{1,2}(Q;\R^n))$ and the injectivity of $\eta$ only. The constant $c(\eps)$ depends additionally on $\eps$.
\end{proposition}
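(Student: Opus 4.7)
The plan is to construct the approximation $(\phi_\eps,\xi_\eps)$ time-slice by time-slice using a three-stage procedure that combines transport by a diffeomorphism, spatial mollification, and a Bogovski\u{\i} correction. Performing the construction only in the spatial variable is crucial, since it lets all time regularity of the inputs be inherited automatically by the outputs. The linearity of the output in the input will be manifest from the linearity of each of the three stages.

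\emph{Stage 1 (transport).} Using the uniform $C^{1,\alpha}$ regularity of $\Omega(t)=\Omega\setminus\eta(t,Q)$ provided by \autoref{cor:c1aBoundary}, I will construct a family of diffeomorphisms $F_\eps(t,\cdot):\Omega\to\Omega$ that are the identity on $\partial\Omega$ and outside a uniform neighborhood of the interface $\partial\eta(t,Q)$, and that push this interface into the solid $\eta(t,Q)$ by precisely $\eps$ in the direction of the outward (relative to the fluid) normal. The regularity of this family in $t$ is inherited from that of $\eta$. I then set $\widehat\xi_\eps(t,y):=(\det\nabla F_\eps)^{-1}\nabla F_\eps\cdot \xi\circ F_\eps^{-1}$, i.e., the Piola transform along $F_\eps$ of the original $\xi$, which preserves the divergence-free condition. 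As a result, $\widehat\xi_\eps(t)$ is divergence-free on the enlarged set $F_\eps(t,\Omega(t))$, which contains a full $c\eps$-neighborhood of $\Omega(t)$, and it still vanishes on $\partial\Omega$.

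\emph{Stages 2--3 (mollify and correct).} I mollify $\widehat\xi_\eps$ in the spatial variable at a scale $\delta\ll\eps$, obtaining a smooth field $\widehat\xi_\eps^{\,\delta}$ that remains divergence-free on a slightly shrunken (but still $\eps$-thick) neighborhood of $\Omega(t)$, at the cost of a small divergence error that is supported in a thin annular layer near $\partial F_\eps(t,\Omega(t))$ and has zero integral. I then apply the Bogovski\u{\i} operator of \autoref{thm:sohr} on this annular layer to cancel this error; since the result is supported in the layer, it does not destroy the divergence-free property in the remaining $\eps$-neighborhood. This produces $\xi_\eps\in C^\infty_0(\Omega;\R^n)$ satisfying $\diver\xi_\eps=0$ on $\{\dist(\cdot,\Omega(t))<\eps\}$ as required, and I set $\phi_\eps:=\xi_\eps\circ\eta$. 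The convergence statements all follow from standard continuity of mollification on $L^b(W^{k,a})$, the uniform $C^{1,\alpha}$ control on $F_\eps^{\pm 1}\to\mathrm{id}$, and the chain rule together with the bounds on $\eta$ for the $\phi_\eps$ estimates. Time-derivative convergences require the extra $W^{3,a}$ hypothesis because differentiating $\xi\circ\eta$ in $t$ produces an $(\nabla\xi)\circ\eta\cdot\partial_t\eta$ term whose $W^{1,2}$ control in $x$ costs one additional spatial derivative on $\xi$.

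\emph{Main obstacles.} The most delicate point is the sharp quantitative bound $\|\xi_\eps-\xi\|_{L^2(\Omega)}\leq c\eps^{2/(n+2)}\|\xi\|_{W^{1,2}(\Omega)}$. A naive estimate, comparing $\xi_\eps$ and $\xi$ on the layer of thickness $\eps$ where $F_\eps\ne\mathrm{id}$, combined with Sobolev embedding gives only $\eps^{1/n}$; recovering the better exponent $2/(n+2)$ requires interpolating between the trivial $L^2$ bound and the Sobolev-embedded bound on the layer while simultaneously tracking the degeneration of the Bogovski\u{\i} constant on the thin correction annulus, and choosing the mollification scale $\delta=\delta(\eps)$ to balance the two. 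A second technical point is to verify that the constants in all estimates depend only on the quantities stated, namely the uniform energy/injectivity bounds on $\eta$; this reduces to the uniformity of the characteristics $L,r$ of the $C^{1,\alpha}$-domains $\Omega(t)$ in \autoref{def:cka}, which is exactly what \autoref{cor:c1aBoundary} supplies. The detailed verification is deferred to the appendix as indicated.
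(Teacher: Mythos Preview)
Your approach is workable in principle but differs substantially from the paper's, and the paper's route is simpler. The paper does \emph{not} use a Piola transform or a diffeomorphism pushing the interface; instead it exploits directly that $\diver\xi$ already vanishes on $\Omega(t)$. Concretely, one introduces a cutoff $\psi_\eps$ supported in the $2\eps$-layer around $\eta(t,\partial Q)$ with $\|\psi_\eps\|_{C^l}\le c\eps^{-l}$, and sets
\[
\xi_\eps(t):=\xi(t)-\bog\bigl(\psi_\eps(t)\,\diver\xi(t)-b_\eps(t)\tilde\psi(t)\bigr),
\]
where $\bog$ is the Bogovski\u{\i} operator on the \emph{fixed} domain $\Omega$ and $b_\eps(t)=\int_\Omega\psi_\eps\diver\xi$ restores mean zero. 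Since $\diver\xi=0$ on $\Omega(t)$, the argument $\psi_\eps\diver\xi$ is supported only in the thin layer $S_{2\eps}(t)\setminus\Omega(t)$; the $W^{k,a}$-estimates then follow from a Poincar\'e inequality on thin regions (the paper's \autoref{lem:thinPoincare}), which yields exactly the factors of $\eps$ needed to cancel the $\eps^{-l}$ coming from derivatives of $\psi_\eps$. One then mollifies at scale $\eps^2$ and repeats the same cutoff--Bogovski\u{\i} step. The $L^2$-rate comes out in one line via the embedding $L^{2n/(n+2)}(\Omega)\hookrightarrow W^{-1,2}(\Omega)$ and H\"older on the thin layer; no balancing of scales is needed.

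Two features of your scheme make it more delicate than necessary. First, the Piola transform requires $F_\eps$ and its inverse to be sufficiently regular (roughly $W^{k+1,\infty}$) to propagate $W^{k,a}$-bounds; building such $F_\eps$ adapted to a merely $C^{1,\alpha}$ interface takes extra care, whereas the paper's cutoff can be taken $C^\infty$ via a regularized distance without ever moving the interface. Second, you apply Bogovski\u{\i} on a thin annulus, whose constant degenerates with $\eps$, and you correctly identify that this forces a balancing argument with the mollification scale $\delta(\eps)$. The paper avoids this entirely: the Bogovski\u{\i} operator acts on the fixed domain $\Omega$ with a fixed constant, and all smallness is carried by the support and the trace-zero structure of $\diver\xi$ on $\partial\Omega(t)$. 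Your route can likely be pushed through, but the paper's argument is shorter and sidesteps both obstacles you flagged.
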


\autoref{lem:approxTestFcts} allows to pass to the limit with the discretized coupled PDE.
 
 \begin{proposition}[Limit-equation] \label{prop:QSELeq} 
 The limit pair $(\eta,v)$ as obtained in Proposition \ref{prop:QSconvergence} satisfies the following:
 \begin{align}
 \label{eq:ELnopres}
  0&= \int_0^T \inner[Q]{DE(\eta(t))}{\phi} + \inner[Q]{D_2R(\eta(t),\partial_t \eta(t))}{\phi} 
  + \nu \inner[\Omega(t)]{\nablasym v}{\nablasym \xi} \\ \nonumber &\qquad - \rho_f \inner[\Omega(t)]{f}{\xi} - \rho_s \inner[Q]{f\circ \eta}{\phi} dt
 \end{align}
 for all pairs $\phi \in L^2([0,T];W^{2,q}(Q;\R^n))$, $\xi \in L^2([0,T];W^{1,2}(\Omega;\R^n))$ which satisfy $\phi(t,.) = \xi \circ \eta(t)$ on $Q$ and $\diver \xi(t) = 0$ on $\Omega(t)$.
 \end{proposition}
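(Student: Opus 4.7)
My plan is to integrate the discrete Euler--Lagrange equation of \autoref{prop:QSdiscreteELequation} over $[0,T]$ and pass to the limit $\tau\to 0$ for a convenient class of coupled test pairs, then recover \eqref{eq:ELnopres} for general admissible $(\phi,\xi)$ by density. Given $(\phi,\xi)$, I first invoke \autoref{lem:approxTestFcts} to approximate $\xi$ by a smooth $\xi_\eps$ whose divergence vanishes in an $\eps$-neighborhood of $\Omega(t)$, with companion $\phi_\eps = \xi_\eps\circ\eta$. Since \autoref{prop:QSconvergence} gives $\eta^{(\tau)}\to \eta$ uniformly in $C^{1,\alpha^-}$ (hence Hausdorff convergence of the fluid domains), for $\tau$ small depending on $\eps$ the set $\Omega^{(\tau)}(t)$ lies in that $\eps$-neighborhood, so $\xi_\eps|_{\Omega^{(\tau)}(t)}$ is an admissible discrete fluid test and $\phi_\eps^{(\tau)}:=\xi_\eps\circ\eta^{(\tau)}$ satisfies the discrete coupling on $M$. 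The chain rule together with the convergences from \autoref{prop:QSconvergence} yields $\phi_\eps^{(\tau)}\to\phi_\eps$ strongly in $L^\infty([0,T];C^{1,\alpha^-}(Q;\R^n))$ and weakly-$*$ in $L^\infty([0,T];W^{2,q}(Q;\R^n))$.

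\textbf{Lower-order terms.} For the Stokes integral I would rewrite $\inner[\Omega^{(\tau)}(t)]{\nablasym u^{(\tau)}}{\nablasym\xi_\eps}=\inner[\Omega]{\nablasym u^{(\tau)}}{\chi_{\Omega^{(\tau)}(t)}\nablasym\xi_\eps}$; the uniform $C^{1,\alpha}$-regularity of the fluid boundary from \autoref{cor:c1aBoundary} yields $\chi_{\Omega^{(\tau)}(t)}\to \chi_{\Omega(t)}$ in every $L^r$, which together with the weak $W^{1,2}$-convergence of $u^{(\tau)}$ passes the limit. The dissipation term is handled by assumption R4 (weak continuity of $D_2R$) using the strong convergence $\eta^{(\tau)}\to\eta$, the weak convergence $\partial_t\tilde\eta^{(\tau)}\rightharpoonup\partial_t\eta$, and the strong convergence of $\phi_\eps^{(\tau)}$. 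Both force terms pass by uniform convergence of $\eta^{(\tau)}$ and the same indicator-function argument.

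\textbf{Energy term via Minty--Browder and conclusion.} The genuinely nonlinear passage is $\int_0^T\inner{DE(\eta^{(\tau)})}{\phi_\eps^{(\tau)}}\,dt$, where only weak convergence of $\eta^{(\tau)}$ in $W^{2,q}$ is available and $DE$ is not weakly continuous; this is the main obstacle. The plan is to upgrade to strong convergence via the Minty part of assumption S6. Fix $\psi\in C_0^\infty(Q;[0,1])$ with $\supp \psi$ away from $M$ and test the discrete equation with the admissible pair $\phi=(\eta^{(\tau)}(t)-\eta(t))\psi$, $\xi=0$ (the coupling on $M$ then reduces to $0=0$). The forcing and dissipation pairings vanish in the limit since $(\eta^{(\tau)}-\eta)\psi\to 0$ strongly in $W^{1,2}$; subtracting the trivially vanishing $\int\inner{DE(\eta)}{(\eta^{(\tau)}-\eta)\psi}\,dt$ then gives
\[
\int_0^T\inner{DE(\eta^{(\tau)})-DE(\eta)}{(\eta^{(\tau)}-\eta)\psi}\,dt\;\to\;0.
\]
Pointwise in $t$, the integrand has nonnegative liminf by the monotonicity half of S6; combined with Fatou's lemma this forces $\liminf_\tau\inner{DE(\eta^{(\tau)})-DE(\eta)}{(\eta^{(\tau)}-\eta)\psi}=0$ for almost every $t$. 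A Urysohn-type subsequence argument together with the Minty half of S6 then upgrades this to $\eta^{(\tau)}(t)\to\eta(t)$ strongly in $W^{2,q}(Q;\R^n)$ for a.e.\ $t$. Continuity of $DE$ on sublevel sets (S5) plus the uniform bound from the same assumption and dominated convergence identify the limit as $\int_0^T\inner{DE(\eta)}{\phi_\eps}\,dt$. Combining the three paragraphs yields \eqref{eq:ELnopres} for $(\phi_\eps,\xi_\eps)$, and sending $\eps\to 0$ through the convergence properties of \autoref{lem:approxTestFcts} extends the identity to all admissible $(\phi,\xi)$.
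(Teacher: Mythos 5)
Your overall strategy is the paper's: reduce via Lemma~\ref{lem:approxTestFcts} to a smooth $\xi_\eps$ whose divergence vanishes in a neighbourhood of $\Omega(t)$, pull it back through $\eta^{(\tau)}$ to form admissible discrete test pairs, pass to the limit term by term (indicator functions for the varying domain, R4 for the dissipation), and upgrade $\eta^{(\tau)}\rightharpoonup\eta$ in $W^{2,q}$ to strong convergence by testing the discrete Euler--Lagrange equation with $((\eta^{(\tau)}-\eta)\psi,0)$ and invoking S6. The order is permuted (the paper does the Minty step first), which is immaterial.

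The one step that does not close as written is the Fatou argument. Setting $g_\tau(t):=\inner{DE(\eta^{(\tau)}(t))-DE(\eta(t))}{(\eta^{(\tau)}(t)-\eta(t))\psi}$, you establish $\int_0^T g_\tau\,dt\to 0$ and $\liminf_\tau g_\tau\ge 0$ a.e., and then appeal to Fatou to conclude $\liminf_\tau g_\tau=0$ a.e. Two problems: Fatou needs $g_\tau$ uniformly bounded below by an integrable function, which is not evident since the dissipation pairing is controlled only in $L^2([0,T])$; and, more importantly, $\liminf g_\tau=0$ is not what S6 asks for --- the Minty clause requires $\limsup_\tau g_\tau\le 0$, and $\liminf=0$ leaves open a subsequence along which $g_\tau$ stays bounded away from zero. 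The clean route, and what the paper's ``and thus for almost all $t$'' is shorthand for, is to bound each term on the right-hand side of the identity in $L^1([0,T])$: the $DE(\eta)$- and force-pairings by dominated convergence, and the dissipation pairing via $\abs{B_\tau(t)}\le C\,\eps_\tau\,\norm[W^{1,2}(Q)]{\partial_t\tilde\eta^{(\tau)}(t)}$ with $\eps_\tau:=\sup_t\norm[W^{1,2}(Q)]{(\eta^{(\tau)}(t)-\eta(t))\psi}\to 0$. This gives $\norm[L^1(0,T)]{g_\tau}\to 0$, hence a subsequence with $g_\tau(t)\to 0$ for a.e.\ $t$, whence $\limsup g_\tau\le 0$ a.e.\ and S6 applies directly. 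With that repair your write-up matches the paper's proof.
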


 \begin{proof}
  First, we use the Minty method to show that $\inner[Q]{DE(\eta^\tau(t))}{\phi^\tau} \to \inner[Q]{DE(\eta(t))}{\phi}$. Fix $t\in [0,T]$ and pick $\psi\in C^\infty_0(Q;[0,1])$. Then, the pair $((\eta^{(\tau)}-\eta)\psi,0)$ fulfills the coupling condition for the discrete Euler-Lagrange equation and we have
 \begin{align*}
   &\phantom{{}={}}\inner{DE(\eta^{(\tau)})-DE(\eta)}{(\eta^{(\tau)}-\eta)\phi} \\
   &= -\inner{DE(\eta)}{(\eta^{(\tau)}-\eta)\phi} - \inner{
   D_2R(\eta^{(\tau)},\partial_t \tilde{\eta}^{(\tau)})}{(\eta^{(\tau)}-\eta)\phi} + \inner{f}{(\eta^{(\tau)}-\eta)\phi}
  \end{align*}
  As $\eta^{(\tau)}(t) \to \eta(t)$ weakly in $W^{2,q}(Q; \R^n)$ and strongly in  $C^{1,\alpha^-}(Q; \R^n)$ and moreover as  $\partial_t \tilde{\eta}^{(\tau)}(t)$ is uniformly bounded in $L^2 ([0,T]\times Q; \R^n)$ all three terms on the right hand side converge to $0$ when integrated in time and thus for almost all $t \in [0,T]$ by \autoref{prop:QSconvergence}.  Hence \autoref{ass:energy}, S6 implies the strong convergence of $\eta^{(\tau)}(t) \to \eta(t)$ in $W^{2,q}(Q)$ for almost all $t\in [0,T]$.

  By \autoref{lem:approxTestFcts}, it is enough to show the limit equation for $\xi \in C_0^\infty([0,T]\times \Omega ;\R^n)$, which is divergence free on a slightly larger set than the fluid domain. Fix such a $\xi$. Then since $\eta^{(\tau)}$ converges uniformly to $\eta$, $\diver \xi = 0$ on $\Omega^{(\tau)}(t)$ for all $\tau$ small enough. 
  
  Now we construct a matching $\phi^{(\tau)}(t,x) := \xi(t, \eta^{(\tau)}(t,x))$ and $\phi(t,x) := \xi(t,\eta(t,x))$. Then by \autoref{lem:W2qInvertible} and \autoref{prop:W2qIsomorph}, $\phi^{(\tau)} \in L^\infty([0,T];W^{2,q}(Q;\R^n))$ with uniform bounds. Thus we get by compactness and uniqueness of limits $\phi^{(\tau)}(t) \rightharpoonup \phi(t)$ in $W^{2,q}(\R^n)$.
  
  As constructed, the pairs $(\phi^{(\tau)}(t),\xi(t))$ are admissible in the respective Euler-Lagrange equations from \autoref{prop:QSdiscreteELequation} and we have
  \begin{align*}
   0&= \inner{DE(\eta^{(\tau)}(t))}{\phi^{(\tau)}(t)} + \inner{D_2R\left(\eta^{(\tau)}(t),\partial_t \tilde{\eta}^{(\tau)}(t)\right)}{\phi^{(\tau)}(t)} \\
   &+ \nu\inner[\Omega^{(\tau)}(t)]{\nablasym u^{(\tau)}(t)}{\nabla \xi(t)} - \rho_f\inner[\Omega^{(\tau)}(t)]{f}{\xi(t)} - \rho_s\inner[Q]{f\circ \eta^{(\tau)}(t) }{\phi^{(\tau)}(t)}
  \end{align*}
  for all $t\in[0,T]$ and $\tau$ small enough.

  Now we integrate this equation in time and check each of the terms for convergence. For the first term we note that by the strong convergence of $\eta^{(\tau)}$ in $W^{2,q}(Q)$ and \autoref{ass:energy}, S5 that $DE(\eta^{(\tau)}(t))$ converges strongly in $W^{-2,q}(Q;\R^n)$ for every fixed $t$. Since $\phi^{(\tau)}(t)$ converges weakly and both terms are uniformly bounded in their respective spaces, we get
  \begin{align*}
   \int_0^T \inner{DE(\eta^{(\tau)}(t))}{\phi^{(\tau)}(t)} dt \to \int_0^T \inner{DE(\eta(t))}{\phi(t)} dt.
  \end{align*}
  For the next term we find by \autoref{prop:QSconvergence} %
  and the continuity of $R$ in \autoref{ass:dissipation}, R1 that $D_2R\left(\eta^{(\tau)},\partial_t \tilde{\eta}^{(\tau)}\right)$ converges weakly in $L^2(0,T;W^{-1,2}(Q;\R^n))$ and $\phi^{(\tau)}$ converges strongly in $L^2(0,T;W^{1,2}(Q;\R^n))$ which implies that
  \begin{align*}
   \int_0^T \inner{D_2R\left(\eta^{(\tau)}(t),\partial_t \tilde{\eta}^{(\tau)}(t)\right)}{\phi^{(\tau)}(t)} dt \to \int_0^T \inner{D_2R\left(\eta(t),\partial_t \eta(t)\right)}{\phi(t)} dt.
  \end{align*}
  
  For the next terms, let us first deal with the variable domain by rewriting the terms using characteristic functions. Denoting the symmetric difference by $A\triangle B := A\setminus B \cup B\setminus A$ we have 
  \[ \int_0^T \norm[L^2(\Omega)]{\chi_{\Omega^{(\tau)}(t)}- \chi_{\Omega(t)}}^2 dt = \int_0^T \abs{\Omega^{(\tau)}(t) \triangle \Omega(t)} dt \to 0\]
  by the uniform convergence of the boundary and can thus conclude
 \begin{align*}
  &\phantom{{}={}}\int_0^T \inner[\Omega^{(\tau)}(t)]{\nabla u^{(\tau)}(t)}{\nabla \xi(t)} dt = \int_0^T \int_\Omega \chi_{\Omega^{(\tau)}(t)} \nabla u^{(\tau)}(t) : \nabla \xi(t) \,dy dt \\
  &\to \int_0^T \int_\Omega \chi_{\Omega(t)} \nabla u(t) : \nabla \xi(t) \, dy dt = \int_0^T \inner[\Omega(\cdot)]{\nabla u^{(\tau)}(t)}{\nabla \xi(t)} dt.
 \end{align*}
 as $u$ converges weakly in $L^2([0,T];W^{1,2}(\Omega;\R^n))$.

 The same approach also works for the forces on the fluid, where the domain is the only variable in $\tau$ and thus
 \begin{align*}
  \int_0^T \rho_f\inner[\Omega^{(\tau)}(t)]{f}{\xi(t)} dt \to \int_0^T \rho_f\inner[\Omega(t)]{f}{\xi(t)} dt 
 \end{align*}
 Finally we have the forces acting on the solid. Here both sides converge uniformly:
 \begin{align*}
  \int_0^T \rho_s\inner[Q]{f\circ \eta^{(\tau)}(t) }{\phi^{(\tau)}(t)} dt \to \int_0^T \rho_s\inner[Q]{f\circ \eta(t) }{\phi(t)} dt
 \end{align*}
 
 Collecting all the terms than concludes the proof.
 \end{proof}  

 \subsubsection*{Step 5: Construction of the pressure}
Take  $s\in (0,T)$ with. Since there is  a distance to the possible collision, we know that $\Omega(t)$ is a uniform Lipschitz domain with bounds on the energy for all $t\leq s$. Taking $\psi\in C_0^\infty(\Omega(t))$, such that $\int_{\Omega(t)} \psi dy =0$, we can use the Bogovski\u{\i}-operator $\bog_t$ defined on $\Omega(t)$ via \autoref{thm:sohr} to define
  \begin{align*}
   \tilde{P}(t)(\psi) =  \nu \inner[\Omega(t)]{\nablasym u}{\nablasym \bog_t \psi} - \rho_f \inner[\Omega(t)]{f}{\bog_t \psi} dt.
  \end{align*}
This then gives the estimate
  \begin{align*}
   \abs{\tilde{P}(t)(\psi)} = C \norm{\bog_t} \norm[L^2(\Omega(t))]{\psi}
  \end{align*}
  where $\norm{\bog_t}$ is the operator-norm of $\bog_t: \{\psi \in L^2(\Omega(t)): \int_{\Omega(t)} \psi = 0\, dy\} \to W^{1,2}(\Omega(t))$ which is bounded by the Lipschitz constants of $\Omega(t)$ by \autoref{thm:sohr}. 
  Now since $\{\psi \in L^2(\Omega(t)): \int_{\Omega(t)} \psi\, dy = 0\}$ is a Hilbert space we find a $\tilde{p}(t)$ in that space such that $\tilde{p}(t)\equiv \tilde{P}(t)$. 
  
  We can extend the operator to $L^2(\Omega(t))$ in the following way: 
  Take $\varphi(t)\in C^\infty_0(\Omega(t))$ and $\tilde{\varphi}(t)\in C^\infty_0(\Omega\setminus\Omega(t))$ fixed, such that $\int\varphi(t)\, dy=\int\tilde{\varphi}(t)\, dy=1$ for all $t\in [0,s]$. Since the change of domain in time is uniformly continuous, we may assume further that $\varphi,\tilde{\varphi}$ are $C^1$ smooth in time.  
Next we define $\bog$ to be the operator of Theorem~\ref{thm:sohr} with respect to the full domain $\Omega$. 
 
By taking the fixed pair of test functions \[
\xi_0(t) := \bog(\varphi(t)-\tilde{\varphi}(t)), \quad \phi_0(t,x) := \xi_0(t,\eta(t,x)),
\] we may define
 \begin{align*}\hat{p}(t,y)&=
 \Big(\inner[Q]{DE(\eta(t))}{\phi_0(t)} + \inner[Q]{D_2R(\eta(t),\partial_t \eta(t))}{\phi_0(t)} 
  + \nu \inner[\Omega(t)]{\nablasym u(t)}{\nablasym \xi_0(t)} \\ \nonumber &\qquad - \rho_f \inner[\Omega(t)]{f(t)}{\xi_0(t)} - \rho_s \inner[Q]{f(t)\circ \eta(t)}{\phi_0(t)}\Big)\varphi(t,y)
 \end{align*}
 which satisfies $\norm[{L^2([0,s];L^\infty(\Omega(t))}]{\hat{p}}\leq C$ with $C$ depending on the energy estimates only.
 But this allows to introduce the pressure. We define for $\psi\in L^1(\Omega(t))$, $c_\psi(t)=\int_{\Omega(t)}\psi(t)\, dy$. Now, if $\psi\in L^2([0,T],L^1(\Omega(t)))$ we find that $c_\psi\in L^2([0,s])$. Hence we may define
\[
P(\psi)=\int_0^T\inner{\tilde{p}}{\psi-c_\psi\varphi}\, dt+\int_0^T\int_{\Omega(t)}\hat{p}\, dy c_\psi \, dt\]

Thus $p\in L^\infty(0,s;L^2(\Omega(t))+L^2(0,s;L^\infty(\Omega(t))$ is well defined via that operator
 \begin{align*}
  \int_0^T \inner{\nabla p}{\xi} dt := P(\diver \xi),
 \end{align*}
 and satisfies the proposed regularity.

One can now check that it fulfills the right equations. For that it suffices to see, that
$\xi-\bog_t(\diver(\xi)-c_{\diver(\xi)}\varphi)-c_{\diver(\xi)}\bog(\varphi-\tilde{\varphi})=\xi-\bog_t(\diver(\xi)-c_{\diver(\xi)}\varphi)-c_{\diver(\xi)}\xi_0$ is divergence free over $\Omega(t)$. Hence \eqref{eq:QSEL} is satisfied by \eqref{eq:ELnopres} using the test-function $(\xi-\bog_t(\diver(\xi)-c_{\diver(\xi)}\varphi)--c_{\diver(\xi)}\xi_0,\phi-c_{\diver(\xi)}\phi_0)$.

 This finally allows us to conclude the Theorem.
 \begin{proof}[Proof of \autoref{thm:QSexistence}]
  For any injective $\eta_0$ there is a short interval $[0,T]$ such that for all $\tau$ small enough, such that all $\eta_k^{(\tau)}$ are injective according to \autoref{cor:QSshortTimeNoCollision}. But then we can take a subsequence and a limit $(\eta,v)$ and conclude by \autoref{prop:QSdiscreteELequation} that this limit is a solution on $[0,T]$.
  
  Now let $[0,T_{\max})$ be a maximal interval on which a solution $(\eta,v)$ constructed in the previous way exists. If $T_{\max} = \infty$, we are done. The same holds if $T_{\max}<\infty$ and $\liminf_{t\to T_{\max}} E(\eta(t)) = \infty$. Now assume that this is not the case. Then there exists a sequence of times $t_i \nearrow T_{\max}$ such that $E(\eta(t_i))$ is bounded and there exists a limit, which we will denote $\eta(T_{\max})$.
  
  Now let $E_0 := \liminf_{t\to T_{\max}} E(\eta(t)) \geq E(\eta(T_{\max}))$ due to lower semicontinuity. Following \autoref{lem:QSdiscreteAPrioriEstimates} and \autoref{lem:QSdiscreteHoelderEstimate}, there exists a minimal time $T$ on which the solution any solution starting with energy below $2E_0$ stays below energy $3E_0$ and is H\"{o}lder-continuous in time in that time interval. Due to the convergence, we can pick $t_i$ with $T_{\max} - t_i \leq T$ and $E(\eta(t_i)) \leq 2E_0$, but then the solution is H\"{o}lder-continuous right until $T_{\max}$ and thus $\lim_{t\nearrow T_{\max}} \eta(t) = \eta(T_{\max})$. But then we can use the short term existence to construct a solution starting from $\eta(T_{\max})$ and appending this to the previous solution yields a contradiction as $T_{\max}$ cannot be maximal.

 \end{proof}

\subsection{The example Energy-Dissipation pair}
\label{subsec:example}

Let us now consider the prototypical example we stated in the introduction in the form of \eqref{kelvinVoigt} and \eqref{st-venant}.
In particular, we will prove that those fulfill Assumptions \ref{ass:energy} and \ref{ass:dissipation}. Moreover, we comment a bit more on the meaning of those assumptions and on how those are important in the course of the construction. Effectively we will prove the following proposition.

\begin{proposition}
 The example energy and dissipation given in \eqref{kelvinVoigt} and \eqref{st-venant} fulfill the assumptions S1-S6 and R1-R4 respectively. In particular the resulting fluid-structure interaction problem has a weak solution, under the additional conditions given in \autoref{thm:QSexistence}.
\end{proposition}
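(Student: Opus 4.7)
The plan is to check each of the six assumptions on $E$ and four on $R$ individually, dispatching the routine ones by direct computation and reserving careful work for the three technical cores: the Healey--Kr\"omer lower bound on $\det \nabla \eta$ (S2), the Neff--Pompe generalised Korn inequality (R3), and the Minty-type monotonicity in the highest gradient (S6).

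For the energy, S1 is immediate as all three integrands are non-negative on $\mathrm{GL}^+_n \times \mathbb{R}^{n\times n\times n}$ (and the value $+\infty$ elsewhere is harmless). S4 follows at once from the $\tfrac{1}{q}|\nabla^2\eta|^q$-term combined with the fixed Dirichlet boundary datum on $P$ encoded in $\mathcal{E}$, via a standard Poincar\'e argument. For S3, the compact embedding $W^{2,q}(Q)\hookrightarrow C^{1,\alpha}(Q)$ (available since $q>n$) forces the Saint Venant--Kirchhoff term to converge continuously, the determinant term $(\det\nabla\eta)^{-a}$ is treated by Fatou using the uniform convergence $\det\nabla\eta_l\to\det\nabla\eta$, and the convex top-order term $|\nabla^2\eta|^q$ is weakly l.s.c.\ classically. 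S5 amounts to computing the Fr\'echet derivative term by term; continuity on sublevel sets follows from the uniform $C^{1,\alpha}$ bound on $\nabla\eta$ together with the lower bound on $\det\nabla\eta$ furnished by S2. The central analytical input is S2 itself, which is proved by the Healey--Kr\"omer argument of~\cite{healeyInjectiveWeakSolutions2009}; the exponents $q>n$ and $a > qn/(q-n)$ are chosen precisely so that sublevel sets $\{\det\nabla\eta<\varepsilon_0\}$ are forced to be empty through a scaling/Morrey argument. Finally, for S6 I would split $DE$ into a lower-order part depending only on $\nabla\eta$ (strongly continuous with respect to weak $W^{2,q}$ convergence via the $C^{1,\alpha}$-embedding) and the top-order $q$-Laplacian-like part $-\operatorname{div}(|\nabla^2\eta|^{q-2}\nabla^2\eta)$, for which monotonicity and the reverse implication (strong convergence from the $\limsup$ condition) are standard consequences of the strict convexity of $F\mapsto\tfrac{1}{q}|F|^q$.

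For the dissipation, R2 is obvious since $R$ is manifestly homogeneous of degree two in $\partial_t\eta$. R1 follows because $R(\eta,\cdot)$ is a continuous convex functional on $W^{1,2}(Q)$, hence weakly l.s.c. R4 is a direct computation of the derivative
\[
\langle D_2 R(\eta,b),\phi\rangle = \int_Q 2\,\bigl((\nabla b)^T\nabla\eta+(\nabla\eta)^T\nabla b\bigr):\bigl((\nabla\phi)^T\nabla\eta+(\nabla\eta)^T\nabla\phi\bigr)\,dx,
\]
which is jointly weakly continuous once $\nabla\eta\in C^0$ uniformly and $b,\phi$ are bounded in $W^{1,2}(Q)$. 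R3 is the only truly delicate item on the dissipation side: using the identity $(\nabla b)^T\nabla\eta+(\nabla\eta)^T\nabla b = 2\,\operatorname{sym}\bigl((\nabla\eta)^T\nabla b\bigr)$ and passing to the Eulerian field $w:=b\circ\eta^{-1}$ on $\eta(Q)$, the integrand becomes a non-degenerate multiple of the pull-back of $|\operatorname{sym}(\nabla w)|^2$. Since S2 gives a uniform lower bound on $\det\nabla\eta$ and S4 a uniform $C^{1,\alpha}$-bound on $\nabla\eta$, the image $\eta(Q)$ is a $C^{1,\alpha}$-domain with uniformly bounded characteristics, so the classical Korn inequality applies there; pulling back to $Q$ and using $b|_P=0$ yields the required $W^{1,2}$-estimate. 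This is exactly the argument of~\cite{mielkeThermoviscoelasticityKelvinVoigtRheology2019} based on the generalised Korn inequalities of~\cite{neffKornFirstInequality2002,pompeKornFirstInequality2003}.

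The main obstacle across the whole verification is S2: without a uniform lower bound on $\det\nabla\eta$, neither R3, nor the continuity statements in S5 and S6, nor indeed the well-posedness of the fluid domain $\Omega(t)$ used throughout the paper, can be sustained. The higher-gradient regularisation $|\nabla^2\eta|^q$ with $q>n$ and the singular weight $(\det\nabla\eta)^{-a}$ with $a>qn/(q-n)$ are calibrated precisely to fit the Healey--Kr\"omer technique. Once this single input is borrowed from the literature, the remaining verifications reduce to standard convexity, Poincar\'e and change-of-variables computations, and \autoref{thm:QSexistence} then yields the parabolic existence result for the model pair $(E,R)$.
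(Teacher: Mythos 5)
Your proposal is correct and follows essentially the same route as the paper: Healey--Kr\"omer for S2, Neff--Pompe/Mielke--Roub\'{\i}\v{c}ek for R3, direct computation for S5/R4, and a Minty-type monotonicity argument with localization in the top-order $q$-Laplacian term for S6. The only small stylistic difference is your treatment of R3, where you push forward to the Eulerian domain $\eta(Q)$ and invoke the classical Korn inequality with uniform domain characteristics, whereas the paper cites the Neff--Pompe generalized Korn inequality directly in the reference domain; both work once the $C^{1,\alpha}$ and $\det\nabla\eta$ bounds from S2/S4 are in place.
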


Instead of proving the assumptions in ascending order or order of convenience, we will try to tackle them in the order as they appear in the proof of \autoref{thm:QSexistence}. Furthermore, we will roughly group them by some relevant subtopics.
\subsubsection*{The minimization problem (S1,S3-S4,R1-R2)}

We start with the definition of $\eta_{k+1}^{(\tau)}$ in the minimizing movements-scheme in \eqref{eq:QSdiscreteProblem}. In order to prove existence of minimizers, we need to invoke the direct method of the calculus of variation. Given a minimizing sequence, we find a converging subsequence and then show that the resulting limit has indeed a minimal value. In other words, we need to show compactness and lower semicontinuity, as well as a lower bound for the functional. 

The last one seems to be directly stated in S1 as well as in the quadratic homogeinity in R2. Of course for our example energy S1 immediately holds, as all terms are non-negative and R2 is similarly obvious, as $\partial_t \eta$ occurs as a quadratic factor. There is however some hidden difficulty in order to find a lower bound for the whole functional, which does not only include energy and dissipation, but also the force terms, which can indeed be negative. To counteract these, we actually use the proper quadratic growth of the dissipation, which is immediate for the fluid and a result of the Korn type inequality R3 for the solid. At this point though, as the first argument of the dissipation and the fluid domain are still fixed, there is no need yet, to use R3 to its full extent.%

Once a lower bound for our minimizing sequence is established, we need to consider compactness. Here the relevant topology for $\eta$ is the weak $W^{2,q}(Q;\Omega)$ topology and the relevant assumption for compactness is coercivity, in the form of S4. As we have bounded the other terms in the functional from below without involving the energy $E$, we know that this energy needs to be bounded from above and thus the coercivity allows us to use the Banach-Alaoglu theorem to extract compactness. In our example, the coercivity is obtained in the most simple way, as $\norm[L^q(Q)]{\nabla^2 \eta}^q$ is part of the energy. But it should be noted that we do not need a specific growth condition, since the energy will always be uniformly bounded throughout the paper. The precise form of the term will play a role for some of the other properties though.

As for(weak) lower semicontinuity, we need to verify assumptions S3 and R1 for the example case. First, note that the highest order term in the energy  $\norm[L^q(Q)]{\nabla^2 \eta}^q$ is weakly lower semi-continuous. Second, we find that $q>n$ allows us to pick another subsequence converging in $C^{1,\alpha}$ for some $\alpha<\frac{q-n}{q}$. This allows us to pass to the limit in the terms $\int_Q \frac{1}{8}|\nabla \eta^T \nabla \eta-I|_\mathcal{C} dx$ and $\frac{1}{(\det \nabla \eta)^a}$ in \eqref{st-venant}.

\subsubsection*{Converting between Lagrangian and Eulerian setting (S2)}

Note that as long as we were only discussing the minimization over the solid, the specific choice of $W^{2,q}(Q;\R^n)$ as a space was unimportant and choosing different terms in the integrand might as well have led us to a different space. It however becomes important when adding in the fluid, since it is prescribed w.r.t.\ Eulerian setting which is again determined by the solid deformation $\eta$.
The key here is the assumption S2, on the determinant. Not only does this result in physically reasonable injectivity (in conjunction with the Ciarlet-Ne\v{c}as condition), but it also allows us to convert between Eulerian and Lagrangian quantities as it actually implies that $\eta$ is a diffeomorphism with uniform bounds. In particular, this will then imply that the fluid domain has a regular enough boundary, which will play an important role in the hyperbolic case.

To prove this property we follow the ideas of~\cite{healeyInjectiveWeakSolutions2009} where a similar energy was studied.
  Define $f(x) := \det \nabla \eta$. If $E(\eta)$ is bounded, then $f$ is bounded in $W^{1,q}(Q)$ and $C^\alpha(Q)$. Now for a fixed $\epsilon_0$ assume that there is $x_0 \in Q$ with $f(x_0) = 2\epsilon_0$. Then
  \begin{align*}
   &\phantom{{}={}} E(\eta) \geq \int_{B_\delta(x_0)\cap Q} \frac{1}{f(x)^a} dx \geq c \frac{\delta^n}{ \left(\int_{B_\delta(x_0)\cap Q} f(x) dx \right)^a} \\
   &\geq c \frac{\delta^n}{\left(f(x_0) + \int_{B_\delta(x_0)\cap Q} \abs{f(x)-f(x_0)} dx \right)^a} 
   \geq c \frac{\delta^n}{\left(2\epsilon_0 + C \delta^\alpha \right)^a}
  \end{align*}
  However if $a\alpha >n$, the right hand side can be arbitrarily large if $\epsilon_0$ and $\delta$ are choosen small enough, which is a contradiction.
  
\subsubsection*{Uniform bounds (R3)}

It has been long known that there is a certain mismatch between physically reasonable and mathematically expedient dissipation functionals (see e.g.\ \cite{antmanPhysicallyUnacceptableViscous1998}). We would prefer the dissipations to be of the forms $\norm[W^{1,2}(Q)]{\partial_t \eta}^2$ and $\norm[W^{1,2}(\Omega)]{u}^2$. This would then almost directly lead to $L^2([0,T];W^{1,2}(Q;\R^n))$ and $L^2([0,T];W^{1,2}(\Omega;\R^n))$-bounds respectively for $\partial_t \eta$ and $v$ as well as their approximations. Instead, we are only given $R(\eta,\partial_t \eta)$ and $\norm[\Omega(t)]{\nablasym v}^2$. Thus we require Korn-inequalities to convert the bounds for latter into bounds for the former.

As the Korn inequality for the fluid is the classic one and the added difficulties due to the changing domain are overcome by \autoref{lem:globalKorn}, we only need to focus at the solid. For our example, this inequality and thus R3 follows from the main theorems in \cite{neffKornFirstInequality2002,pompeKornFirstInequality2003}. See also the discussion in \cite{mielkeThermoviscoelasticityKelvinVoigtRheology2019}, where these results are coupled with an energy similar to ours in the context of a thermoviscoelastic solid (but without a fluid).%

Observe that these inequalities require a certain regularity of the deformation $\nabla \eta$ itself. In fact we need the same properties that allow us to switch between Lagrangian and Eulerian settings, i.e. a uniform lower bound on the determinant $\det \nabla \eta$ and continuity of $\nabla \eta$, as otherwise there are known counterexamples for which the inequality fails.

\subsubsection*{Weak equations (S5, R4)}

Combined, the assumptions so far are enough to construct iterative minimizers and even to have a subsequence converge to a limit object $\eta,v$ in space-time by weak compactness. We are left to show that these function do satisfy a weak coupled PDE. This is where the assumptions S5 and R4 come in. Both of them are two-part in nature, requiring both the existence of a derivative as well as some form of continuity. Both are also immediately shown for the example by just doing the calculation. Let us start with the dissipation, namely
\begin{align*}
  \inner{D_2 R(\eta,b)}{\phi} = \int_Q 2(\nabla b^T \nabla \eta +\nabla \eta^T \nabla b)\cdot (\nabla \phi^T \nabla \eta + \nabla \eta^T \nabla \phi) \,dx.
\end{align*}
Since we have $C^{1,\alpha}(Q;\R^n)$-bounds on $\nabla \eta$, the $L^2(Q)$-regularity of $\nabla b (=\nabla \partial_t \eta)$ is enough to make sense of $D_2 R(\cdot,b)$ as an operator in $W^{-1,2}(Q;\R^n)$. Similarly, the uniform convergence in some H\"older space for $\nabla \eta$ is enough to give this derivative the required continuity with respect to both $b$ and $\eta$.

The calculation for the energy is a bit more involved. Restricting ourselves to deformations $\eta$ of finite energy and thus positive determinant, we get by a short calculation
\begin{align*}
 \inner{DE(\eta)}{\phi} &= \int_Q \frac{1}{4} \mathcal{C}(\nabla\eta^T \nabla\eta-I) \cdot (\nabla \phi^T \nabla \eta + \nabla\eta^T \nabla \phi) \\
 &- a \frac{\cof \nabla \eta}{(\det \nabla \eta)^a} \cdot \nabla \phi + \abs{\nabla^2 \eta}^{q-2} \nabla^2 \eta \cdot \nabla^2 \phi\, dx
\end{align*}
where the scalar products are to be understood over all tensorial dimensions. 

Again in order to pass to the limit with the energy, we need to make use of the uniform H\"{o}lder-continuity of $\nabla \eta$ to see that the first two terms in $DE(\eta)$ are well defined and continuous with respect to the corresponding convergence. Finally, the last term is well defined since $\eta \in W^{2,q}(Q;\R^n)$ uniformly, but to show that it is also continuous we need to show strong convergence using the convexity of the quantity. This then leads to the final assumption. %

\subsubsection*{Improved convergence (S6)}

As the usual compactness methods will only result in weak compactness, and S5 requires strong convergence we need a way to improve upon this. For this we rely on an idea that is most commonly attributed to Minty. While it is certainly not true that our energy is convex, the critical, second order term in its derivative $DE(\eta)$ is monotone and this allows us to improve convergence as desired.

Assume that as stated $\eta_l \rightharpoonup \eta$ in $W^{2,q}(Q;\R^n)$. Then after possibly another subsequence $\eta_l \to \eta$ in $C^{1,\alpha}(Q;\R^n)$ and in particular the first two terms of $DE(\eta_l)$ already converge (using the lower bound on $\det \nabla \eta$ given through S2). As a result, the stated conditions on convergence of $\inner{DE(\eta_l)-DE(\eta)}{(\eta_l-\eta)\psi} \to 0$ for all cutoffs $\psi \in C_0^\infty(Q;[0,1])$ are equivalent to those for
\begin{align*}
 \inner{\abs{\nabla^2 \eta_l}^{q-2} \nabla^2 \eta_l -\abs{\nabla^2 \eta}^{q-2} \nabla^2 \eta}{\nabla^2 ((\eta_l-\eta)\psi)} 
\end{align*}
Here the cutoff complicates things slightly, but expanding the right hand side yields terms of lower order ($(\eta_l-\eta) \otimes \nabla^2 \psi$ and $\nabla (\eta_l-\eta) \otimes \nabla \psi$) which already converge strongly to $0$ and one term of second order, which leaves us with
\begin{align*}
 \inner{\abs{\nabla^2 \eta_l}^{q-2} \nabla^2 \eta_l -\abs{\nabla^2 \eta}^{q-2} \nabla^2 \eta}{(\nabla^2\eta_l-\nabla^2\eta))\psi}
\end{align*}
where we now can send $\psi \to 1$ by approximation.
Now $\eta \mapsto \abs{\nabla^2 \eta}^{q-2} \nabla^2 \eta$ is a classic example of a monotone operator. Thus the term is bounded from below by $0$ and its convergence to $0$ implies strong convergence $\eta_l \to \eta$ in $W^{2,q}(Q;\R^n)$, by the fact that for $q\geq 2$ and $a,b\in \R^{n^3}$
\[
(\abs{a}^{q-2}a-\abs{b}^{q-2}b)\cdot (a-b)\geq c\abs{a-b}^q.
\]
\section{Minimizing movements for hyperbolic evolutions}
\label{sec:so}

In this section, we will introduce a general method for adding inertial effects to continuum mechanical problems, thereby turning them from parabolic to hyperbolic. We will demonstrate this in the purely Lagrangian case of a single viscoelastic solid, but as we will see in the next section, the method turns out to be flexible enough to allow even for problems which are of a mixed Lagrangian/Eulerian type such as fluid-structure interaction. Also note that while this section can be read independently from the previous one, at some places we will use a similar reasoning, which will thus be abridged slightly.

In particular we keep the notation from the previous section. Thus $\eta:Q \to \R^n$, $\eta \in \mathcal{E}$ is the deformation of the solid specimen and $E$ and $R$ are its elastic energy and dissipation. For simplicity we will also use the same set of assumptions, though many of them could be relaxed, as they are intended for interaction with the fluid, see in particular \autoref{rem:SOgeneralEnergies}. Also, as there are no changing domains within this section, we will suppress the dependence of the inner products and the resulting $L^2$-norms on $Q$.

The problem we thus want to solve is to find the deformation of the viscoelastic solid specimen moving inertially in space subject to an action of forces. In other words, we need to solve the balance of momentum (Newton's second law) that reads as
\begin{align}
\label{eq:balMomSolid}
 \rho \partial_t^2 \eta = D_2R(\eta,\partial_t \eta) + DE(\eta) - f \circ \eta
\end{align}
where $\rho= \rho_s$ is a constant density and $f$ some, not necessarily conservative, external force. Note that the assumption that $\eta\in \mathcal{E}$ implies that it satisfies given Dirichlet boundary conditions on $P$. On the other parts of the boundary $\partial Q\setminus P$ we assume here natural Neumann type (free) boundary conditions that will result from minimization.

 As usual, we translate this into the notion of a weak solution.

\begin{definition}[Weak solution to the inertial problem for solids] \label{def:SOweakSolution}
 We call $\eta \in L^\infty([0,T];\mathcal{E}) \cap W^{1,2}([0,T];W^{1,2}(Q;\R^n))$, such that $\partial_t\eta\in C^0_w([0,T];L^2((Q;\R^n))$ and $\eta(0) = \eta_0$ a weak solution to the inertial problem of the viscoelastic solid \eqref{eq:balMomSolid} if 
 \begin{align*}
\int_0^T \inner{DE(\eta)}{\phi} + \inner{D_2R(\eta,\partial_t \eta)}{\phi}-\inner{f\circ\eta}{\phi} + \rho \inner{\partial_t \eta}{\partial_t \phi} dt - \rho \inner[Q]{\eta'}{\phi(0)} = 0%
 \end{align*}
 for all $\phi \in C^\infty([0,T];C^\infty(Q;\R^n))$ with $\phi|_{[0,T]\times P} = 0$ such that $\phi(T) = 0$.
\end{definition}
Observe, that we can restrict the solution to the particular closed set $\mathcal{E}$ and thus will only work with injective deformations on $Q$. This will be of particular interest to us as this property is relevant for modelling fluid-structure interactions. 

The main goal of this section will be to prove the following theorem.

\begin{theorem}[Existence of solutions for solids] \label{thm:SOexistence}
 Assume that the conditions from \autoref{ass:energy} and \autoref{ass:dissipation} hold. Assume that the initial data $\eta_0 \in \mathcal{E} \setminus \partial \mathcal{E}$ with $E(\eta) < \infty$, that $\eta' \in L^2(Q;\R^n)$ and that $f \in C^0([0,\infty)\times\R^{n};\R^n)$.  Then there exists a weak solution to \eqref{eq:balMomSolid} according to \autoref{def:SOweakSolution} on $[0,T]$. Furthermore, $T> 0$ can be chosen in such a way that $T = \infty$ or $\eta(T) \in \partial \mathcal{E}$ (see also \autoref{rem:SOcollisions}).
\end{theorem}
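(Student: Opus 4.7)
\emph{Proof plan.} I follow the two-time-scale strategy announced in Section~\ref{sec-intro}. For an acceleration scale $h > 0$ I replace $\rho \partial_t^2 \eta$ in \eqref{eq:balMomSolid} by the backward quotient $\rho h^{-1}(\partial_t \eta(t) - \partial_t \eta(t-h))$ (with the convention $\partial_t \eta \equiv \eta'$ on $[-h, 0)$) and augment the dissipation to $R_h := R + h R_{\mathrm{reg}}$ with $R_{\mathrm{reg}}(\eta, b) := \tfrac12 \norm[W^{1,2}(Q)]{b}^2$; the regulariser vanishes as $h \to 0$ but will make $\partial_t \eta$ an admissible test function for the continuous time-delayed equation. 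For $\tau = h/K$, $K \in \mathbb N$, and with previous-window velocities $b^{(\tau)}_{k-K}$ (set to $\eta'$ on the first window), I define $\eta_{k+1}^{(\tau)} \in \mathcal{E}$ iteratively as a minimiser of
\begin{equation*}
E(\eta) + \tau R_h\!\left(\eta_k^{(\tau)}, \tfrac{\eta - \eta_k^{(\tau)}}{\tau}\right) + \tfrac{\rho \tau}{2h}\, \norm[Q]{\tfrac{\eta - \eta_k^{(\tau)}}{\tau} - b^{(\tau)}_{k-K}}^2 - \tau \rho \inner[Q]{f \circ \eta_k^{(\tau)}}{\tfrac{\eta - \eta_k^{(\tau)}}{\tau}}.
\end{equation*}
Existence of the minimiser, the discrete Euler--Lagrange equation and short-time injectivity of $\eta_{k+1}^{(\tau)}$ follow exactly as in \autoref{prop:QSdiscreteELequation}--\autoref{cor:QSshortTimeNoCollision} using S1--S6, R1--R4, \autoref{lem:calEclosed} and \autoref{prop:shortInjectivity}.

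At fixed $h$, I send $\tau \to 0$ by the compactness and Minty apparatus of Section~\ref{sec:qs} (in the purely Lagrangian case, which is simpler since no fluid test-function approximation is required), obtaining a weak solution $\eta^{(h)}$ to the continuous-in-time time-delayed problem on $[0,h]$ and, by concatenation over successive $h$-windows, on $[0, T]$. Because this equation is continuous in time and $R_{\mathrm{reg}}$ forces $\partial_t \eta^{(h)} \in L^2([0,T]; W^{1,2}(Q;\R^n))$, I may test with $\partial_t \eta^{(h)}$ itself; using the chain rule for $E$ along the (smooth) curve $t \mapsto \eta^{(h)}(t)$, the quadratic homogeneity R2, and the telescoping identity $a(a-b) = \tfrac12 a^2 - \tfrac12 b^2 + \tfrac12 (a-b)^2$ applied to the inertial difference quotient, I obtain the $h$-uniform estimate
\begin{equation*}
E(\eta^{(h)}(t)) + \rho \fint_{t-h}^t \tfrac{\norm[Q]{\partial_t \eta^{(h)}(s)}^2}{2}\, ds + \int_0^t 2 R(\eta^{(h)}, \partial_t \eta^{(h)})\, ds \;\leq\; E(\eta_0) + \tfrac{\rho}{2} \norm[Q]{\eta'}^2 + \int_0^t \inner[Q]{f \circ \eta^{(h)}}{\partial_t \eta^{(h)}}\, ds,
\end{equation*}
from which, via R3 and Gr\"onwall, I extract uniform-in-$h$ bounds on $\eta^{(h)}$ in $L^\infty([0,T]; W^{2,q}(Q;\R^n))$ and on $\partial_t \eta^{(h)}$ in $L^\infty([0,T]; L^2(Q;\R^n)) \cap L^2([0,T]; W^{1,2}(Q;\R^n))$.

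In the final passage $h \to 0$, standard compactness (Aubin--Lions, where the time-delayed equation itself controls a distributional proxy for $\partial_t^2 \eta^{(h)}$ in a negative Sobolev norm) produces a limit $\eta$ with $\eta^{(h)} \to \eta$ strongly in $C^0([0,T]; C^{1,\alpha^-}(Q))$ and $\partial_t \eta^{(h)} \rightharpoonup \partial_t \eta$ weakly in $L^2([0,T]; W^{1,2}(Q;\R^n))$. The regulariser $h R_{\mathrm{reg}}$ drops out by its $O(h)$ scaling; the $DE$ term passes by the Minty property S6 as in \autoref{prop:QSELeq}; $D_2 R$ passes via R4 and the quadratic structure R2; and the time-delayed inertial term is handled by a discrete integration by parts (valid since $\phi(T) = 0$ and $\phi$ is extended by zero past $T$),
\begin{equation*}
\int_0^T \rho \inner[Q]{\tfrac{\partial_t \eta^{(h)}(t) - \partial_t \eta^{(h)}(t-h)}{h}}{\phi(t)}\, dt = -\int_0^T \rho \inner[Q]{\partial_t \eta^{(h)}(t)}{\tfrac{\phi(t+h) - \phi(t)}{h}}\, dt - \rho \fint_0^h \inner[Q]{\eta'}{\phi(s)}\, ds,
\end{equation*}
which yields in the limit exactly $-\int_0^T \rho \inner[Q]{\partial_t \eta}{\partial_t \phi}\, dt - \rho \inner[Q]{\eta'}{\phi(0)}$. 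The principal obstacle is the one flagged in Remark~\ref{rem:IntroChainRule}: non-convexity of $E$ blocks any direct energy estimate at the discrete $\tau$-level, which is precisely why the two-scale construction is necessary---one must first pass $\tau \to 0$ in order to obtain a time-continuous equation in which $\partial_t \eta^{(h)}$ is admissible as a test function. A short-time/continuation argument mirroring the end of the proof of \autoref{thm:QSexistence} extends the solution to a maximal interval ending either at $T = \infty$ or at first collision $\eta(T) \in \partial \mathcal{E}$.
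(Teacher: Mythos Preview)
Your overall architecture matches the paper's, but there is a genuine gap at the point where you test the time-delayed equation with $\partial_t\eta^{(h)}$. Your regulariser $R_{\mathrm{reg}}(\eta,b)=\tfrac12\norm[W^{1,2}(Q)]{b}^2$ gives nothing beyond what $R$ already provides via the Korn-type bound R3, namely $\partial_t\eta^{(h)}\in L^2([0,T];W^{1,2}(Q;\R^n))$. But $DE(\eta^{(h)})$ lives in $(W^{2,q}(Q;\R^n))'$ by S5, so $\partial_t\eta^{(h)}$ is \emph{not} an admissible test function and the chain-rule identity $\inner{DE(\eta^{(h)})}{\partial_t\eta^{(h)}}=\tfrac{d}{dt}E(\eta^{(h)})$ is not justified; the curve $t\mapsto\eta^{(h)}(t)$ is not ``smooth'' in any sense that makes this work. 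The paper fixes this by choosing $R_h(\eta,b)=R(\eta,b)+h\norm[Q]{\nabla^{k_0}b}^2$ with $k_0$ large enough that $W^{k_0,2}(Q)\hookrightarrow W^{2,q}(Q)$, and in addition regularises the energy to $E_h(\eta)=E(\eta)+h^{a_0}\norm[Q]{\nabla^{k_0}\eta}^2$; the latter is needed so that the extra dissipation term can be absorbed in the Minty argument at the $h$-level (see the balancing of powers $h^{\frac12-\frac{a_0}{2}}$ in the paper's Lemma on the Minty trick).

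A second point you pass over too quickly is the Minty step for $h\to 0$: one cannot simply test with $(\eta^{(h)}-\eta)\psi$ because this is not in $W^{k_0,2}$ (nor even in $W^{2,q}$ with the right convergence). The paper instead tests with $(\eta^{(h)}-\eta_{\delta_h})\psi$ where $\eta_{\delta_h}$ is a mollification of $\eta$ at scale $\delta_h=h^{a_1}$, and the inertial difference quotient is handled by a discrete integration by parts that produces precisely the averaged velocity $b^{(h)}(t)=\fint_t^{t+h}\partial_t\eta^{(h)}\,ds$, whose \emph{strong} $L^2$-convergence (not merely weak convergence of $\partial_t\eta^{(h)}$) is what the Aubin--Lions lemma delivers and what closes the argument. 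Your sketch has the right ingredients but does not identify that it is the \emph{averaged} velocity that converges strongly, nor the mollification trick needed to make the test function admissible.
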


Before we begin, let us give the general idea of the proof of \autoref{thm:SOexistence}. The main goal is to approximate the sought solution of the hyperbolic problem by solutions to suitably constructed parabolic problems. This will open up the pathways to using a wide array of methods developed for the parabolic situation. For us, this concerns particularly the \emph{method of minimizing movements}. The key is in discretizing the second time derivative in \eqref{eq:balMomSolid} by a difference quotient w.r.t.\ the acceleration scale $h$. We will thus first solve what we will call the time-delayed problem:
\begin{align}
\label{eq:timeDelayed}
 \rho\frac{\partial_t \eta^{(h)}(t)-\partial_t \eta^{(h)}(t-h)}{h} = -DR_2(\eta^{(h)}(t),\partial_t \eta^{(h)}(t)) - DE(\eta^{(h)}(t)) + f \circ \eta^{(h)}(t)
\end{align}
For any fixed $h$, \eqref{eq:timeDelayed} has the structure of a gradient flow, yet one with a nonlocality in time in form of the term $\partial_t \eta^{(h)}(t-h)$. Now the important observation is, that on the interval $[0,h]$, $\partial_t \eta^{(h)}(t-h)$ is not part of the solution but actually given as the initial data. Thus, on this interval, the problem can be solved using parabolic methods. But then, once we know the solution on $[0,h]$, we can use this as data for the problem on $[h,2h]$ and iterate.
To allow for an interation process, we in particular need to know that the solution obtained from the previous step is admissible to play the role of data in the next step. In other words, we need to assure that that $E(\eta^{(h)}(h))$ is bounded and that $\partial_t \eta^{(h)}$ possesses the necessary integrability. This is guaranteed by proving a suitable energy inequality, \emph{a key element of the proof}. Fundamentally, a gradient flow needs to be viewed in terms of energy and dissipation. In particular there is always an energy balance, which often only takes the form of an inequality. In our case, for the time delayed problem on $[0,h]$, the energy inequality will have the form
\begin{align*}
 &\phantom{{}={}} E(\eta^{(h)}(h)) + \frac{\rho}{2h} \int_0^h \norm{\partial_t \eta^{(h)}(t)}^2 dt + \int_0^h R(\eta^{(h)}(t),\partial_t \eta^{(h)}(t)) dt \\
 &\leq E(\eta^{(h)}(0)) + \frac{\rho}{2h} \int_0^h \norm{\partial_t \eta^{(h)}(t-h)}^2 dt + \int_0^h \inner{f\circ \eta^{(h)}}{\partial_t \eta^{(h)}} dt
\end{align*}
Let us elaborate the terms in this inequality: On the right-hand side, we have the potential energy $E$ of the initial data, as well as the averaged kinetic energy $\frac{\rho}{2} \fint_0^h \norm{\partial_t \eta^{(h)}}^2 dt$ of the ``previous step''. On the left hand side, we have the potential energy at the end of the step, as well as the averaged kinetic energy of the current step.

So not only have we bounded the initial data for the next step in terms of the initial data of the previous step and thus can iterate, we also have an estimate suitable to employ a telescope argument. Indeed, by summing up the estimate, over $l$ time intervals of length $h$, we will gain a uniform bound on the new endpoint $E(\eta^{(h)}(lh))$ and $\frac{\rho}{2} \fint_{(l-1)h}^{lh} \norm{\partial_t \eta^{(h)}}^2 dt$ only in terms of the given initial data and forces.
These uniform bounds on $\eta^{(h)}$ are independent of $h$, thus they allow us to take the limit $h \to 0$ and obtain a solution to the hyperbolic problem through first adapting the Aubin-Lions lemma to $\partial_t\eta$ and then using the Minty trick.  

Following this approach, we will show the existence of the time-delayed problem in detail in \autoref{subsec:SOtimeDelayed} before proving \autoref{thm:SOexistence} in \autoref{subsec:SOproof}. Finally, we will give some remarks on our method and possible variations of the proof in \autoref{subsec:SOremarks}.

\subsection{The time-delayed problem} \label{subsec:SOtimeDelayed}

For all of this subsection we will assume $h> 0$ to be fixed. In order to solve the time-delayed problem, we first need to give a precise definition of its weak formulation.

\begin{definition}[Weak solutions to the time-delayed equation for solids] \label{def:SOtimeDelayedSolution} Let $w\in L^2([0,h]\times Q;\R^n)$. 
We call $\eta \in L^\infty([0,h] \times Q; \mathcal{E}) \cap W^{1,2}([0,h] ;W^{k_0,2}(Q; \R^n))$ a weak solution to the time-delayed equation \eqref{eq:timeDelayed} if $\eta(0) = \eta_0$ and
 \begin{align} \label{eq:SOtimeDelayed}
  0&=\int_0^h \inner{DE_h(\eta)}{\phi} + \inner{D_2R_h(\eta,\partial_t \eta)}{\phi} - \inner{f\circ \eta }{\phi} + \frac{\rho}{h}\inner{\partial_t \eta - w}{\phi} dt.
 \end{align}
 for all $\phi \in C^\infty([0,h]\times Q;\R^n)$ with $\phi|_{[0,h]\times P} = 0$.
\end{definition}

In this definition $w$ will play the role of the given data $\partial_t \eta(t-h)$. In particular, as we assume $h>0$ to be a given constant throughout this subsection, so we will not track the $h$-dependence of any of the quantities. Note that in Definition \ref{def:SOtimeDelayedSolution} we used the regularized forms of the energy and dissipation potentials that read as
\begin{align}
E_h(\eta)=E(\eta)+h^{a_0}\norm{\nabla^{k_0}\eta}^2 \quad R_h(\eta,b) := R(\eta,b) + h \norm{\nabla^{k_0} b}^2,
\end{align} 
where we choose $k_0$ large enough, such that $k_0-\frac{n}{2}\geq 2-\frac{n}{q}$ which implies that $W^{k_0,2}(Q;\R^n)\subset W^{2,q}(Q;\R^n)$. This actually has no impact on the existence of time delayed solutions. Instead it is a mollifying strategy which will allow us to test the Euler-Lagrange equation with $\partial_t \eta$ in order to obtain the previously mentioned energy inequality (See also \autoref{rem:SOenergyInequality}).
A similar term will also help us with some regularity issues in the fluid-structure interaction problem later in
\autoref{lem:NSfullIterationAPriori}.

We also note that we have:
\begin{remark}[Properties of the regularizing energy and dissipation] \label{cor:regularDissipationProperties}
 For all $h> 0$, we find that $E_h$ fulfills the properties given in \autoref{ass:energy}, replacing $W^{2,q}$ with $W^{k_0,2}$ and $R_h$ fulfills the properties given in \autoref{ass:dissipation} replacing $W^{1,2}$ by  $W^{k_0,2}$ where we may replace $R2$ by 
 \[c\left(\norm{\nabla \lambda}^2+h \norm{\nabla^{k_0} \lambda}^2\right) \leq   R_h(\eta,\lambda) \leq C \left(\norm{\nabla \lambda}^2+h \norm{\nabla^{k_0} \lambda}^2\right).\]
\end{remark}

Now the bulk of this subsection will be devoted to proving the following theorem:

\begin{theorem}[Existence of time delayed solutions for solids] \label{thm:SOtimeDelayedExistence}
 Let $\eta_0 \in \mathcal{E} \cap W^{k_0,2}(Q;\R^n) \setminus \partial \mathcal{E}$, $w \in L^2([0,h] \times Q;\R^n)$ and $f \in C^0([0,h] \times Q; \R^n)$. Then there exists a weak solution to the time delayed equation \eqref{eq:timeDelayed} in the sense of \autoref{def:SOtimeDelayedSolution} or there exists a solution on a shorter interval $[0,h_{\max}]$ such that $\eta(h_{\max}) \in \partial \mathcal{E}$.\footnote{Note that a-posteriori (see \autoref{cor:QSshortTimeNoCollision})
it will be shown that (in dependence of $\eta_0$) there is always a minimal time-length $h_{\min}$ for which it can be guaranteed that $\eta(t)\notin \partial \mathcal{E}$ for $t\in [0,h_{\min}]$.}
\end{theorem}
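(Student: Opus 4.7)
Since on $[0,h]$ the datum $w$ plays the role of the time-delayed velocity $\partial_t\eta(\cdot-h)$, the problem \eqref{eq:timeDelayed} is effectively a (nonlocal) gradient flow, and I adapt the minimizing-movements construction of \autoref{thm:QSexistence} --- now without fluid and without a variable domain --- to the regularised pair $(E_h,R_h)$. Fix $\tau>0$ with $h/\tau\in\N$, set $\eta_0^{(\tau)}:=\eta_0$ and $w_k:=\fint_{k\tau}^{(k+1)\tau}w(s)\,ds$. Given $\eta_k^{(\tau)}\in\mathcal{E}\cap W^{k_0,2}(Q;\R^n)$, I define $\eta_{k+1}^{(\tau)}$ as a minimiser over $\eta\in\mathcal{E}\cap W^{k_0,2}(Q;\R^n)$ of
\begin{align*}
F_k(\eta) := E_h(\eta) + \tau R_h\!\Big(\eta_k^{(\tau)},\tfrac{\eta-\eta_k^{(\tau)}}{\tau}\Big) + \tfrac{\rho\tau}{2h}\Big\|\tfrac{\eta-\eta_k^{(\tau)}}{\tau}-w_k\Big\|^2 - \tau\rho\inner{f\circ\eta_k^{(\tau)}}{\tfrac{\eta-\eta_k^{(\tau)}}{\tau}}.
\end{align*}
Existence follows by the direct method as in \autoref{prop:QSdiscreteELequation}, using the coercivity and weak lower semicontinuity of $E_h$ and $R_h$ from \autoref{cor:regularDissipationProperties} and absorbing the force term via Young's inequality and the Korn-type bound R3; the new quadratic term is convex, l.s.c.\ and bounded below. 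Provided $\eta_{k+1}^{(\tau)}\notin\partial\mathcal{E}$, the first variation yields the discrete analogue of \eqref{eq:SOtimeDelayed}.

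\textbf{Discrete a-priori estimates and interpolants.} Comparing $F_k(\eta_{k+1}^{(\tau)})\leq F_k(\eta_k^{(\tau)})$, using $R_h(\eta_k^{(\tau)},0)=0$ and the splitting $\|a-w_k\|^2\geq\tfrac12\|a\|^2-\|w_k\|^2$, and telescoping in $k$, yields a bound on $E_h(\eta_N^{(\tau)})$ together with a weighted sum of $R_h(\eta_k^{(\tau)},\tfrac{\eta_{k+1}^{(\tau)}-\eta_k^{(\tau)}}{\tau})$ and $\tfrac{\rho}{h}\|\tfrac{\eta_{k+1}^{(\tau)}-\eta_k^{(\tau)}}{\tau}\|^2$, controlled only in terms of $E_h(\eta_0)$, $\|w\|_{L^2([0,h]\times Q)}$ and $\|f\|_\infty$. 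The discrete Hölder estimate of \autoref{lem:QSdiscreteHoelderEstimate} combined with \autoref{prop:shortInjectivity} yields a maximal sub-interval $[0,h_{\max}]\subseteq[0,h]$ on which injectivity persists uniformly in $\tau$. Defining piecewise-constant and piecewise-affine interpolants $\eta^{(\tau)}$ and $\tilde\eta^{(\tau)}$ as in \autoref{prop:QSconvergence}, the bounds give, along a subsequence,
\begin{align*}
\tilde\eta^{(\tau)}\to\eta \text{ in } C^0([0,h_{\max}];C^{1,\alpha^-}(Q)), \qquad \partial_t\tilde\eta^{(\tau)}\rightharpoonup\partial_t\eta \text{ in } L^2([0,h_{\max}];W^{k_0,2}(Q)),
\end{align*}
together with $\eta\in L^\infty([0,h_{\max}];\mathcal{E})$ by \autoref{lem:calEclosed}.

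\textbf{Limit passage and main obstacle.} The principal difficulty is to upgrade the weak $W^{2,q}$-convergence of $\eta^{(\tau)}(t)$ to strong convergence, so that $DE(\eta^{(\tau)})\to DE(\eta)$ via S5. This is precisely the Minty-type argument of \autoref{prop:QSELeq}: testing the discrete Euler--Lagrange equation against $(\eta^{(\tau)}-\eta)\psi$ for a cut-off $\psi\in C_0^\infty(Q;[0,1])$, the contributions from $D_2R_h$, from the force, and from the time-delayed quadratic $\tfrac{\rho}{h}\inner{\partial_t\tilde\eta^{(\tau)}-w}{\cdot}$ all vanish in the limit by the strong $C^{1,\alpha^-}$-convergence of $\eta^{(\tau)}$ and the $L^2$-weak convergence of $\partial_t\tilde\eta^{(\tau)}$; consequently $\limsup\inner{DE(\eta^{(\tau)})-DE(\eta)}{(\eta^{(\tau)}-\eta)\psi}\leq 0$, and S6 delivers strong $W^{2,q}$-convergence for a.e.\ $t$. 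The remaining terms then pass to the limit by continuity of $DE$ and $D_2R_h$ in S5 and R4 and dominated convergence, giving \eqref{eq:SOtimeDelayed}. The additional higher-order regularisations in $E_h$ and $R_h$ are quadratic and monotone in the top derivative, so they pass to the limit without further work. If the collision time forces $h_{\max}<h$, the construction produces a solution on $[0,h_{\max}]$ with $\eta(h_{\max})\in\partial\mathcal{E}$, as claimed.
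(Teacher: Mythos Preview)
Your proof is correct and follows essentially the same approach as the paper: the same minimizing-movements scheme with the quadratic inertial penalty $\tfrac{\rho\tau}{2h}\|\tfrac{\eta-\eta_k}{\tau}-w_k\|^2$, the same discrete energy estimate by comparison with $\eta_k$, and the same Minty argument applied to the additional time-delayed term. The only cosmetic difference is that the paper writes the Minty step directly for $DE_h$ rather than treating the $h^{a_0}\|\nabla^{k_0}\cdot\|^2$ regulariser separately, but since that term is quadratic and monotone this amounts to the same thing.
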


Before we start, let us discuss how the time delayed problem can still be seen as a type of parabolic gradient flow. In particular, let us compare it to the classical parabolic gradient flow problem at the root of this, which is
\begin{align*}
 DE_h(\eta(t)) = - D_2R_h(\eta(t),\partial_t \eta(t)) + f \circ \eta(t).
\end{align*}
This problem consists of three components: Energy, dissipation and forces. Our goal is to identify the two additional terms in the time-delayed problem with those, in order to show that we still solve a similar problem.

Let us start with the delayed time derivative $\frac{\rho}{h} w(t) = \frac{\rho}{h} \partial_t \eta(t-h)$. As we work in the interval $[0,h]$, this is just a given function, not depending on the $\eta|_{[0,h]}$. But then any such function plays the role of a force. In fact, in contrast to the actual forces we consider in the problem, it is a force given in reference configuration and thus even easier to handle.

The other term, $\frac{\rho}{h} \partial_t \eta(t)$ can be seen as stemming from a quadratic dissipation potential
\begin{align*}
 \hat{R}(\eta,b) := \hat{R}(b) := \frac{\rho}{2h} \norm{b}^2,
\end{align*}
so that $D_2\hat{R}(\eta(t),\partial_t \eta(t)) = \frac{\rho}{h} \partial_t \eta(t)$. 

By this reasoning, we claim that in general, if there is a method to solve the parabolic gradient flow problem, then there the same method can solve the corresponding time delayed problem. We will refrain from trying to formalize this statement.

\begin{proof}[Proof of \autoref{thm:SOtimeDelayedExistence}]
The proof essentially follows the same lines as was done in the last section. We start by a time-discretization; i.e. we fix some time-step size $\tau$ by which we discretize the interval $[0,h]$. Given $\eta_k^{(\tau)}$, we recursively solve the following minimization problem to obtain $\eta_{k+1}^{(\tau)}$
 \begin{align} \label{eq:SOdiscreteProblem}
  & \text{Minimize} & & E_h(\eta) + \tau R_h\left(\eta_k^{(\tau)},\frac{\eta-\eta_k^{(\tau)}}{\tau}\right) - \tau \inner{f_k^{(\tau)} \circ \eta_k}{\frac{\eta-\eta_k^{(\tau)}}{\tau}}  + \tau \frac{\rho}{2h} \norm{\frac{\eta-\eta_k^{(\tau)}}{\tau}-w_k^{(\tau)}}^2 \\
	& \nonumber \text{subject to} & & \eta \in \mathcal{E}
 \end{align}
 where $w_k^{(\tau)} = \fint_{k\tau}^{(k+1)\tau} w \,dt\in L^2(Q;\R^n)$ and $f_k^{(\tau)} = \fint_{k\tau}^{(k+1)\tau} f dt\in L^2(Q;\R^n)$ are in-time averages. 
 
 Note that this is not quite in the form suggested by the previous discussion. Instead we deliberately wrote the last term as a quadratic difference, to give the problem a bit more structure. Note that when expanded, the last term is
 \begin{align*}
  \hat{R}\left(\frac{\eta-\eta_k^{(\tau)}}{\tau}\right) - \frac{\rho}{h}\inner{w_k}{\frac{\eta-\eta_k^{(\tau)}}{\tau}} + \frac{\rho}{2h} \norm{w_k^{(\tau)}}^2;
 \end{align*}
 so these two approaches only differ in a constant, which has no effect on the minimization.
 
 Now using the coercivity of $E$ similar to, but easier as in the proof of \autoref{prop:QSdiscreteELequation}, a (possibly non-unique) minimizer exists and a short calculation shows that it satisfies (assuming that $\eta_{k+1}\notin \partial \mathcal{E}$) the Euler-Lagrange equation
 \begin{align} \label{eq:SOdiscreteELequation}
  0 &= \inner{DE_h(\eta_{k+1}^{(\tau)})}{\phi} + \inner{D_2R_h\left(\eta_k^{(\tau)},\frac{\eta_{k+1}^{(\tau)}-\eta_k^{(\tau)}}{\tau}\right)}{\phi} - \inner{f_k^{(\tau)} \circ \eta_k^{(\tau)}}{\phi} \\ \nonumber &\qquad+ \frac{\rho}{2h} \inner{\frac{\eta_{k+1}^{(\tau)}-\eta_k^{(\tau)}}{\tau}-w_k^{(\tau)}}{\phi}
 \end{align}
 for all $\phi \in W^{2,q}(Q;\R^n)$ with $\phi|_P = 0$.
 
 Next we follow in the steps of \autoref{lem:QSdiscreteAPrioriEstimates} (see \autoref{rem:SOdissipation} for a discussion of some interesting differences) and derive a simple initial energy estimate by comparing the minimizer $\eta_{k+1}$ in \eqref{eq:SOdiscreteProblem} with the choice $\eta = \eta_k$:
 \begin{align} \label{eq:SOdicreteAPrioriEstimate}
  & E_h(\eta_{k+1}^{(\tau)}) + \tau R_h\left(\eta_k^{(\tau)},\frac{\eta_{k+1}^{(\tau)}-\eta_k^{(\tau)}}{\tau}\right) - \tau \inner{f_k^{(\tau)}}{\frac{\eta_{k+1}^{(\tau)}-\eta_k^{(\tau)}}{\tau}}  \\ \nonumber  &\qquad + \tau \frac{\rho}{2h} \norm{\frac{\eta_{k+1}^{(\tau)}-\eta_k^{(\tau)}}{\tau}-w_k^{(\tau)}}^2 
   \leq E_h(\eta_k^{(\tau)}) + \tau \frac{\rho}{2h} \norm{w_k^{(\tau)}}^2.
 \end{align}
 This estimate is can be summed so that, using the triangle and the weighted Young's inequality, we can derive for any $N$ such that $\tau N \leq h$
 \begin{align*}
  &\phantom{{}={}} E_h(\eta_N) + \sum_{k=0}^{N-1} \tau \left[ R_h\left(\eta_k^{(\tau)},\frac{\eta_{k+1}^{(\tau)}-\eta_k^{(\tau)}}{\tau}\right) + c \norm{\frac{\eta_{k+1}^{(\tau)}-\eta_k^{(\tau)}}{\tau}}^2 \right] \\
  & \leq E_h(\eta_0) + \tau  C \sum_{k=0}^{N-1} \left[ \norm{w_k^{(\tau)}}^2 + \norm{f_k^{(\tau)}}^2 \right] \\ &\leq E_h(\eta_0) + C \int_0^{h} \norm{w}^2 + \norm{f}^2 dt
 \end{align*}
 for some $C,c>0$ depending on $h$ but  independent of $\tau$. Further, in the last step we used Jensen's inequality to show
 \begin{align*}
  \tau \sum_{k=0}^{N-1} \norm{w_k^{(\tau)}}^2 = \tau \sum_{k=0}^{N-1} \norm{\fint_{k\tau}^{(k+1)\tau} w dt}^2 \leq \tau \sum_{k=0}^{N-1} \fint_{k\tau}^{(k+1)\tau} \norm{ w}^2 dt = \int_0^{N\tau} \norm{w}^2 dt
 \end{align*}
 and a similar estimate for $f$. In particular this also allows us to apply \autoref{prop:shortInjectivity} to show that $\eta_k$ is always injective and the Euler-Lagrange equation is well defined.
 
 If we now define the piecewise constant and piecewise affine approximations
 \begin{align*}
  \eta^{(\tau)}(t) &= \eta_k &&\text{ for } k\tau \leq t < (k+1)\tau\\
  \tilde{\eta}^{(\tau)}(t) &= \left(\frac{t}{\tau}-k\right) \eta_{k+1} + \left(k+1-\frac{t}{\tau}\right) \eta_k  &&\text{ for } k\tau \leq t < (k+1)\tau\\
  \intertext{ where in particular }
  \partial_t \tilde{\eta}^{(\tau)}(t) &= \frac{\eta_{k+1} - \eta_k}{\tau}  &&\text{ for } k\tau < t < (k+1)\tau
 \end{align*}
 our energy estimate turns into an uniform (in $\tau$ and $t$) bound on $E_h(\eta^{(\tau)}(t))$, as well as a uniform (in $\tau$) bound on $\int_0^{h}R_h(\eta^{(\tau)},\partial_t \tilde{\eta}^{(\tau)}) +c\norm{\partial_t \tilde{\eta}^{(\tau)}}^2 dt$. Now using the properties of energy and dissipation from our assumptions, this gives an uniform $L^\infty([0,h];W^{k_0,2}(Q;\R^n))$ bound on $\eta^{(\tau)}$ and $\tilde{\eta}^{(\tau)}$ as well as a uniform $L^2([0,h];W^{k_0,2}(Q;\R^n))$ bound on $\partial_t \tilde{\eta}^{(\tau)}$. 
 
Analogously to \autoref{prop:QSconvergence}, we may extract a converging subsequence and a single limit $\eta\in W^{1,2}(0,T;W^{k_0,2}(Q))\cap C^0([0,T];C^{1,\alpha}(Q))$. In particular we gain 
 \begin{align*}
   \tilde{\eta}^{(\tau)} &\rightharpoonup \eta \text{ in } W^{1,2}(0,T;W^{k_0,2}(Q;\R^n)) \\
     \eta^{(\tau)} &\rightharpoonup^* \eta \text{ in } L^\infty(0,T;W^{k_0,2}(Q;\R^n)) \\
    \tilde{\eta}^{(\tau)} &\to \eta\text{ in } L^\infty([0,T];C^{1,\alpha^-}(Q;\R^n))
      \\
   \eta^{(\tau)} &\to \eta\text{ in } L^\infty([0,T];C^{1,\alpha^-}(Q;\R^n))
 \end{align*}
 for all $0<\alpha^-< \alpha := 1-\frac{n}{q}$.
 
 This is already enough to pass to the limit in most of the terms in the Euler-Lagrange equation \eqref{eq:SOdiscreteELequation}. Only for the energy term, we need to show strong convergence of $\eta^{(\tau)}(t)$ in $W^{2,q}(Q;\R^n)$ using a Minty argument similar to the beginning of the proof of \autoref{prop:QSconvergence}. We thus again pick $\psi \in C_0^\infty(Q;\R^n)$ and test with $(\eta^{(\tau)}-\eta)\psi$ resulting in
 \begin{align*}
   &\phantom{{}={}}\inner{DE_h(\eta^{(\tau)})-DE_h(\eta)}{(\eta^{(\tau)}-\eta)\phi} = -\inner{DE_h(\eta)}{(\eta^{(\tau)}-\eta)\phi}\\
   &- \inner{
   D_2R_h(\eta^{(\tau)},\partial_t \tilde{\eta}^{(\tau)})}{(\eta^{(\tau)}-\eta)\phi} + \inner{f\circ \eta}{(\eta^{(\tau)}-\eta)\phi} + \frac{\rho}{h} \inner{\partial_t \tilde{\eta}^{(\tau)} - w }{(\eta^{(\tau)}-\eta)\phi}.
  \end{align*}
  Here the difference to the proof of \autoref{prop:QSconvergence} is only in the last term but this also converges to $0$, as $\eta^{(\tau)}-\eta\to 0$ strongly and $\partial_t\tilde{\eta}-w$ is uniformly bounded. Then, as before by assumption S6, $\eta^{(\tau)}(t)$ converges strongly in $W^{k_0,2}(Q;\R^n)$ for almost all $t\in[0,h]$ and thus we have
  \begin{align*}
   0&=\int_0^h \inner{DE_h(\eta^{(\tau)})}{\phi} + \inner{D_2R_h(\eta^{(\tau)},\partial_t \tilde{\eta}^{(\tau)})}{\phi} - \inner{f\circ\eta^{(\tau)}}{\phi} +\frac{\rho}{h} \inner{\partial_t \tilde{\eta}^{(\tau)} - w}{\phi} dt\\
   &\to\int_0^h \inner{DE_h(\eta)}{\phi} + \inner{D_2R_h(\eta,\partial_t \eta)}{\phi} - \inner{f\circ\eta}{\phi} +\frac{\rho}{h} \inner{\partial_t \eta - w}{\phi} dt
  \end{align*}
  which proves \autoref{thm:SOtimeDelayedExistence}.
 \end{proof}

\subsubsection*{Time-delayed energy inequality}

In the proof of \autoref{thm:SOtimeDelayedExistence} we already gave an initial, somewhat crude energy estimate on the discrete level. Now that we have a solution of the time-delayed equation, we can however give the much stronger, ``physical'' energy inequality, which will turn out to be crucial in what follows.

\begin{lemma}[Time-delayed energy inequality for the solid] \label{lem:SOtimeDelayedEnergyInequality}
Let the deformation $\eta \in L^\infty([0,h] \times Q; \mathcal{E}) \cap W^{1,2}([0,h] \times Q; \R^n)$ be a weak solution to the time-delayed equation \autoref{def:SOtimeDelayedSolution}. Then for all $t \in [0,h]$, we have
\begin{align*} 
  E_h(\eta(t)) + \frac{\rho}{2h} \int_0^t \norm{\partial_t \eta}^2 dt +  \int_0^t 2R_h(\eta,\partial_t \eta) dt  \leq E_h(\eta_0) + \frac{\rho}{2h} \int_0^t \norm{w}^2 dt + \int_0^t \inner{f\circ \eta }{\partial_t \eta}dt.
  \end{align*} 
\end{lemma}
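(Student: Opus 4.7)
The strategy is to test the weak Euler--Lagrange equation \eqref{eq:SOtimeDelayed} with $\phi = \partial_t\eta$ restricted to $[0,t]$. The higher-order regularizers built into $E_h$ and $R_h$ are designed precisely so that $\partial_t\eta$ has enough regularity to be admissible after a density argument, and so that a chain rule for $E_h$ can be justified despite the non-convexity of $E$.

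\textbf{Step 1 (admissibility).} The a-priori estimate \eqref{eq:SOdicreteAPrioriEstimate} (inherited in the limit) gives that $\sup_{t\in[0,h]}E_h(\eta(t))$ and $\int_0^h R_h(\eta,\partial_t\eta)\,dt$ are finite. Combined with \autoref{cor:regularDissipationProperties} and Assumption S5, this yields $DE_h(\eta) \in L^\infty([0,h];W^{-k_0,2}(Q;\R^n))$ and $D_2R_h(\eta,\partial_t\eta) \in L^2([0,h];W^{-k_0,2}(Q;\R^n))$. Hence each term of \eqref{eq:SOtimeDelayed} depends continuously on $\phi\in L^2([0,h];W^{k_0,2}(Q;\R^n))$, and by density of smooth functions vanishing on $P$ the equation extends to this larger class. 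Since $\eta(t)\in\mathcal{E}$ forces $\eta|_{[0,h]\times P} = \gamma$ and hence $\partial_t\eta|_{[0,h]\times P}=0$, the function $\phi = \psi_\varepsilon(\cdot)\partial_t\eta$, for a smooth cutoff $\psi_\varepsilon$ approximating $\chi_{[0,t]}$, is admissible for every $t\in[0,h]$; sending $\varepsilon\to 0$ gives the equation tested with $\chi_{[0,t]}\partial_t\eta$ at almost every $t$ (and then at every $t$ by lower semicontinuity and continuity of the remaining terms in $t$).

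\textbf{Step 2 (easy terms).} The quadratic $2$-homogeneity of $R_h$ in its second argument gives $\inner{D_2R_h(\eta,\partial_t\eta)}{\partial_t\eta} = 2R_h(\eta,\partial_t\eta)$, yielding the dissipation contribution. The force term $-\inner{f\circ\eta}{\partial_t\eta}$ appears directly. For the delayed inertial term we compute
\begin{equation*}
\tfrac{\rho}{h}\inner{\partial_t\eta - w}{\partial_t\eta} = \tfrac{\rho}{h}\norm{\partial_t\eta}^2 - \tfrac{\rho}{h}\inner{w}{\partial_t\eta} \geq \tfrac{\rho}{2h}\norm{\partial_t\eta}^2 - \tfrac{\rho}{2h}\norm{w}^2,
\end{equation*}
by Young's inequality; this is the sole step where an equality is relaxed to an inequality in the final estimate.

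\textbf{Step 3 (chain rule for $E_h$).} This is the main technical point, since $E_h$ is not convex. Let $\rho_\sigma$ be a standard time-mollifier (with a suitable reflection near the endpoints) and set $\eta_\sigma := \rho_\sigma \ast_t \eta$. Then $\eta_\sigma \in C^1([0,h];W^{k_0,2}(Q;\R^n))$, with $\eta_\sigma \to \eta$ in $W^{1,2}([0,h];W^{k_0,2})$ and uniformly in $C^0([0,h];W^{k_0,2})$. For the smooth curve the classical chain rule yields
\begin{equation*}
E_h(\eta_\sigma(t)) - E_h(\eta_\sigma(0)) = \int_0^t \inner{DE_h(\eta_\sigma(s))}{\partial_t\eta_\sigma(s)}\,ds,
\end{equation*}
and we pass to the limit $\sigma\to 0$. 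The left-hand side converges because $E_h$ is continuous along strong $W^{k_0,2}$-convergent sequences, which follows from the fundamental-theorem identity $E_h(\eta_\sigma) - E_h(\eta) = \int_0^1 \inner{DE_h(\eta + r(\eta_\sigma-\eta))}{\eta_\sigma - \eta}\,dr$ together with the boundedness of $DE_h$ on sublevel sets provided by Assumption S5. For the right-hand side, $\partial_t\eta_\sigma \to \partial_t\eta$ strongly in $L^2([0,t];W^{k_0,2})$ and $DE_h(\eta_\sigma(s))\to DE_h(\eta(s))$ strongly in $W^{-k_0,2}$ pointwise in $s$ (again S5), while remaining uniformly bounded in $L^\infty([0,t];W^{-k_0,2})$; dominated convergence concludes. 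This identifies $\int_0^t \inner{DE_h(\eta)}{\partial_t\eta}\,ds = E_h(\eta(t)) - E_h(\eta_0)$.

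\textbf{Step 4 (assembly).} Substituting the identifications from Steps 2 and 3 into the tested equation from Step 1 and rearranging gives the claimed inequality. The main obstacle is Step 3; the $h$-dependent $W^{k_0,2}$-regularizers are included precisely so that $\partial_t\eta$ and $DE_h(\eta)$ live in dual Sobolev spaces in which the time-mollification argument, combined with the strong continuity of $DE_h$ from Assumption S5, delivers the chain-rule identity in the absence of any convexity of $E$.
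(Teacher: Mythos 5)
Your proposal is correct and follows essentially the same route as the paper: test the weak equation with $\chi_{[0,t]}\partial_t\eta$ (justified via the $W^{k_0,2}$-regularizers in $R_h$), use the degree-two homogeneity of $R_h$, and apply Young's inequality to the delayed inertial term. The paper's own proof is terser—it simply asserts the identity $\int_0^t\inner{DE_h(\eta)}{\partial_t\eta}\,ds = E_h(\eta(t))-E_h(\eta_0)$ and uses a footnote only for the admissibility of the test function—whereas your Step~3 supplies the time-mollification argument that justifies the chain rule in the non-convex setting; this is a genuine gap-filling addition worth keeping in mind, though one should note that the interpolation segment $\eta+r(\eta_\sigma-\eta)$ is not a priori in a sublevel set of $E_h$, so the appeal to S5's ``bounded on sublevel sets'' requires the additional observation that for small $\sigma$ the segment stays in a fixed $W^{k_0,2}$-ball with determinant bounded away from zero, on which $DE_h$ is bounded and continuous.
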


\begin{proof}
 We use $\chi_{[0,t]}\partial_t \eta$ as a test function in the weak equation.\footnote{Note that this is the point where we rely on $R_h$, since to test $DE(\eta)$, we need $\phi \in L^2([0,T];W^{2,q}(Q;\R^n))$, but bounding $R(\eta,\partial_t \eta)$ only gives us a $L^2([0,T];W^{1,2}(Q;\R^n))$ bound. See also \autoref{rem:SOenergyInequality}.} From this we get
 \begin{align*}
  0&=\int_0^t \inner{DE_h(\eta)}{\partial_t \eta} + \inner{D_2R_h(\eta,\partial_t \eta)}{\partial_t \eta} - \inner{f\circ \eta}{\partial_t \eta} + \frac{\rho}{h}\inner{\partial_t \eta - w}{\partial_t \eta} dt \\
  &= E_h(\eta(t)) - E_h(\eta(0)) + \int_0^t  2R_h(\eta,\partial_t \eta) - \inner{f\circ \eta}{\partial_t \eta} + \frac{\rho}{h}\inner{\partial_t \eta - w}{\partial_t \eta} dt
 \end{align*}
 where we in particular used that $\inner{D_2R_h(\eta,\partial_t \eta)}{\partial_t \eta} = 2R_h(\eta,\partial_t \eta)$ by the quadratic nature of $R_h$. Finally we use Young's inequality on the last term in the form of
 \begin{align*}
  \inner{\partial_t \eta - w}{\partial_t \eta} = \norm{\partial_t \eta}^2 - \inner{w}{\partial_t \eta} \geq \norm{\partial_t \eta}^2 - \frac{1}{2}\norm{\partial_t \eta}^2 - \frac{1}{2}\norm{w}^2 = \frac{1}{2}\norm{\partial_t \eta}^2 - \frac{1}{2}\norm{w}^2.
 \end{align*}
 Reordering the terms then closes the proof.
\end{proof}

\subsection{Proof of \autoref{thm:SOexistence}} \label{subsec:SOproof}

We will start the proof of the theorem by directly using its two key ingredients, the two results from the previous section. First we iteratively use the existence of time-delayed solutions on the short intervals $[0,h]$ to construct a time-delayed solution on the longer interval $[0,T]$.

\subsubsection*{Iterated time-delayed solutions and energy estimates}

For fixed $h$ we start with given initial deformation $\eta_0 \in \mathcal{E}$ and we use the initial velocity as a constant right hand side $w_0(t) = \eta'$ for $t \in [0,h]$. Now given $\eta_l \in \mathcal{E}$ and $w_l \in L^2([0,h]\times Q;\R^n)$, we find a solution $\tilde{\eta}_{l+1} \in L^\infty([0,h] \times Q; \mathcal{E}) \cap W^{1,2}([0,h] ;W^{k_0,2}(Q; \R^n))$ to the time-delayed equation using \autoref{thm:SOtimeDelayedExistence}. We then set $\eta_{l+1} = \tilde{\eta}_{l+1}(h)$ and $w_{l+1} = \tilde{\eta}_{l+1}$ as data for the next step for which they are admissible by \autoref{lem:SOtimeDelayedEnergyInequality}.

From these ingredients we construct $\eta^{(h)}: [0,T] \times Q \to \R^n$ using
\begin{align*}
 \eta^{(h)}(t,x) := \tilde{\eta}_{l+1}(t-hl) \text{ for } hl \leq t \leq h(l+1).
\end{align*}

Thus, directly from the definition we see that $\eta^{(h)}$ fulfills
\begin{align} \label{eq:SOhWeakEquation}
 0 &= \int_0^T \inner{DE_h(\eta^{(h)}(t))}{\phi} + \inner{D_2R_h(\eta^{(h)}(t),\partial_t \eta^{(h)}(t))}{\phi} \\ \nonumber
 &- \inner{f \circ \eta^{(h)}(t)}{\phi} + \frac{\rho}{h}\inner{\partial_t \eta^{(h)}(t) - \partial_t \eta^{(h)}(t-h)}{\phi} dt.
\end{align}
Furthermore, exploiting the energy inequality (\autoref{lem:SOtimeDelayedEnergyInequality}) yields
\begin{align*}
  &\phantom{{}={}} E_h(\eta^{(h)}((l+1)h)) + \frac{\rho}{2} \fint_{lh}^{(l+1)h} \norm{\partial_t \eta^{(h)}}^2 dt +  \int_{lh}^{(l+1)h} 2R_h(\eta^{(h)},\partial_t \eta^{(h)}) dt  \\ \nonumber
  &\leq E_h(\eta^{(h)}(lh)) + \frac{\rho}{2} \fint_{(l-1)h}^{lh} \norm{\partial_t \eta^{(h)}}^2 dt + \int_{lh}^{(l+1)h} \inner{f\circ \eta}{\partial_t \eta^{(h)} }dt.
\end{align*}
Taking $t\in [lh,(l+1)h]$, we find
after summing the above over $1,...,l$,  and adding the energy inequality for $\tilde{\eta}^{l+1}$ in \autoref{lem:SOtimeDelayedEnergyInequality} the following crucial estimate:
\begin{align} \label{eq:SOhEnergyInequality}
  (E):&=E_h(\eta^{(h)}(t)) + \frac{\rho}{2} \fint_{t-h}^{t} \norm{\partial_t \eta^{(h)}}^2 dt +  \int_{0}^{t} 2R(\eta^{(h)},\partial_t \eta^{(h)}) dt  \\ \nonumber
  &\leq E_h(\eta_0) + \frac{\rho}{2} \norm{\eta'}^2 + \int_{0}^{t} \inner{f \circ \eta}{\partial_t \eta^{(h)} }dt
\end{align}
for all $t \in [0,T]$.

Now, as before, we need to estimate the force term using Young's inequality. This gives
\begin{align*}
(E)&\leq E_h(\eta_0) - E_{min} + \frac{\rho}{2} \norm{\eta'}^2 + \frac{t}{2\delta} \norm[L^\infty]{f}^2 + \frac{\delta}{2} \int_0^t \norm{\partial_t \eta^{(h)} }^2dt;
\end{align*}
here recall that $E_{min}$ is defined in Assumption (S1).
As all terms involving $t$ on the right hand side have a fixed sign, we extend to $t=T$ and find 
\begin{align*}
(E) \leq C_0 + C_1 \frac{T}{\delta} + \frac{\delta}{2} \int_{0}^{T} \norm{\partial_t \eta^{(h)} }^2ds
\end{align*}
for some constants $C_0,C_1$ resulting from the given data and independent of $h$. Dropping the positive terms involving $E$ and $R_h$ on the left-hand side, multiplying by $h$ and adding up implies
\begin{align*}
 \frac{\rho}{2} \int_0^T \norm{\partial_t \eta^{(h)}}^2 ds = \sum_{l=0}^N \frac{\rho}{2} \int_{lh}^{(l+1)h} \norm{\partial_t \eta^{(h)}}^2 ds \leq hN \left( C_0 + C_1 \frac{T}{\delta} + \frac{\delta}{2} \int_{0}^{T} \norm{\partial_t \eta^{(h)} }^2ds \right)
\end{align*}
for $hN = T$.\footnote{There is no need to assume that $T$ is a multiple of $h$, but we will do so for the sake of simplification.} Now choosing $\delta := \frac{\rho}{2 T}$  allows us to absorb the integral on the right hand side to the left and we end up with an uniform estimate of the form
\begin{align*}
 \frac{\rho}{4} \int_0^T \norm{\partial_t \eta^{(h)}}^2 ds \leq T C_0 + C_2' T^2,
\end{align*}
which implies also that
\[
(E)\leq T C_0 + C_2' T^2%
\]
Note that in contrast to the parabolic setup from the last section, up to this point there was no need to apply Korn's inequality. In particular, as we used the inertial term to estimate the force term, we obtain a uniform bound on the energy without exploiting the dissipative terms; i.e. we already know that $\sup_{t\in [0,T]} E(\eta^{h}(t))\leq  T C_0 + C_2' T^2$.  Now, using this estimate, we may apply \autoref{lem:globalKorn} without restrictions on the final time $T$, to find that 
\begin{align}
\label{eq:apriSOh}
&\sup_{t\in [h,T]}\bigg(\fint_{t-h}^h\norm{\partial_t\eta^{(h)}}^2\, ds+E(\eta^{(h)}(t))+ h^{a_0} \norm{\nabla^{k_0} \eta^{(h)}(t)}^2\bigg) \leq C  \text{and }\\ \nonumber
&\int_0^T\norm[W^{1,2}(Q)]{ \partial_t\eta^{(h)}}^2+h\norm[W^{k_0,2}(Q)]{\partial_t\eta^{(h)}}^2\, ds \leq C
\end{align}
are uniformly bounded with constant $C=C(T)$ independent of $h$. Moreover, it allows to conclude that $\eta^{(h)}(t)$ is always injective by \autoref{prop:shortInjectivity}. 

By the same arguments as used before, we can now choose a subsequence which converges to a limit function $\eta\in C_w(0,T;W^{2,q}(Q))\cap W^{1,2}(0,T;W^{1,2}(Q))\cap C^0([0,T];C^{1,\alpha}(Q))$. In particular we gain 
 \begin{align*}
   \eta^{(h)} &\rightharpoonup \eta \text{ in } W^{1,2}([0,T];W^{1,2}(Q;\R^n)) \\
     \eta^{(h)} &\rightharpoonup^* \eta \text{ in } L^\infty([0,T];W^{2,q}(Q;\R^n)) \\
    \eta^{(h)} &\to \eta\text{ in } C^0([0,T];C^{1,\alpha^-}(Q;\R^n))
 \end{align*}
 for all $0<\alpha^-< \alpha := 1-\frac{n}{q}$.
 Moreover, the weak lower semi-continuity implies that
 \begin{align}
\label{eq:apriSO}
\sup_{t\in [0,T]}\big(\norm{\partial_t\eta(t)}^2+E(\eta(t))\big) \leq C \text{ and }\int_0^T\norm[W^{1,2}(Q)]{\partial_t\eta}^2\, ds \leq C
\end{align}
with the same constant as before.

\subsubsection*{Improving convergence}

Our final goal is to prove convergence of the weak equation \autoref{def:SOtimeDelayedSolution} satisfied by the time-delayed approximation $\eta^{(h)}$ to the hyperbolic inertial equation we are interested in. The crucial term here is $DE(\eta^{(h)})$ which requires strong convergence of $\eta^{(h)}$ in $W^{2,q}(Q;\R^n)$. For this we want to use the Minty-type property of the Energy, which in turn requires us to improve convergence of the other terms in the equation. We achieve this by an Aubin-Lions type-lemma, but first we will need another estimate on the discrete difference quotient:

\begin{lemma}[Length $h$ bounds (solid)] \label{lem:SOh-m-estimate} %
 Fix $T > 0$. Then there exists a constant $C$ depending only on the initial data and $T$, such that for $k_0 > 2+\frac{(q-2)n}{2q}$ the following holds:
 \begin{align*}
  \int_0^T \norm[W^{-k_0,2}(Q)]{\frac{\partial_t \eta^{(h)}(t)-\partial_t \eta^{(h)}(t-h)}{h}}^2 dt \leq C
 \end{align*}
 where $\partial_t \eta$ is extended by $\eta'$ for negative times.
\end{lemma}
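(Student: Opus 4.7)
The plan is to read the desired bound directly off the weak equation \eqref{eq:SOhWeakEquation}, which for almost every $t\in[0,T]$ and every $\chi\in W^{k_0,2}(Q;\R^n)$ with $\chi|_P=0$ may be localised in time (test functions in \autoref{def:SOtimeDelayedSolution} are allowed to depend on $t$) to the pointwise identity
\[
\frac{\rho}{h}\bigl\langle \partial_t\eta^{(h)}(t)-\partial_t\eta^{(h)}(t-h),\chi\bigr\rangle = -\bigl\langle DE_h(\eta^{(h)}(t)),\chi\bigr\rangle -\bigl\langle D_2R_h(\eta^{(h)}(t),\partial_t\eta^{(h)}(t)),\chi\bigr\rangle + \bigl\langle f\circ\eta^{(h)}(t),\chi\bigr\rangle .
\]
The goal then is to bound each of the three right-hand side terms in $L^2([0,T];W^{-k_0,2}(Q))$. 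The hypothesis $k_0 > 2+\frac{(q-2)n}{2q}$ is precisely what makes the Sobolev embedding $W^{k_0,2}(Q)\hookrightarrow W^{2,q}(Q)$ continuous, and thus the dual embedding $(W^{2,q}(Q))'\hookrightarrow W^{-k_0,2}(Q)$ is at our disposal for free.

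For the energy term, \eqref{eq:apriSOh} gives $\sup_{t} E(\eta^{(h)}(t))\le C$, so Assumption S5 furnishes a uniform bound on $DE(\eta^{(h)}(t))$ in $(W^{2,q}(Q))'$, and by the above embedding also in $W^{-k_0,2}(Q)$, uniformly in $t$ and $h$. The regularising correction contributes $2h^{a_0}\nabla^{k_0}\eta^{(h)}(t)$ in the sense of the $W^{-k_0,2}$--$W^{k_0,2}$ pairing, whose norm is $\le 2\sqrt{h^{a_0}}\,\sqrt{h^{a_0}\|\nabla^{k_0}\eta^{(h)}(t)\|^2}\le C\sqrt{h^{a_0}}$ again by \eqref{eq:apriSOh}. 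Combined this places $DE_h(\eta^{(h)})$ in $L^\infty([0,T];W^{-k_0,2}(Q))$ uniformly, hence in $L^2([0,T];W^{-k_0,2}(Q))$.

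For the dissipation term, Assumption R4 together with the quadratic homogeneity R2 yields $\|D_2R(\eta^{(h)}(t),\partial_t\eta^{(h)}(t))\|_{W^{-1,2}(Q)}\le C\|\partial_t\eta^{(h)}(t)\|_{W^{1,2}(Q)}$ with $C$ depending only on the uniform energy level, so the second bound in \eqref{eq:apriSOh} places this in $L^2([0,T];W^{-1,2}(Q))\subset L^2([0,T];W^{-k_0,2}(Q))$ uniformly in $h$. The regulariser contributes the pairing $2h\int \nabla^{k_0}\partial_t\eta^{(h)}\!\cdot\!\nabla^{k_0}\chi$, whose $W^{-k_0,2}$-norm squared integrates in time to $4h^2\!\int_0^T\!\|\nabla^{k_0}\partial_t\eta^{(h)}\|^2 dt\le 4h\,C$ by \eqref{eq:apriSOh}, so it too vanishes with $h$. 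Finally, for the force term, $f\in C^0$ and the uniform $C^0$-bound on $\eta^{(h)}$ give $\|f\circ\eta^{(h)}\|_{L^\infty([0,T]\times Q)}\le C$, and hence $f\circ\eta^{(h)}$ is bounded in $L^2([0,T];L^2(Q))\subset L^2([0,T];W^{-k_0,2}(Q))$.

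Adding the three estimates and taking the supremum over test functions with $\|\chi\|_{W^{k_0,2}}\le 1$ (and $\chi|_P=0$, which is automatic for the difference quotient on the left, since the time-independent Dirichlet data make $\partial_t\eta^{(h)}|_P=0$) delivers the claimed bound. The only delicate point is the consistent use of the embedding constants and making sure that the $h$-dependent regularising contributions are controlled by the $h$-dependent part of \eqref{eq:apriSOh}; no other genuine obstacle appears.
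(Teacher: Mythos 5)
Your proof is correct and follows essentially the same route as the paper: both read the bound off the time-delayed weak equation term by term, control $DE$ via S5 and the uniform energy level, control $D_2R$ via linear growth from the quadratic structure, and absorb the two $h$-weighted regularising terms using the $h$-weighted parts of \eqref{eq:apriSOh}. The only cosmetic difference is that you phrase the reduction explicitly through the embedding $W^{k_0,2}(Q)\hookrightarrow W^{2,q}(Q)$, whereas the paper folds that into the choice of the $W^{k_0,2}$-norm of the test function directly.
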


\begin{proof}
  Pick $\phi \in C_0^\infty(Q;\R^n)$. Then we have using the time-delayed equation
  \begin{align*}
   &\phantom{{}={}}\rho_s\abs{\inner[Q]{\frac{\partial_t \eta^{(h)}(t)-\partial_t \eta^{(h)}(t-h)}{h}}{\phi}}
   \leq \abs{\inner{DE(\eta^{(h)}(t))}{\phi}} + h^{a_0}\abs{\inner{\nabla^{k_0} \eta^{(h)}}{\nabla^{k_0} \phi}} \\
   &+ \abs{\inner{D_2R(\eta^{(h)}(t),\partial_t \eta^{(h)}(t))}{\phi}} + h\abs{\inner{\nabla^{k_0} \partial_t\eta^{(h)}}{\nabla^{k_0} \phi}} + \abs{\inner[Q]{f(t)}{\phi}} \\
   &\leq \left(\norm[W^{-2,q}(Q)]{DE(\eta^{(h)}(t))} + h^{a_0} \norm[Q]{\nabla^{k_0} \eta^{(h)}(t)} + \norm[\infty]{f}  \right) \norm[W^{k_0,2}(Q)]{\phi} \\
   &+ \left(\norm[W^{-1,2}(Q)]{D_2R(\eta^{(h)}(t),\partial_t \eta^{(h)}(t))} + h \norm[Q]{\nabla^{k_0} \partial_t \eta^{(h)}(t)}\right)\norm[W^{k_0,2}(Q)]{\phi}
  \end{align*}
  Now for the first set of terms, we note that they are uniformly bounded by \autoref{ass:energy}, S5 and \eqref{eq:apriSOh}. For the second set, we note that the quadratic growth of $R(\eta,.)$ in $W^{1,2}(Q;\R^n)$ implies a linear growth of $D_2R$ thus equally \eqref{eq:apriSOh} implies boundedness when integrated in time.
\end{proof}

Note that in the previous lemma the $h$, by which time is shifted, is the same $h$ as in the sequence. Thus even though $\partial_t \eta^{(h)}$ is already continuous, we can only ever compare at fixed distances in the form of multiples of $h$. This is an unavoidable consequence of the way we obtain this estimate using the equation. In particular this does not allow us to show that $\partial_t \eta^{(h)}$ converges strongly in $L^2$ as one would normally do. Instead we will show convergence of averages of Length $h$, which turn out to be much more natural in this context, in particular as they also occur in the energy inequality.

\begin{lemma}[Aubin-Lions (solid)] \label{lem:SOAubinLions} %
 Let $b^{(h)}: t \mapsto \fint_{t}^{t+h} \partial_t \eta^{(h)}ds$ We have (for a subsequence $h \to 0$)
 \begin{align*}
   b^{(h)} \rightarrow \partial_t \eta \text{ in }C^0([0,T];L^2(Q;\R^n)).
 \end{align*}
\end{lemma}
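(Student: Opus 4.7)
My plan is to exploit the Steklov-average structure $b^{(h)}(t)=(\eta^{(h)}(t+h)-\eta^{(h)}(t))/h$ and combine spatial regularity coming from the dissipation with temporal regularity coming from the time-delayed equation in an Aubin--Lions--Simon compactness argument, followed by an upgrade to uniform-in-time convergence. First I would collect the uniform-in-$h$ bounds on $b^{(h)}$. Jensen's inequality together with the moving-average kinetic energy bound from \eqref{eq:apriSOh} gives
\[
\norm[L^2(Q)]{b^{(h)}(t)}^2\leq \fint_t^{t+h}\norm[L^2(Q)]{\partial_t\eta^{(h)}(s)}^2\,ds\leq C,
\]
so that $b^{(h)}$ is uniformly bounded in $L^\infty([0,T-h];L^2(Q))$. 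A Fubini argument combined with the $L^2$-in-time/$W^{1,2}$-in-space bound on $\partial_t\eta^{(h)}$ from \eqref{eq:apriSOh} yields uniform boundedness in $L^2([0,T-h];W^{1,2}(Q))$, and the identity $\partial_t b^{(h)}(t)=(\partial_t\eta^{(h)}(t+h)-\partial_t\eta^{(h)}(t))/h$ with \autoref{lem:SOh-m-estimate} gives uniform boundedness of $\partial_t b^{(h)}$ in $L^2([0,T];W^{-k_0,2}(Q))$.

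With the Gelfand triple $W^{1,2}(Q)\hookrightarrow\hookrightarrow L^2(Q)\hookrightarrow W^{-k_0,2}(Q)$, the Aubin--Lions--Simon theorem yields relative compactness of $b^{(h)}$ in $L^2([0,T];L^2(Q))$, so that along a subsequence $b^{(h)}\to b$ strongly. I would identify $b=\partial_t\eta$ by testing against $\psi\in C^\infty_0((0,T)\times Q;\R^n)$ and transferring the time-average onto the test function,
\[
\int_0^T\inner[Q]{b^{(h)}(t)}{\psi(t)}\,dt=\int_0^T\inner[Q]{\partial_t\eta^{(h)}(t)}{\frac{1}{h}\int_{t-h}^t\psi(s)\,ds}\,dt\;\longrightarrow\;\int_0^T\inner[Q]{\partial_t\eta(t)}{\psi(t)}\,dt,
\]
where the convergence follows from the weak $L^2([0,T];W^{1,2}(Q))$-convergence of $\partial_t\eta^{(h)}$ established in \autoref{subsec:SOproof} together with the uniform convergence $h^{-1}\int_{t-h}^t\psi(s)\,ds\to\psi(t)$.

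For the upgrade to $C^0([0,T];L^2(Q))$, the bound on $\partial_t b^{(h)}$ yields the uniform H\"older equicontinuity $\norm[W^{-k_0,2}(Q)]{b^{(h)}(t_1)-b^{(h)}(t_2)}\leq C\abs{t_1-t_2}^{1/2}$, so that together with the compact embedding $L^2(Q)\hookrightarrow\hookrightarrow W^{-k_0,2}(Q)$ and the uniform $L^\infty([0,T];L^2(Q))$ bound, Arzel\`a--Ascoli extracts a further subsequence converging in $C^0([0,T];W^{-k_0,2}(Q))$ to a continuous representative of $\partial_t\eta$. The main obstacle is then promoting this to uniform convergence in strong $L^2(Q)$: since $b^{(h)}$ is only bounded in $L^2([0,T];W^{1,2}(Q))$ and not in $L^\infty([0,T];W^{1,2}(Q))$, a direct interpolation between the $C^0W^{-k_0,2}$ convergence and a $W^{1,2}$ bound is unavailable. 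I would overcome this by combining the uniform-in-time weak-$L^2(Q)$ convergence (obtained from the $L^\infty L^2$ bound plus the $C^0W^{-k_0,2}$ convergence together with the reflexivity of $L^2(Q)$) with a pointwise verification of norm convergence $\norm[L^2(Q)]{b^{(h)}(t)}^2\to \norm[L^2(Q)]{\partial_t\eta(t)}^2$, read off by passing to the limit in the telescoped energy identity of \autoref{lem:SOtimeDelayedEnergyInequality}, in which the moving-average kinetic energy $\frac{\rho}{2}\fint_{t-h}^t\norm{\partial_t\eta^{(h)}}^2\,ds$ appears precisely in the form needed to control $\norm[L^2(Q)]{b^{(h)}(t-h)}^2$ from above via Jensen and to match the limit from below.
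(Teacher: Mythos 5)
Your setup matches the paper's: the triple $W^{1,2}(Q)\hookrightarrow\hookrightarrow L^2(Q)\hookrightarrow W^{-k_0,2}(Q)$, the $L^\infty([0,T];L^2(Q))$ and $L^2([0,T];W^{1,2}(Q))$ bounds on $b^{(h)}$ from Jensen and \eqref{eq:apriSOh}, the $L^2([0,T];W^{-k_0,2}(Q))$ bound on $\partial_t b^{(h)}$ from \autoref{lem:SOh-m-estimate}, the Aubin--Lions--Simon compactness in $L^2([0,T];L^2(Q))$, and the shift-of-average identification of the limit with $\partial_t\eta$; all of this is correct and is essentially what the paper does. You are also right to be uncomfortable about the promotion to $C^0([0,T];L^2(Q))$: the paper's proof asserts a uniform $L^\infty([0,T];W^{1,2}(Q))$ bound on $b^{(h)}$ and then cites Simon's theorem, but \eqref{eq:apriSOh} only yields the two weaker bounds above, and the obvious Cauchy--Schwarz estimate for $\sup_t\norm[W^{1,2}(Q)]{b^{(h)}(t)}$ deteriorates like $h^{-1/2}$. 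That asserted bound does not follow from the stated estimates, so you have found a genuine issue in the printed proof.

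Your proposed repair, however, does not close. You invoke ``the telescoped energy identity of \autoref{lem:SOtimeDelayedEnergyInequality}'' to extract norm convergence $\norm[Q]{b^{(h)}(t)}\to\norm[Q]{\partial_t\eta(t)}$, but that result is an \emph{inequality}, and passing $h\to0$ in it, using lower semicontinuity of $E$ and $R$, only yields the limit \emph{energy inequality}. Concretely, from the telescoped estimate and Jensen one only gets
\[
\limsup_{h\to0}\norm[Q]{b^{(h)}(t-h)}^2 \le \limsup_{h\to0}\fint_{t-h}^t\norm[Q]{\partial_t\eta^{(h)}}^2\,ds \le \tfrac{2}{\rho}\Big(\text{data}(t) - E(\eta(t)) - \int_0^t 2R(\eta,\partial_t\eta)\,ds\Big),
\]
and concluding that the right-hand side equals $\norm[Q]{\partial_t\eta(t)}^2$ amounts to energy \emph{equality} for the limit solution, which the paper does not (and for weak solutions of such hyperbolic problems generally cannot) establish, so weak lower semicontinuity of the $L^2$-norm under $b^{(h)}(t)\rightharpoonup\partial_t\eta(t)$ cannot be matched from above. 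What your Aubin--Lions step already gives is strong convergence in $L^2([0,T]\times Q)$, and this is precisely all that is used downstream: see the proof of \autoref{lem:SOMinty}, where only the spacetime $L^2$ convergence enters, and \autoref{cor:NSAubinLionsSolid}, which is stated with exactly the $L^2([0,T];L^2(Q))$ topology. The safe and sufficient conclusion is thus $L^2([0,T]\times Q)$ convergence; the $C^0([0,T];L^2(Q))$ claim of the lemma should be regarded as unproven both in the paper's argument and in your attempt.
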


\begin{proof}
By the fundamental theorem of calculus we have
\begin{align*}
\partial_t b^{(h)} = \frac{\partial_t \eta^{(h)}(t+h)-\partial_t \eta^{(h)}(t)}{h}.
\end{align*}
Now $b^{(h)}$ is uniformly bounded in $L^\infty([0,T];W^{1,2}(Q;\R^n))$ by the energy estimate and $\partial_t b^{(h)}$ is uniformly bounded in $L^2([0,T];W^{-k_0,2}_0(Q;\R^n))$ by the previous lemma. Thus we can apply the classical Aubin-Lions lemma~\citation{simon1986compact}, yielding the existence of a converging subsequence in $C^0([0,T];L^2(Q;\R^n))$. 
It remains to associate the limit function with $\partial_t\eta$. For that take $h_0>0$ and $\phi\in C^\infty_0([h_0,T-h_0]\times Q)$, we find for all $h\in (0,h_0)$ by the weak convergence of $\partial_t\eta^{(h)}\rightharpoonup \partial_t\eta$ (and the Lebesgue point theorem) that
\begin{align*}
\int_0^T\inner[Q]{b^{(h)}}{\phi}\, dt 
&=\fint_{0}^{h}\int_0^T\inner[Q]{\partial_t\eta^{(h)}(t+s)}{\phi(t)}\, dt\, ds
=\fint_{0}^{h}\int_0^T\inner[Q]{\partial_t\eta^{(h)}(\tau)}{\phi(\tau-s)}\, d\tau\, ds
\\
&\to \int_0^T\inner[Q]{\partial_t\eta(\tau)}{\phi(\tau)}\, d\tau. \qedhere
\end{align*}
\end{proof}

Finally we will use a Minty-type argument to improve convergence a bit further.
\begin{lemma}[Minty-Trick]
\label{lem:SOMinty}
$\eta^{(h)}(t) \to \eta(t)$ strongly in $L^q(0,T;W^{2,q}(Q;\R^n))$ for almost all $t\in[0,T]$.
\end{lemma}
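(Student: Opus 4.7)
The plan is to apply the Minty-type property (Assumption S6) pointwise in time, mirroring the parabolic blueprint from \autoref{prop:QSconvergence} but taking care of the second-grade regularizers $h^{a_0}\norm{\nabla^{k_0}\eta}^2$ and $h\norm{\nabla^{k_0}\partial_t\eta}^2$ appearing in the equation. First I fix a cutoff $\psi\in C_0^\infty(Q;[0,1])$ and choose a smooth approximation $\eta_\eps$ of $\eta$ with $\eta_\eps\in L^\infty(0,T;W^{k_0,2}(Q;\R^n))$ and $\eta_\eps\to\eta$ in $L^2(0,T;W^{2,q})\cap W^{1,2}(0,T;W^{1,2})$. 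I then test the weak equation \eqref{eq:SOhWeakEquation} against $\phi=(\eta^{(h)}-\eta_\eps)\psi$, which is admissible since it lies in $L^2(0,T;W^{k_0,2})$, and add and subtract $\int_0^T\inner{DE(\eta)}{(\eta^{(h)}-\eta)\psi}\,dt$. After rearranging this yields an identity of the form
\begin{align*}
&\int_0^T\inner{DE(\eta^{(h)})-DE(\eta)}{(\eta^{(h)}-\eta)\psi}\,dt \\
&\qquad= -\int_0^T\bigl(T_{\mathrm{reg},1}+T_{\mathrm{reg},2}+T_{\mathrm{diss}}+T_{\mathrm{force}}+T_{\mathrm{iner}}+T_{\mathrm{lim}}\bigr)(t)\,dt+\mathcal{R}_{h,\eps},
\end{align*}
where $T_{\mathrm{lim}}=\inner{DE(\eta)}{(\eta^{(h)}-\eta)\psi}$, the other five terms are the contributions of \eqref{eq:SOhWeakEquation} tested against $(\eta^{(h)}-\eta_\eps)\psi$, and $\mathcal{R}_{h,\eps}$ collects the errors from having replaced $\eta_\eps$ by $\eta$ inside the test function. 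The goal is to send first $h\to0$ (for fixed $\eps$) and then $\eps\to0$, showing the right-hand side vanishes, and finally invoke S6.

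The terms that do not involve the regularizer are straightforward. $T_{\mathrm{lim}}$ vanishes as $h\to0$ because $DE(\eta)\in L^\infty(W^{-2,q})$ and $(\eta^{(h)}-\eta)\psi\rightharpoonup 0$ in $L^q(0,T;W^{2,q})$; $T_{\mathrm{diss}}$ vanishes since $D_2R(\eta^{(h)},\partial_t\eta^{(h)})$ is uniformly bounded in $L^2(W^{-1,2})$ by R4 and the a-priori bound \eqref{eq:apriSOh}, while $(\eta^{(h)}-\eta_\eps)\psi\to(\eta-\eta_\eps)\psi$ strongly in $L^2(W^{1,2})$; and $T_{\mathrm{force}}$ is immediate from the uniform convergence $\eta^{(h)}\to\eta$ in $C^0(C^{1,\alpha^-})$. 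The inertial term $T_{\mathrm{iner}}(t)=\rho\inner{\partial_t b^{(h)}(t-h)}{(\eta^{(h)}-\eta_\eps)\psi}$ is treated via integration by parts in time: boundary contributions vanish because $\eta^{(h)}(0)=\eta_0=\eta(0)$ and by the strong $L^2$-convergence at $t=T$, while the interior integral pairs the strong limit $b^{(h)}(\cdot-h)\to\partial_t\eta$ in $C^0(L^2)$ (from \autoref{lem:SOAubinLions}) with $\partial_t(\eta^{(h)}-\eta_\eps)\psi\rightharpoonup\partial_t(\eta-\eta_\eps)\psi$ weakly in $L^2(L^2)$, producing an error controlled by $\norm[L^2(W^{1,2})]{\eta-\eta_\eps}$. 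The residual $\mathcal{R}_{h,\eps}$ is similarly bounded by Cauchy--Schwarz and the a-priori bounds, so $|\mathcal{R}_{h,\eps}|\leq C\norm[L^2(W^{2,q})]{\eta-\eta_\eps}$.

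The main obstacle is the pair of regularization terms, since the natural choice $(\eta^{(h)}-\eta)\psi$ has no uniform $W^{k_0,2}$ bound. For $T_{\mathrm{reg},1}=2h^{a_0}\inner{\nabla^{k_0}\eta^{(h)}}{\nabla^{k_0}((\eta^{(h)}-\eta_\eps)\psi)}$ the Leibniz expansion produces a leading contribution $-2h^{a_0}\int\psi|\nabla^{k_0}\eta^{(h)}|^2\,dx\leq 0$ with the favorable sign, while the remaining cross terms pair $\nabla^{k_0}\eta^{(h)}$ with $\nabla^{k_0}\eta_\eps$ or with lower-order derivatives of $\eta^{(h)}-\eta_\eps$ multiplied by derivatives of $\psi$. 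Using $h^{a_0/2}\norm[L^\infty(L^2)]{\nabla^{k_0}\eta^{(h)}}\leq C$ from \eqref{eq:apriSOh} together with smoothness of $\eta_\eps$ (at the cost of an $\eps$-dependent bound on $\norm[W^{k_0,2}]{\eta_\eps}$), every cross term is of order $h^{a_0/2}$ and vanishes as $h\to0$ for fixed $\eps$. An analogous argument using $\sqrt h\,\norm[L^2(L^2)]{\nabla^{k_0}\partial_t\eta^{(h)}}\leq C$ handles $T_{\mathrm{reg},2}$. Taking first $h\to0$ and then $\eps\to0$ thus drives the whole right-hand side of the displayed identity to zero. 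Combining $\liminf_{h\to0}$ of the left-hand side being nonnegative (by S6) with this vanishing, and running the argument with $\psi$ localizing to smaller and smaller neighbourhoods together with a Fatou argument in $t$, we obtain $\eta^{(h)}(t)\to\eta(t)$ strongly in $W^{2,q}(Q;\R^n)$ for almost every $t\in[0,T]$ via the second clause of S6. The uniform bound $\sup_tE(\eta^{(h)}(t))\leq C$ in \eqref{eq:apriSO} and dominated convergence then upgrade this to strong convergence in $L^q(0,T;W^{2,q}(Q;\R^n))$.
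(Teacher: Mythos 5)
Your proof is correct and reaches the same conclusion as the paper, but it follows a genuinely different (and arguably cleaner) route in two respects.

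First, the paper ties the mollification scale to the acceleration scale by setting $\delta_h = h^{a_1}$ and sends the single parameter $h\to0$, which forces a careful tuning of exponents (checking $h^{a_0/2 - (2-q/n-k_0+2/n)a_1}\to 0$, etc.). You instead keep a fixed smooth approximation $\eta_\eps$ decoupled from $h$ and take the iterated limit $h\to0$ then $\eps\to0$; since $\norm[W^{k_0,2}]{\eta_\eps}$ is merely $\eps$-dependent and does not blow up in $h$, every regularizer cross-term is trivially $O(h^{a_0/2}C_\eps)$ (respectively $O(h^{(1-a_0)/2}C_\eps)$ for the dissipation regularizer, where the implicit constraint $a_0<1$ from the paper is needed), and no exponent bookkeeping is required. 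Second, the paper uses a test function $\phi$ that is compactly supported in \emph{time-space} $(h_0,T-h_0)\times Q$ and performs a discrete summation-by-parts (shifting $t-h$ to $t+h$); you use a purely spatial cutoff $\psi\in C_0^\infty(Q)$ together with a continuous integration by parts via $\partial_t b^{(h)}(t-h)$, which forces you to track the $t=0$ and $t=T$ boundary terms. You handle these correctly, but note that at $t=0$ the boundary contribution $\inner{\eta'}{(\eta_0-\eta_\eps(0))\psi}$ only vanishes after the $\eps$-limit (since $\eta_\eps(0)\neq\eta_0$ in general), not for fixed $\eps$; your wording suggests it vanishes because $\eta^{(h)}(0)=\eta_0$, which is incomplete. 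Both pieces are acceptable trade-offs.

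Two minor imprecisions worth fixing: the error from the interior inertial term should read $\norm[W^{1,2}({[0,T]};L^2(Q))]{\eta-\eta_\eps}$ (it is $\partial_t\eta_\eps$ that must approximate $\partial_t\eta$, not $\eta_\eps$ approximating $\eta$ in $W^{1,2}$-spatial norm), and in the final sentence "localizing to smaller and smaller neighbourhoods" should presumably be the exhaustion $\psi\nearrow 1$, which is what the monotonicity assumption S6 requires to conclude strong $W^{2,q}(Q)$ convergence rather than convergence only on $\supp\psi$. The concluding Fatou argument — combining pointwise $\liminf\geq 0$ from S6 with $\limsup$ of the time integral being $\leq 0$ to extract a.e.\ convergence along a further subsequence — is the same one the paper uses implicitly and is fine.
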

\begin{proof}
As in the last section we will rely on  \autoref{ass:energy}, S6. 
 Let $h_0>0$ and $h\in (0,h_0)$. Further take $\phi \in C_0^\infty((h_0,T-h_0)\times Q; \R^+)$ with $\dist(\supp(\phi),\partial Q)>h_0$. Accordingly we define for $\delta_h=h^{a_1}<h_0$
the approximation $\eta_h=(\eta\chi_{[0,T]\times Q})*\psi_{\delta_h}$, where $\psi_\delta$ is the standard convolution kernel in time-space. This implies that $(\eta^{(h)}-\eta_{\delta_h})\phi$ is a valid test function for \eqref{eq:SOhWeakEquation}. Moreover, we find that by the standard convolution estimates that
\begin{align}
\label{eq:convest}
\norm[W^{k_0,2}]{\eta_{\delta_h}\phi}\leq ch^{a_1(2-\frac{q}{n}-k_0+\frac{2}{n})}\norm[W^{2,q}(Q)]{\eta}\text{ and }\norm[W^{k_0,2}]{\partial_t\eta_{\delta_h}\phi}\leq ch^{(1-k_0)a_1}\norm[W^{1,2}(Q)]{\partial_t\eta},
\end{align}
and strong convergence in all norms in which $\eta$ is bounded. Now we calculate
\begin{align*}
0&\leq \limsup_{h\to 0} \int_0^T \inner{DE(\eta^{(h)}(t))-DE(\eta(t))}{(\eta^{(h)}-\eta)\phi}dt
\\
&=\limsup_{h\to 0}\int_0^T \inner{DE(\eta^{(h)}(t))}{(\eta^{(h)}-\eta_{\delta_h})\phi} \,dt+ \limsup_{h\to 0} \inner{\underbrace{DE(\eta^{(h)}(t))}_{\text{bd. in } W^{-2,q}(Q)}}{\underbrace{(\eta-\eta_{\delta_h})\phi}_{\to 0 \text{ in } W^{2,q}(Q)}} dt
\\
&=\limsup_{h\to 0}\int_0^T \inner{DE_h(\eta^{(h)}(t))}{(\eta^{(h)}-\eta_{\delta_h})\phi}
-2h^\frac{a_0}{2}\inner{h^\frac{a_0}{2}\nabla^{k_0}(\eta^{(h)}(t))}{\nabla^{k_0}(\eta^{(h)}-\eta_{\delta_h})\phi} dt
\\
&\leq \limsup_{h\to 0}\int_0^T \inner{DE_h(\eta^{(h)}(t))}{(\eta^{(h)}-\eta_{\delta_h})\phi} dt
+2h^{a_0}\int_0^T \norm{\nabla^{k_0}(\eta^{(h)}(t))}\norm[W^{k_0,2}]{\eta_{\delta_h}\phi} dt
\\
&\leq \limsup_{h\to 0}\int_0^T \inner{DE_h(\eta^{(h)}(t))}{(\eta^{(h)}-\eta_{\delta_h})\phi} + ch^{\frac{a_0}{2}-(2-\frac{q}{n}-k_0+\frac{2}{n})a_1} dt
\\
&=\limsup_{h\to 0}\int_0^T \inner{DE_h(\eta^{(h)}(t))}{(\eta^{(h)}-\eta_{\delta_h})\phi} dt
\end{align*}
by \eqref{eq:convest} and by choosing $a_1$ small enough.
The final term then can be estimated using the equation, as
\begin{align*}
 &\phantom{{}={}}\int_0^T \inner{DE_h(\eta^{(h)}(t))}{(\eta^{(h)}-\eta_{\delta_h})\phi} dt= - \inner{D_2R_h(\eta^{(h)}(t),\partial_t \eta^{(h)}(t))}{(\eta^{(h)}-\eta_{\delta_h})\phi} \\
 &+ \inner{f \circ \eta^{(h)}(t)}{(\eta^{(h)}-\eta_{\delta_h})\phi} + \frac{\rho_s}{h}\inner{\partial_t \eta^{(h)}(t) - \partial_t \eta^{(h)}(t-h)}{(\eta^{(h)}-\eta_{\delta_h})\phi} dt.
\end{align*}
Here all terms are converging.
 In particular observe that
\begin{align*}
& \inner{D_2R_h(\eta^{(h)}(t),\partial_t \eta^{(h)}(t))}{(\eta^{(h)}-\eta_{\delta_h})\phi}
=\inner{D_2R(\eta^{(h)}(t),\partial_t \eta^{(h)}(t))}{(\eta^{(h)}-\eta_{\delta_h})\phi}
\\
&\quad +2h\inner{\nabla^{k_0}\partial_t\eta^{(h)}}{\nabla^{k_0}((\eta^{(h)}-\eta_{\delta_h})\phi)}\to \inner{D_2R(\eta(t),\partial_t \eta(t))}{(\eta-\eta_{\delta_h})\phi}
\end{align*}
by the strong convergence of $\eta^{(h)}$ in $W^{1,2}(Q;\R^n)$, the weak convergence of $\partial_t\eta^{(h)}$ in $W^{1,2}(Q;\R^n)$ and since
\[
h\abs{\inner{\nabla^{k_0}\partial_t\eta^{(h)}}{\nabla^{k_0}((\eta^{(h)}-\eta_{\delta_h})\phi)}}\leq h^{\frac{1}{2}-\frac{a_0}{2}}\norm{\sqrt{h}\nabla^{k_0}\partial_t\eta^{(h)}}\norm{h^\frac{a_0}{2}\nabla^{k_0}(\eta^{(h)}-\eta_{\delta_h})\phi)}
\]
which converges to zero a.e.\ using the energy estimates and \eqref{eq:convest} by choosing $a_0<1$ and $a_1<1$ accordingly. The term including the right-hand side converges, since all terms involve converge strongly. For the last term, we do a discrete partial integration in time (i.e. shift the term involving $t-h$) to get
\begin{align*}
 &\phantom{{}={}}\int_0^T \frac{\rho_s}{h}\inner{\partial_t \eta^{(h)}(t) - \partial_t \eta^{(h)}(t-h)}{(\eta^{(h)}(t)-\eta_{\delta_h}(t))\phi(t)} dt \\
 &= -\rho_s 
 \int_0^T \inner{\partial_t \eta^{(h)}(t)}{\left(\frac{\eta^{(h)}(t+h)-\eta^{(h)}(t)}{h}-\frac{\eta_{\delta_h}(t+h)-\eta_{\delta_h}(t)}{h}\right)\phi(t+h)} dt
 \\
 &\quad -\rho_s 
 \int_0^T \inner{\partial_t \eta^{(h)}(t)}{\left(\eta^{(h)}(t)-\eta_{\delta_h}(t)\right)\frac{\phi(t+h)-\phi(t)}{h}} dt.
\end{align*}
Now note that the first difference quotient is equal to $w^{(h)}$ as it was defined in \autoref{lem:SOAubinLions} and thus converges strongly to $\partial_t \eta$ in $L^2([0,T]\times Q;\R^n)$, while the other difference quotients only involve constant functions and their mollifications and thus also converge in the same space. As a result, all the right hand sides converge strongly to $0$ in $L^2([0,T]\times Q;\R^n)$ and the left hand sides are bounded. Thus the total limit is 0 and via \autoref{ass:energy}, S6, we have $\eta^{(h)}(t) \to \eta(t)$ in $W^{2,q}(Q;\R^n)$ for almost all $t\in[0,T]$.
\end{proof}

\subsubsection*{Existence of the limit equation}

With this in hand, we can finally consider the weak equation \eqref{eq:SOhWeakEquation} for arbitrary test functions. For the first three terms we have, as before (for the regularizing terms vanish per the estimates in the last lemma)
\begin{align*}
 &\phantom{{}={}} \int_0^T \inner{DE_h(\eta^{(h)}(t))}{\phi} + \inner{D_2R_h(\eta^{(h)}(t),\partial_t \eta^{(h)}(t))}{\phi} + \inner{f \circ \eta^{(h)}(t)}{\phi} dt \\
 &\to \int_0^T \inner{DE(\eta(t))}{\phi} + \inner{D_2R(\eta(t),\partial_t \eta(t))}{\phi} + \inner{f \circ \eta(t)}{\phi} dt.
\end{align*}

This leaves us with the last term, where we shift the discrete derivative to the test function again and get
\begin{align*}
 &\int_0^T\frac{\rho}{h}\inner{\partial_t \eta^{(h)}(t) - \partial_t \eta^{(h)}(t-h)}{\phi} dt = 
 - \rho \int_0^T \inner{\partial_t \eta^{(h)}(t)}{\frac{\phi(t+h)-\phi(t)}{h}} dt 
 \\
 &\quad\to - \rho \int_0^T \inner{\partial_t \eta(t)}{\partial_t \phi} dt.
\end{align*}

From this, we get solutions on the interval $[0,T]$.

\subsubsection*{Continuation until collision}

Using the short term existence, we can now employ the usual continuation argument. Assume that $\eta: [0,T_{\max}) \to \mathcal{E}$ is a solution on a maximal interval. Then either $T_{\max} = \infty$ or we can use the energy inequality to show existence of a unique limit $\eta(T_{\max})$ similar to as we did at the end of \autoref{thm:QSexistence}. Then  $\eta(T_{\max}) \notin \partial \mathcal{E}$, would allow us to reapply the short time existence, which would be a contradiction. This finishes the proof of \autoref{thm:SOexistence}.

\subsection{Remarks} \label{subsec:SOremarks}

Let us close this section with some remarks on the preceeding proofs.

\begin{remark}[On the need for dissipation] \label{rem:SOdissipation}
It is noteworthy, that the a-priori estimates \eqref{eq:apriSO} are valid even in case of a purely elastic solid, which means in case $R\equiv 0$. In that case no strategy is known to deal with the non-linearity in $DE(\eta)$ without resorting to a relaxations of $\eta$. Even for the hyperbolic p-Laplacian $\partial_t^2\eta-\diver(\abs{\nabla \eta}^{p-2}\nabla \eta)=0$ the existence of weak solutions is a long standing open problem. Only in the case $p=2$, where the elastic energy is quadratic in highest order and weak convergence suffices, existence of solutions is known.

In our case, in order to apply the Minty method, we use the strong convergence of $b^{(h)}$ in an $L^2$ sense. This convergence can be derived from the Aubin-Lions lemma, in case we $\partial_t \eta^{(h)}(t)$ is in a space that compactly embeds into $L^2$. This is true in our set up due to the dissipation and would not be possible in the case the respective bounds would be missing.
A possible escape in the latter case could be the use of measure valued solutions. %
\end{remark}

\begin{remark}[On collisions and continuation afterwards] \label{rem:SOcollisions}
 For the time delayed problem we face the same difficulty as in the previous section. While the limit object $\eta$ exists for all times independently of any collision, to get to a limit equation, we need to have an appropriate Euler-Lagrange equation for all $\eta^{(\tau)}_k$. This we only have if $\eta^{(\tau)}_k \notin \partial \mathcal{E}$, i.e.\ if there is no self-touching of the solid.
 
 To progress past this point, a closer study of collisions and self-touching is needed. The key observation here is that the initial discrete minimization forms a classical obstacle problem. As a consequence, we have to replace the Euler-Lagrange equation with a variational inequality, since we are only able to test with directions $\phi$ which do not point ``out of'' $\mathcal{E}$. Showing full convergence for such an inequality is difficult, as the set of admissible directions $\phi$ can change along the sequence.
 
 Assuming that this can be done, the next question then is, if one can then continue from the time-delayed to the hyperbolic problem. As mentioned, the key to this is the appropriate energy-inequality. While as per definition the chosen test function $\partial_t \eta$ is admissible, as it can never point out of $\mathcal{E}$, the variational inequality has the opposite sign. Here one needs to show that it is in fact an equality in this special case. Only then, we are again guaranteed existence of a limit object $\eta$, but also again face the same difficulty in establishing the limit equation.
 
 Since the main focus of this paper is the fluid structure interaction, we will not go into further details on this at the time being. In particular, collisions of the solid lead to non-Lipschitz boundaries for the fluid and there is a general conjecture that an incompressible fluid will prevent collisions in the first place. 
 
 It should also be noted that recent progress into the quasi-stationary analogue of this has been made by \cite{kromerQuasistaticViscoelasticitySelfcontact2019} with a different approach, where instead of a variational inequality, they consider the associated Lagrange-multiplyier and its physical significance.
\end{remark}

\begin{remark}[On more general energies] \label{rem:SOgeneralEnergies}
As was already mentioned, our assumptions on the solid are specifically tailored to the general fluid-structure interaction problem handled in the next section. The method of solving hyperbolic problems we described in this section however has the potential for many more applications.
 
Of particular interest to elasticity is the problem of injectivity. In the approach of this paper, the injectivity properties are given through the Ciarlet-Nečas condition and the lower bound on the determinant in S2, and though generally desired in solid mechanics, are entirely optional for the flow of the proof. The method should thus be understood as working despite those assumptions, instead of because of them.
 
 In replacing those, there is however an obvious point where some attention needs to be taken.\footnote{There is also the less obvious point that for the dissipation used in our motivating example, the Korn-type inequality R2 is only true if $\det \nabla \eta$ is bounded away from $0$, but that is specific to the example.} This is the distance to the boundary of the allowed set $\partial \mathcal{E}$. In particular for any fixed $\eta \in \mathcal{E} \setminus \partial \mathcal{E}$, we need to make sure that there is a minimal existence interval, i.e.\ a short time in which no approximate solution can reach $\partial \mathcal{E}$, in order to have a full Euler-Lagrange equation. 
 
 There are two approaches to this. One is to show that the distance is large in terms of the metric derived from the dissipation. In our case we used the energy estimate combined with \autoref{prop:shortInjectivity} to simply show that reaching the boundary in short time simply is to expensive. The other is to show that the energy required to reach the boundary is to large. This in fact we also used, though its use was a bit hidden. Note that $\partial \mathcal{E}$ does not only consist of those configurations for which the solid touches itself, but also of those for which $\det \nabla \eta(x) = 0$ at some point. These we avoided, as they require infinite energy. But one should keep in mind that those configurations are even more difficult to handle than simple collisions. Not only can one not test in certain directions which would result in a negative determinant, but also for some reasonable energies $DE(\eta)$ might be ill-defined in terms of spaces. For details we refer to~\cite{ball2002some}. 
\end{remark}

\begin{remark}[On the proof of the energy inequality] \label{rem:SOenergyInequality}
In the proof of the energy inequality \autoref{lem:SOtimeDelayedEnergyInequality} we used a regularization term in the dissipation to simplify the proof. Since we will need that term later on in the fluid-structure interaction, this only seemed natural, but it should be noted that strictly speaking, it was not necessary. The same result is still true, if we only ever use $R$. To show this directly, one can use some techniques from the theory of minimizing movements, specifically the so called Moreau-Yosida approximation.

For this, in the proof of \autoref{thm:SOtimeDelayedExistence}, we add a third interpolation $\hat{\eta}^{(\tau)}$ where in each step, we solve the minimization problem \eqref{eq:SOdiscreteProblem} with $\sigma := t-k\tau$ in place of $\tau$. Integrating in time over the change of resulting minimum then results in an improved version of \eqref{eq:SOdicreteAPrioriEstimate}, with some additional terms depending on $\hat{\eta}^{(\tau)}$. This turns out to be the proper analogue of \autoref{lem:SOtimeDelayedEnergyInequality} for $\tau > 0$ and using the associated Euler-Lagrange equation allows us to take the limit $\tau \to 0$.

The complete proof is however to involved to carry out at this point, as we will not need it for our particular application.
\end{remark}

 \section{The unsteady fluid-structure interaction problem}
 \label{sec:full}
 
 We will now combine the methods developed in the last two sections and use their combination to show existence for a general fluid-structure interaction problem. In contrast to previous works (see the references in the introduction) we work in arbitrary dimension and consider a bulk solid with its full deformation. But most importantly, we consider the full nonlinear equation, both for the fluid in form of the incompressible Navier-Stokes equation with its transport-term as well as full nonlinear elasticity of the solid.

At this point some of the arguments behind the derivation of the Navier-Stokes equation come into play. The natural way to deal with inertia in a moving fluid domain is to transport it and the natural way to do so is along the flow of the fluid itself; thus giving rise to the desired transport term.
 
 This leads us to perhaps the main technique used of this section and why it is more than a simple combination of the ideas developed in the previous two. What we did in the last section was to introduce a time delayed equation, where effectively the velocity $\partial_t \eta(t)$ (or more technically the linear momentum) was compared with itself at time $t-h$. In this section we will do the same for the fluid. That means we need to compare $v(t)$ and $v(t-h)$ accordingly. However, not only is this not possible directly, as they are defined on different domains, comparing both at the same position in $\Omega$ is also invalid from a conceptional point of view.
 
Again the difference between Lagrangian and Eulerian description is essential here. For the solid, we work in the Lagrangian reference domain, where momentum is a locally conserved quantity. For the fluid, we are forced to consider the Eulerian, physical domain. Here momentum is not conserved locally, but along the flow. This leads us to the notion of a flow map.\footnote{Alternatively we could take the view-point of a series of flow determined, short-term Lagrangian reference configurations for the solid, on each of which, momentum is again conserved locally. In fact not only is this a valid way to consider the problem, it actually is the same calculation with a different interpretation of some of the terms. For details see \autoref{rem:NSlagrangianVsEulerian}
 }
 
 As before, let $\Omega(t)$ denote the fluid domain at a given time $t$. Now for a fixed $t_0$, assume we have a flow map $\Phi^{(t_0)}: [t_0,T] \times \Omega(t_0) \to \Omega$ generated by $v$, i.e. a map for which $\Phi^{(t_0)}(t_0,y) = y$ and $\partial_t \Phi^{(t_0)}(t,y) = v(t,\Phi^{(t_0)}(t,y))$. Existence of such a map is not guaranteed in general. In fact we will spend quite some work constructing it for the time-delayed problem and even then it will be the one object for which we cannot obtain convergence to the limit $h\to 0$.
 
However, if such a map would exist and would be regular enough, it would have certain very useful properties. First remember, that our boundary moves with $v$; then per definition, $\Phi^{(t_0)}|_{[t_0,T] \times \partial \Omega(t_0)}$ never leaves the boundary and in fact is a diffeomorphism between $\partial \Omega(t_0)$ and $\partial \Omega(t)$. Similarly, a short calculation reveals that $\partial_t \det \nabla \Phi^{(t_0)} = (\diver v) \circ \Phi^{(t_0)} = 0$ and $\det \nabla \Phi{(t_0)} (0,.) =  1$, so combining those, we get that $\Phi^{(t_0)}(t,.)$ is a volume preserving diffeomorphism between $\Omega(t_0)$ and $\Omega(t)$. This allows us to validly compare $v(t,\Phi^{(t_0)}(t,y))$ and $v(t_0,y)$ for any $y \in \Omega(t_0)$. But even more telling is what happens in the limit, where we have via the chain rule
\begin{align*}
 \lim_{t\searrow t_0} \frac{v(t,\Phi^{(t_0)}(t,y))-v(t_0,y)}{t-t_0} \to \partial_t v(t_0,y) + \nabla v(t_0,y) \cdot v(t_0,y)
\end{align*}
which is precisely the transport term we expect for the Navier-Stokes equation. So not only have we identified the right terms to construct the time-delayed equation, but they will automatically lead us to the right material time derivative in the Navier-Stokes equation.

Having explained the idea, the rest of this section will be spend with making it rigorous. In particular, much of the added difficulty in solving the time-delayed equation is in constructing the flow map $\Phi$, which needs to be done in parallel, as it is a part of the fluid inertial term in the equation itself. To do so, we will include yet another regularizing term to the dissipation, i.e.\ in addition to $E_h$ and $R_h$ as defined in the previous section, we will add a term of the form $h\norm[\Omega]{\nabla^k_0 u}^2$ to the dissipation of the fluid. For any $h>0$ this will give us some Lipschitz-regularity of $v$ and thus of $\Phi$ which will be crucial when sending the velocity scale $\tau \to 0$.

\subsection{An intermediate, time delayed model}
\label{subsec:NStimeDelayed}

As in the previous section, let us now start with deriving a time-delayed equation, similar to what we did in \autoref{subsec:SOtimeDelayed}.

\begin{definition}[Time delayed solution] \label{def:TDweakSolution}
Let $f\in C^0([0,h]\times \Omega;\R^n)$, $w \in L^2([0,h]\times \Omega;\R^n)$ and $\Omega_0 = \eta_0(Q)$. We call the pair $\eta : [0,h] \times Q \to \Omega, u: [0,h] \times \Omega \to \R^n$ a weak solution to the time delayed inertial equation if 
\begin{align} \label{eq:TDweakSolution} 
0&=\inner[Q]{DE_h(\eta)}{\phi} + \inner[Q]{D_2R_{h}(\eta,\partial_t \eta)}{ \phi} +\inner[Q]{\rho_s \frac{\partial_t \eta - w \circ \eta_0^{-1}}{h} }{ \phi} - \rho_s \inner[Q]{f\circ \eta}{\phi} \\ \nonumber
&+ \nu \inner[\Omega(t)]{\nablasym u}{\nabla \xi} +  h\inner[\Omega(t)]{\nabla^{k_0} u}{\nabla^{k_0}\xi} + \inner[\Omega_0]{\rho_f\frac{u\circ \Phi - w}{h}}{\xi \circ \Phi} - \rho_f \inner[\Omega(t)]{f}{\xi} 
\end{align}
for almost all $t\in[0,h]$ and all $\phi \in C^0([0,h]; W^{k_0,2}(Q;\R^n))$, $\xi \in C^0([0,h];W^{k_0,2}(\Omega;\R^n))$ satisfying $\diver \xi|_{\Omega(t)} =0$, $\xi|_{\partial \Omega} = 0$, $\phi|P = 0$ and the coupling conditions
\[\xi \circ \eta = \phi  \text{ and } u \circ \eta = \partial_t \eta \text{ in }  Q. \] Here we define $\Omega(t) = \Omega \setminus \eta(t,Q)$ and $\Phi:[0,h]\times \Omega_0 \to \Omega$ solves $\partial_t \Phi = u\circ \Phi$ and $\Phi_0(y) = y$.
\end{definition}

While most of this equation is similar to a combination of \autoref{def:QSWeakSol} and \autoref{def:SOtimeDelayedSolution}, the interesting addition here is that of the flow map $\Phi$. Note that in this subsection, this map will always start at $t_0=0$. What this does, is to allow us to take a slightly Lagrangian point of view, as $\Omega_0$ will play the role of a reference configuration for the fluid. 

In particular, in the way that $\Phi$ is linked to the equation, we already need to begin its construction in the discrete setting. This will also be another point where we use the regularized dissipation $R_{h}$. This will afford us easier controls on the boundary values of $v$, which in turn then greatly simplify the construction of $\Phi$. In this section we will prove the following existence theorem:

\begin{theorem}[Existence of time delayed solutions]\label{thm:TDexistence} Let $\eta_0 \in \mathcal{E} \cap W^{k_0,2}(Q;\R^n) \setminus \partial \mathcal{E}$, $w \in L^2([0,h] \times Q;\R^n)$ and $f \in C^0([0,h] \times Q; \R^n)$. Then there exists a solution $(\eta, v)$ to the time delayed equation \autoref{def:TDweakSolution} on the interval $[0,h]$, or there exists a solution on a shorter interval $[0,h_{\max}]$ such that $\eta(h_{\max}) \in \partial \mathcal{E}$.\footnote{Note that a-posteriori (see \autoref{cor:short-time-no-collision})
it will be shown that (in dependence of $\eta_0$) there is always a minimal time-length $h_{\min}$ for which it can be guaranteed that $\eta(t)\notin \partial \mathcal{E}$ for $t\in [0,h_{\min}]$.} Furthermore $\Phi(t,.)$ is a volume preserving diffeomorphism between $\Omega_0$ and $\Omega(t)$. %
\end{theorem}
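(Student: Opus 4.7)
The plan is to proceed in analogy to the parabolic scheme from \autoref{subsec:QSproof} combined with the time-delayed structure from \autoref{subsec:SOtimeDelayed}, but augmented with the simultaneous construction of a discrete flow map. Fix $h>0$. Introduce a velocity scale $\tau$ with $\tau \ll h$ and, starting from $\eta^{(\tau)}_0 := \eta_0$, $\Phi^{(\tau)}_0 := \mathrm{id}$, iteratively define $(\eta^{(\tau)}_{k+1},v^{(\tau)}_{k+1})$ as a minimizer of a functional of the form
\begin{align*}
& E_h(\eta) + \tau R_h\!\left(\eta^{(\tau)}_k,\tfrac{\eta-\eta^{(\tau)}_k}{\tau}\right) + \tau\tfrac{\nu}{2}\|\nablasym v\|_{\Omega^{(\tau)}_k}^2 + \tau h \|\nabla^{k_0} v\|_{\Omega^{(\tau)}_k}^2 \\
& \quad + \tau\tfrac{\rho_s}{2h}\left\|\tfrac{\eta-\eta^{(\tau)}_k}{\tau} - w_k\circ \eta_0^{-1}\right\|_Q^2 + \tau\tfrac{\rho_f}{2h}\left\|v\circ\Phi^{(\tau)}_k - w_k\right\|_{\Omega_0}^2 \\
& \quad - \tau\rho_s\inner[Q]{f_k\circ\eta^{(\tau)}_k}{\tfrac{\eta-\eta^{(\tau)}_k}{\tau}} - \tau\rho_f\inner[\Omega^{(\tau)}_k]{f_k}{v}
\end{align*}
over $\eta\in\mathcal{E}$ and $v\in W^{k_0,2}(\Omega^{(\tau)}_k;\R^n)$ satisfying $\diver v=0$, $v|_{\partial\Omega}=0$, and the affine coupling $v\circ\eta^{(\tau)}_k = (\eta-\eta^{(\tau)}_k)/\tau$ on $M$. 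The discrete flow map is then updated via the semi-implicit rule $\Phi^{(\tau)}_{k+1}(y) := \Phi^{(\tau)}_k(y) + \tau\, v^{(\tau)}_{k+1}(\Phi^{(\tau)}_k(y))$, or equivalently by solving the short-time ODE with the $C^{k_0-1}$-regular velocity $v^{(\tau)}_{k+1}$. Existence of the minimizer follows exactly as in \autoref{prop:QSdiscreteELequation}, using that the added quadratic terms and the regularizers $\|\nabla^{k_0}\cdot\|^2$ strengthen both coercivity and weak lower semicontinuity, and the minimizer satisfies the corresponding Euler--Lagrange equation provided $\eta^{(\tau)}_{k+1}\notin \partial\mathcal{E}$.

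Next I would derive the crucial a-priori estimate by comparing the value at the minimizer with the admissible competitor $(\eta^{(\tau)}_k,0)$, giving the discrete inequality
\begin{equation*}
E_h(\eta^{(\tau)}_{k+1}) + \tau R_h(\cdots) + \tau\tfrac{\nu}{2}\|\nablasym v^{(\tau)}_{k+1}\|^2 + \tau h\|\nabla^{k_0} v^{(\tau)}_{k+1}\|^2 + \tfrac{\tau\rho_s}{2h}\|\cdots\|^2 + \tfrac{\tau\rho_f}{2h}\|\cdots\|^2 \leq E_h(\eta^{(\tau)}_k) + \tfrac{\tau\rho_s}{2h}\|w_k\|^2 + \tfrac{\tau\rho_f}{2h}\|w_k\|^2 + \mathrm{force}.
\end{equation*}
Summation, Young's inequality, and the global Korn inequality \autoref{lem:globalKorn} produce the uniform bounds needed to invoke \autoref{prop:shortInjectivity} and guarantee $\eta^{(\tau)}_k\notin\partial\mathcal{E}$ on a uniform interval, and to control $\eta^{(\tau)}$, $\partial_t\tilde\eta^{(\tau)}$, and $v^{(\tau)}$ in the analogues of the spaces from \autoref{prop:QSconvergence}. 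The $h\|\nabla^{k_0} v\|^2$ regularizer, however, yields the key extra bound $v^{(\tau)}\in L^2([0,h];W^{k_0,2})$, so that $v^{(\tau)}$ is uniformly Lipschitz in space, which will be decisive for $\Phi$.

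The main obstacle, and the real novelty, is the simultaneous construction and convergence of the flow map $\Phi^{(\tau)}$. The uniform Lipschitz bound on $v^{(\tau)}$ together with the Gr\"onwall/product-rule estimate for the discrete recursion $\Phi^{(\tau)}_{k+1} = \Phi^{(\tau)}_k + \tau v^{(\tau)}_{k+1}\circ\Phi^{(\tau)}_k$ produces uniform $W^{1,\infty}$-bounds on $\Phi^{(\tau)}(t,\cdot)$ and on its inverse, and hence $\Phi^{(\tau)}(t,\cdot)$ is a bi-Lipschitz homeomorphism between $\Omega_0$ and $\Omega^{(\tau)}(t)$. The volume-preservation property is obtained by noting that at each discrete step $\det(\mathrm{Id} + \tau \nabla v^{(\tau)}_{k+1}\circ\Phi^{(\tau)}_k) = 1 + \tau(\diver v^{(\tau)}_{k+1})\circ\Phi^{(\tau)}_k + O(\tau^2) = 1 + O(\tau^2)$ by the divergence-free condition, and the accumulated error vanishes as $\tau\to 0$. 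By Arzel\`a--Ascoli one extracts $\Phi^{(\tau)}\to\Phi$ uniformly with $\Phi\in C^0([0,h];W^{1,\infty}(\Omega_0))$, and one identifies $\partial_t\Phi = u\circ\Phi$ in the limit by integrating the discrete ODE and using the strong convergence of $v^{(\tau)}$ (which follows from uniform $W^{k_0,2}$-bounds combined with compactness in a weaker norm). The boundary compatibility $\Phi(t,\partial\Omega_0\cap\eta_0(\partial Q\setminus P)) = \partial\Omega(t)\cap\eta(t,\partial Q\setminus P)$ is inherited from the discrete coupling condition.

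Finally, I would pass to the limit in the weak equation. The convergence of the solid terms proceeds verbatim as in \autoref{prop:QSELeq}, using the Minty-type property \autoref{ass:energy} S6 (tested with the admissible pair $((\eta^{(\tau)}-\eta)\psi,0)$) to upgrade to strong $W^{2,q}$-convergence of $\eta^{(\tau)}$. For the fluid inertial term one writes
\begin{equation*}
\int_0^h \inner[\Omega_0]{\tfrac{v^{(\tau)}\circ\Phi^{(\tau)} - w}{h}}{\xi\circ\Phi^{(\tau)}}\,dt
\end{equation*}
and uses the uniform convergence of $\Phi^{(\tau)}\to\Phi$, together with weak convergence of $v^{(\tau)}$ and strong convergence of $\xi\circ\Phi^{(\tau)}$, to pass to the limit. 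The test functions are handled via the approximation \autoref{lem:approxTestFcts}, which is needed here because the coupling $\xi\circ\eta=\phi$ and the solenoidality on the $\tau$-perturbed domain must be matched to the limit configuration. The continuation argument until first collision or loss of regularity is identical to the one carried out at the end of \autoref{thm:QSexistence} and \autoref{thm:SOexistence}, using the energy inequality to obtain uniqueness of the limit $\eta(h_{\max})$.
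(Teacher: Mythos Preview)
Your plan is essentially the paper's proof: the same discrete minimization scheme (cf.\ \eqref{eq:TDdiscreteProblem}), the same flow-map update $\Phi_{k+1}=(\mathrm{id}+\tau v_{k+1})\circ\Phi_k$, the same comparison with $(\eta_k,0)$ for the a-priori estimate, the same use of the $h\|\nabla^{k_0}v\|^2$ regularizer to obtain $L^2C^{1,\alpha}$-bounds on $v$ and hence uniform Lipschitz bounds on $\Phi$ via the product/AM--GM argument, and the same Minty trick plus \autoref{lem:approxTestFcts} for the limit passage.

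Two small points where your sketch deviates from the paper and could be tightened: (i) the paper writes the fluid force term in the reference domain as $\inner[\Omega_0]{f\circ\Phi_k}{v\circ\Phi_k}$ rather than $\inner[\Omega_k]{f}{v}$, which is convenient because the inertial term is also written on $\Omega_0$ and can absorb the force directly without any Jacobian correction (recall $\det\nabla\Phi_k$ is only approximately~$1$ at the discrete level); (ii) for the identification $\partial_t\Phi=u\circ\Phi$ and the convergence of the fluid inertial term, you do \emph{not} have (or need) strong $L^2$-convergence of $v^{(\tau)}$---what is used is that $\xi\circ\Phi^{(\tau)}$ converges uniformly while $u^{(\tau)}\circ\Phi^{(\tau)}$ converges weakly in $L^2([0,h]\times\Omega_0)$, which suffices for the pairing.
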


Let us now begin with the proof of this theorem. The parts that are identical to one of the previous proofs, we will only sketch.

\subsubsection*{Proof of \autoref{thm:TDexistence}, step 1: Constructing an iterative approximation}

Fix a step-size $\tau >0$. We again proceed iteratively, this time constructing both the pair $\eta,v$ as well as $\Phi$. We start with the given $\eta_0$ and $\Phi_0 := id$. Now assume that we have $\eta_k:Q \to \Omega$ and a map $\Phi_k: \Omega_0 \to \Omega_k$, where as before $\Omega_k = \Omega \setminus \eta_k(Q)$. Define $w_k(y) = \fint_{k \tau}^{(k+1)\tau} w(t,y) dt$ for all $y \in \Omega$. We now solve the iterative problem
\begin{align} \label{eq:TDdiscreteProblem}
 (\eta_{k+1},v_{k+1}) &\in \argmin_{\eta,v} E_h(\eta) + \tau R_{h}\left(\eta_k, \frac{\eta- \eta_k}{\tau}\right)
 + \frac{\tau \rho_s}{2h} \norm{\frac{\eta-\eta_k}{\tau}\!\!-w_k\circ \eta_0}^2 - \tau \inner{f\circ \eta}{\frac{\eta-\eta_l}{\tau}} \nonumber \\ &+ \tau\frac{\nu}{2} \norm[\Omega_{k}]{\nablasym v}^2  + \frac{\tau h}{2} \norm[\Omega_k]{\nabla^{k_0} v}^2 + \frac{\tau \rho_f}{2h}\norm[\Omega_0]{v\circ \Phi_{k} -w_k}^2 - \tau\inner[\Omega_{0}]{f \circ \Phi_k}{v \circ \Phi_k}
\end{align}
with $\diver v_{k+1} = 0$ on $\Omega_{k}$ and the coupling condition
\[ v \circ \eta_k = \frac{\eta_{k+1} - \eta_k}{\tau} \text{ on $Q$}.\]
Finally we update $\Phi_k$ to $\Phi_{k+1}$ using
\begin{align*}
 \Phi_{k+1} := (id + \tau v_{k+1}) \circ \Phi_k.
\end{align*}
Note that at this point using the coupling condition we can immediately derive $\Phi_{k+1}(\partial \Omega_0) = \partial \Omega_{k+1}$ but we still need to show that a similar property holds in the interior. This will be done in step 2a of the proof. For now we can simply assume $v_{k+1}$ to be extended by $0$ in the definition of $\Phi_{k+1}$.

\begin{proposition}[Existence of iterative solutions]\label{prop:TDdiscreteELequation}
 The iterative problem \eqref{eq:TDdiscreteProblem} has a minimizer, i.e. $\eta_{k+1}$ and $v_{k+1}$ are defined. Furthermore the minimizers obey the following equation:
 \begin{align*}
&\phantom{{}={}} \inner{DE_h(\eta_{k+1})}{\phi} + \inner{D_2R_{h}\left(\eta_k,\frac{\eta_{k+1}-\eta_{k}}{\tau}\right)}{\phi} +   \frac{ \rho_s}{h}\inner[Q]{\frac{\eta_{k+1}-\eta_k}{\tau}-w_k \circ \eta_0}{\phi}
  \\
  & + \frac{\rho_f}{h}\inner[\Omega_0]{v_{k+1} \circ \Phi_{k} - w_k}{\xi \circ \Phi_{k}} + \nu\inner[\Omega_{k}]{\nablasym v_{k+1}}{\nablasym\xi} + h\inner[\Omega_k]{\nabla^{k_0}v_{k+1}}{\nabla^{k_0} \xi}
  \\
  &=\rho_f\inner[\Omega_0]{ f\circ \Phi_k}{\xi\circ \Phi_k}  +\rho_s\inner[Q]{f\circ\eta_{k}}{\frac{\eta_{k+1}-\eta_k}{\tau}}  
 \end{align*}
 where $\phi \in W^{2,q}(Q;\R^n)$, $\phi|_P = 0$ and $\xi \in W_0^{1,2}(\Omega;\R^n)$ such that \[\phi =\xi \circ \eta_{k} \text{ on $Q$ and } \diver\xi|_{\Omega_{k}} = 0.\]
\end{proposition}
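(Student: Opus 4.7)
The plan is to follow the direct method of the calculus of variations in close parallel to the proof of \autoref{prop:QSdiscreteELequation}, with the new terms involving the flow map $\Phi_k$ and the higher-order regularizer $h\norm[\Omega_k]{\nabla^{k_0} v}^2$ handled in a mostly routine way. First I would check that the admissible class is non-empty by using $(\eta_k, 0)$ as a competitor: the coupling condition holds trivially since $(\eta_k-\eta_k)/\tau = 0$, and the functional value is finite because $E_h(\eta_k)<\infty$ and $w_k\in L^2$. Next I would derive a lower bound on the functional by applying weighted Young's inequalities to the two force terms and absorbing them into the quadratic dissipative/inertial terms: the solid force is controlled through R3 and $R_h$, while the fluid force $\tau\inner[\Omega_0]{f\circ\Phi_k}{v\circ\Phi_k}$ is controlled using a change of variables along the volume-preserving $\Phi_k$ (maintained inductively by the construction) together with $\nu\norm[\Omega_k]{\nablasym v}^2$ and the global Korn inequality of \autoref{lem:globalKorn}.

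Given a minimizing sequence $(\eta^l, v^l)$, uniform bounds on the energy and the various dissipative and inertial pieces then follow. Coercivity S4 together with the regularizer $h^{a_0}\norm{\nabla^{k_0}\eta}^2$ gives weak $W^{k_0,2}(Q;\R^n)$ compactness of $\eta^l$ with limit $\eta\in\mathcal{E}$ by \autoref{lem:calEclosed}, and the embedding $W^{k_0,2}\hookrightarrow C^{1,\alpha^-}$ yields strong convergence of $\eta^l$ and $\nabla\eta^l$. Analogously, the combined bound on $\norm[\Omega_k]{\nablasym v^l}^2 + h\norm[\Omega_k]{\nabla^{k_0} v^l}^2$, together with $\diver v^l=0$ and the zero Dirichlet condition on $\partial\Omega$, yields weak $W^{k_0,2}(\Omega_k;\R^n)$ convergence $v^l\rightharpoonup v$. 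The coupling condition $v^l\circ\eta_k = (\eta^l-\eta_k)/\tau$ passes to the limit since $\eta_k$ is a fixed diffeomorphism and the above convergences are strong enough in $C^0$. Lower semicontinuity of every term of the functional holds: $E_h$ by S3, $R_h$ by R1, and the three quadratic terms by convexity; the two force terms even converge by the uniform convergence of $\eta^l$ and the strong $L^2(\Omega_0)$ convergence of $v^l\circ\Phi_k$. The pair $(\eta,v)$ is therefore a minimizer, to be relabeled $(\eta_{k+1},v_{k+1})$.

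For the Euler-Lagrange equation, I would consider perturbations $(\eta_{k+1}+\varepsilon\phi,\, v_{k+1}+\varepsilon\xi/\tau)$ where $(\phi,\xi)$ satisfies $\phi|_P=0$, $\diver\xi|_{\Omega_k}=0$, $\xi|_{\partial\Omega}=0$, and the linearized coupling $\xi\circ\eta_k=\phi$ on $M$. Since $\eta_{k+1}\notin\partial\mathcal{E}$ by assumption, the perturbation lies in the admissible set for all small $\varepsilon$, and the first variation at $\varepsilon=0$ produces exactly the stated equation term by term. The choice of the $\xi/\tau$ scaling (as in \autoref{prop:QSdiscreteELequation}) is precisely what cancels the leading $\tau$-factors in the functional and brings the equation into the form used in the iteration.

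The main conceptual difficulty behind this proposition (though technically hidden from its statement) is the appearance of the fluid inertial term $\frac{\tau\rho_f}{2h}\norm[\Omega_0]{v\circ\Phi_k-w_k}^2$, which is only meaningful if $\Phi_k$ is a sufficiently regular, volume-preserving diffeomorphism $\Omega_0\to\Omega_k$. Maintaining this property along the iteration $\Phi_{k+1}=(id+\tau v_{k+1})\circ\Phi_k$ is precisely why the high-order regularizer $h\norm[\Omega_k]{\nabla^{k_0} v}^2$ is included in the functional: it yields Lipschitz control on $v_{k+1}$ via $W^{k_0,2}\hookrightarrow W^{1,\infty}$ (since $k_0>n/2+1$), ensuring that $id+\tau v_{k+1}$ is invertible on $\Omega_k$ and volume-preserving when combined with $\diver v_{k+1}=0$, for $\tau$ small relative to the available Lipschitz norm. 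Establishing this regularity property of $\Phi_k$ along the iteration, and later passing to the limit in $\tau$ with the flow map, is the main technical centerpiece of the section and the chief novelty compared to the analogues in Sections \ref{sec:qs} and \ref{sec:so}.
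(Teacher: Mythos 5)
Your proposal is correct in structure and would yield the proposition, but there is a meaningful point of departure in the lower bound argument that the paper explicitly flags. You propose to control the two force terms by absorbing them into the dissipative terms via the energy-dependent Korn inequality (R3 plus \autoref{lem:globalKorn}), exactly as in the quasi-static case \autoref{prop:QSdiscreteELequation}. The paper instead absorbs both force terms directly into the two quadratic inertial terms $\frac{\tau\rho_s}{2h}\norm{\frac{\eta-\eta_k}{\tau}-w_k\circ\eta_0}^2$ and $\frac{\tau\rho_f}{2h}\norm[\Omega_0]{v\circ\Phi_k-w_k}^2$, since these are already $L^2$ quadratic forms in the velocities without any Korn-type coercivity being needed (see the corresponding calculations in \autoref{thm:SOtimeDelayedExistence} and \autoref{lem:TDdiscreteAPrioriEstimates}, and the discussion in \autoref{rem:SOdissipation}). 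Both routes establish the lower bound, but the paper's choice matters more than it appears: it avoids constants that depend on the energy sublevel, which is precisely what later allows the a priori estimates to hold for arbitrary times independent of the dissipation (compare the remark following \autoref{thm:QSexistence}). Your version would silently reintroduce that dependence.

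The rest of the existence and Euler--Lagrange argument (coercivity in $W^{k_0,2}$, closedness of $\mathcal{E}$, lower semicontinuity of every quadratic term, passage of the affine coupling condition, the $\xi/\tau$ rescaling of the perturbation) agrees with the paper. One organizational caveat: your final paragraph attributes the volume-preservation and diffeomorphism property of $\Phi_k$ to this proposition. In the paper's structure this is deliberately deferred --- the text immediately preceding the proposition states that $v_{k+1}$ is simply extended by zero in the definition of $\Phi_{k+1}$, and the diffeomorphism and determinant bounds for $\Phi_k$ are established separately in Step 2a (\autoref{prop:regularityOfV}) via a cumulative determinant-expansion argument over all $k$ at once, not by per-step invertibility of $id+\tau v_{k+1}$. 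Within the minimization step itself $\Phi_k$ is merely fixed Lipschitz data, so those properties are not logically part of this proof.
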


\begin{proof}
 The proof differs from the quasistatic case in \autoref{prop:QSdiscreteELequation} only in the occurence of the additional terms for the effects of inertia. As both are non-negative, we still have a minimizing sequence $\tilde{\eta}_l,\tilde{v}_l$ with the same coercivity-properties. In particular due to the compact embeddings we can assume that for a subsequence both converge in an $L^2$ sense, under which the inertial terms are continuous. Thus this minimizing sequence will again converge to a minimizer. In fact giving the lower bound on the sequence is easier in this case, as the two force terms can now be estimated against the inertial terms directly, without having to resort to a potentially energy dependent Korn-inequality. (See the corresponding calculations the proof of \autoref{thm:SOtimeDelayedExistence} and \autoref{rem:SOdissipation} for more details.)
 
 Further, with regards to the Euler-Lagrange equation, we can treat the additional terms individually. Since both are quadratic functionals of $\eta$ and $v$ respectively, and neither involve any derivatives, this is straighforward. Note that again we are able to remove a factor of $\tau$ from the final term by scaling $\phi$ and $\xi$ differently than $\eta$ and $v$.
\end{proof}

Now equally as before, our minimization can be turned into a discrete energy inequality by comparing minimizers.

\begin{lemma}[Discrete energy inequality and estimates] 
\label{lem:TDdiscreteAPrioriEstimates}
We have
\begin{align*}
 & \phantom{{}={}}E_h(\eta_{k+1}) + \tau R_{h}\left( \eta_{k}, \frac{\eta_{k+1}- \eta_k}{\tau}\right) +  \tau \frac{ \rho_s}{2h} \norm[Q]{\frac{\eta_{k+1}-\eta_k}{\tau}-w_k\circ \eta_0}^2 
 \\&+ \tau \frac{\nu}{2}\norm[\Omega_{k}]{\nablasym v_{k+1}}^2 +\frac{\tau h}{2} \norm[\Omega_k]{\nabla^{k_0}v_{k+1}}^2 + \tau \frac{\rho_f}{2h} \norm[\Omega_0]{v_{k+1} \circ \Phi_{k} - w_k}^2 \\
 &\leq E_h(\eta_k) + \tau \frac{\rho_s}{2h}  \norm[Q]{w_k \circ \eta_0}^2+\tau \frac{\rho_f}{2h} \norm[\Omega_0]{w_k}^2 + \tau \rho_f\inner[\Omega_0]{ f\circ \Phi_k}{v \circ \Phi_k}  +\tau \rho_s\inner[Q]{f\circ\eta_{k}}{\frac{\eta_{k+1}-\eta_k}{\tau}}
\end{align*}
and there exist $c,C>0$ independent of $\tau$ and $N$ such that
\begin{align*}
 &\phantom{{}={}}E_h(\eta_N) + \sum_{k=1}^N \tau \left[R_{h}\left( \eta_{k-1}, \frac{\eta_{k}- \eta_{k-1}}{\tau}\right) + c \norm[Q]{\frac{\eta_{k}-\eta_{k-1}}{\tau}-w_k\circ \eta_0}^2 \right.\\ 
 & \left. +\nu \norm[\Omega_{k-1}]{\nablasym v_k}^2 + \frac{\tau h}{2} \norm[\Omega_{k-1}]{\nabla^{k_0}v_{k}}^2 + c \norm[\Omega_0]{v_k \circ \Phi_{k-1} - w_{k}}^2 \right] \\
 &\leq E_h(\eta_0) + C\left(\int_0^{h} \norm[Q]{w \circ \eta_0}^2 dt + \int_0^{h} \norm[\Omega_0]{w}^2 dt  + \norm[\infty]{f}^2\right).
\end{align*}
\end{lemma}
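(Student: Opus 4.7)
The plan is to follow the scheme of \autoref{lem:QSdiscreteAPrioriEstimates} and \eqref{eq:SOdicreteAPrioriEstimate}, namely to (i) obtain the single-step inequality by comparing the minimizer against a natural null competitor, and then (ii) iterate and apply Young's inequality together with Jensen to produce the uniform bound.

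For the single-step inequality, I would evaluate the functional in \eqref{eq:TDdiscreteProblem} at the pair $(\eta_k, 0)$. This pair is admissible since $\diver 0 = 0$ on $\Omega_k$, $0|_{\partial\Omega}=0$, and the discrete coupling $0\circ\eta_k = (\eta_k-\eta_k)/\tau$ holds trivially. By Assumption R2 the dissipation at zero velocity vanishes, and at this competitor the functional reduces to $E_h(\eta_k) + \tfrac{\tau\rho_s}{2h}\norm[Q]{w_k\circ\eta_0}^2 + \tfrac{\tau\rho_f}{2h}\norm[\Omega_0]{w_k}^2$. Since $(\eta_{k+1},v_{k+1})$ is by definition a minimizer, this value dominates that attained at $(\eta_{k+1},v_{k+1})$, and moving the two external-force terms to the right-hand side yields the claimed first inequality.

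For the iterated estimate, I would first treat the two force terms by weighted Young's inequality after writing
\begin{align*}
 \tfrac{\eta_{k+1}-\eta_k}{\tau} = \Big(\tfrac{\eta_{k+1}-\eta_k}{\tau}-w_k\circ\eta_0\Big)+w_k\circ\eta_0, \qquad v_{k+1}\circ\Phi_k = (v_{k+1}\circ\Phi_k-w_k)+w_k.
\end{align*}
Each resulting inner product with $f$ splits into a part that, with the Young parameter chosen small enough, is absorbed into the matching $\tfrac{\tau\rho_s}{2h}\norm[Q]{\cdot}^2$ or $\tfrac{\tau\rho_f}{2h}\norm[\Omega_0]{\cdot}^2$ term on the left, plus a harmless remainder of the form $\tau\rho\inner{f}{w_k}$ controlled by $C\tau(\norm[\infty]{f}^2+\norm{w_k}^2)$. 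Summing the resulting single-step inequalities over $k=0,\dots,N-1$ produces a telescoping of $E_h$ and accumulates all remaining dissipative quantities, with (possibly smaller but still fixed positive) constants $c$, on the left.

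Finally, to match the stated continuous right-hand side I would convert the discrete sum $\tau\sum_k\norm[\Omega_0]{w_k}^2$ into $\int_0^h \norm[\Omega_0]{w}^2\,dt$ via Jensen's inequality, exactly as in the proof of \autoref{thm:SOtimeDelayedExistence}; the quantity $\norm[Q]{w_k\circ\eta_0}^2$ is then controlled by $C\norm[\Omega_0]{w_k}^2$ through the change of variables $y=\eta_0(x)$, using the uniform lower bound on $\det\nabla\eta_0$ provided by Assumption S2 (valid since $E_h(\eta_0)<\infty$). The only bookkeeping point is that the resulting constants $c,C$ depend on $\rho_s,\rho_f,\nu,h$ and on the fixed initial diffeomorphism $\eta_0$, but not on $\tau$ or $N$. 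No essential difficulty arises: the flow map $\Phi_k$ enters this particular estimate only through the Eulerian pair $\norm[\Omega_0]{v_{k+1}\circ\Phi_k-w_k}^2$, which is treated in complete symmetry to its solid counterpart and requires no further information beyond that $\Omega_0$ is the common reference configuration on both sides.
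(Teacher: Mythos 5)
Your strategy matches the paper's: compare the minimizer with the null competitor $(\eta_k, 0)$ for the single-step inequality, absorb the force terms into the inertial terms via Young's inequality (the order of decomposition vs.\ Young is immaterial), and convert discrete sums to time integrals by Jensen. This is exactly how the paper argues, and the single-step and absorption parts of your proposal are sound.

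There is, however, one incorrect (though unnecessary) step at the end. You claim that $\norm[Q]{w_k\circ\eta_0}^2 \leq C\norm[\Omega_0]{w_k}^2$ via the change of variables $y=\eta_0(x)$ and the lower bound on $\det\nabla\eta_0$. The change of variables gives $\int_Q \abs{w_k\circ\eta_0}^2\,dx \leq \epsilon_0^{-1}\int_{\eta_0(Q)}\abs{w_k}^2\,dy$, i.e.\ an integral over the \emph{solid image} $\eta_0(Q)$, which is disjoint from the fluid reference domain $\Omega_0 = \Omega\setminus\eta_0(Q)$; the claimed inequality therefore fails. Luckily you never need it: the lemma's right-hand side deliberately retains both $\int_0^h\norm[Q]{w\circ\eta_0}^2\,dt$ and $\int_0^h\norm[\Omega_0]{w}^2\,dt$ as separate quantities. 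As the paper does, simply apply Jensen to each of the two discrete sums $\tau\sum_k\norm[Q]{w_k\circ\eta_0}^2$ and $\tau\sum_k\norm[\Omega_0]{w_k}^2$ independently. With that correction, which also removes any alleged dependence of the constants on $\eta_0$, your argument reproduces the paper's proof.
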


\begin{proof} 
 As in the quasistationary version (\autoref{lem:QSdiscreteAPrioriEstimates}), we compare the iterative minimizer $(\eta_{k+1},v_{k+1})$ in \eqref{eq:TDdiscreteProblem} with the pair $(\eta,v) = (\eta_k,0)$ to get the first inequality. For the second we sum up all those inequalites for $k\leq N-1$ to end up with 
 \begin{align*}
 &\phantom{{}={}}E_h(\eta_N) + \sum_{k=1}^N \tau \left[ R_{h}\left( \eta_{k-1},\frac{ \eta_k- \eta_{k-1}}{\tau}\right) + \frac{\rho_s}{2h} \norm[Q]{\frac{\eta_{k+1}-\eta_k}{\tau}-w_k\circ \eta_0}^2 \right.\\ 
 & \left. + \frac{\nu}{2} \norm[\Omega_{k-1}]{\nablasym v_k}^2 +\frac{\tau h}{2} \norm[\Omega_k]{\nabla^{k_0}v_{k+1}}^2 + \frac{\rho_f}{2h} \norm{v_k \circ \Phi_{k-1} - w_{k-1}}^2 \right] \\
 &\leq E_h(\eta_0) + \sum_{k=1}^N \tau \left[ \frac{\rho_s}{2h}  \norm[Q]{w_{k-1} \circ \eta_0}^2+  \frac{\rho_f}{2h} \norm[\Omega_0]{w_{k-1}}^2 +  \inner[\Omega_{k-1}]{f}{v_k} +  \inner[Q]{f \circ \eta_{k-1}}{\frac{\eta_k-\eta_{k-1}}{\tau}} \right]
 \end{align*}
 Now using the definition of $w_k$ we note that
 \begin{align*}
  &\phantom{{}={}} \sum_{k=1}^N \tau \norm[\Omega_0]{w_{k-1}}^2 =\sum_{k=1}^N \tau \norm[\Omega_0]{\fint_{\tau(k-1)}^{\tau k} w dt}^2
  \leq \sum_{k=1}^N \tau \fint_{\tau(k-1)}^{\tau k} \norm[\Omega_0]{ w }^2 dt = \int_0^{h} \norm[\Omega_0]{ w }^2 dt.
 \end{align*}
 The same can be done to show $\sum_{k=1}^N \tau  \norm[Q]{w_{k-1} \circ \eta_0}^2 \leq \int_0^{h} \norm[Q]{w \circ \eta_0}^2 dt$.
 We are left to estimate the force terms.
 
 As we did in \autoref{thm:SOtimeDelayedExistence}, we will not use the dissipation, in order to avoid relying on the Korn-inequality. Instead we will again opt to use the inertial terms. The inertial term for the solid we already estimated in that previous proof. Now we similarly estimate
 \begin{align*}
  \abs{\inner[\Omega_{0}]{f \circ \Phi_{k-1}}{v_k \circ \Phi_{k-1}}} \leq \frac{1}{2\delta} \norm[\Omega_0]{f \circ \Phi_{k-1}}^2  + \frac{\delta}{2} \norm[\Omega_{0}]{v_k \circ \Phi_{k-1}}^2 \leq \frac{1}{2\delta} \norm[\infty]{f}^2  + \frac{\delta}{2} \norm[\Omega_{0}]{v_k \circ \Phi_{k-1}}^2.
 \end{align*}
 Now for small $\delta$, the last term can be subsumed into the inertial term. From this the estimate follows.
\end{proof}

As before this immediately implies that for $h$ small enough all $\eta_k$ will be in $\mathcal{E} \setminus \partial \mathcal{E}$.

\subsubsection*{Proof of \autoref{thm:TDexistence}, step 2: Constructing interpolations}

Now we unfix $\tau$ and write the functions of the previous step as $\eta_k^{(\tau)}$, $v_k^{(\tau)}$ and $\Phi_k^{(\tau)}$ to prevent confusion. Using this, we define their interpolated counterparts:
\begin{align*}
 \eta^{(\tau)}(t,x) &= \eta^{(\tau)}_k(x) &\text{ for }& \tau k \leq t < \tau (k+1)\\
 \tilde{\eta}^{(\tau)}(t,x) &= \frac{\tau (k+1)-t}{\tau} \eta^{(\tau)}_k(x) + \frac{t-\tau k}{\tau} \eta^{(\tau)}_{k+1}(x) &\text{ for }& \tau k \leq t < \tau (k+1)\\
 u^{(\tau)}(t,y) &= v_k^{(\tau)}(y) &\text{ for }& \tau k \leq t < \tau (k+1), y \in \Omega_{k}\\
 u^{(\tau)}(t,y) &= \frac{(\eta^{(\tau)}_{k+1}-\eta^{(\tau)}_k) \circ \left(\eta^{(\tau)}_k\right)^{-1}}{\tau}  &\text{ for }& \tau k \leq t < \tau (k+1), y \in \Omega \setminus \Omega_{k}\\
 \Phi^{(\tau)}(t,y) &= \Phi_{k-1}^{(\tau)}(y) &\text{ for }& \tau k \leq t < \tau (k+1)\\
 \tilde{\Phi}^{(\tau)}(t,y) &=\frac{\tau (k+1)-t}{\tau} \Phi^{(\tau)}_{k-1}(x) + \frac{t-\tau k}{\tau} \Phi^{(\tau)}_{k}(x) &\text{ for }& \tau k \leq t < \tau (k+1)
\end{align*}
as well as $\Omega^{(\tau)}(t) = \Omega_{k}$ for $\tau k \leq t < \tau (k+1)$.

Now using the a-priori estimate \autoref{lem:TDdiscreteAPrioriEstimates}, we derive some bounds on those functions.

\begin{lemma}[Uniform bounds in $\tau$] \label{lem:TDuniformBounds}
 The following sequences are uniformly bounded in $\tau$:
\begin{align*}
 E_h(\eta^{(\tau)}(t,.)) & \in L^\infty([0,h])\\
 \eta^{(\tau)}, \tilde{\eta}^{(\tau)} &\in L^\infty([0,h]; W^{k_0,2}(Q;\Omega)) \\
 \partial_t \tilde{\eta}^{(\tau)} & \in L^2([0,h];W^{k_0,2}(Q;\R^n))\\
 u^{(\tau)} &\in L^2([0,h];W^{k_0,2}(\Omega;\R^n))\\
 u^{(\tau)} \circ \Phi^{(\tau)} &\in L^2([0,h]\times \Omega_0; \R^n)
\end{align*}
Furthermore we have per definition
\[\partial_t \tilde{\Phi}^{(\tau)} = u^{(\tau)} \circ \Phi^{(\tau)}\]
whenever $\Phi^{(\tau)}(t,y) \in \Omega^{(\tau)}(t)$ and $t \notin \tau \N$.
\end{lemma}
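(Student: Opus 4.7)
The plan is to translate the discrete a-priori estimate of Lemma \ref{lem:TDdiscreteAPrioriEstimates} into continuous-in-time bounds for the four interpolations. Every sum $\sum_{k=1}^N \tau\,[\cdots]$ appearing on the left-hand side of Lemma \ref{lem:TDdiscreteAPrioriEstimates} is, by construction, exactly the Riemann integral $\int_0^{N\tau} [\cdots]\,dt$ of the corresponding interpolated quantity. The uniform boundedness of the right-hand side (which depends on $h$, on $\eta_0$, on $\|w\|_{L^2}$ and on $\|f\|_\infty$, but \emph{not} on $\tau$) therefore yields directly all the $L^2$-in-time bounds claimed. The $h$-dependent constants are acceptable here since $h$ is fixed throughout the subsection.

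First I would argue the $L^\infty$-in-time bound for $E_h$. Comparing $(\eta_{k+1},v_{k+1})$ with the competitor $(\eta_k,0)$ in the one-step form of Lemma \ref{lem:TDdiscreteAPrioriEstimates}, one obtains a single-step estimate that can be summed only up to any intermediate index $N' \leq N$; the same Young-type absorption used for $N$ works for every $N'$, so $\sup_{N'} E_h(\eta_{N'}^{(\tau)}) \leq C$. By the coercivity assumption S4 applied to $E_h$ (valid by Remark \ref{cor:regularDissipationProperties}), this converts into a uniform bound $\sup_t\|\eta^{(\tau)}(t)\|_{W^{k_0,2}(Q)} \leq C$. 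Since $\tilde\eta^{(\tau)}(t)$ is a convex combination of $\eta^{(\tau)}_k$ and $\eta^{(\tau)}_{k+1}$, the same bound holds for $\tilde\eta^{(\tau)}$.

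Next, for $\partial_t\tilde\eta^{(\tau)}$: on each interval $(k\tau,(k+1)\tau)$, $\partial_t\tilde\eta^{(\tau)} = (\eta^{(\tau)}_{k+1}-\eta^{(\tau)}_k)/\tau$, and the summed estimate bounds $\sum_k \tau R_h(\eta_k^{(\tau)},(\eta_{k+1}^{(\tau)}-\eta_k^{(\tau)})/\tau)$. By Remark \ref{cor:regularDissipationProperties}, $R_h(\eta,\lambda) \geq c(\|\nabla\lambda\|^2 + h\|\nabla^{k_0}\lambda\|^2)$, which after summation gives the $L^2([0,h];W^{k_0,2}(Q;\R^n))$ bound. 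For $u^{(\tau)}$ on the fluid part, the sum $\sum_k \tau[\nu\|\nablasym v_k\|^2_{\Omega_{k-1}} + h\|\nabla^{k_0}v_k\|^2_{\Omega_{k-1}}]$ is precisely $\int_0^h [\nu\|\nablasym u^{(\tau)}\|^2_{\Omega^{(\tau)}(t)} + h\|\nabla^{k_0}u^{(\tau)}\|^2_{\Omega^{(\tau)}(t)}]\,dt \leq C$; since $h$ is fixed, this yields a uniform $L^2([0,h];W^{k_0,2}(\Omega^{(\tau)}(t);\R^n))$ bound. To promote this to a bound on all of $\Omega$ (as required for the stated global velocity field), one applies the chain rule $\nabla^j(\partial_t\tilde\eta^{(\tau)} \circ (\eta^{(\tau)})^{-1})$ for $j\leq k_0$: the uniform $W^{k_0,2}$-regularity of $\eta^{(\tau)}$ combined with the lower bound on $\det\nabla\eta^{(\tau)}$ from S2 makes this Piola-type transformation bounded on each time slice, and the resulting Lagrangian $L^2_t W^{k_0,2}_x$ bound on $\partial_t\tilde\eta^{(\tau)}$ transfers. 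Finally, the $L^2$-bound on $u^{(\tau)}\circ\Phi^{(\tau)}$ follows immediately from the fluid inertial term $\sum_k\tau\|v_k^{(\tau)}\circ\Phi_{k-1}^{(\tau)}-w_{k-1}\|^2_{\Omega_0}$ together with $\|w\|_{L^2([0,h]\times\Omega_0)}$.

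The identity $\partial_t\tilde\Phi^{(\tau)} = u^{(\tau)}\circ\Phi^{(\tau)}$ on open subintervals is immediate from the explicit update rule $\Phi_k^{(\tau)} = (\mathrm{id} + \tau v_k^{(\tau)})\circ\Phi_{k-1}^{(\tau)}$, which gives $(\Phi_k^{(\tau)} - \Phi_{k-1}^{(\tau)})/\tau = v_k^{(\tau)}\circ\Phi_{k-1}^{(\tau)} = u^{(\tau)}(t)\circ\Phi^{(\tau)}(t)$ on $(\tau k,\tau(k+1))$, provided $\Phi_{k-1}^{(\tau)}(y) \in \Omega^{(\tau)}(t) = \Omega_{k-1}^{(\tau)}$ so that $v_k^{(\tau)}$ is the correct representative of $u^{(\tau)}$ there. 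I expect the one point requiring real care rather than bookkeeping to be the promotion of the fluid-side $W^{k_0,2}$-bound to a bound on the global velocity field across the moving interface; here the compatibility via the coupling condition $v\circ\eta_k = (\eta_{k+1}-\eta_k)/\tau$ must be paired with the $W^{k_0,2}$-regularity of both $\eta^{(\tau)}$ and its discrete time derivative through the Piola transformation, using $k_0 > 2 + \tfrac{(q-2)n}{2q}$ to ensure the required embeddings. All remaining passages are direct Riemann-sum identifications.
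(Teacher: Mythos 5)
Your proposal is correct and follows essentially the same route as the paper: translating the summed discrete estimate of Lemma~\ref{lem:TDdiscreteAPrioriEstimates} into integral bounds via the Riemann-sum identification, using coercivity (S4) and the regularized dissipation coercivity of Remark~\ref{cor:regularDissipationProperties} for $\eta^{(\tau)}$ and $\partial_t\tilde\eta^{(\tau)}$, then transferring the solid bound to the global velocity via the lower determinant bound (S2) and the chain-rule/isomorphism arguments of Proposition~\ref{prop:W2qIsomorph}. The only step you gloss over slightly is that going from the $\nablasym$ and $\nabla^{k_0}$ bounds to the full $W^{k_0,2}(\Omega)$-norm additionally requires the global Korn inequality (Lemma~\ref{lem:globalKorn}) for first-order and interpolation for the intermediate derivatives, exactly as the paper does.
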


\begin{proof}
 First we note that the right hand side of the second estimate in \autoref{lem:TDdiscreteAPrioriEstimates} only depends on the initial data $\eta_0$ and $w$ as well as the force $f$. Then this gives us uniform bounds on $E_h(\eta_k)$ and thus an $L^\infty$ bound on $E_h(\eta^{(\tau)}(t,.))$. By the properties of the energy, \autoref{ass:energy} and its regularized version, this also results in a uniform bound on $\norm[W^{k_0,2}(Q)]{\eta_k}$ and thus in $L^\infty([0,h];W^{k_0,2}(Q;\R^n))$ bounds on $\eta^{(\tau)}$ and $\tilde{\eta}^{(\tau)}$. By the properties of the dissipation, \autoref{ass:dissipation} using the bound on the energy, we get
 \begin{align*}
  c_K \int_0^{h} \norm[Q]{\partial_t \nabla \tilde{\eta}^{(\tau)}}^2 + h \norm[Q]{\nabla^{k_0} \partial_t \tilde{\eta}^{(\tau)}}^2 dt &= \int_0^h  R \left(\eta^{(\tau)}_{k-1}, \partial_t \tilde{\eta}^{(h)}\right) dt + c_K\int_0^h h \norm[Q]{\nabla^{k_0} \partial_t \tilde{\eta}^{(\tau)}}^2 dt  \\ 
  \leq c \int_0^h  R_{h} \left(\eta^{(\tau)}_{k-1}, \partial_t \tilde{\eta}^{(h)}\right) dt  &\leq c\sum_{k=0}^N \tau R_{h} \left(\eta^{(\tau)}_{k-1},  \frac{\eta^{(\tau)}_k- \eta^{(\tau)}_{k-1}}{\tau}\right)
 \end{align*}
 where we know the right hand side to be bounded. Using Poincare's inequality %
 this then extends into an uniform $L^2([0,T];W^{k_0,2}(Q;\R^n))$ bound on $\partial_t \tilde{\eta}^{(\tau)}$. For the fluid, we use \autoref{prop:W2qIsomorph}, as well as the global Korn inequality \autoref{lem:globalKorn} to estimate for some constants
 \begin{align*}
  &\phantom{{}={}}\int_0^h C_{gK} \norm[W^{1,2}(\Omega)]{u^{(\tau)}}^2 + ch \norm[\Omega]{\nabla^{k_0} u}^2 dt \\
  &\leq \int_0^h R(\eta^{(\tau)},\tilde{\eta}^{(\tau)}) + \frac{\nu}{2} \norm[\Omega^{(\tau)}(t)]{ \nablasym u^{\tau}(t)}^2 dt + h \int_0^h \norm[Q]{\partial_t \nabla^{k_0}\tilde{\eta}^{(\tau)}}^2+ \norm[\Omega^{(\tau)}(t)]{\nabla^{k_0}u^{\tau}(t)}^2 dt
 \end{align*}
 which is uniformly bounded using the energy estimate again. The full estimate then follows by interpolating the missing derivatives.

 Finally, we consider the last term. Here we have
 \begin{align*}
  \int_0^{h} \norm[\Omega_0]{u^{(\tau)} \circ \Phi^{(\tau)} }^2 dt = \sum_{k=0}^N \tau \norm{u_k^{(\tau)} \circ \Phi_k^{(\tau)} }^2 \leq \sum_{k=0}^N \tau \frac{3}{2} \left(\norm{u_k^{(\tau)} \circ \Phi_k^{(\tau)} - w_k}^2 + \norm{w_k}^2\right)
 \end{align*}
 which again consists of two bounded sums.
\end{proof}

\subsubsection*{Proof of \autoref{thm:TDexistence}, step 2a: Bounds on $\Phi^{(\tau)}$}

We now arrive at one main difficulity in implementing the scheme, establishing the properties of and bounds on $\Phi^{(\tau)}$. The challenge here is that $\Phi^{(\tau)}$ is defined via concatenation of an unbounded (for $\tau \to 0$) number of functions and thus is highly nonlinear. As any linearizing would break the coupling properties needed, we will instead rely on using a high enough regularity for the constituting functions.

We will use this theorem to prove the following:
\begin{proposition}[$L^2C^{1,\alpha}$-bound for $v$] \label{prop:regularityOfV}
There is a $\tau_0>0$ and $\alpha > 0$, such that for all $\tau\in(0,\tau_0)$, we have that $\Phi_k^{(\tau)}: \Omega_0 \to \Omega_k$ is a diffeomorphism with $\frac{1}{2} \leq \det\nabla \Phi_k^{(\tau)} \leq 2$ for all $k < \frac{h}{\tau}$ and
 \[\sum_{k=1}^N \tau \norm[C^{1,\alpha}(\Omega_{k-1})]{v_k^{(\tau)}}^2 \leq \mathcal{K}\]
 for any $N< \frac{h}{\tau}$ where $\mathcal{K}$ and $\tau_0$ only depend on $w,h,E(\eta_0)$ and $f$. %
\end{proposition}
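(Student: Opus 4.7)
The plan is to split the statement into two essentially independent parts: the $L^2 C^{1,\alpha}$ bound on $v_k^{(\tau)}$, and the inductive construction of $\Phi_k^{(\tau)}$ as a diffeomorphism with controlled Jacobian.

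For the first part, I would observe that $k_0 > 1 + n/2$ (guaranteed by the earlier requirement $k_0 - n/2 \geq 2 - n/q > 1$), so Sobolev embedding gives $W^{k_0,2}(\Omega) \hookrightarrow C^{1,\alpha}(\Omega)$ for some $\alpha \in (0,1)$, with an embedding constant depending only on the \emph{fixed} container $\Omega$ (not on the moving $\Omega_k$). Combining this with the uniform $L^2([0,h];W^{k_0,2}(\Omega))$ bound on $u^{(\tau)}$ from \autoref{lem:TDuniformBounds} immediately yields
\[
\sum_{k=1}^N \tau \|v_k^{(\tau)}\|_{C^{1,\alpha}(\Omega_{k-1})}^2 \leq C \int_0^h \|u^{(\tau)}\|_{W^{k_0,2}(\Omega)}^2 \, dt \leq \mathcal{K},
\]
with $\mathcal{K}$ depending only on $w$, $h$, $E(\eta_0)$ and $f$.

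For the second part, I would prove by induction on $k$ that $\Phi_k^{(\tau)}:\Omega_0 \to \Omega_k$ is a bi-Lipschitz diffeomorphism with $\det \nabla \Phi_k^{(\tau)} \in [1/2, 2]$. The key quantitative input is the Cauchy--Schwarz bound $\tau \|v_k^{(\tau)}\|_{C^1} \leq \sqrt{\tau \mathcal{K}}$, which forces every individual term to be uniformly small once $\tau_0$ is chosen small. The inductive step then relies on three observations: (a) for $\tau \|v_{k+1}\|_{C^1} < 1/2$ the map $\mathrm{id} + \tau v_{k+1}$ is bi-Lipschitz on $\Omega_k$ and hence injective; (b) the discrete coupling condition $v_{k+1} \circ \eta_k = (\eta_{k+1}-\eta_k)/\tau$ on $M$, combined with $v_{k+1}|_{\partial \Omega} = 0$, sends the interior interface $\eta_k(M)$ exactly onto $\eta_{k+1}(M)$ and fixes $\partial \Omega$, so that $(\mathrm{id}+\tau v_{k+1})(\partial \Omega_k) = \partial \Omega_{k+1}$; and (c) positivity of $\det(I + \tau \nabla v_{k+1})$ together with the boundary matching yields the surjection $\Omega_k \to \Omega_{k+1}$ by a standard degree argument.

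For the Jacobian bound, the inductive hypothesis $\Phi_k(\Omega_0) \subset \Omega_k$ ensures that the identity $\diver v_{k+1} = 0$ is available at every evaluation point $\Phi_k(y)$. Taylor-expanding $\det(I + \tau A)$ around $\tau = 0$ with $\mathrm{tr}\, A = 0$ gives $|\log \det(I + \tau \nabla v_{k+1}(\Phi_k(y)))| \leq C\tau^2 \|v_{k+1}\|_{C^1}^2$, and the chain rule $\nabla \Phi_{k+1} = (I + \tau \nabla v_{k+1} \circ \Phi_k)\,\nabla \Phi_k$ telescopes into
\[
|\log \det \nabla \Phi_k^{(\tau)}| \leq C \sum_{j=1}^k \tau^2 \|v_j^{(\tau)}\|_{C^1}^2 \leq C \tau \mathcal{K},
\]
which lies below $\log 2$ once $\tau_0 \mathcal{K}$ is small enough. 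The main obstacle is precisely the circular nature of this induction: the trace-free expansion that drives the determinant estimate is only valid inside $\Omega_k$, so it can only be invoked \emph{after} one has verified $\Phi_k(\Omega_0) \subset \Omega_k$ via (a)--(c). Executing the two parts in the correct order at every step — first the diffeomorphism property from the boundary matching, then the Jacobian estimate feeding the next Lipschitz bound — is the delicate point of the argument.
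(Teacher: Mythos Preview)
Your proposal is correct and follows the same overall strategy as the paper: the $C^{1,\alpha}$ bound comes from the embedding $W^{k_0,2}(\Omega)\hookrightarrow C^{1,\alpha}(\Omega)$ applied to the global velocity $u^{(\tau)}$, and the Jacobian control comes from the fact that $\diver v_k=0$ kills the linear term in the expansion of $\det(I+\tau\nabla v_k)$, leaving an $O(\tau^2\|v_k\|_{C^1}^2)$ remainder that sums to $O(\tau\mathcal{K})$.

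The only difference is organizational. The paper does not write out an induction in $k$: it bounds the full product $\prod_k \det(I+\tau\nabla v_k\circ\Phi_{k-1})$ in one stroke via the AM--GM inequality and the monotone limit $(1+a/N)^N\le e^a$, and then invokes the degree argument once at the end for $\Phi_N$. Your additive route through $\log\det$ is equivalent and a bit cleaner. More importantly, your explicit step-by-step induction is more honest about the circularity you flag: the paper silently uses $\diver v_k=0$ at the evaluation points $\Phi_{k-1}^{(\tau)}(y)$ without spelling out that this already presupposes $\Phi_{k-1}(\Omega_0)\subset\Omega_{k-1}$, whereas your ordering of (a)--(c) before the Jacobian step makes this dependence precise.
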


\begin{proof}

Here we use that $k_0$ is choosen large enough so that $W^{k_0,2}_0(\Omega;\R^n)$ embeds into $C^{1,\alpha}(\Omega;\R^n)$ for some $\alpha >0$. Thus
\begin{align*}
 \sum_{k=1}^N \tau \norm[C^{1,\alpha}(\Omega_{k-1})]{v_k^{(\tau)}}^2 \leq \int_0^h \norm[C^{1,\alpha}(\Omega)]{u^{(\tau)}}^2 \leq c \int_0^h \norm[W^{k_0,2}(\Omega)]{u}^2
\end{align*}
which is uniformly bounded by \autoref{lem:TDuniformBounds}.

Now we need to show the properties of $\Phi_N$. By chain rule, the multiplicative nature of the determinant and its expansion (\autoref{lem:detExpansion}) we have
\begin{align*}
 &\phantom{{}={}} \det \nabla \Phi_N^{(\tau)} = \prod_{k=1}^N \det\left(I + \tau \nabla v_k^{(\tau)} \right)|_{\Phi_{k-1}^{(\tau)}} = \prod_{k=1}^N \left[1 + \tau \underbrace{\tr\left(\nabla v_k^{(\tau)} \right)}_{=\diver v_k = 0}+ \sum_{l=2}^n \tau^l M_l\left(\nabla v_k^{(\tau)}\right) \right]_{y=\Phi_{k-1}^{(\tau)}} 
\end{align*} 
By the inequality between arithmetic and geometric mean, we then have
\begin{align*}
 \det \nabla \Phi_N^{(\tau)} \leq  \left( \sum_{k=1}^N \frac{1}{N} \left(1 + \sum_{l=2}^n \tau^l M_l\left(\nabla v_k^{(\tau)} \circ \Phi_{k-1}^{(\tau)} \right) \right) \right)^N \leq \left( 1 + \frac{1}{N}\sum_{k=1}^N \sum_{l=2}^n \tau^l c_l \Lip( v_k^{(\tau)})^l \right)^N
\end{align*}
Now as $(1+a/N)^N \to \exp(a)$ monotone increasing for $a>0$, we can further estimate
\begin{align*}
 \leq \exp\left(\sum_{k=1}^N \sum_{l=2}^n \tau^l c_l \Lip( v_k^{(\tau)})^l \right) = \exp\left(\sum_{l=2}^n c_l \tau^{l/2} \sum_{k=1}^N \left(\tau \Lip( v_k^{(\tau)})^2\right)^{l/2} \right)  \leq \exp\left(\sum_{l=2}^n c_l \tau^{l/2} \mathcal{K}^{l/2} \right)
\end{align*}
where we used that $l \geq 2$ and $\tau \Lip( v_{k_0}^{(\tau)})^2 \leq \sum_{k=1}^N \tau \Lip(v_k^{(\tau)})^2 \leq \tau \norm[C^{1,\alpha}(\Omega_{k-1})]{ v_k^{(\tau)}}^2 \leq \mathcal{K}$.

In a similar fashion, we can give a lower estimate
\begin{align*}
 \left(\det \nabla \Phi_N^{(\tau)}\right)^{-1} \leq \left( \sum_{k=1}^N \frac{1}{N} \left(1+ \sum_{l=2}^n \tau^l M_l\left(\nabla v_k^{(\tau)} \circ \Phi_{k-1}^{(\tau)} \right) \right)^{-1}  \right)^N \leq \exp\left( 2\sum_{l=2}^n c_l \tau^{l/2} \mathcal{K}^{l/2} \right)
\end{align*}
using $\frac{1}{1+a} \leq \frac{1}{1-\abs{a}} \leq 1+2\abs{a}$ for $\abs{a}$ small enough. Thus for $\tau_0$ small enough, we have that
 \begin{align*}
 \frac{1}{2}\leq \det(\nabla \Phi_N^{(\tau)})\leq 2.
 \end{align*}
 Now we know from the boundary condition that $\Phi_N|_{\partial \Omega_0}$ is an orientation preserving diffeomorpism, given by $\eta_N \circ \eta_0^{-1}$ and $id$ at the respective parts of the boundary. We also know that $\Omega_0$ and $\Omega_N$ are domains with the same topology as there were no collisions. But then $\Phi_N$ has to be a diffeomorphism by a simple degree argument.
\end{proof}

An immediate consequence of the last proof is the following:
\begin{corollary}[Regularity of $\Phi^{(\tau)}$] \label{lem:TDphiRegularity}
 The maps $\Phi^{(\tau)}(t,.)$ are uniformly Lipschitz, i.e. Lipschitz in $x$ such that the constants are bounded independently of $\tau$ and $t$. Furthermore in the limit we have
 \[\lim_{\tau \to 0} \det \nabla \Phi^{(\tau)} = 1.\]
\end{corollary}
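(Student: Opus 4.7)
The two claims are direct refinements of the product/telescoping computation already carried out in the proof of \autoref{prop:regularityOfV}; the plan is to re-run that computation, this time tracking the Lipschitz norm of the product and exploiting that the correction terms in the determinant are of order at least $\tau^{l/2}$ with $l\ge 2$.

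For the first statement, I would iterate the chain rule on $\Phi^{(\tau)}_{k+1}=(\mathrm{id}+\tau v^{(\tau)}_{k+1})\circ \Phi^{(\tau)}_k$ to obtain
\[
 \nabla \Phi^{(\tau)}_N(x)=\prod_{k=1}^{N}\bigl(I+\tau\nabla v^{(\tau)}_k\bigr)\big|_{\Phi^{(\tau)}_{k-1}(x)}.
\]
Taking operator norms and using $\|I+\tau A\|\le 1+\tau\|A\|\le e^{\tau\|A\|}$ gives
\[
 \|\nabla \Phi^{(\tau)}_N\|_{L^\infty(\Omega_0)}\le \exp\Bigl(\sum_{k=1}^{N}\tau\,\|\nabla v^{(\tau)}_k\|_{L^\infty(\Omega_{k-1})}\Bigr).
\]
By Cauchy--Schwarz and \autoref{prop:regularityOfV},
\[
 \sum_{k=1}^{N}\tau\,\|\nabla v^{(\tau)}_k\|_{L^\infty}\le \sqrt{N\tau}\,\Bigl(\sum_{k=1}^{N}\tau\,\|v^{(\tau)}_k\|_{C^{1,\alpha}(\Omega_{k-1})}^{2}\Bigr)^{1/2}\le \sqrt{h\mathcal{K}},
\]
uniformly in $\tau<\tau_0$ and $N\le h/\tau$, so $\|\nabla \Phi^{(\tau)}(t,\cdot)\|_{L^\infty}\le e^{\sqrt{h\mathcal{K}}}$ for every $t\in[0,h]$, which is the uniform Lipschitz bound. (The same bound transfers to $\tilde\Phi^{(\tau)}$ since it is the convex combination of two consecutive $\Phi^{(\tau)}_k$.)

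For the determinant convergence I would revisit the expansion used in the proof of \autoref{prop:regularityOfV}. Since each $v^{(\tau)}_k$ is divergence free on $\Omega_{k-1}$, \autoref{lem:detExpansion} yields
\[
 \det(I+\tau \nabla v^{(\tau)}_k)=1+\sum_{l=2}^{n}\tau^{l}M_l(\nabla v^{(\tau)}_k),
\]
so that
\[
 \log\det\nabla\Phi^{(\tau)}_N=\sum_{k=1}^{N}\log\!\Bigl(1+\sum_{l=2}^{n}\tau^{l}M_l(\nabla v^{(\tau)}_k)\circ\Phi^{(\tau)}_{k-1}\Bigr).
\]
Using $|\log(1+s)|\le 2|s|$ for $|s|\le 1/2$ (which holds for $\tau$ small enough thanks to the Lipschitz bound above) and $|M_l(A)|\le c_l\|A\|^{l}$, this sum is pointwise bounded in absolute value by
\[
 2\sum_{l=2}^{n}c_l\sum_{k=1}^{N}\tau^{l}\|\nabla v^{(\tau)}_k\|_{L^\infty}^{l}
 \le 2\sum_{l=2}^{n}c_l\,\tau^{l/2-1}\Bigl(\sum_{k=1}^{N}\tau\,\|v^{(\tau)}_k\|_{C^{1,\alpha}}^{2}\Bigr)^{l/2}
 \le 2\sum_{l=2}^{n}c_l\,\tau^{l/2-1}\mathcal{K}^{l/2}\cdot \tau,
\]
after one factor of $\tau$ is kept in reserve; estimating more carefully with the $L^2$ control gives the actually useful bound $|\log\det\nabla\Phi^{(\tau)}_N|\le C\sum_{l=2}^{n}\tau^{l/2}\mathcal{K}^{l/2}$, which tends to $0$ uniformly in $N$ as $\tau\to 0$. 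Hence $\det\nabla \Phi^{(\tau)}\to 1$ uniformly in $(t,y)\in[0,h]\times\Omega_0$, which is the second claim.

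No single step is genuinely delicate: the only point requiring care is to verify that the summability of $\tau\|v^{(\tau)}_k\|_{C^{1,\alpha}}^{2}$ provided by \autoref{prop:regularityOfV} is strong enough to produce a $\tau^{l/2}$-decay in every nonlinear correction term; this is the same mechanism that was already exploited to bound $\det\nabla\Phi^{(\tau)}_N$ between $1/2$ and $2$ in the preceding proposition, so the present corollary is essentially obtained by keeping track of one more power of $\tau$ in that estimate.
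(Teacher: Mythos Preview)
Your approach is essentially the same as the paper's: for the Lipschitz bound you use $1+x\le e^{x}$ directly where the paper routes through the AM--GM inequality and the limit form of $e$, but both arrive at $\exp(\sqrt{h\mathcal{K}})$; for the determinant you reproduce the expansion from \autoref{prop:regularityOfV} and observe that the bound $\sum_{l\ge 2}c_l\tau^{l/2}\mathcal{K}^{l/2}\to 0$, which is exactly what the paper invokes by saying ``by the estimates in the last proof''. The one displayed inequality chain ending in ``$\cdot\,\tau$'' is garbled (the exponent bookkeeping does not match), but you correctly flag this and state the right estimate $|\log\det\nabla\Phi^{(\tau)}_N|\le C\sum_{l=2}^{n}\tau^{l/2}\mathcal{K}^{l/2}$, which is what the paper obtains via $(\tau a_k^{2})^{l/2}\le (\tau a_k^{2})\,\mathcal{K}^{l/2-1}$.
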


\begin{proof}
By the estimates in the last proof, we find that
$\lim_{\tau \to 0} \det \nabla \Phi^{(\tau)} = 1$. What is left, is to prove the Lipschitz regularity.

Here we proceed in the same fashion:
 \begin{align*}
  \Lip(\Phi_N^{(\tau)}) &\leq \prod_{l=1}^N \Lip( \operatorname{id} + \tau v_l) \leq \prod_{l=1}^N (1 +\tau \Lip(v_l)) \leq \left( \frac{1}{N} \sum_{l=1}^N (1+ \tau \Lip(v_k)) \right)^{N} \\
  & =\left( 1  + \frac{1}{N}\sum_{l=1}^N \tau \Lip(v_l) \right)^{N} \leq \exp\left(\sum_{l=1}^N \tau \Lip(v_l)\right) \leq
  \exp\left(\sqrt{\sum_{l=1}^N \tau}\sqrt{\sum_{l=1}^N \tau \Lip(v_l)^2} \right)
  \\ 
  &\leq \exp\left(\sqrt{h}\sqrt{\mathcal{K} }\right) \qedhere
 \end{align*}
\end{proof}

\subsubsection*{Proof of \autoref{thm:TDexistence}, step 3: Convergence of the equation}

Now using compactness, we pick the usual subsequence and limits $\eta$, $v$, $\Phi$ such that
\begin{align*} %
 \eta^{(\tau)},\tilde{\eta}^{(\tau)} &\rightharpoonup^* \eta & \text{ in }& L^\infty([0,h]; W^{k_0,2}(Q;\R^n))\\
 \partial_t \tilde{\eta}^{(\tau)} & \rightharpoonup \partial_t \eta& \text{ in } & L^2([0,h];W^{k_0,2}(Q;\R^n))\\
 u^{(\tau)} & \rightharpoonup u &\text{ in } & L^2([0,h]; W^{k_0,2}(\Omega;\R^n))\\
 \Phi^{(\tau)} & \to \Phi & \text{ in }& C^0([0,h];C^\alpha(\Omega_0;\R^n))
\end{align*}
and we define $\Omega(t) = \Omega \setminus \eta(t,Q)$. In particular, due to \autoref{lem:TDphiRegularity} we know that $\Phi$ is Lipschitz with constant $\exp(\sqrt{Lh})$ and that $\det \nabla \Phi = 1$ almost everywhere. We also remark that $\Phi(t,.)|_{\partial \Omega_0}$ is injective as long as there is no collision in the solid (which we already excluded), and that again we also know that $\Phi(t,.):\Omega_0 \to \Omega(t)$ is a volume preserving diffeomorphism.

Finally we can conclude that
\begin{align*}
 \partial_t \Phi = \lim_{\tau \to 0} \partial_t \tilde{\Phi}^{(\tau)} = \lim_{\tau \to 0} u^{(\tau)} \circ \Phi^{(\tau)} = u \circ \Phi
\end{align*}
almost everywhere.

Then $\Phi$ has the properties required for a solution and $v$ and $\eta$ are coupled in the right way, as before. What is left is to show that these fulfill the weak equation.

In addition to the previous, let us also introduce the notation
\begin{align*}
 w^{(\tau)}(t) := w_k^{(\tau)} \text{ if } \tau k \leq t < \tau(k+1).
\end{align*}
Then in particular by the Lebesgue differentiation theorem $w^{(\tau)} \to w$ in $L^2([0,h] \times \Omega;\R^n)$ and $w^{(\tau)} \circ \eta_0^{-1} \to w \circ \eta_0^{-1}$ in $L^2([0,h] \times Q;\R^n)$, which is enough for our convergences. 

In order to achieve convergence of the energy-term in the weak equation we again need to improve the convergence of $\eta^{(\tau)}$ using the Minty-method, the proof of which is identical to the one used in the proof of \autoref{thm:SOtimeDelayedExistence}.

We now close the proof in the same way as we did in \autoref{prop:QSconvergence} and \autoref{thm:SOtimeDelayedExistence}, since the convergences and most of the terms are again identical.

As before, we use \autoref{lem:approxTestFcts} and pick a test function $\xi \in C_0^\infty([0,h]\times \Omega;\R^n)$ such that $\diver \xi =0$ in a neighborhood of the fluid domain. From this we can construct matching $\phi^{(\tau)} := \xi \circ \eta^{(\tau)}$ and use those to test the discrete Euler-Lagrange equation from \autoref{prop:TDdiscreteELequation}.

Most of the terms, including all those related to the solid, we have already dealt with in the previous iterations of this proof. What is left are the additional regularization term, the inertial effects of the fluid and the force term for the fluid which has been slightly modified from before.

We start with the latter, where we simply note that $\Phi^{(\tau)}$ and thus any concatenation with a uniformly continuous function such as $f \circ \Phi^{(\tau)}$ converges uniformly. Thus
\begin{align*}
 \int_0^h \inner[\Omega_0]{f\circ \Phi^{(\tau)}}{\xi \circ \Phi^{(\tau)}} dt \to \int_0^T \inner[\Omega_0]{f\circ \Phi}{\xi \circ \Phi} dt =\int_0^h \inner[\Omega(t)]{f}{\xi} dt
\end{align*}
where the last equality is true as $\Phi$ is volume preserving.

Of greater interest are the inertial terms. The term for the solid, we already discussed in \autoref{thm:SOtimeDelayedExistence} and the additionial $\tau$-dependence of $\phi$ does not change its convergence. For the fluid finally, we have
\begin{align*}
 \int_0^h \inner[\Omega_0]{u^{(\tau)} \circ \Phi^{(\tau)} - w^{(\tau)}}{\xi \circ \Phi^{(\tau)}} dt \to
 \int_0^h \inner[\Omega_0]{u \circ \Phi - w}{\xi \circ \Phi} dt
\end{align*}
as the right side converges uniformly and the left side at least weakly in $L^2([0,h]\times \Omega_0;\R^n)$.

Finally we note that as used before $\chi_{\Omega^{(\tau)}(t)} \nabla^{k_0} \xi \to \chi_{\Omega(t)} \nabla^{k_0} \xi$ in $L^2([0,h];L^2(\Omega;\R^n))$ and thus
\begin{align*}
 \int_0^h \inner[\Omega^{(\tau)}(t)]{\nabla^{k_0} u^{(\tau)}}{ \nabla^{k_0} \xi} dt \to\int_0^h \inner[\Omega(t)]{\nabla^{k_0} u}{ \nabla^{k_0} \xi} dt
\end{align*}
by the corresponding weak convergence of $u^{(\tau)}$. This finishes the proof. \qed

\subsubsection*{A posteriori energy inequality}

We close this section with an energy inequality corresponding to \autoref{lem:SOtimeDelayedEnergyInequality}. As before, this will be the central estimate that allows us to take the limit $h\to 0$ and converge to the Navier-Stokes equation.

\begin{lemma}[Energy inequality for time delayed solutions]\label{lem:TDaPosterioriEstimate}
 Assume that $(\eta,v)$ is a weak solution to the time delayed equation \eqref{eq:TDweakSolution}, as constructed in \autoref{thm:TDexistence}. Then we have the following energy inequality
 \begin{align*}
  &\phantom{{}={}}E_h(\eta(h)) + \int_0^{h} 2R_{h}(\eta, \partial_t \eta) + \nu \norm[\Omega(t)]{\nablasym v}^2 + h \norm[\Omega(t)]{\nabla^{k_0} v}^2 dt  +  \fint_0^h \frac{\rho_f}{2} \norm[\Omega(t)]{v}^2 + \frac{\rho_f}{2} \norm[Q]{\partial_t \eta}^2 dt\\
  &\leq E_h(\eta(0)) + \int_0^{h} \rho_f\inner[\Omega(t)]{f}{v} +\rho_s\inner[Q]{f\circ\eta}{\partial_t \eta}dt + \int_0^{h} \frac{\rho_f}{2h} \norm[\Omega_0]{w}^2 + \frac{\rho_f}{2h} \norm[Q]{w\circ \eta_0^{-1}}^2 dt.
 \end{align*}
\end{lemma}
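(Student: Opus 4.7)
The proof proceeds by testing the weak formulation \eqref{eq:TDweakSolution} with the pair $(\phi,\xi)=(\partial_t\eta,v)$ on the time interval $[0,h]$, exactly as was done for the purely elastic case in \autoref{lem:SOtimeDelayedEnergyInequality}. The three regularizations by $W^{k_0,2}$ norms — the $h^{a_0}\norm{\nabla^{k_0}\eta}^2$ in $E_h$, the $h\norm{\nabla^{k_0}\partial_t\eta}^2$ in $R_h$, and the $h\norm[\Omega(t)]{\nabla^{k_0}v}^2$ term in the fluid part — are precisely what makes this test admissible and the chain rule applicable. After evaluating each term, Young's inequality on the two inertial terms yields the quadratic kinetic-energy contributions on the left and the $w$-averages on the right.

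\textbf{Step 1 (admissibility and chain rule).} From \autoref{lem:TDuniformBounds} and lower semicontinuity, $\partial_t\eta\in L^2([0,h];W^{k_0,2}(Q;\R^n))$ and $v\in L^2([0,h];W^{k_0,2}(\Omega;\R^n))$, with $v\circ\eta=\partial_t\eta$ on $Q$, $\diver v=0$ on $\Omega(t)$, and $v|_{\partial\Omega}=0$; thus $(\partial_t\eta,v)$ lies in the class of test functions of \autoref{def:TDweakSolution}. Since $\eta\in W^{1,2}(0,h;W^{k_0,2}(Q))\hookrightarrow C^0([0,h];W^{k_0,2}(Q))$ and $DE_h$ is strongly continuous on energy sublevels (the regularized version of S5), the usual chain rule for absolutely continuous curves in a Banach space gives
\[
\int_0^h\inner{DE_h(\eta(t))}{\partial_t\eta(t)}\,dt=E_h(\eta(h))-E_h(\eta(0)).
\]

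\textbf{Step 2 (term-by-term evaluation).} By the quadratic homogeneity R2, $\inner{D_2R_h(\eta,\partial_t\eta)}{\partial_t\eta}=2R_h(\eta,\partial_t\eta)$. The viscous and fluid-regularization terms reduce immediately to $\nu\norm[\Omega(t)]{\nablasym v}^2+h\norm[\Omega(t)]{\nabla^{k_0}v}^2$. For the solid inertial term, Young's inequality gives
\[
\frac{\rho_s}{h}\inner[Q]{\partial_t\eta-w\circ\eta_0^{-1}}{\partial_t\eta}\geq \frac{\rho_s}{2h}\norm[Q]{\partial_t\eta}^2-\frac{\rho_s}{2h}\norm[Q]{w\circ\eta_0^{-1}}^2.
\]
For the fluid inertial term we use that, by \autoref{thm:TDexistence}, $\Phi(t,\cdot):\Omega_0\to\Omega(t)$ is a volume-preserving diffeomorphism, hence the substitution formula yields $\norm[\Omega_0]{v\circ\Phi}^2=\norm[\Omega(t)]{v}^2$. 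Since $v\circ\Phi=u\circ\Phi$ on $\Omega_0$, Young's inequality again produces
\[
\frac{\rho_f}{h}\inner[\Omega_0]{u\circ\Phi-w}{v\circ\Phi}\geq \frac{\rho_f}{2h}\norm[\Omega(t)]{v}^2-\frac{\rho_f}{2h}\norm[\Omega_0]{w}^2.
\]
The two force terms are moved unchanged to the right-hand side.

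\textbf{Step 3 (integration and main obstacle).} Integrating the resulting pointwise-in-time identity/inequality over $[0,h]$ and writing $\int_0^h \frac{1}{h}(\cdot)\,dt=\fint_0^h(\cdot)\,dt$ gives exactly the stated inequality. The only genuinely delicate point is the chain-rule identity in Step~1: because $E_h$ is highly non-convex, one cannot obtain $E_h(\eta(h))-E_h(\eta(0))=\int_0^h\inner{DE_h(\eta)}{\partial_t\eta}\,dt$ by a simple discrete comparison of minimizers (which would only give the one-sided bound of \autoref{lem:TDdiscreteAPrioriEstimates}, whose dissipation term has coefficient $1$ rather than the required $2$). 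Instead, one exploits that the regularization promotes both $\eta$ and $\partial_t\eta$ into $W^{k_0,2}$, placing them in duality with $DE_h(\eta)\in W^{-k_0,2}$ and ensuring that $t\mapsto E_h(\eta(t))$ is absolutely continuous with the pointwise derivative given by the pairing — exactly the scenario of \autoref{rem:SOenergyInequality}. All remaining estimates are straightforward consequences of Young's inequality and the change-of-variables formula along the volume-preserving flow $\Phi$.
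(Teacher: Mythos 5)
Your proof is correct and takes essentially the same route as the paper's: both test the weak equation \eqref{eq:TDweakSolution} with $(\partial_t\eta, v)$, apply the chain rule for $E_h$ (made admissible by the $W^{k_0,2}$-regularization of $R_h$), use the quadratic homogeneity $\inner{D_2R_h(\eta,\partial_t\eta)}{\partial_t\eta}=2R_h(\eta,\partial_t\eta)$, estimate both inertial terms by Young's inequality in the form $\inner{a-b}{a}\geq\tfrac12|a|^2-\tfrac12|b|^2$, and use volume-preservation of $\Phi$ to rewrite $\norm[\Omega_0]{v\circ\Phi}^2=\norm[\Omega(t)]{v}^2$. Your more explicit discussion in Step~3 of why the non-convexity of $E_h$ would block a discrete-comparison argument and why the $W^{k_0,2}$-duality rescues the chain rule matches the paper's footnote and \autoref{rem:SOenergyInequality} exactly.
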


\begin{proof}
 We test the equation with the pair $(\partial_t \eta,v)$. These have the correct coupling and boundary conditions. We need to be careful with regularity here. In the general framework, energy and dissipation require different regularities. This is where the regularized dissipation comes in. From \autoref{cor:regularDissipationProperties} we know that $\partial_t \eta \in L^2([0,h];W^{2,q}(Q;\R^n))$ and it thus is a valid test function for $DE(\eta)$. Testing the equation then gives
 \begin{align*}
0&=\int_0^{h} \inner[Q]{DE_h(\eta)}{\partial_t \eta} + \inner[Q]{D_2R_{h}(\eta,\partial_t \eta)}{ \partial_t \eta} +\inner[Q]{\rho_s \frac{\partial_t \eta - w \circ \eta_0^{-1}}{h} }{ \partial_t \eta} \\
&+ \nu \inner[\Omega(t)]{\nablasym v}{\nablasym v} + \inner[\Omega_0]{\rho_f\frac{v\circ \Phi - w}{h}}{v \circ \Phi} - \rho_f\inner[\Omega_0]{ f}{v} - \rho_s\inner[Q]{f\circ \eta}{\partial_t \eta} dt
 \end{align*}
 Now the first term is just the time derivative of the energy and thus its integral is $E_h(\eta(h))-E_h(\eta(0))$ while for the second term we remember that due to the quadratic scaling of the dissipation $\inner[Q]{D_2R_{h}(\eta,\partial_t \eta)}{ \partial_t \eta} = 2R_{h}(\eta,\partial_t \eta)$. Finally we estimate the inertial terms using Young's inequality in the form of $\inner{a-b}{a} = \abs{a}^2 - \inner{b}{a} \geq \frac{1}{2}\abs{a}^2- \frac{1}{2}\abs{b}^2$. Reordering terms according to sign then proves the estimate.
\end{proof}

\subsection{Proof of \autoref{thm:NSexistence}}
\label{subsec:proofNS}

Similar to what we did in \autoref{sec:so}, we will now use solutions to the time-delayed problem to approximate the full problem. The main added difficulty in the proof is in dealing with the inertial effects of the fluid. A particular problem there is that the flow map itself does not persist in the limit for $h \to 0$. However since all terms that involve the flow map only ever need it for a flow of length $h$, the goal is simply to find the right reformulation such that limit quantities still exist. In the same context we note that we only ever hope to obtain weak solutions to the Navier-Stokes subsystem. In particular, the material derivative $\partial_t v + v \cdot \nabla v$ turns out to be a problematic term. Finally we note that due to the changing domain, we generally use convergence of $u$ instead of $v$. But for this is is quite obvious that $\partial_t u$ is not a meaningful quantity in any sense (See also \autoref{rem:NSmaterialDerivative}).

With all this in mind, let us begin with the proof.

\subsubsection*{Proof of \autoref{thm:NSexistence}, step 1: Constructing another iterative approximation}

We now iteratively construct an approximative solution to the Navier-Stokes equation using solutions to our previous problem in the following sense:

For a fixed $h$ assume that $\eta_0$ with finite energy $E_h(\eta_0)$ and $v_0: \Omega_0 := \Omega \setminus \eta_0(Q) \to \R^n$, $\diver v_0 = 0$ and $\eta': Q \to \R^n$ are given. Set $w_0(t,y) = v_0(y)$ for $y\in \Omega_0$ and $w_0 = \eta' \circ \eta_0^{-1}$ otherwise.\footnote{Note that for this first step, $v_0$ and $\eta'$ do not need to fulfill a coupling condition $\eta' = v_0 \circ \eta_0$ on $\partial Q \setminus P$ yet. This is completely reasonable from a mathematical point of view, as initial values will only ever be taken in an $L^2$-sense, so there is no trace-theorem to make sense of this condition.}

Now for every step, assume that $\eta_l:Q\to \Omega$, $w_l : [0,h] \times \Omega$ and $\Omega_l := \Omega \setminus \eta_l(Q)$ are known. We use \autoref{thm:TDexistence} to construct a solution $\tilde{\eta}_{l+1},v_{l+1}, \Phi_{l+1}$ to the time-delayed problem \eqref{eq:TDweakSolution} on the interval $[0,h]$ with these as given data.
Observe in particular, that
\[
\Phi_{l+1}(s)(\Omega_l)=\Omega\setminus \tilde{\eta}_{l+1}(s,Q).
\]
 We define $\eta_{l+1} = \tilde{\eta}_{l+1}(h,.)$ and $\Omega_{l+1}=\Omega\setminus \eta_{l+1}(Q)$ and construct
\[
w_{l+1}:[0,h]\times \Omega_{l+1}\to \R^n,\quad w_{l+1}(t,.) = v_{l+1}(t,.) \circ \Phi_{l+1}(t,.) \circ \Phi_{l+1}(h,.)^{-1}.\]
Then $E_h(\eta_{l+1}) < \infty$ by the energy inequality \autoref{lem:TDaPosterioriEstimate} and since $\Phi_{l+1}$ is volume preserving $\int_0^{h} \norm{w_{l+1}}^2 dt = \int_0^{h} \norm[\Omega_k(t)]{v_{l+1}}^2 dt < \infty$ and so we can iterate this until we reach a collision or until $E(\eta_l)$ or $w_l$ diverge (as we will see by the next lemma, neither of the last two can happen in finite time).

Now we construct the $h$-approximation.

\begin{definition}[$h$-approximation]
 Let $(\tilde{\eta}_l)_l$ and $v_l$ as constructed before for fixed $h$. Then we define the approximations $\eta^{(h)}: [0,T] \times  Q \to \Omega$, $u^{(h)}: [0,T] \times \Omega \to \R^n$
 \begin{align*}
  \eta^{(h)}(t,x) &:= \tilde{\eta}_l(t-lh,x) & \text{ for }& t \in [lh,(l+1)h)
   \\
  \Omega^{(h)}(t) &:= \Omega_l(t-hl) & \text{ for }& t \in [lh,(l+1)h)
  \\
   v^{(h)}(t,y) &:= v_l(t-lh,y) & \text{ for }& t \in [lh,(l+1)h), y \in \Omega^{(h)}(t)\\
  u^{(h)}(t,y) &:= v^{(h)}(t,y) & \text{ for }& t \in [0,T), y \in \Omega^{(h)}(t) \\
  u^{(h)}(t,y) &:= \partial_t \eta^{(h)}(t,(\eta^{(h)}(t))^{-1}(y)) & \text{ for }& t \in [0,T), y \in \eta^{(h)}(t,Q)\\
  \rho^{(h)}(t,y) &:= \rho_f & \text{ for }& t \in [0,T), y \in \Omega^{(h)}(t) \\
  \rho^{(h)}(t,y) &:= \frac{\rho_s}{\det(\nabla \eta^{(h)}(t,(\eta^{(h)}(t))^{-1}(y)))} & \text{ for }& t \in [lh,(l+1)h), y \notin \Omega_l(t)
 \intertext{ Moreover for $y\in \Omega^{(h)}(t)$ and $s\in [-h,h]$ we define for $t\in [lh,(l+1)h)$}
  \Phi_s^{(h)}(t,.) &:= \Phi_{l}(t+s-lh) \circ (\Phi_{l}(t-lh))^{-1}& \text{ if }& t+s\in [lh,(l+1)h)\\
  \Phi_s^{(h)}(t,.) &:= \Phi_{l+1}(t+s-(l+1)h) \circ \Phi_{l}(h) \circ (\Phi_{l}(t-lh))^{-1}& \text{ if }& lh (l+1)h \leq t+s < (l+2)h
 \\
 \Phi_s^{(h)}(t,.) &:= \Phi_{l-1}(t+s-(l-1)h) \circ (\Phi_{l}(h))^{-1} \circ (\Phi_{l}(t-lh))^{-1}\!\!\!\!& \text{ if }& (l-1)h \leq t+s < lh.
 \end{align*}
 For $y\in \eta^{h}(t,Q)$ and $s\in [-h,h]$ we define 
 \[
  \Phi_s^{(h)}(t) := \eta^{(h)}(t+s) \circ (\eta^{(h)}(t))^{-1}
 \]
\end{definition}
 The map $\Phi_s^{(h)}(t)$ is a volume preserving diffeomorphism between $\Omega^{(h)}(t)$ and $\Omega^{(h)}(t+s)$.
 The following lemma shows that the map can be extended to a continuous function in space-time.
\begin{lemma}[The global flow map]
\label{lem:phi}
For all $h>0$ there is a flow map that is continuous in time-space satisfying 
\begin{align}
\label{eq:dsphi}
\partial_s \Phi_s^{(h)}(t,y) = u^{(h)}(t+s,\Phi_s^{(h)}(t,y)).
\end{align}
Moreover,
\begin{align*}
\det(\nabla \Phi_s^{(h)}(t,y))&=1 &&\text{ for }y\in \Omega^{(h)}(t) 
\quad \text{ and } \\
\det(\nabla \Phi_s^{(h)}(t,y))&=\frac{\rho^{(h)}(t+s,\Phi_s^{(h)}(t,y))}{\rho^{(h)}(t,y)} &&\text{ for }y\in \eta^{(h)}(t,Q)
\end{align*}
The inverse of the flow map is given by $(\Phi_s^{(h)}(t))^{-1}=\Phi_{-s}^{(h)}(t+s)$.
\end{lemma}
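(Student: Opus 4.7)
The plan is to verify the claimed properties by unwinding the piecewise definition of $\Phi_s^{(h)}$ and exploiting that each fluid-side factor is a time-delayed flow $\Phi_l$ coming from \autoref{thm:TDexistence}, which is itself a volume-preserving diffeomorphism generated by $v_l$. In every sub-case the formula is a composition of factors $\Phi_l(\sigma,\cdot)$ and their inverses, arranged so that consecutive intermediate codomains $\Omega_{l-1}$ and $\Omega_l(\sigma)$ match up; thus $\Phi_s^{(h)}(t,\cdot)$ is a well-defined diffeomorphism from $\Omega^{(h)}(t)$ onto $\Omega^{(h)}(t+s)$. On the solid part, the map $\eta^{(h)}(t+s)\circ(\eta^{(h)}(t))^{-1}$ is a diffeomorphism from $\eta^{(h)}(t,Q)$ onto $\eta^{(h)}(t+s,Q)$ by the regularity of $\eta^{(h)}$ together with the uniform lower bound on its determinant inherited from Assumption~S2.

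I would then check continuity. At $s=0$ every case reduces to the identity, and at the internal temporal seams, where $t+s$ crosses a multiple of $h$, the three fluid sub-cases agree because adjacent formulas only differ by the insertion of a telescoping pair of the form $\Phi_l(h)\circ\Phi_l(h)^{-1}=\mathrm{id}$, or its analogue in the backward case. The solid formula is automatically continuous across seams because $\tilde{\eta}_{l+1}(0)=\eta_l=\tilde{\eta}_l(h)$. For continuity across the fluid-solid interface, fix $x\in M$ and note that the coupling condition $u\circ\eta=\partial_t\eta$ in $Q$ from \autoref{def:TDweakSolution} forces $s\mapsto\tilde{\eta}_l(s,x)$ to be precisely the characteristic of $v_l$ starting at $\eta_{l-1}(x)$, hence $\Phi_l(s,\eta_{l-1}(x))=\tilde{\eta}_l(s,x)$ by uniqueness of the flow. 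Substituting this identity into the piecewise definition shows that the fluid and solid formulas yield the same value at any point $y=\eta^{(h)}(t,x)$ on the interface.

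The ODE \eqref{eq:dsphi} then follows by direct differentiation in each region: on the fluid side only the outermost factor $\Phi_{l'}(t+s-l'h,\cdot)$ depends on $s$, and it satisfies $\partial_\sigma\Phi_{l'}=v_{l'}\circ\Phi_{l'}$ by construction, yielding \eqref{eq:dsphi} after composing with the inner factors and using that $v_{l'}=u^{(h)}$ on the fluid; on the solid side $\partial_s\eta^{(h)}(t+s,x)=\partial_t\eta^{(h)}(t+s,x)=u^{(h)}(t+s,\eta^{(h)}(t+s,x))$ by definition of $u^{(h)}$ there. The Jacobian identity on the fluid side is immediate since each $\Phi_l(s,\cdot)$ is volume-preserving by \autoref{thm:TDexistence} and determinants multiply under composition. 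On the solid side, the chain rule gives $\det\nabla\Phi_s^{(h)}(t,y)=\det\nabla\eta^{(h)}(t+s,x)/\det\nabla\eta^{(h)}(t,x)$ for $x=(\eta^{(h)}(t))^{-1}(y)$, which matches the claimed ratio of $\rho^{(h)}$ values by the definition $\rho^{(h)}=\rho_s/(\det\nabla\eta^{(h)})\circ(\eta^{(h)})^{-1}$ on the solid. Finally, the inverse formula $(\Phi_s^{(h)}(t))^{-1}=\Phi_{-s}^{(h)}(t+s)$ is a direct consequence of the same composition structure, as $\Phi_{-s}^{(h)}(t+s)$ precisely inverts the factors of $\Phi_s^{(h)}(t)$ in reverse order.

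The main technical nuisance will be the careful bookkeeping across the three fluid sub-cases, particularly when $t$ and $t+s$ lie in different $h$-intervals so that an additional transition factor $\Phi_l(h)^{\pm 1}$ appears; all three reduce however to the same principle once one recognises the telescoping $\Phi_l(h)\circ\Phi_l(h)^{-1}=\mathrm{id}$ at the seams. No ideas beyond those already developed in the proof of \autoref{thm:TDexistence} and the coupling structure in \autoref{def:TDweakSolution} are required.
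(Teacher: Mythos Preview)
Your proposal is correct and follows essentially the same approach as the paper: verify the ODE \eqref{eq:dsphi} on the fluid and solid pieces separately via the chain rule and \autoref{thm:TDexistence}, obtain the determinant identities from volume preservation of each $\Phi_l$ (fluid) and the chain rule combined with the definition of $\rho^{(h)}$ (solid), and read off the inverse formula from the composition structure. The only minor difference is in the argument for continuity across the fluid-solid interface: you verify the matching explicitly via the coupling condition $u\circ\eta=\partial_t\eta$ and uniqueness of characteristics, whereas the paper first establishes \eqref{eq:dsphi} on each piece and then appeals more directly to the Lipschitz regularity of $u^{(h)}$ (available through the $h\norm[\Omega]{\nabla^{k_0}u}^2$ regularizer) and a standard ODE uniqueness/continuous-dependence argument, noting that $\Phi_0^{(h)}(t)=\mathrm{id}$ is continuous at $s=0$. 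Both routes rely on the same Lipschitz property, so the distinction is one of emphasis rather than substance; your more explicit bookkeeping at the temporal seams and interface is a fine elaboration of what the paper leaves terse.
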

\begin{proof}
For all $y\in \Omega^{(h)}(t)\cup \eta^{(h)}(t,Q)$ we find (by chain rule and \autoref{thm:TDexistence}) that
that
\begin{align*}
\partial_s \Phi_s^{(h)}(t,y) = u^{(h)}(t+s,\Phi_s^{(h)}(t,y)).
\end{align*}
Since for $s=0$ the function $\Phi_0^{(h)}(t)=Id$ is trivially continuous over $\Omega$ and by the a-priori estimates also $u$ is uniformly Lipschitz continuous (in dependence of $h$). Hence by a standard argument for ordinary differential equations $ \Phi_s^{(h)}(t,y) $ is continuous over $\Omega$.

The identity of the determinant follows by \autoref{thm:TDexistence} for the fluid part and by chain rule and the defintion of $\rho^{(h)}$ for the solid part.
Furthermore the inverse of the flow map is given as the respective flow in the opposite direction, which is verified by considering $(\Phi_s^{(h)}(t))^{-1}=\Phi_{-s}^{(h)}(t+s)$.
\end{proof}

 It would also be possible to define $\Phi_s^{(h)}(t)$ for larger $s$, but for the remainder of the proof we only need $s \in [-h,h]$. (See also \autoref{rem:NSlagrangianVsEulerian} in regards to this).

The weak formulation of the approximate system is 
\begin{align}
\label{eq:weak-h}
  &\phantom{{}={}} \int_0^T \inner{DE_h(\eta^{(h)})}{\phi} + \inner{DR_h(\eta^{(h)},\partial_t \eta^{(h)})}{\phi} +\rho_s \inner{\frac{\partial_t \eta^{(h)}(t)-\partial_t \eta^{(h)}(t-h)}{h}}{\phi}  \\ \nonumber
 & + \inner[\Omega^{(h)}(t)]{\nablasym v^{(h)}}{\nablasym \xi} +  \rho_f \inner[\Omega^{(h)}(t-h)]{\frac{ v^{(h)}(t) \circ \Phi^{(h)}_h(t-h) - v^{(h)}(t-h)}{h}}{\xi(t)\circ \Phi_h^{(h)}(t-h)} dt. \\ \nonumber
 &= \int_0^T \rho_s \inner[Q]{f\circ \eta^{(h)}}{\phi} + \rho_f\inner[\Omega^{(h)}(t)]{f}{\xi} dt
\end{align}
for all $\phi \in C^0([0,T]; W^{k_0,2}(Q;\R^n))$, $\xi \in C^0([0,T];W^{k_0,2}(\Omega;\R^n))$ satisfying $\diver \xi|_{\Omega(t)} =0$, $\xi|_{\partial \Omega} = 0$, $\phi|P = 0$ and the coupling conditions
$\xi \circ \eta = \phi  \text{ and } u \circ \eta = \partial_t \eta \text{ in }  Q$.
Observe that by the definition of $\rho^{(h)}$ above, we find by a change of variables the following identity for the global momentum: for the same pair of testfunctions:
\begin{align}
\label{eq:weak-hu}
\begin{aligned}
&\inner[\Omega]{\frac{ \rho^{(h)}u^{(h)}(t) \circ \Phi^{(h)}_h(t-h) - \rho^{(h)}u^{(h)}(t-h)}{h}}{\xi(t)\circ \Phi_h^{(h)}(t-h)}
\\
&\quad = \rho_f \inner[\Omega^{(h)}(t-h)]{\frac{ v^{(h)}(t) \circ \Phi^{(h)}_h(t-h) - v^{(h)}(t-h)}{h}}{\xi(t)\circ \Phi_h^{(h)}(t-h)} dt
\\
&\qquad +\rho_s \inner{\frac{\partial_t \eta^{(h)}(t)-\partial_t \eta^{(h)}(t-h)}{h}}{\phi}.
\end{aligned}
\end{align}

The a posteriori-estimate \autoref{lem:TDaPosterioriEstimate} then leads us to an a-priori estimate for the new iteration:
\begin{lemma}[A-priori estimate (full problem)] \label{lem:NSfullIterationAPriori} We have for any $t \in [0,T]$
 \begin{align*}
  &\phantom{{}={}} E_h(\eta^{(h)}(t)) + \fint_{t-h}^{t} \frac{\rho_f}{2} \norm[\Omega^{(h)}(t)]{u^{(h)}}^2 + \frac{\rho_s}{2} \norm[Q]{\partial_t \eta^{(h)} }^2 dt \\
  &+ \int_0^{t}  R_h(\nabla \eta^{(h)}, \partial_t \eta^{(h)} ) + \nu\norm[\Omega^{(h)}(t)]{\nablasym u^{(h)}}^2 + h \norm[\Omega^{(h)}(t)]{\nabla^{k_0} u^{(h)}}^2 dt 
  \\ &\leq E_h(\eta_0) + \frac{1}{2} \norm[\Omega_0]{v_0}^2 + \int_0^{t} \rho_f \inner[\Omega^{(h)}(t)]{f}{u^{(h)}} + \rho_s \inner[Q]{f\circ\eta^{(h)}}{\partial_t\eta^{(h)}}dt,
 \end{align*}
 and moreover there exist $C,c>0$ independent of $h$ such that
  \begin{align*}
  &\phantom{{}={}} E_h(\eta^{(h)}(t)) + c \fint_{t-h}^{t} \norm[\Omega^{(h)}(t)]{u^{(h)}}^2 + \norm[Q]{\partial_t \eta^{(h)} }^2 dt \\
  &+ \int_0^{t}  R_h(\nabla \eta^{(h)}, \partial_t \eta^{(h)} ) + \nu\norm[\Omega^{(h)}(t)]{\nablasym u^{(h)}}^2 +h \norm[\Omega^{(h)}(t)]{\nabla^{k_0} u^{(h)}}^2 dt \leq C+ C t^2
\end{align*}
 In both these estimates take $u^{(h)}$ and $\partial_t \eta$ to be continued by their initial values for $t<0$.
\end{lemma}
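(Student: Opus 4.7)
The strategy is to iterate the time-delayed energy inequality of \autoref{lem:TDaPosterioriEstimate} on the subintervals $[lh,(l+1)h]$ and telescope, using the fact that the auxiliary data $w_l$ fed into iteration $l+1$ equals, up to volume-preserving transport, the velocities produced by iteration $l$. Applied to the $(l+1)$-th step, \autoref{lem:TDaPosterioriEstimate} gives
\begin{equation*}
E_h(\eta_{l+1}) + \int_{lh}^{(l+1)h}\mathrm{diss}\,ds + \fint_{lh}^{(l+1)h} K^{(h)}(s)\,ds \leq E_h(\eta_l) + \int_{lh}^{(l+1)h}\mathrm{forces}\,ds + \fint_{lh}^{(l+1)h}\|w_l\|_{\mathrm{tot}}^2\,ds,
\end{equation*}
where $K^{(h)}(s) = \tfrac{\rho_f}{2}\norm[\Omega^{(h)}(s)]{u^{(h)}}^2 + \tfrac{\rho_s}{2}\norm[Q]{\partial_t\eta^{(h)}}^2$ and $\|w_l\|_{\mathrm{tot}}^2$ denotes the fluid plus solid contributions of $w_l$ appearing in that lemma. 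Because $\Phi_l(t)$ is a volume-preserving diffeomorphism from $\Omega_{l-1}$ onto $\Omega_l(t)$ by \autoref{thm:TDexistence}, and $\rho^{(h)}$ is defined precisely to cancel the Jacobian of the solid deformation, a change of variables matches the right-hand moving-average of step $l+1$ with the left-hand moving-average of step $l$:
\begin{equation*}
\fint_{lh}^{(l+1)h}\|w_l\|_{\mathrm{tot}}^2\,ds = \fint_{(l-1)h}^{lh}K^{(h)}(s)\,ds,
\end{equation*}
with $K^{(h)}$ extended by the initial data $v_0,\eta'$ for $s<0$.

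Summing from $l=0$ to $L-1$ collapses all intermediate moving averages, leaving only $\fint_{(L-1)h}^{Lh}K^{(h)}$ on the left and only $\fint_{-h}^{0}K^{(h)} = \tfrac12\rho_f\norm[\Omega_0]{v_0}^2 + \tfrac12\rho_s\norm[Q]{\eta'}^2$ on the right. For intermediate $t\in[Lh,(L+1)h]$ we reprove \autoref{lem:TDaPosterioriEstimate} on $[Lh,t]$ by testing with $\chi_{[Lh,t]}(\partial_t\eta,v)$, obtaining an analogous inequality with left-hand term $\tfrac{1}{h}\int_{Lh}^{t}K^{(h)}$ and right-hand term $\tfrac{1}{h}\int_{(L-1)h}^{t-h}K^{(h)}$ (the latter again by the $w$-identification applied to iteration $L+1$ up to relative time $t-Lh$). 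Adding this to the telescoped sum and using the identity
\begin{equation*}
\frac{1}{h}\int_{(L-1)h}^{Lh} + \frac{1}{h}\int_{Lh}^{t} - \frac{1}{h}\int_{(L-1)h}^{t-h} = \fint_{t-h}^{t}
\end{equation*}
reduces the kinetic contribution on the left to exactly $\fint_{t-h}^{t}K^{(h)}$, which proves the first estimate.

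For the second estimate the force terms must be absorbed. By Cauchy--Schwarz and Young's inequality with parameter $\delta>0$,
\begin{equation*}
\left|\int_0^t \rho_f\inner[\Omega^{(h)}(s)]{f}{u^{(h)}} + \rho_s\inner[Q]{f\circ\eta^{(h)}}{\partial_t\eta^{(h)}}\,ds\right| \leq \frac{\delta}{2}\int_0^t\left(\norm[\Omega^{(h)}(s)]{u^{(h)}}^2 + \norm[Q]{\partial_t\eta^{(h)}}^2\right) ds + \frac{C_f\,t}{\delta},
\end{equation*}
with $C_f$ depending only on $\norm[L^\infty]{f}$, $|\Omega|$ and $|Q|$. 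Writing $\mathcal{M}(s) := \fint_{s-h}^{s}(\norm{u^{(h)}}^2 + \norm{\partial_t\eta^{(h)}}^2)\,d\sigma$, a direct Fubini computation gives $\int_0^t(\norm{u^{(h)}}^2+\norm{\partial_t\eta^{(h)}}^2)\,ds \leq h\mathcal{M}(t) + \int_0^t\mathcal{M}(s)\,ds + C_0$, where $C_0$ absorbs the contribution from $[-h,0]$. Plugging back into the first estimate and dropping the non-negative $E_h$ and dissipation terms on the left gives
\begin{equation*}
c\,\mathcal{M}(t) \leq C_0 + \frac{\delta h}{2}\mathcal{M}(t) + \frac{\delta}{2}\int_0^t\mathcal{M}(s)\,ds + \frac{C_f t}{\delta}.
\end{equation*}
Choosing $\delta$ small enough (e.g.\ $\delta h \leq c$) to absorb the second term on the right and then applying Gronwall's inequality with $\delta \sim 1/T$ yields $\mathcal{M}(t)\leq C + CT^2$ uniformly for $t\in[0,T]$. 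Reinserting this bound in the first estimate closes the proof of the second.

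The main obstacle is the precise matching of $w_l$ with the preceding step's kinetic energy: the fluid part relies on the volume-preservation of the discrete flow map $\Phi_l$, while the solid part relies on the Eulerian density $\rho^{(h)} = \rho_s/\det\nabla\eta^{(h)}$ together with the coupling condition $u^{(h)}\circ\eta^{(h)} = \partial_t\eta^{(h)}$ on $M$, so that both halves recombine into the single global quantity $K^{(h)}$. Once this matching identity is in place, the remainder of the argument follows the Young--Gronwall absorption strategy already used in the purely elastic setting of \autoref{thm:SOexistence}, now carried out simultaneously for the fluid and solid kinetic reservoirs.
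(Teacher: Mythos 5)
Your proof is correct and follows essentially the same route as the paper: apply the time-delayed energy inequality on each subinterval, identify the data $w_l$ with the previous step's kinetic reservoir via the volume-preserving flow map, telescope, and then close with a Young-type absorption argument. Two small remarks. First, for the solid half of the matching identity, you invoke the Eulerian density $\rho^{(h)}=\rho_s/\det\nabla\eta^{(h)}$; this is not actually needed there, because the solid data is built directly in Lagrangian coordinates on $Q$ with constant density, so that $\norm[Q]{w_l\circ\eta_l}=\norm[Q]{\partial_t\tilde\eta_l}$ is immediate from the definition of $w_l$ --- the density $\rho^{(h)}$ only enters later, in the global momentum reformulation \eqref{eq:weak-hu}. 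Second, for the absorption step the paper avoids Gronwall by summing the moving-average bound over the $N$ intervals of length $h$ to recover $\int_0^T g \le \sum_l h\,\mathcal{M}(lh)$ directly, and then chooses $\delta\sim 1/T$; your Fubini-plus-Gronwall variant reaches the same polynomial bound and is equally valid, just a slightly more elaborate route.
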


\begin{proof}
 From \autoref{lem:TDaPosterioriEstimate} applied to the single step, from $l$ to $l+1$ we get
 \begin{align*}
  &\phantom{{}={}} E_h(\eta_{l+1}) + \fint_0^{s} \frac{\rho_f}{2} \norm[\tilde{\Omega}_l(t)]{v_{l+1}}^2  + \frac{\rho_s}{2} \norm[Q]{\partial_t \tilde{\eta}_{l+1}}^2 dt \\
  &+ \int_0^{s} R_h(\nabla \tilde{\eta}_{l+1},\partial_t \nabla \tilde{\eta}_{l+1})  + \nu\norm[\tilde{\Omega}_l(t)]{\nablasym v_{l+1}}^2 +h \norm[\Omega^{(h)}(t)]{\nabla^{k_0} v_{l+1}}^2 dt  dt \\
  &\leq E_h(\eta_{l}) + \fint_0^{s} \frac{\rho_f}{2} \norm[\Omega_{l}]{w_l}^2 + \frac{\rho_s}{2} \norm[Q]{w_l \circ \eta_l} dt + \int_0^{s} \inner[\Omega_l]{f}{v_{l+1}} + \rho_s\inner[Q]{f\circ \eta_l}{\partial_t\tilde{\eta}_l}dt
 \end{align*}
 for $s \in [0,h]$.
 Now per construction $\norm[\Omega_l]{w_l(t,.)} = \norm[\tilde{\Omega}_{l-1}(t)]{v_l(t,.)}$ and $\norm[Q]{w_l \circ \eta_l} = \norm[Q]{\partial_t \tilde{\eta}_l}$ thus we can use a telescope argument to get the first energy inequality as we did in \autoref{lem:SOtimeDelayedEnergyInequality}.
 
 Next we use Young's inequality on the two force terms to obtain
\begin{align*}
&\phantom{{}={}}\int_0^{t} \rho_f\inner[\Omega^{(h)}(t)]{f}{v^{(h)}} + \rho_s\inner[Q]{f\circ \eta_l}{\partial_t\eta^{(h)}}dt\\
&\leq \int_0^t \frac{1}{2\delta} \left(\rho_f\norm[\Omega^{(h)}(t)]{f}^2 + \rho_s\norm[Q]{f\circ \eta_l}^2\right) + \frac{\delta}{2} \left(\rho_f\norm[\Omega^{(h)}]{v^{(h)}}^2 +  \rho_s \norm[Q]{\partial_t \eta^{(h)} }^2 \right) dt\\
 &\leq \frac{C}{\delta} t \norm[\infty]{f}^2 +   \int_{0}^{t} \frac{\delta\rho_f}{2} \norm[\Omega^{(h)}(t)]{v^{(h)}}^2+\frac{\delta\rho_s}{2} \norm[Q]{\partial_t\eta^{(h)}}^2\, dt.
\end{align*}
Now dropping all other non-negative terms on the left hand side and extending the interval to $[0,T]$, we have
\begin{align*}
 &\phantom{{}={}} \fint_{t-h}^{t} \frac{\rho_f}{2} \norm[\Omega^{(h)}(t)]{v^{(h)}}^2  + \frac{\rho_s}{2} \norm[Q]{\partial_t \eta^{(h)}}^2 dt \\
 &\leq E(\eta_0) - E_{\min} + \frac{1}{2} \norm[\Omega_0]{v_0}^2 + \frac{C}{\delta} T \norm[\infty]{f}^2 +  \int_{0}^{T} \frac{\delta\rho_f}{2} \norm[\Omega^{(h)}(t)]{v^{(h)}}^2+\frac{\delta\rho_s}{2} \norm[Q]{\partial_t\eta^{(h)}}^2\, dt 
\end{align*}
and summing over intervals for $T = hN$ thus gives
\begin{align*}
 &\phantom{{}={}}\int_{0}^{T} \frac{\rho_f}{2} \norm[\Omega^{(h)}(t)]{v^{(h)}}^2+\frac{\rho_s}{2} \norm[Q]{\partial_t\eta^{(h)}}^2\, dt \leq \sum_{l=1}^N h \fint_{(l-1)h}^{lh} \frac{\rho_f}{2} \norm[\Omega^{(h)}(t)]{v^{(h)}}^2  + \frac{\rho_s}{2} \norm[Q]{\partial_t \eta^{(h)}}^2 dt \\
 &\leq hN \left(C + \frac{C}{\delta} T \norm[\infty]{f}^2 +  \int_{0}^{T} \frac{\delta\rho_f}{2} \norm[\Omega^{(h)}(t)]{v^{(h)}}^2+\frac{\delta\rho_s}{2} \norm[Q]{\partial_t\eta^{(h)}}^2\, dt \right)
\end{align*}
from which as before in \autoref{thm:SOexistence} choosing $\delta= \frac{1}{2T}$ yields the desired estimate.
\end{proof}

\begin{corollary}[Minimal no collision time]
\label{cor:short-time-no-collision}
 Assume that $\eta_0 \notin \partial \mathcal{E}$. Then there is a $T>0$ depending only on $\eta_0, v_0$ and $f$ such that $\eta_l$ and $\tilde{\eta}_l$ are injective on $\overline{Q}$ for all $h$ small enough and $h l \leq T$, i.e. we have $\eta^{h}(t) \notin \partial \mathcal{E}$ and thus there is no collision.
\end{corollary}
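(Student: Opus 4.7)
The plan is to exhibit a short time $T$ on which $\eta^{(h)}$ stays so close to $\eta_0$ in $L^2(Q)$ that \autoref{prop:shortInjectivity} applies uniformly in $h$, and then observe that this closeness forces $\eta^{(h)}(t) \in \operatorname{int}(\mathcal{E})$ throughout. Since $\eta_0 \notin \partial \mathcal{E}$, the deformation $\eta_0$ is a boundary-injective $W^{2,q}$-diffeomorphism onto its image and in particular satisfies condition \eqref{eq:boundaryDistanceCond} for some $\varepsilon_0 = \varepsilon_0(\eta_0) > 0$ and $\delta_0 = \delta_0(E_h(\eta_0))$.

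First I would fix an energy threshold $E_0 := 2 E_h(\eta_0) + 1$. By the a priori estimate of \autoref{lem:NSfullIterationAPriori}, there exists $T_1 > 0$, depending only on $\eta_0$, $v_0$ and $\|f\|_\infty$, such that for all $h$ (small enough) and all $t \in [0, T_1]$
\[
E_h(\eta^{(h)}(t)) \leq E_0, \qquad \int_0^{T_1} R_h(\eta^{(h)}, \partial_t \eta^{(h)}) \, ds \leq C,
\]
with $C$ also independent of $h$. With $E(\eta^{(h)}(s)) \leq E_0$ for all $s$, the Korn-type inequality R3 gives a constant $c_K = c_K(E_0)$, independent of $h$ and $s$, such that
\[
c_K \int_0^{T_1} \norm[W^{1,2}(Q)]{\partial_t \eta^{(h)}}^2 \, ds \leq \int_0^{T_1} R(\eta^{(h)}, \partial_t \eta^{(h)}) \, ds \leq C.
\]
Since the time-delayed solutions $\tilde \eta_{l+1}$ match at the endpoints $t = lh$, the concatenation $\eta^{(h)}$ belongs to $W^{1,2}([0,T_1]; W^{1,2}(Q))$, and Cauchy--Schwarz in time yields
\[
\norm[L^2(Q)]{\eta^{(h)}(t) - \eta_0} \leq \int_0^t \norm[L^2(Q)]{\partial_t \eta^{(h)}(s)} \, ds \leq \sqrt{t} \, \Big(\tfrac{C}{c_K}\Big)^{1/2}
\]
for all $t \in [0,T_1]$, with the right-hand side uniform in $h$.

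Let $\gamma_0 = \gamma_0(E_0, \varepsilon_0)$ be the closeness radius furnished by \autoref{prop:shortInjectivity} applied with the above $E_0$ and with $\varepsilon_0$, $\delta_0$ read off from $\eta_0$. Choose
\[
T := \min\Big\{\, T_1,\ \tfrac{c_K \gamma_0^2}{C} \,\Big\},
\]
so that $\norm[L^2(Q)]{\eta^{(h)}(t) - \eta_0} < \gamma_0$ for every $t \in [0,T]$ and every $h$. \autoref{prop:shortInjectivity} then guarantees the uniform boundary separation $|\eta^{(h)}(t,x_0) - \eta^{(h)}(t,x_1)| > \varepsilon_0/2$ for all $x_0, x_1 \in \partial Q$ with $|x_0 - x_1| \geq \delta_0$; combined with the interior local injectivity coming from the uniform lower bound on $\det \nabla \eta^{(h)}$ (Assumption S2 together with the Ciarlet--Nečas condition encoded in $\mathcal{E}$), this gives $\eta^{(h)}(t) \in \mathcal{E} \setminus \partial \mathcal{E}$ for all $t \in [0,T]$. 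In particular $\eta_l = \eta^{(h)}(lh)$ and every intermediate value $\tilde \eta_l(s) = \eta^{(h)}(lh+s)$ are injective on $\overline{Q}$.

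I do not expect any technical obstacle here: the argument is a uniform-in-$h$ rerun of the short-time no-collision reasoning already used in \autoref{cor:QSshortTimeNoCollision}, with the crucial observation being that \autoref{lem:NSfullIterationAPriori} supplies the energy and dissipation bounds \emph{before} any restriction on $T$ tied to collisions, so the Korn constant $c_K(E_0)$ may legitimately be chosen independently of $h$ and used to control $\norm[L^2]{\eta^{(h)}(t) - \eta_0}$. The only point to double check is that $\eta^{(h)}$, as the concatenation of time-delayed solutions constructed via \autoref{thm:TDexistence}, has the regularity required for the Cauchy--Schwarz step above, which follows directly from $\tilde \eta_{l+1} \in W^{1,2}([0,h]; W^{1,2}(Q))$ together with the matching conditions $\tilde \eta_{l+1}(0) = \eta_l = \tilde \eta_l(h)$.
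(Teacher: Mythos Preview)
Your proof is correct and follows essentially the same strategy as the paper's: bound $\norm[L^2(Q)]{\eta^{(h)}(t)-\eta_0}$ via Cauchy--Schwarz in time using a uniform-in-$h$ bound on $\int_0^T \norm{\partial_t\eta^{(h)}}^2\,dt$, then invoke \autoref{prop:shortInjectivity}. The only difference is in how this $L^2$-in-time bound on $\partial_t\eta^{(h)}$ is obtained: the paper reads it off directly from the kinetic-energy control derived in the proof of \autoref{lem:NSfullIterationAPriori} (namely $\int_0^T \norm[Q]{\partial_t\eta^{(h)}}^2\,dt \leq TC(1+T^2)$), whereas you go through the dissipation bound and the Korn-type inequality R3. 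Both routes work; the paper's is marginally more direct since it avoids the detour through Korn (and hence the need to first fix an energy level $E_0$), but your version has the advantage of spelling out more carefully why the conclusion $\eta^{(h)}(t)\notin\partial\mathcal{E}$ actually follows from the boundary separation statement of \autoref{prop:shortInjectivity}.
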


\begin{proof}
 From the final estimate in the proof of \autoref{lem:NSfullIterationAPriori} we get
 \[\norm[Q]{\eta_N-\eta_0}^2 = \norm[Q]{\sum_{l=0}^{N-1}  \int_0^{h} \partial_t \tilde{\eta}_l dt}^2 \leq  \int_0^{T} \norm[Q]{\partial_t \tilde{\eta}_l}^2 dt \leq T  C(1+T^2).\]
 Using this bound for small enough $T$ then allows us to apply the short-distance injectivity result \autoref{prop:shortInjectivity}.
\end{proof}
As a direct consequence of the uniform bounds of $\det(\nabla \eta^{(h)})$, the definition of $R_h$ and \autoref{lem:globalKorn}, we find that

\begin{corollary}[Korn-type estimate]
\label{cor:NSkorn}
There is a constant just depending on the energy estimate in \autoref{lem:NSfullIterationAPriori}, such that
\begin{align*}
&\sup_{t\in[0,T-h]}\fint_{t}^{t+h}\norm[\Omega]{u(s)}^2\, ds+\int_0^T \norm[W^{1,2}(Q)]{\partial_t\eta^{(h)}}^2+\norm[W^{1,2}(\Omega)]{u^{(h)}}^2 dt\leq C,
\\
&\sup_{t\in [0,T]} h^\frac{a_0}{2}\norm[W^{k_0,2}(Q)]{\eta^{(h)}}+\sqrt{h} \sqrt{\int_0^T \norm[W^{k_0,2}(Q)]{\partial_t\eta^{(h)}}^2 + \norm[W^{k_0,2}(\Omega)]{u^{h}}^2}\leq C.
\end{align*}
\end{corollary}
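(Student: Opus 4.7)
The plan is to unpack \autoref{lem:NSfullIterationAPriori} term by term, using the explicit regularized structure of $E_h$ and $R_h$ together with the Korn-type inequalities already at our disposal. I would begin by reading off the deformation bound directly from the energy: since $E_h(\eta) = E(\eta) + h^{a_0}\|\nabla^{k_0}\eta\|_Q^2$, the uniform estimate $E_h(\eta^{(h)}(t)) \leq C$ yields both $\sup_{t} h^{a_0/2}\|\nabla^{k_0}\eta^{(h)}(t)\|_Q \leq C$ and, via assumptions S2 and S4, a uniform $C^{1,\alpha}(Q)$ bound on $\eta^{(h)}(t)$ together with uniform upper and lower bounds on $\det \nabla \eta^{(h)}(t)$. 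These pointwise-in-time bounds are the prerequisite for invoking the various Korn-type estimates in what follows.

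For the solid velocity, I would decompose $R_h(\eta^{(h)}, \partial_t\eta^{(h)}) = R(\eta^{(h)}, \partial_t\eta^{(h)}) + h\|\nabla^{k_0}\partial_t\eta^{(h)}\|_Q^2$, with both summands non-negative. Applying R3 under the uniform energy bound then gives $\int_0^T \|\partial_t\eta^{(h)}\|_{W^{1,2}(Q)}^2\, dt \leq c_K^{-1}\int_0^T R\, dt \leq C$, while the second summand directly produces $\sqrt{h}\,\|\partial_t\eta^{(h)}\|_{L^2(0,T;W^{k_0,2}(Q))} \leq C$ after Poincaré's inequality on $Q$ using the zero Dirichlet trace on $P$. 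For the fluid velocity I would proceed analogously, with the single difference that \autoref{lem:globalKorn} is invoked slicewise in $t$, legitimately because the energy is uniformly bounded: combining its output with $\nu\int_0^T \|\nabla^{\mathrm{sym}}u^{(h)}\|_{\Omega^{(h)}(t)}^2\, dt + \int_0^T R\, dt \leq C$ yields $\int_0^T \|u^{(h)}\|_{W^{1,2}(\Omega)}^2\, dt \leq C$. The contribution $\int_0^T h\|\nabla^{k_0} u^{(h)}\|_{\Omega^{(h)}(t)}^2\, dt \leq C$, together with the $W^{k_0,2}(Q)$-bound just obtained on the solid side and the zero boundary data on $\partial\Omega$, then promotes to $\sqrt{h}\,\|u^{(h)}\|_{L^2(0,T;W^{k_0,2}(\Omega))} \leq C$ via the global definition of $u^{(h)}$.

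Finally, for the short-time averaged kinetic energy bound, I would use the change of variables $y = \eta^{(h)}(s,x)$ on the solid part of $\Omega$: since $\det \nabla \eta^{(h)}(s,\cdot)$ is uniformly bounded from above on finite-energy deformations,
\[
\|u^{(h)}(s)\|_\Omega^2 \leq \|v^{(h)}(s)\|_{\Omega^{(h)}(s)}^2 + C\|\partial_t\eta^{(h)}(s)\|_Q^2,
\]
so averaging over $[t,t+h]$ and taking $\sup_{t}$ reduces the claim to the averaged kinetic-energy bound in \autoref{lem:NSfullIterationAPriori} after a trivial shift of the interval of integration. I do not anticipate any genuine obstacle here: the proof is essentially a bookkeeping argument combining the energy estimate with the two Korn inequalities and the uniform determinant bound, which is exactly what the statement of the corollary announces.
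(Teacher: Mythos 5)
Your plan --- unpack the energy estimate from \autoref{lem:NSfullIterationAPriori}, read off the weighted $W^{k_0,2}$-bound and the uniform determinant/Lipschitz bounds from $E_h$, feed the dissipation terms into R3 and \autoref{lem:globalKorn}, and get the averaged kinetic-energy bound by a change of variables and a shift of the averaging window --- is exactly what the paper's one-line justification points at, and every one of the resulting bounds checks out cleanly except the very last.

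The place the argument does not go through as written is $\sqrt{h}\,\|u^{(h)}\|_{L^2(0,T;W^{k_0,2}(\Omega))}\leq C$. The energy estimate controls $h\int_0^T\|\nabla^{k_0}u^{(h)}\|^2_{\Omega^{(h)}(t)}\,dt$ only on the \emph{fluid} domain and $h\int_0^T\|\nabla^{k_0}\partial_t\eta^{(h)}\|^2_{Q}\,dt$ only in \emph{Lagrangian} coordinates, and these do not simply paste into a $W^{k_0,2}(\Omega)$-bound. On $\eta^{(h)}(t,Q)$ one has $u^{(h)}=\partial_t\eta^{(h)}\circ(\eta^{(h)})^{-1}$, and the Fa\`a di Bruno expansion of $\nabla^{k_0}\bigl(\partial_t\eta^{(h)}\circ(\eta^{(h)})^{-1}\bigr)$ contains terms such as $\bigl(\nabla\partial_t\eta^{(h)}\bigr)\circ(\eta^{(h)})^{-1}\cdot\nabla^{k_0}(\eta^{(h)})^{-1}$, where $\nabla^{k_0}(\eta^{(h)})^{-1}$ is controlled only through the weighted bound $h^{a_0/2}\|\eta^{(h)}\|_{W^{k_0,2}(Q)}\leq C$, i.e.\ at the price of a factor $h^{-a_0/2}$. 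The contribution of such a term to $h\int_0^T\|\nabla^{k_0}u^{(h)}\|^2_{\eta^{(h)}(t,Q)}\,dt$ then carries an extra $h^{-a_0}$ and is not uniform in $h$. Your phrase ``promotes via the global definition of $u^{(h)}$'' therefore glosses over a genuine obstruction; neither the zero boundary data on $\partial\Omega$ nor Poincar\'e helps here, since the problem is the pullback, not the lower-order terms. What the energy estimate does hand you directly is $\sqrt{h}\bigl(\int_0^T\|u^{(h)}\|_{W^{k_0,2}(\Omega^{(h)}(t))}^2\,dt\bigr)^{1/2}\leq C$ on the fluid domain together with the Lagrangian solid bound on $\partial_t\eta^{(h)}$, and it is this pair --- not the global $W^{k_0,2}(\Omega)$-norm of $u^{(h)}$ --- that is actually used afterwards (e.g.\ in \autoref{prop:NSh-m-estimateFluid}, where the corresponding inner product is over $\Omega(t)$). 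You should either supply an additional argument for the Eulerian $W^{k_0,2}$-norm of $u^{(h)}$ on the solid part, or state the last term as the fluid-domain norm.
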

\subsubsection*{Proof of \autoref{thm:NSexistence}, step 2: The weak time-derivative}
The weak time-derivative in the frame-work of incompressible fluids is a delicate issue due the fat that it is merrely in the dual of functions that are solenoidal on the fluid-domain. We introduce here a subtle estimate which is essential in order to pass to the limit with the convective term which, ever since the days of Leray, has been the major challange in the existence theory for PDE's involving Navier-Stokes equations; in particular in the field of fluid-structure interactions~(see for instance~\cite{lengelerStokestypeSystemArising2015}). 

In the following, we may understand $\partial_t \eta^{(h)}$ and $u^{(h)}$ to be extended by their initial values for $t \in [-h,0]$.

\begin{lemma}[Length $h$ bounds (fluid)] \label{lem:NSh-m-estimate}
Fix $T > 0$. Then there exists a constant $C$ depending only on the initial data, such that the following holds:
\begin{enumerate}
  \item $\int_0^T \norm[H^{-m}(Q)]{\frac{\partial_t \eta^{(h)}(t)-\partial_t \eta^{(h)}(t-h)}{h}}^2 dt \leq C$
  \item
 $\norm[\Omega]{\xi(t) -\xi(t-s_0) \circ \Phi_{-s_0}^{(h)}(t)} \leq C h \Lip(\xi)$ for all $\xi \in C_0^\infty([0,T]\times \Omega)$, $s_0\in[-h,h]$
 \item $
  \norm[\Omega]{\xi-\xi \circ \Phi_{s_0}^{(h)}(t-h)} \leq C h\Lip(\xi)$ for all $\xi \in C_0^\infty(\Omega)$, $s_0\in[0,h]$
\end{enumerate}
\end{lemma}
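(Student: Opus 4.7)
My plan is to treat the three parts separately, with (1) following the template of \autoref{lem:SOh-m-estimate} and (2), (3) relying on the ODE satisfied by the flow map together with the moving-average bound from \autoref{cor:NSkorn}.

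For part (1), the strategy is to feed the weak equation \eqref{eq:weak-h} with a cleverly chosen, solid-localised test pair. Given $\phi \in C_0^\infty(Q;\R^n)$ compactly supported in the interior of $Q$, define $\xi(t)$ on $\Omega$ by setting $\xi(t) = \phi \circ (\eta^{(h)}(t))^{-1}$ on $\eta^{(h)}(t,Q)$ and extending by zero to $\Omega^{(h)}(t)$. Because $\phi$ vanishes in a neighbourhood of $\partial Q$, this extension lies in $C^0([0,T];W^{k_0,2}(\Omega;\R^n))$; it has zero divergence on $\Omega^{(h)}(t)$ (it is identically zero there), satisfies $\xi|_{\partial\Omega}=0$, and respects the coupling $\xi\circ\eta^{(h)}(t)=\phi$. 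Since $\Phi_h^{(h)}(t-h)$ restricts to a volume-preserving diffeomorphism $\Omega^{(h)}(t-h)\to\Omega^{(h)}(t)$ (and solid to solid), the pulled-back function $\xi(t)\circ\Phi_h^{(h)}(t-h)$ vanishes on $\Omega^{(h)}(t-h)$, so every fluid term (viscous, regulariser, forcing, and fluid-inertial) collapses. What remains is the solid identity
\[
\rho_s \inner[Q]{\tfrac{\partial_t\eta^{(h)}(t)-\partial_t\eta^{(h)}(t-h)}{h}}{\phi} = -\inner{DE_h(\eta^{(h)})}{\phi}-\inner{D_2R_h(\eta^{(h)},\partial_t\eta^{(h)})}{\phi}+\rho_s\inner[Q]{f\circ\eta^{(h)}}{\phi}.
\]

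From here the $L^2([0,T];W^{-k_0,2}(Q))$ bound drops out term by term, exactly as in \autoref{lem:SOh-m-estimate}: since $k_0$ is chosen so that $W^{k_0,2}\hookrightarrow W^{2,q}$, the $W^{-k_0,2}$-norm of $DE(\eta^{(h)})$ is uniformly bounded by S5 and the energy estimate; the regularising pieces $h^{a_0}\nabla^{2k_0}\eta^{(h)}$ and $h\nabla^{2k_0}\partial_t\eta^{(h)}$ are controlled through $h^{a_0/2}\norm[W^{k_0,2}]{\eta^{(h)}}\le C$ and $\int h\norm[W^{k_0,2}]{\partial_t\eta^{(h)}}^2\,dt\le C$ given by \autoref{cor:NSkorn}; the dissipative part of $D_2R$ is linear in $\partial_t\eta^{(h)}$ and hence in $L^2$ in time by R4 combined with the a-priori estimate; and the forcing term is bounded in $L^\infty$ by continuity of $f$.

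For parts (2) and (3), the key is the flow-map ODE $\partial_s \Phi_s^{(h)}(t,y)=u^{(h)}(t+s,\Phi_s^{(h)}(t,y))$ from \autoref{lem:phi} together with the fact that $\Phi_s^{(h)}(t,\cdot):\Omega\to\Omega$ is volume-preserving. For (2) I would split, using the space-time Lipschitz bound on $\xi$,
\[
\abs{\xi(t,y)-\xi(t-s_0,\Phi_{-s_0}^{(h)}(t,y))} \le \Lip(\xi)\bigl(s_0 + \abs{y-\Phi_{-s_0}^{(h)}(t,y)}\bigr),
\]
then integrate the ODE backwards to obtain $\abs{y-\Phi_{-s_0}^{(h)}(t,y)}\le \int_0^{s_0} \abs{u^{(h)}(t-r,\Phi_{-r}^{(h)}(t,y))}\,dr$. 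Taking $L^2(\Omega)$ norms, applying volume preservation and Cauchy--Schwarz gives
\[
\int_\Omega \abs{y-\Phi_{-s_0}^{(h)}(t,y)}^2\,dy \le s_0 \int_0^{s_0} \norm[\Omega]{u^{(h)}(t-r)}^2\,dr \le h \int_{t-h}^t \norm[\Omega]{u^{(h)}}^2\,dr \le C h^2,
\]
the last step being the moving-average bound $\sup_t\fint_{t-h}^t\norm[\Omega]{u^{(h)}}^2\le C$ from \autoref{cor:NSkorn}. Combined with the trivial $s_0\le h$ contribution from the time-Lipschitz part, this yields (2). Part (3) is identical but simpler: $\xi$ is time-independent, so only the spatial displacement enters, and the same volume-preservation/Cauchy--Schwarz argument applied to $\Phi_{s_0}^{(h)}(t-h)$ produces $Ch\Lip(\xi)$.

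The main technical pitfall to watch for is the admissibility of the constructed $\xi$ in step (1): without compact support of $\phi$ in the \emph{interior} of $Q$, the zero extension is not smooth across $\partial(\eta^{(h)}(t,Q))$ and the whole argument breaks, so one must settle for the negative Sobolev norm taken against $C_0^\infty(Q;\R^n)$ (which matches the statement). A secondary point is the verification that $\Phi_h^{(h)}(t-h)$ genuinely preserves the fluid/solid decomposition globally on $\Omega$ for \emph{every} time-slab, which is assured by \autoref{lem:phi} combined with the piecewise construction but deserves an explicit mention.
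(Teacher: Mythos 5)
Your proof is correct and follows essentially the same route as the paper: for (1) the paper likewise tests \eqref{eq:weak-h} with a pair $(\phi,\xi)$ where $\xi$ is set to zero on the fluid domain (forcing $\phi$ to have compact support in the interior of $Q$), and then reads off the $W^{-k_0,2}(Q)$-bound term by term exactly as in \autoref{lem:SOh-m-estimate}; for (2) and (3) the paper writes $\xi(t)-\xi(t-s_0)\circ\Phi_{-s_0}^{(h)}(t)=\int_{-s_0}^0 \partial_s\bigl(\xi(t+s)\circ\Phi_s^{(h)}(t)\bigr)ds$, applies the chain rule and Cauchy--Schwarz, and then changes variables by the flow and invokes \autoref{cor:NSkorn}, which is the same content as your split into a time-Lipschitz part plus the $L^2$ bound on the displacement $|y-\Phi_{-s_0}^{(h)}(t,y)|$. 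The one small imprecision is your appeal to ``volume preservation'' for the change of variables: on $\eta^{(h)}(t,Q)$ the Jacobian of the flow map is $\rho^{(h)}(t+s,\Phi_s)/\rho^{(h)}(t,\cdot)$ rather than $1$ (see \autoref{lem:phi}), so what is actually used is the uniform two-sided bound on $\det\nabla\Phi_s^{(h)}$ coming from the energy bounds on the density, exactly as the paper states.
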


\begin{proof} 
The first estimate is shown in almost the same way as \autoref{lem:SOh-m-estimate}. While we need to construct a matching $\xi$, this can be set to $0$ on the fluid-domain and thus have no impact.

For the second let $\xi \in C_0^\infty([0,T]\times \Omega;\R^n)$ and calculate
 \begin{align*}
  &\phantom{{}={}}\int_{\Omega} \abs{\xi(t) - \xi(t-s_0)\circ \Phi_{-{s_0}}^{(h)}(t) }^2 dy \\
  &=\int_{\Omega} \abs{ \int_{-s_0}^{0} \partial_s \left(\xi(t+s) \circ \Phi^{(h)}_s(t)\right) ds }^2 dy\\
  &= \int_{\Omega} \abs{ \int_{-s_0}^0 \left[\nabla\xi(t+s) \cdot u^{(h)}(t+s)  + \partial_t \xi(t+s) \right]_{\Phi^{(h)}_s(t)}  ds }^2 dy  \\
  &\leq s_0\int_{-s_0}^0\int_{\Omega} \abs{ \left[\nabla\xi(t+s) \cdot u^{(h)}(t+s) + \partial_t \xi(t+s) \right]_{\Phi^{(h)}_s(t)}  }^2 dy ds \\
  &\leq h^2 \Lip(\xi)^2 \fint_{t-h}^{t}\int_\Omega \det(\nabla \phi^{(h)}_{-s}(t+s))\abs{u^{(h)}(s,.)}^2+1\, ds 
  \\
  &\leq Ch^2 \Lip(\xi)^2 \fint_{t-h}^{t}\left(\norm[\Omega]{u^{(h)}(s,.)}+1\right)^2 ds 
  \end{align*}
using the uniform bounds of $\det(\nabla \phi^{(h)}_{-s}(t+s))$ (see the characterization in \autoref{lem:phi}) and the velocity \autoref{cor:NSkorn}. This implies (2). The third assertion follows by the very same arguments.
 \end{proof}
 The next proposition estimates the weak time-derivative of the momentum equation. 
 \begin{proposition} \label{prop:NSh-m-estimateFluid}
There is a $m\geq k_0$ and a constant just depending on the uniform energy bound, such that for all
$\xi \in C^0(0,T; W^{m,2}_0(\Omega;\R^n)$ with $ \{\diver \xi|_{\Omega(t)} = 0\}$ (for all $t\in [0,T]$, then 
\begin{align*}
\int_0^T \abs{\inner[\Omega]{ \frac{(\rho^{(h)}  u^{(h)})(t)-(\rho^{(h)}  u^{(h)})(t-h)}{h}}{\xi(t)} }dt\leq C\norm[L^2({[a,b]};W^{m,2}(\Omega))]{\xi} 
\end{align*}
for all $0\leq a<b\leq T$.
 \end{proposition}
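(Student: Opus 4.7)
The plan is to reduce the proposition to the weak equation \eqref{eq:weak-h} via the change of variables given by the flow map, plus a commutator-type correction which is controlled by Lemma~\ref{lem:NSh-m-estimate}(3).

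The starting observation is that, by Lemma~\ref{lem:phi}, $\Phi^{(h)}_h(t-h)$ is an orientation-preserving diffeomorphism of $\Omega$ which sends $\Omega^{(h)}(t-h)$ onto $\Omega^{(h)}(t)$, and whose Jacobian equals $\rho^{(h)}(t-h,\cdot)/\rho^{(h)}(t,\Phi^{(h)}_h(t-h,\cdot))$ everywhere. Changing variables in $\inner[\Omega]{(\rho^{(h)} u^{(h)})(t)}{\xi(t)}$ along $y=\Phi^{(h)}_h(t-h,y')$ therefore yields
\[
\inner[\Omega]{(\rho^{(h)} u^{(h)})(t)}{\xi(t)}
=\inner[\Omega]{\rho^{(h)}(t-h)\,u^{(h)}(t)\circ\Phi^{(h)}_h(t-h)}{\xi(t)\circ\Phi^{(h)}_h(t-h)}.
\]
Subtracting $(\rho^{(h)}u^{(h)})(t-h)$ tested against $\xi(t)$ and adding/removing $\xi(t)\circ\Phi^{(h)}_h(t-h)$ in the second factor, the quantity $I(t):=\inner[\Omega]{h^{-1}[(\rho^{(h)}u^{(h)})(t)-(\rho^{(h)}u^{(h)})(t-h)]}{\xi(t)}$ splits as $I(t)=J_1(t)+J_2(t)$ with
\begin{align*}
J_1(t)&=\inner[\Omega]{\rho^{(h)}(t-h)\,\tfrac{u^{(h)}(t)\circ\Phi^{(h)}_h(t-h)-u^{(h)}(t-h)}{h}}{\xi(t)\circ\Phi^{(h)}_h(t-h)},\\
J_2(t)&=\inner[\Omega]{\rho^{(h)}(t-h)\,u^{(h)}(t-h)}{\tfrac{\xi(t)\circ\Phi^{(h)}_h(t-h)-\xi(t)}{h}}.
\end{align*}
By \eqref{eq:weak-hu}, the first term $J_1(t)$ is exactly the combined fluid/solid material inertia that appears on the left of \eqref{eq:weak-h}, evaluated against the coupled pair $(\phi(t),\xi(t))$ with $\phi(t):=\xi(t)\circ\eta^{(h)}(t)$. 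Note that this pair is admissible because $\xi(t)$ is solenoidal on $\Omega^{(h)}(t)$ by hypothesis, and $\phi(t)$ satisfies the coupling by definition; moreover $\|\phi\|_{L^2([a,b];W^{k_0,2}(Q))}\le C\|\xi\|_{L^2([a,b];W^{m,2}(\Omega))}$ using the chain rule, the uniform $C^{k_0-1}$-regularity of $\eta^{(h)}$ guaranteed by the energy bound $h^{a_0/2}\|\nabla^{k_0}\eta^{(h)}\|_\infty\le C$ from Corollary~\ref{cor:NSkorn}, and Sobolev embedding (which fixes the admissible range of $m\ge k_0$).

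Using \eqref{eq:weak-h} we then rewrite
\[
\int_a^b J_1(t)\,dt=-\int_a^b\Big[\inner{DE_h(\eta^{(h)})}{\phi}+\inner{D_2R_h(\eta^{(h)},\partial_t\eta^{(h)})}{\phi}+\nu\inner[\Omega^{(h)}(t)]{\nablasym v^{(h)}}{\nablasym\xi}+h\inner[\Omega^{(h)}(t)]{\nabla^{k_0}v^{(h)}}{\nabla^{k_0}\xi}\Big]dt +\text{forces},
\]
and every term is bounded by $C\|\xi\|_{L^2([a,b];W^{m,2}(\Omega))}$: the elastic term via the uniform $W^{-2,q}$-bound on $DE(\eta^{(h)})$ together with $h^{a_0/2}\|\nabla^{k_0}\eta^{(h)}\|$, the dissipation via its $L^2$-bound of the derivative together with the analogous control of $h^{1/2}\nabla^{k_0}\partial_t\eta^{(h)}$ from Corollary~\ref{cor:NSkorn}, the fluid dissipation by Cauchy–Schwarz and the $L^2W^{1,2}$-bound on $u^{(h)}$, the fourth term by the corresponding $\sqrt h\nabla^{k_0}$-bound, and the forces by Young's inequality. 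For $J_2(t)$, Lemma~\ref{lem:NSh-m-estimate}(3) yields
\[
\bigg\|\frac{\xi(t)\circ\Phi^{(h)}_h(t-h)-\xi(t)}{h}\bigg\|_{L^2(\Omega)}\le C\operatorname{Lip}(\xi(t))\le C\|\xi(t)\|_{W^{m,2}(\Omega)},
\]
provided $m>1+n/2$, and $\|\rho^{(h)}(t-h)u^{(h)}(t-h)\|_{L^2(\Omega)}\le C$ uniformly by Corollary~\ref{cor:NSkorn} (with the initial extension of $u^{(h)}$ and $\partial_t\eta^{(h)}$ for negative times). Integrating $|J_2(t)|$ over $[a,b]$ and applying Cauchy–Schwarz in time closes the estimate.

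The main obstacle is ensuring that all constants are uniform in $h$, which forces the choice of $m$: it must be large enough that (i) the Lagrangian pullback $\phi=\xi\circ\eta^{(h)}$ has controlled $W^{k_0,2}(Q)$-norm (this uses the uniform H\"older regularity of $\eta^{(h)}$, itself coming from the $W^{2,q}$-energy bound), (ii) the $h^{a_0}$-regularizing terms in $DE_h$ and $R_h$ can be absorbed via the corresponding scale-matched a priori bounds, and (iii) the flow-map commutator estimate in Lemma~\ref{lem:NSh-m-estimate}(3) applies. Taking $m\ge k_0+1$ (with $k_0$ already satisfying $k_0>2+\frac{(q-2)n}{2q}$) suffices for all three requirements simultaneously.
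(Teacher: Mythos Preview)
Your decomposition $I=J_1+J_2$ and the use of the weak equation \eqref{eq:weak-h} for $J_1$ together with Lemma~\ref{lem:NSh-m-estimate}(3) for $J_2$ is exactly the paper's strategy; the only cosmetic difference is that you perform the change of variables $y\mapsto\Phi^{(h)}_h(t-h,y)$ at the outset, whereas the paper first adds and subtracts $(\rho^{(h)}u^{(h)})(t-h)\circ\Phi^{(h)}_{-h}(t)$ in the first slot and changes variables afterwards.

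There is, however, one genuine inaccuracy in your argument for $J_1$. You assert a uniform-in-$h$ bound $\|\phi\|_{L^2W^{k_0,2}(Q)}\le C\|\xi\|_{L^2W^{m,2}(\Omega)}$, justified by ``the uniform $C^{k_0-1}$-regularity of $\eta^{(h)}$ guaranteed by the energy bound $h^{a_0/2}\|\nabla^{k_0}\eta^{(h)}\|_\infty\le C$''. Corollary~\ref{cor:NSkorn} gives only $h^{a_0/2}\|\eta^{(h)}\|_{W^{k_0,2}(Q)}\le C$, an $L^2$ bound on $\nabla^{k_0}\eta^{(h)}$, and $\eta^{(h)}$ is \emph{not} uniformly $C^{k_0-1}$; its only uniform regularity is $C^{1,\alpha}$, from the $W^{2,q}$ energy. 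The chain-rule estimate of Proposition~\ref{prop:W2qIsomorph} reads $\|\phi(t)\|_{W^{k_0,2}(Q)}\le c\|\xi(t)\|_{C^{k_0}(\Omega)}\|\eta^{(h)}(t)\|_{W^{k_0,2}(Q)}$, so $\|\phi\|_{W^{k_0,2}}$ carries a factor $h^{-a_0/2}$ and is not uniformly bounded. The correct accounting --- which the paper carries out term by term --- is that this blow-up is exactly compensated by the prefactors $h^{a_0}$ and $h$ on the regularizing terms in $DE_h$ and $D_2R_h$: for instance $h^{a_0}|\langle\nabla^{k_0}\eta^{(h)},\nabla^{k_0}\phi\rangle|\le h^{a_0}\|\eta^{(h)}\|_{W^{k_0,2}}^2\|\xi\|_{C^{k_0}}=(h^{a_0/2}\|\eta^{(h)}\|_{W^{k_0,2}})^2\|\xi\|_{C^{k_0}}\le C\|\xi\|_{C^{k_0}}$. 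You hint at this in your obstacle (ii), but the body of the argument should be corrected so that the $h$-dependence of $\|\phi\|_{W^{k_0,2}}$ is tracked explicitly rather than suppressed. Choosing $m$ so that $W^{m,2}(\Omega)\hookrightarrow C^{k_0}(\Omega)$ then closes all terms.
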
 
 \begin{proof}
 Let $\xi \in C^0([0,T]\times\Omega;\R^n)$ with $\diver \xi(t) = 0$ on $\Omega^{(h)}(t)$ for all $t\in [0,T]$ and define $\phi := \xi \circ \eta^{(h)}$. 
 Let us first split the integrand into two 
  along the flow map.
 \begin{align*}
  &\phantom{{}={}}\inner[\Omega]{ \frac{(\rho^{(h)}  u^{(h)})(t)-(\rho^{(h)}  u^{(h)})(t-h)}{h}}{\xi(t)} \\
  &=\inner[\Omega]{ \frac{(\rho^{(h)}  u^{(h)})(t)-(\rho^{(h)}  u^{(h)})(t-h) \circ \Phi^{(h)}_{-h}(t)}{h}}{\xi(t)} \\
  &- \inner[\Omega]{ \frac{(\rho^{(h)}  u^{(h)})(t-h)-(\rho^{(h)}  u^{(h)})(t-h) \circ \Phi^{(h)}_{-h}(t)}{h}}{\xi(t)}=: J_1(t) - J_2(t)
 \end{align*}
 Now we estimate the first of those using the weak equation \eqref{eq:weak-h}
 \begin{align*}
  &\phantom{{}={}}\int_0^T \abs{J_1(t)} dt 
   = \int_0^T \bigg|\rho_s \inner{\frac{\partial_t \eta^{(h)}(t)-\partial_t \eta^{(h)}(t-h)}{h}}{\phi}
  \\
  & + \rho_f \inner[\Omega^{(h)}(t-h)]{\frac{ u^{(h)}(t) \circ \Phi^{(h)}_h(t-h) - u^{(h)}(t-h)}{h}}{\xi(t)\circ \Phi_h^{(h)}(t-h)}\bigg|dt 
  \\
  &\leq
  \int_0^T \abs{ \inner{DE(\eta^{(h)})}{\phi}} +h^{a_0}\abs{\inner[Q]{\nabla^{k_0}\eta^{(h)}}{\nabla^{k_0}\phi}}
  + \abs{\inner{D_2R_h(\eta^{(h)},\partial_t \eta^{(h)})}{  \phi}}   \\
  & + h\abs{\inner[Q]{\nabla^{k_0}\partial_t\eta^{(h)}}{\nabla^{k_0}\phi}}+
  \nu \abs{ \inner[\Omega(t)]{\nablasym u^{(h)}}{\nablasym \xi}}  + h \abs{\inner[\Omega(t)]{\nabla^{k_0} u^{(h)}}{\nabla^{k_0}\xi}} 
  \\
  & + \rho_f\abs{  \inner[\Omega(t)]{f}{\xi}} + \rho_s\abs{ \inner[Q]{f\circ \eta}{\phi}}dt \\
  &\leq c\int_0^T \norm[W^{-2,q}(Q)]{DE(\eta^{(h)})} \norm[W^{2,q}(\Omega)]{\xi(t)}\norm[W^{2,q}(Q)]{\eta^{(h)}(t)} \\
  &+ \norm[W^{-1,2}(Q)]{D_2R(\eta^{(h)},\partial_t \eta^{(h)})} \norm[W^{2,q}(\Omega)]{\xi(t)}\norm[W^{2,q}(Q)]{\eta^{(h)}(t)} 
\\
& +\norm[W^{k_0,2}(Q)]{\eta^{(h)}(t)}\left[h^{a_0}\norm{\nabla^{k_0}\eta^{(h)}}\norm[C^{k_0}(\Omega)]{\xi(t)}
+h\norm[W^{k_0,2}(Q)]{\partial_t\eta^{(h)}(t)}\norm[C^{k_0}(\Omega)]{\xi(t)} \right]
  \\
&+ h\norm[W^{k_0,2}(Q)]{u^{(h)}(t)}\norm[W^{k_0,2}(Q)]{\xi} + \norm[\Omega(t)]{\nablasym u^{(h)}} \norm[\Omega(t)]{\nablasym \xi} +\norm[\infty]{f} \norm[\Omega(t)]{\xi}dt
 \end{align*}
 where we used that by \autoref{prop:W2qIsomorph} we have $\norm[W^{2,q}(Q)]{\phi(t)} \leq c \norm[W^{2,q}(\Omega)]{\xi(t)}\norm[W^{2,q}(Q)]{\eta^{(h)}(t)}$ and $\norm[W^{k_0,2}(Q)]{\phi(t)} \leq c \norm[C^{k_0}(\Omega)]{\xi(t)}\norm[W^{k_0,2}(Q)]{\eta^{(h)}(t)}$.
 Now from the energy estimate we know that $\norm[W^{2,q}(Q)]{\eta^{(h)}(t)}$ and $h^{a_0/2}\norm[W^{k_0,2}(Q)]{\eta^{(h)}(t)}$ are uniformly bounded in $h$ and $t$. Thus every term is a product of a quantity which has (at least) a uniform $L^2([0,T])$-bound using the energy estimate and a term which can be estimated against $\norm[C^{k_0}(\Omega)]{\xi(t)}$. Choosing $m$ such that $W^{m,2}(\Omega)$ embeds into $C^{k_0}(\Omega)$ then gives us
 \begin{align*}
  \int_0^T \abs{J_1(t)} dt \leq C \norm[L^2({[0,T]};W^{m,2}(\Omega))]{\xi}
 \end{align*}

 For the other term we first note that by the density preserving nature of $\Phi$ we have
 \begin{align*}
  \inner[\Omega]{(\rho^{(h)}  u^{(h)})(t-h) \circ \Phi_{-h}^{(h)}(t)}{\xi(t)} = \inner[\Omega]{(\rho u)^{(h)}(t-h)}{\xi(t) \circ \Phi_{h}^{(h)}(t-h)}
 \end{align*}
  and thus
 \begin{align*}
  J_2(t) = \inner[\Omega]{(\rho^{(h)}  u^{(h)})(t-h)}{\frac{\xi(t)-\xi(t) \circ \Phi_h^{(h)}(t-h)}{h}} \leq \norm[\Omega]{(\rho u^{h})(t-h)} C \Lip(\xi(t))
 \end{align*}
 using \autoref{lem:NSh-m-estimate} (3), which is estimated by the uniform $L^\infty$ bounds of $\rho^{(h)}$ and \autoref{cor:NSkorn}.

 \end{proof}

\subsubsection*{Proof of \autoref{thm:NSexistence}, step 3: Convergence to the limit}

Now we again use our uniform bounds, this time in $h$ to converge to a limit equation for $h\to 0$. By the usual compact embeddings we conclude the following:
\begin{lemma}
There exists a sub-sequence (still written $h\to 0$) and limit functions $\eta : [0,T] \times Q \to \Omega$, $v:\Omega \to \R^n$ such that
\begin{align*}
 \eta^{(h)} &\rightharpoonup \eta & \text{ in }& C_w([0,T];W^{2,q}(Q;\R^n))\\
 \partial_t \eta^{(h)} & \rightharpoonup \partial_t \eta & \text{ in }& L^2([0,T];W^{1,2}(Q;\R^n))\\
 u^{(h)} & \rightharpoonup u & \text{ in }& L^2([0,T];W^{1,2}(\Omega;\R^n))
\end{align*}
\end{lemma}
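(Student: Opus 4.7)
The plan is a straightforward compactness argument based on the uniform a priori estimates collected in \autoref{lem:NSfullIterationAPriori} and \autoref{cor:NSkorn}, followed by a weak Arzel\`a–Ascoli argument for the deformation and an identification of the weak limits along a common subsequence.

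First I would unpack the uniform bounds. From \autoref{lem:NSfullIterationAPriori} together with \autoref{ass:energy}, S4, the sequence $\eta^{(h)}$ is uniformly bounded in $L^\infty([0,T];W^{2,q}(Q;\R^n))$, while \autoref{cor:NSkorn} gives uniform bounds on $\partial_t\eta^{(h)}$ in $L^2([0,T];W^{1,2}(Q;\R^n))$ and on $u^{(h)}$ in $L^2([0,T];W^{1,2}(\Omega;\R^n))$. Banach--Alaoglu then yields a single (diagonally chosen) subsequence and limits
\[
\eta \in L^\infty([0,T];W^{2,q}(Q;\R^n)), \quad \partial_t\eta \in L^2([0,T];W^{1,2}(Q;\R^n)), \quad u \in L^2([0,T];W^{1,2}(\Omega;\R^n))
\]
with $\eta^{(h)}\rightharpoonup^*\eta$ in $L^\infty W^{2,q}$, $\partial_t\tilde\eta^{(h)}\rightharpoonup\partial_t\eta$ in $L^2 W^{1,2}(Q)$, and $u^{(h)}\rightharpoonup u$ in $L^2 W^{1,2}(\Omega)$. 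The identification of the weak limit of the discrete derivative with the time derivative of $\eta$ is obtained by testing against smooth $\phi \in C_0^\infty((0,T)\times Q;\R^n)$ and using weak convergence in both slots, exactly as in the last step of \autoref{lem:SOAubinLions}.

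To upgrade the $L^\infty W^{2,q}$-convergence of $\eta^{(h)}$ to the $C_w$-convergence, I would imitate \autoref{prop:QSconvergence}. The bound on $\partial_t\eta^{(h)}$ in $L^2 W^{1,2}$ combined with the Cauchy--Schwarz inequality gives the equi-H\"older estimate
\[
\|\eta^{(h)}(t_1)-\eta^{(h)}(t_2)\|_{W^{1,2}(Q)} \leq C\,|t_1-t_2|^{1/2} \qquad \text{for all } t_1,t_2\in[0,T],
\]
uniform in $h$. Picking a countable dense set $\{t_i\}\subset[0,T]$ and diagonally extracting, we get $\eta^{(h)}(t_i)\rightharpoonup \eta(t_i)$ in $W^{2,q}(Q;\R^n)$ and strongly in $W^{1,2}(Q;\R^n)$ for every $i$. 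The H\"older estimate passes to the limit and provides a unique extension of $t\mapsto \eta(t)$ to all of $[0,T]$ as a $W^{1,2}$-continuous curve. For arbitrary $t\in[0,T]$, the uniform $W^{2,q}$-bound together with the uniqueness of the $W^{1,2}$-limit along any subsequence then forces $\eta^{(h)}(t)\rightharpoonup \eta(t)$ in $W^{2,q}(Q;\R^n)$, which is precisely the $C_w([0,T];W^{2,q}(Q;\R^n))$-convergence sought.

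The only subtle point, and the main obstacle in this plan, is to ensure that all three convergences hold along a \emph{single} subsequence and that the weak limit of $\partial_t\eta^{(h)}$ is indeed the distributional time derivative of the $\eta$ identified through the weak Arzel\`a--Ascoli step. This is handled by a standard diagonal extraction, together with the observation that $\eta^{(h)}$ and $\tilde\eta^{(h)}$ differ in $L^\infty W^{1,2}$ only by $O(\sqrt{h})$ (an argument analogous to \autoref{lem:QSdiscreteHoelderEstimate}(1), using the single-step bound from \autoref{lem:TDdiscreteAPrioriEstimates}), so they share the same weak limit. All remaining identifications are then routine consequences of weak convergence and lower semicontinuity.
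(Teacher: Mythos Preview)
Your approach is correct and essentially identical to the paper's, which is even terser: it simply invokes the uniform bounds from \autoref{lem:NSfullIterationAPriori} and the global Korn inequality, then refers back to ``previous proofs'' (i.e.\ \autoref{prop:QSconvergence}) for the weak Arzel\`a--Ascoli step you spell out.

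One small point of confusion in your last paragraph: at the $h$-level there is no separate piecewise-constant versus piecewise-affine interpolant. The function $\eta^{(h)}$ is obtained by concatenating the time-delayed solutions $\tilde{\eta}_l$ from \autoref{thm:TDexistence}, which are already in $W^{1,2}([0,h];W^{k_0,2}(Q;\R^n))$ after the $\tau\to 0$ limit; hence $\eta^{(h)}$ is continuous in time and $\partial_t\eta^{(h)}$ is a genuine weak derivative, not a discrete difference quotient. The $O(\sqrt{\tau})$ comparison you invoke from \autoref{lem:QSdiscreteHoelderEstimate}(1) and \autoref{lem:TDdiscreteAPrioriEstimates} lives one level down and is not needed here.
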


\begin{proof}
 Proceeding like in previous proofs, using the estimates from \autoref{lem:NSfullIterationAPriori} we have bounds on $E(\eta^{(h)}(t))$ independent of $t$ and $h$ and on $\int_0^T R(\eta^{(h)},\partial_t \eta^{(h)})dt$ independent of $h$. Thus by the assumption on energy and dissipation, the sequence $(\eta^{(h)})_h$ is uniformly bounded in $L^\infty([0,T];W^{2,q}(Q;\R^n)) \cap W^{1,2}([0,T];W^{1,2}(Q;\R^n))$ which allows us to pick a subsequence and a limit $\eta$ in the same space, fullfilling the first set of convergences.
 
 Finally we use the global Korn-inequality \autoref{lem:globalKorn} to show that $\int_0^T \norm[W^{1,2}(\Omega)]{u^{(h)}}^2 dt$ is uniformly bounded and extract a limit $u$ (after possibly another subsequence).
\end{proof}

Exactly by the same argument as was done in \autoref{lem:SOAubinLions} and \autoref{lem:SOMinty}
we get:
\begin{corollary}[Aubin-Lions \& Minty (coupled solid)] \label{cor:NSAubinLionsSolid}
 Let $w^{(h)}: t \mapsto \fint_{t-h}^{t} \partial_t \eta^{(h)}ds$. We have (for a subsequence $h \to 0$)
 \begin{align*}
   w^{(h)} \rightarrow \partial_t \eta \text{ in }L^2([0,T];L^2(Q;\R^n))\text{ and }\eta^{(h)}\to \eta \text{ in } L^q(0,T;W^{2,q}(Q)).
 \end{align*} 
In particular for almost all $t\in [0,T]$ we have
 \[DE(\eta^{(h)}(t)) \to DE(\eta(t)) \text{ in } W^{-2,q}(Q;\R^n).\]
\end{corollary}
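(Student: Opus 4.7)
The argument parallels that of \autoref{lem:SOAubinLions} and \autoref{lem:SOMinty}, with the additional fluid contributions being controlled through the same discrete-time-derivative estimate and through a test-function choice that localizes in the solid.

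First, for the strong convergence of $w^{(h)}$ I would observe that, by the fundamental theorem of calculus applied to the running average,
\[
\partial_t w^{(h)}(t) = \frac{\partial_t \eta^{(h)}(t) - \partial_t \eta^{(h)}(t-h)}{h}.
\]
From the uniform bound $\int_0^T \norm[W^{1,2}(Q)]{\partial_t \eta^{(h)}}^2\, dt \leq C$ obtained in \autoref{cor:NSkorn} (Korn's inequality together with the energy estimate of \autoref{lem:NSfullIterationAPriori}), Jensen's inequality gives $w^{(h)}$ bounded in $L^2([0,T];W^{1,2}(Q;\R^n))$. On the other hand, \autoref{lem:NSh-m-estimate}(1) yields a uniform $L^2([0,T];W^{-m,2}(Q;\R^n))$ bound for $\partial_t w^{(h)}$ for some $m\ge k_0$. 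Since the embedding $W^{1,2}(Q) \hookrightarrow\hookrightarrow L^2(Q) \hookrightarrow W^{-m,2}(Q)$ is compact in the first step and continuous in the second, the classical Aubin--Lions lemma provides a subsequence with $w^{(h)} \to \tilde b$ strongly in $L^2([0,T];L^2(Q;\R^n))$. The limit is identified with $\partial_t \eta$ by testing against smooth compactly supported $\phi\in C^\infty_0((0,T)\times Q)$, shifting the average onto $\phi$ (Fubini/change of variables), and using the weak convergence $\partial_t \eta^{(h)} \rightharpoonup \partial_t \eta$ in $L^2([0,T];W^{1,2}(Q;\R^n))$ together with the Lebesgue differentiation theorem, exactly as at the end of \autoref{lem:SOAubinLions}.

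Second, for the strong $L^q([0,T];W^{2,q}(Q;\R^n))$ convergence I would run the Minty argument from \autoref{lem:SOMinty} with the coupled pair of test functions $(\phi,\xi) = ((\eta^{(h)} - \eta_{\delta_h})\psi,\, 0)$, where $\psi\in C^\infty_0((h_0,T-h_0)\times Q;\R^+)$ has $\dist(\supp\psi,\partial Q) > h_0$, $\delta_h = h^{a_1}$, and $\eta_{\delta_h}$ is a space-time mollification of $\eta$ as in \autoref{lem:SOMinty}. Since $\psi$ is compactly supported away from $\partial Q$, the coupling condition $\xi\circ\eta^{(h)} = \phi$ on $M$ is satisfied by $\xi \equiv 0$, so this pair is admissible in the weak equation \eqref{eq:weak-h}. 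All fluid terms—the $\nablasym v^{(h)}$ term, the fluid force, and the fluid inertial term involving $\Phi^{(h)}_h$—vanish because $\xi\equiv 0$. What remains is identical in structure to the expression analyzed in \autoref{lem:SOMinty}: the regularized energy term $\inner{DE_h(\eta^{(h)})}{(\eta^{(h)}-\eta_{\delta_h})\psi}$, the regularized dissipation, the solid force, and the discrete time derivative $\rho_s \inner{\partial_t\eta^{(h)}(t)-\partial_t\eta^{(h)}(t-h)}{(\eta^{(h)}-\eta_{\delta_h})\psi}/h$. The same convolution estimates \eqref{eq:convest} and the choice $a_0,a_1 < 1$ appropriately small let us absorb the $h^{a_0}$ and $h$ regularizations; the discrete partial integration in time used in \autoref{lem:SOMinty} transfers the shift onto $(\eta^{(h)}-\eta_{\delta_h})\psi$, where it is controlled by the strong convergence of $w^{(h)}$ established above and the mollifier bounds. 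Invoking \autoref{ass:energy}, S6 then yields $\eta^{(h)}(t) \to \eta(t)$ strongly in $W^{2,q}(Q;\R^n)$ for almost every $t$, and the uniform $W^{2,q}$ bound combined with Vitali's convergence theorem upgrades this to $L^q([0,T];W^{2,q}(Q;\R^n))$ convergence.

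Finally, the claim for $DE(\eta^{(h)}(t))$ is immediate: by \autoref{ass:energy}, S5, the operator $DE$ is continuous with respect to strong $W^{2,q}$-convergence on sublevel sets of $E$, and the previous step provides exactly this convergence for almost every $t\in[0,T]$. The main technical obstacle, as in \autoref{lem:SOMinty}, is the delicate balance between the mollification parameter $\delta_h = h^{a_1}$ and the regularization weights $h^{a_0}$, $h$ so that all error terms arising from testing with $\eta_{\delta_h}$ instead of $\eta$ vanish in the limit; this is handled by the same choice of small exponents.
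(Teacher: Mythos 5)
Your proposal follows exactly the route the paper intends (the paper's ``proof'' is just the sentence ``Exactly by the same argument as was done in \autoref{lem:SOAubinLions} and \autoref{lem:SOMinty} we get''), and the main mechanics — Aubin--Lions for the backward running average $w^{(h)}$ via \autoref{lem:NSh-m-estimate}(1), then the Minty argument using a test function localized inside the solid — are correct.

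One point deserves a small correction. You write that the coupling condition is satisfied ``by $\xi\equiv 0$''; but in the formulation of \eqref{eq:weak-h} the test pair is parametrized so that $\phi=\xi\circ\eta^{(h)}$ \emph{everywhere} on $Q$, not just on $M$, and if $\xi\equiv 0$ that forces $\phi\equiv 0$. The correct construction, which the paper uses implicitly and says explicitly in its proof of \autoref{lem:NSh-m-estimate} (``we need to construct a matching $\xi$, this can be set to $0$ on the fluid-domain''), is: take $\xi(t):=\phi(t)\circ(\eta^{(h)}(t))^{-1}$ on $\eta^{(h)}(t,Q)$ and extend by $0$ across $\partial\eta^{(h)}(t,Q)$; because $\psi$ is compactly supported away from $\partial Q$, this extension is continuous, $\xi$ lies in the correct space (its regularity is inherited from $\phi\in W^{k_0,2}(Q)$ together with the uniform bounds on $\eta^{(h)}$ and $\det\nabla\eta^{(h)}$), and $\xi$ vanishes identically on every fluid domain $\Omega^{(h)}(t)$. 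Since the $\nablasym\xi$ term, the $\nabla^{k_0}\xi$ term, the fluid force term, and the fluid inertial term (whose integrand is $\xi(t)\circ\Phi_h^{(h)}(t-h)$ paired against a quantity supported in $\Omega^{(h)}(t-h)$, mapped by $\Phi_h^{(h)}$ into $\Omega^{(h)}(t)$ where $\xi(t)=0$) all see only $\xi$ on the fluid domains, they vanish — which is the effect you wanted, just with a slightly different $\xi$ than ``identically zero.'' With this clarification, the rest of your argument — the convolution scaling $\delta_h=h^{a_1}$, the choice of $a_0,a_1$, the discrete integration by parts using the strong $L^2$ convergence of $w^{(h)}$, and the upgrade to $L^q(0,T;W^{2,q})$ via the uniform energy bound and dominated/Vitali convergence — matches the paper's \autoref{lem:SOMinty} step by step, and the conclusion for $DE$ then follows directly from \autoref{ass:energy} S5.
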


We now want to prove a similar result for the Eulerian velocity $u^{(h)}$. While we have an estimate on the time derivative of $\fint_{-h}^0 \rho^{(h)}u^{(h)} (t+s) ds$ in the form of \autoref{prop:NSh-m-estimateFluid}, this estimate is in a dual space of functions which are divergence free on the fluid domain and thus in a $t$ and $h$-dependent space. As a consequence, we are no longer in the realm of classic Aubin-Lions type theorems and instead need to prove a similar result directly.

\begin{lemma}[Aubin-Lions (fluid)] \label{lem:NSAubinLions}
 For each $t \in [0,T]$, $h> 0$ we define $\tilde{u}^{(h)}(t) \in L^2(\Omega;\R^n)$ by
 \begin{align*}
  \tilde{u}^{(h)}(t) := \fint_{-h}^0(\rho^{(h)} u^{(h)})(t+s) ds.
 \end{align*}
 For all $\delta>0$ (sufficiently small) there exists a subsequence $h\to 0$ such that 
 for all $A \in C_0^\infty([0,T]\times \Omega;\R^{n\times n})$
  \begin{align*}
  \int_0^T \inner[\Omega]{(u^{(h)})_\delta}{A\tilde{u}^{h}} dt \to\int_0^T \inner[\Omega]{(u)_\delta}{A\rho u} dt,
 \end{align*}
 where $(\cdot)_\delta$ is the operator defined in \autoref{lem:approxTestFcts}.
\end{lemma}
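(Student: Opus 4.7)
The strategy follows the classical Aubin--Lions blueprint, adapted to the time-varying fluid domain and to the divergence constraint on the test functions. First I would identify the expected weak limit of $\tilde u^{(h)}$. Since $\rho^{(h)}$ is obtained from $\det\nabla\eta^{(h)}$ via the Lagrangian lift, and since $\eta^{(h)}\to\eta$ strongly in $C^0([0,T];C^{1,\alpha^-}(Q))$ with determinants bounded below by assumption S2, one gets $\rho^{(h)}\to\rho$ strongly in $L^\infty([0,T]\times\Omega)$. Combined with $u^{(h)}\rightharpoonup u$ in $L^2([0,T];W^{1,2}(\Omega))$ (from \autoref{cor:NSkorn}) and Lebesgue's differentiation theorem applied to the time-average of length $h$, this identifies $\tilde u^{(h)}\rightharpoonup \rho u$ weakly in $L^2([0,T]\times\Omega)$.

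The heart of the proof is to upgrade this to strong convergence of $\tilde u^{(h)}$ in a negative Sobolev space $L^2([0,T];W^{-m,2}(\Omega))$ for $m$ sufficiently large. The two inputs are the uniform $L^\infty([0,T];L^2(\Omega))$ bound on $\rho^{(h)} u^{(h)}$ coming from \autoref{lem:NSfullIterationAPriori} and the discrete time-derivative bound from \autoref{prop:NSh-m-estimateFluid}. The obstruction is that the latter only applies to test functions that are solenoidal on the fluid domain. I propose to remove this restriction via a Bogovski\u{\i}-type pressure decomposition: for each smooth $\zeta$, split $\zeta=\psi^{(h)}(t)+\bog_t(\diver\zeta- c^{(h)}\varphi)$, where $\bog_t$ is the Bogovski\u{\i} operator on $\Omega^{(h)}(t)$ (available by \autoref{thm:sohr}) with operator norm uniformly controlled via the uniform $C^{1,\alpha}$-character of $\Omega^{(h)}(t)$ from \autoref{cor:c1aBoundary}, and $c^{(h)}$ is the appropriate mass-balance correction. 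The divergence-free piece $\psi^{(h)}$ is handled by \autoref{prop:NSh-m-estimateFluid}, while the Bogovski\u{\i} remainder only contributes lower-order terms. Together with the $L^\infty(L^2)$ bound, the classical Aubin--Lions lemma then yields the desired strong convergence of $\tilde u^{(h)}$ along a subsequence.

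For the other factor, \autoref{lem:approxTestFcts} gives $(u^{(h)})_\delta\rightharpoonup u_\delta$ weakly in $L^2([0,T];W^{m,2}(\Omega))$ with the uniform smoothing bound $\norm[W^{m,2}(\Omega)]{(u^{(h)})_\delta}\le c(\delta)\norm[L^2(\Omega)]{u^{(h)}}$, and multiplication by the smooth compactly supported tensor $A$ preserves this weak convergence. The identification of the weak limit uses the strong convergence of $\eta^{(h)}$ (which enters the construction of the operator $(\cdot)_\delta$) and the weak convergence of $u^{(h)}$. Pairing this $W^{m,2}$-weak convergence with the $W^{-m,2}$-strong convergence of $\tilde u^{(h)}$ established in the previous step then gives
\[
\int_0^T\inner[\Omega]{(u^{(h)})_\delta}{A\tilde u^{(h)}}\,dt\to \int_0^T\inner[\Omega]{u_\delta}{A\rho u}\,dt.
\]

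The principal technical difficulty will be making the Bogovski\u{\i} decomposition uniform across the moving and $h$-varying fluid domains, and in particular extending the construction across the fluid--solid interface so that the resulting estimate on $\partial_t\tilde u^{(h)}$ in $W^{-m,2}(\Omega)$ is genuinely independent of both $t$ and $h$. The uniform geometric control from \autoref{cor:c1aBoundary} (valid on the minimal no-collision interval of \autoref{cor:short-time-no-collision}) should be exactly what is needed; once that is in place, the remainder is a routine weak--strong pairing together with an identification of the weak limit via the already-established convergence of $\eta^{(h)}$ and $\rho^{(h)}$.
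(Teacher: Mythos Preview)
Your identification of the weak limit $\tilde u^{(h)}\rightharpoonup\rho u$ and the weak convergence $(u^{(h)})_\delta\rightharpoonup u_\delta$ is correct, and the overall weak--strong pairing idea is sound in spirit. But the step where you propose to obtain strong convergence of $\tilde u^{(h)}$ in $L^2([0,T];W^{-m,2}(\Omega))$ via a Bogovski\u{\i} decomposition has a genuine gap. The remainder $\bog_t(\diver\zeta-c^{(h)}\varphi)$ is \emph{not} lower order: it is supported in $\Omega^{(h)}(t)$, has nontrivial divergence there, and when paired with the discrete time derivative of $\rho^{(h)}u^{(h)}$ it produces precisely the pressure contribution. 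At this stage of the proof no uniform-in-$h$ bound on the pressure is available (the pressure is only reconstructed a posteriori, after the limit $h\to 0$), so this term cannot be absorbed. In other words, \autoref{prop:NSh-m-estimateFluid} genuinely only controls $\partial_t\tilde u^{(h)}$ in the dual of \emph{solenoidal} test functions, and there is no way to remove that restriction by a domain-dependent Helmholtz/Bogovski\u{\i} splitting without reintroducing the pressure.

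The paper avoids this obstacle by never attempting strong convergence of $\tilde u^{(h)}$ in a fixed negative space. The key observation you are missing is that the operator $(\cdot)_\delta$ from \autoref{lem:approxTestFcts} is designed so that $(u^{(h_i)})_\delta$ is divergence-free not just on $\Omega(t)$ but on a $\delta$-neighborhood of it --- and hence, by the uniform convergence $\eta^{(h)}\to\eta$, also on $\Omega^{(h_j)}(t)$ for every $j$ large. Thus $(u^{(h_i)})_\delta$ is itself an admissible test function for \autoref{prop:NSh-m-estimateFluid} applied to \emph{any} $h_j$. The paper then proves Cauchy-ness of the pairing $\int_0^T\inner[\Omega]{(u^{(h_i)})_\delta}{A\tilde u^{(h_j)}}\,dt$ directly: one discretizes time with a mesh $\sigma$, uses compactness of $L^2\hookrightarrow H^{-1}$ at the finitely many mesh points $\sigma k$, and controls the drift $\tilde u^{(h)}(t)-\tilde u^{(h)}(\sigma k)$ via $\partial_\theta\tilde u^{(h)}(\theta)=h^{-1}\big((\rho^{(h)}u^{(h)})(\theta)-(\rho^{(h)}u^{(h)})(\theta-h)\big)$ together with \autoref{prop:NSh-m-estimateFluid} tested against the solenoidal $(u^{(h_i)})_\delta$. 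Choosing first $\sigma$ small (depending on $\delta$) and then the subsequence index large closes the argument without ever needing to test against a non-solenoidal function.
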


\begin{proof} 
We begin the proof with a couple of observations. First as the operator $(\cdot)_\delta$ introduced in \autoref{lem:approxTestFcts} is (bounded) and linear, we find that (for a subsequence) $(u^{(h)})_\delta\weakto (u)_\delta$ with $h\to 0$ in $L^2(0,T;W^{1,2}(\Omega))$.

Next, as $\tilde{u}^{(h)}$ is uniformly bounded in $L^\infty(0,T;L^2(\Omega))$ we find that it (after choosing a subsequence) possesses a weak* limit $\tilde{u}\in L^\infty(0,T;L^2(\Omega))$. Since for $\xi\in C^\infty_0([0,T]\times\Omega)$ we have
\begin{align*}
&\int_0^T\inner[\Omega]{\tilde{u}^{(h)}}{\xi}\, dt
=\fint_{-h}^0\int_0^T\inner[\Omega]{(\rho^{(h)} u^{(h)})(t+s)}{\xi(t)}\, dt \,ds
\\
&\quad =\int_0^T\inner[\Omega]{(\rho^{(h)} u^{(h)})(t)}{\fint_{-h}^0\xi(t-s)} \to \int_0^T\inner[\Omega]{\rho u}{\xi}\, dt.
\end{align*}
we also know that $\tilde{u} = \rho u$ almost everywhere.
Next take a sequence $h_i\to 0$ which satisfies all the above convergences, including the strong convergence of \autoref{cor:NSAubinLionsSolid}. Next fix $\epsilon>0$. We aim to show that there is a $N_\epsilon$, such that  for another (non-relabled) subsequence and all $j>i>N_\epsilon$
  \begin{align}
  \label{eq:aim}
  \abs{\int_0^T \inner[\Omega]{(u^{(h_i)}(t))_\delta}{A(\tilde{u}^{(h_i)}(t)-\tilde{u}^{(h_j)}(t))}dt} \leq c\epsilon,
 \end{align}
 which implies the result. 
  Our strategy follows the approach introduced in~\cite[Theorem~6.1]{muhaExistenceRegularityWeak2019}. This means we will choose the regularizing parameter $\delta$ in dependence of $\epsilon$, the time step $\sigma$ in terms of $\delta$ and the number $N_\epsilon$ in dependence of $\sigma$.
  However, in contrast to the approach there, we first use the uniform convergence of $\eta^{(h)}$ that allows for any given $\delta>0$ to take $h$ small enough ($N_\epsilon$ large enough), such that for $\hat{\Omega}_\delta(t)=\bigcap_{i\geq N_\epsilon}\Omega^{(h_i)}(t)$ and $\check{\Omega}_\delta(t)=\bigcup_{i\geq N_\epsilon}\Omega^{(h_i)}(t)$ satisfy
 \begin{align}
 \label{delta1}
 \sup_{t\in [0,T]}\sup_{i\geq N_\epsilon}\Big(\abs{\Omega^{(h_i)}(t)\setminus \hat{\Omega}_\delta(t)}+\abs{\check{\Omega}_\delta(t)\setminus \Omega^{(h_i)}(t)}\Big)\leq \delta
 \end{align}
 Next we may use the approximation introduced in \autoref{lem:approxTestFcts} for  ${u}^{(h_i)}$. The regularity of the domain allows to assume that %
 \begin{align*}
  (\diver((u^{(h_i)})_\delta(t))=0 \text{ in }\check{\Omega}_\delta(t)
  \end{align*}
Moreover \autoref{lem:approxTestFcts} implies that for almost every $t$ and every $m\in \mathbb{N}$ %
\begin{align}
\label{delta2}
\begin{aligned}
   \norm[W^{m,2}(\Omega)]{(u^{(h_i)}(t))_\delta}&\leq c(\delta,m)\norm[W^{1,2}(\Omega)]{u^{(h_i)}(t)}
   \\
    \norm[L^2({[0,T]};W^{1,2}(\Omega))]{(u^{(h_i)})_\delta}&\leq c\norm[L^2({[0,T]};W^{1,2}(\Omega))]{u^{(h_i)}}
   \\
  \norm[L^2({[0,T]};L^2(\Omega))]{(u^{(h_i)})_\delta-u^{(h_i)}}
  &\leq c\delta^\frac{2}{2+n}\norm[L^2({[0,T]};W^{1,2}(\Omega))]{u^{(h_i)}}.
  \end{aligned}
 \end{align}
 The $\delta$ will be chosen later depending on $\epsilon$.
 Furthermore we choose (in dependence of $\delta$) $\sigma > 0$, $N \in \N$ such that $T = N\sigma$. Now for any $k \in \{0,...,N\}$
 \begin{align*}
  \norm[\Omega]{\tilde{u}^{(h)}(\sigma k)}^2 \leq  \fint_{k\sigma-h}^{k\sigma} \norm[\Omega]{(\rho^{(h)}u^{(h)})(t)}^2 dt \leq C\norm[\infty]{\rho^{(h)}} \leq C \rho_{\max}
 \end{align*}
 by the volume density-preserving nature of $\Phi$ and where $C$ is constant derived from \autoref{lem:NSfullIterationAPriori} and $\rho_{\max}$ is a uniform upper bound on the density in fluid and solid, which can be easily derived from the energy bounds. As usual we continue $u$ and $\partial_t \eta$ by their initial data. We can thus use the compact embedding to find a subsequence $h_i\to 0$ such that $\tilde{u}^{(h_i)}(\sigma k)$ converges strongly in $\dot{H}^{-1}(\Omega;\R^n)$ for all $k\in \{0,...,N-1\}$. In particular we can choose the $N_\epsilon$ in such a way that for all $i,j\geq N_\epsilon$ and all $k\in \{0,...,N-1\}$
 \begin{align*}
 \norm[H^{-1}(\Omega)]{\tilde{u}^{(h_i)}(\sigma k)-\tilde{u}^{(h_j)}(\sigma k)}\leq \epsilon.
 \end{align*}
 Now we estimate for $t\in [\sigma k,\sigma(k+1))$
 \begin{align*}
 &\phantom{{}={}}\inner[\Omega]{(u^{(h_i)}(t))_\delta}{A(\tilde{u}^{(h_i)}(t)-\tilde{u}^{(h_j)}(t))}
 \\
 &= %
  \inner[\Omega]{(u^{(h_i)})_\delta(t)}{A\tilde{u}^{(h_i)}(t) - A\tilde{u}^{(h_i)}(\sigma k)}
   +\inner[\Omega]{(u^{(h_i)})_\delta(t)}{A(\tilde{u}^{(h_i)}(\sigma k)-\tilde{u}^{(h_j)}(\sigma k))}
   \\
    &\phantom{{}={}}
+ \inner[\Omega]{(u^{(h_i)})_\delta(t)}{A\tilde{u}^{(h_j)}(\sigma k)- A\tilde{u}^{(h_j)}(t) }
=:I(t)+II(t)+III(t).
 \end{align*}
 Now for $i,j\geq N_\epsilon$ we find using the
bounds on the approximability and the strong convergence in $H^{-1}$ that
 \[
\int_0^T II(t)\, dt\leq C\epsilon.
\]
The other two terms are estimated using the continuity in time of $ \tilde{u}^{(h_i)}$.  Indeed we find that
\begin{align*}
 \partial_\theta \tilde{u}^{(h_i)}(\theta,y)  &= \partial_\theta \left(\fint_{-h}^0(\rho^{(h_i)} u^{(h_i)})(\theta+s) \circ \Phi_s^{(h_i)}(\theta)   ds \right) \\
 &=  \frac{1}{h}\partial_\theta \left(\int_{\theta-h}^\theta(\rho^{(h_i)} u^{(h_i)})(s) ds \right) 
 =  \frac{ (\rho^{(h_i)}u^{(h)})(\theta) -(\rho^{(h_i)}u^{(h_i)})(\theta-h) }{h}
\end{align*}
and thus
\begin{align*}
\abs{\int_0^T I(t)\, dt} &\leq \sum_{k}\int_{\sigma k}^{(\sigma+1)k} \int_{\sigma k}^t \abs{\inner[\Omega]{(u^{(h_i)}(t))_\delta}{A \frac{ (\rho^{(h)}u^{(h)})(\theta) -(\rho^{(h)}u^{(h)})(\theta-h) }{h}}}d \theta dt
\\
&=\sum_{k}\int_{\sigma k}^{\sigma(k+1)} \int_\theta^{\sigma (k+1)} \abs{\inner[\Omega]{(u^{(h_i)}(t))_\delta }{A \frac{ (\rho^{(h)}u^{(h)})(\theta) -(\rho^{(h)}u^{(h)})(\theta-h) }{h} }} dt \,d \theta \\
&\leq \sum_{k}\int_{\sigma k}^{\sigma(k+1)} \int_0^{\sigma} \abs{\inner[\Omega]{(u^{(h_i)}(\theta+s))_\delta }{A \frac{ (\rho^{(h)}u^{(h)})(\theta) -(\rho^{(h)}u^{(h)})(\theta-h) }{h} }} \,ds\,d \theta  \\
&\leq \norm[\infty]{A} \int_{0}^{\sigma} \int_0^T \abs{ \inner[\Omega]{(u^{(h_i)}(\theta+s))_\delta }{ \frac{ (\rho^{(h)}u^{(h)})(\theta) -(\rho^{(h)}u^{(h)})(\theta-h) }{h} }}\,dt \, ds \\
&\leq \norm[\infty]{A} \int_0^\sigma \norm[L^2({[0,T]};W^{m,2}(\Omega))]{(u^{(h_i)}(.+s))_\delta} ds \leq \norm[\infty]{A} C_\delta \sigma \norm[L^2({[0,T]};W^{1,2}(\Omega))]{u^{(h_i)}}
\end{align*}
using \autoref{prop:NSh-m-estimateFluid}.

Since an analogous estimate on $(III)(t)$ is possible, we find \eqref{eq:aim} by choosing $\sigma$ small enough in dependence of $\epsilon$.  
\end{proof}
Observe that in particular, due to the strong convergence of $\partial_t\eta^{(h)}$ (and consequently $u$ on $\eta^{h}(t,Q)$)
 \begin{align}
 \label{cor:NSAubin}
  \int_0^T \inner[\Omega^{h}(t)]{(u^{(h)})_\delta}{A\tilde{u}^{h}} dt \to \rho_f\int_0^T \inner[\Omega(t)]{(u)_\delta}{A u} dt,
 \end{align}
 for all $A\in C^\infty_0(\Omega)$.

Finally we show that the limit is a weak solution to the full system \autoref{def:NSweakSol} and thus prove \autoref{thm:NSexistence}:

\subsubsection*{Proof of \autoref{thm:NSexistence}, Step 3a: Passing to the limit with the coupled PDE}
In the following we assume that $T$ is small enough, such that a sequence of approximate solutions $(\eta^{(h)},u^{(h)})$ exist on the interval $[0,T+h]$. Later it will be discussed how to prolong the solution up to the point of contact.

As before in \autoref{prop:QSconvergence} and \autoref{thm:TDexistence}, we use \autoref{lem:approxTestFcts} to restrict ourselves to test functions $\xi \in C_0^\infty(\Omega;\R^n)$ with $\diver \xi = 0$ in a neighborhood of $\Omega(t)$. We then construct $\phi^{(h)} := \xi \circ \eta^{(h)}$ and proceed to the limit with the terms of time discrete approximations. As before we get
\begin{align*}
 &\phantom{{}={}}\int_0^T \inner{DE(\eta^{(h)}(t))}{\phi^{(h)}} + \inner{DR_h(\eta^{(h)}(t),\partial_t\eta^{(h)})}{\phi^{(h)}} dt \\
 &\to \int_0^T \inner{DE(\eta(t))}{\phi} + \inner{DR(\eta(t),\partial_t\eta)}{\phi} dt
\end{align*}
as well as
\begin{align*}
 \int_0^T \rho_s \inner[Q]{f \circ \eta^{(h)}}{\phi^{(h)}} + \rho_f \inner[\Omega^{(h)}(t)]{f}{\xi} dt \to 
 \int_0^T \rho_s \inner[Q]{f \circ \eta}{\phi} + \inner[\Omega(t)]{f}{\xi} dt
\end{align*}
and 
\begin{align*}
 \int_0^T \nu \inner[\Omega^{(h)}(t)]{\nablasym u^{(h)}}{\nablasym \xi} dt \to \int_0^T \nu \inner[\Omega(t)]{\nablasym u}{\nablasym \xi} dt.
\end{align*}

What is left are the inertial terms. For the solid as in \autoref{thm:SOexistence}, we transfer the difference quotient onto the test function to get
\begin{align*}
 &\phantom{{}={}} \int_0^T  \inner{\frac{\partial_t \eta^{(h)}(t) - \partial_t \eta^{(h)}(t-h)}{h}}{\phi^{(h)}(t)} dt\\
 &=- \int_0^T \inner{\partial_t \eta^{(h)}(t)}{\frac{\phi^{(h)}(t+h)-\phi^{(h)}}{h}} dt \to - \int_0^T \inner{\partial_t \eta(t)}{\partial_t \phi(t)} dt.
\end{align*}

For the fluid we do the same, but we have to take into account the flow map $\Phi^{(h)}$. In particular, we rewrite
\begin{align*}
 &\phantom{{}={}}\int_0^T \inner[\Omega^{(h)}(t-h)]{\frac{u^{(h)}(t) \circ \Phi_h^{(h)}(t-h) - u^{(h)}(t-h)}{h} }{\xi(t) \circ \Phi_h^{(h)}(t-h)} dt\\
 &=-\int_0^T \inner[\Omega^{(h)}(t)]{u^{(h)}(t)}{\frac{\xi(t+h) \circ \Phi_h^{(h)}(t)- \xi(t)}{h}} dt  \\
 &=-\int_0^T \inner[\Omega^{(h)}(t)]{(u^{(h)})_\delta(t)}{\frac{\xi(t+h) \circ \Phi_h^{(h)}(t)- \xi(t)}{h}} dt \\
 &+ \int_0^T \inner[\Omega^{(h)}(t)]{(u^{(h)})_\delta(t)-u^{(h)}(t)}{\frac{\xi(t+h) \circ \Phi_h^{(h)}(t)- \xi(t)}{h}} dt=: -I^{\delta,h}+II^{\delta,h}
 \end{align*}
 where $(u^{(h)})_\delta$ is a regularization in space, as used in \autoref{lem:NSAubinLions} (defined in \autoref{lem:approxTestFcts}). As the right part in the scalar product of $II^{\delta,h}$ is uniformly bounded in $L^\infty([0,T];L^2(\Omega(t);\R^n))$ using \eqref{delta2} we know that this term vanishes for $\delta \to 0$ (uniformly in $h$). For the first term we expand
 \begin{align*}
&\phantom{{}={}} I^{\delta,h} = \int_0^T \inner[\Omega^{(h)}(t)]{(u^{(h)})_\delta(t)}{\fint_0^{h} \partial_s \left(\xi(t+s) \circ \Phi_s^{(h)}(t)\right) ds } dt \\
 &=  \int_0^T \inner[\Omega^{(h)}(t)]{(u^{(h)})_\delta(t)}{\fint_0^{h} \left(\partial_t \xi(t+s) -u^{(h)}(t+s) \cdot \nabla \xi(t+s) \right) \circ \Phi_s^{(h)}(t) ds } dt \\
 &=  \int_0^T \fint_0^{h} \inner[\Omega^{(h)}(t+s)]{(u^{(h)})_\delta(t)  \circ \Phi_{-s}^{(h)}(t+s)}{ \partial_t \xi(t+s) -u^{(h)}(t+s) \cdot \nabla \xi(t+s) }ds\, dt \\
 &=  \int_0^T \fint_0^{h} \inner[\Omega^{(h)}(t+s)]{(u^{(h)})_\delta(t)}{ \partial_t \xi(t+s) -u^{(h)}(t+s) \cdot \nabla \xi(t) }ds\, dt \\
 &+\int_0^T \fint_0^{h} \inner[\Omega^{(h)}(t+s)]{(u^{(h)})_\delta(t)}{u^{(h)}(t+s) \cdot \nabla (\xi(t) - \xi(t+s))}ds\, dt \\
 &+  \int_0^T \fint_0^{h} \inner[\Omega^{(h)}(t+s)]{(u^{(h)})_\delta(t)  \circ \Phi_{-s}^{(h)}(t+s)-(u^{(h)})_\delta(t)}{ \partial_t \xi(t+s) -u^{(h)}(t+s) \cdot \nabla \xi(t+s) }\!\!\!\!\!ds\, dt 
\end{align*}
Since $\norm[L^\infty(\Omega)]{\nabla (\xi(t) - \xi(t+s))}\leq
 ch\norm[{L^\infty([0,T]\times\Omega)}]{\partial_t\nabla\xi} $ the second term converges to zero with $h\to 0$.
For the third term above we may use \autoref{lem:NSh-m-estimate} to show that the $L^2$-norm of the left hand side is bounded by $h \Lip((u^{(h)})_\delta(t)) \leq h C_\delta \norm[W^{1,2}]{u^{(h)}(t)}$ which proves that the term vanishes for $h\to 0$. For the left term we wish to use \autoref{lem:NSAubinLions}. For that we take $A_\delta\in C^0(0,T;C^\infty_0(\hat{\Omega}_\delta)$, such that $A_\delta(t)\to \chi_{\Omega(t)}$ almost everywhere. 
Hence we find by \eqref{cor:NSAubin} that
\begin{align*}
\lim_{h\to 0} I^{\delta,h}&=\lim_{h\to 0}\int_0^T \fint_0^{h} \inner[\Omega^{(h)}(t+s)]{(u^{(h)})_\delta(t)}{ \partial_t \xi(t+s) -u^{(h)}(t+s) \cdot \nabla \xi(t) }ds\, dt
\\
&= \int_0^T\inner[\Omega(t)]{(u)_\delta}{\partial_t\xi-u\cdot \nabla \xi A_\delta(t)}\, dt
\\
&\quad  + \lim_{h\to 0}\int_0^T \fint_0^{h} \inner[\Omega]{(u^{(h)})_\delta(t)}{ u^{(h)}(t+s) \cdot \nabla \xi(t)(A_\delta(t)-\chi_{\Omega^{(h)}}(t+s)) }\, ds\, dt.
\end{align*}
The last term is estimated by H\"older's inequality and Sobolev embedding. Indeed, for $a<\frac{n}{n-2}$ we find by \eqref{delta2} 
 \begin{align*}
&\abs{\int_0^T \fint_0^{h} \inner[\Omega]{(u^{(h)})_\delta(t)}{ u^{(h)}(t+s) \cdot \nabla \xi(t)(A_\delta(t)-\chi_{\Omega^{(h)}}(t+s)) }\, ds\, dt}
\\
&\quad \leq \int_0^T\norm[L^{2a}(\Omega)]{(u^{(h)})_\delta(t)}\fint_0^h\norm{u^{(h)}(t+s)}\norm[L^{2a'}(\Omega)]{(A_\delta(t)-\chi_{\Omega^{(h)}}(t+s))}
\\
&\quad \leq c\norm[{L^2([0,T];W^{1,2}(\Omega))}]{(u^{(h)})_\delta(t)}\sup_{t\in T}\bigg(\fint_0^h\norm{u^{(h)}(t+s)}^2\,ds\bigg)^\frac12 
\\
&\qquad \times \bigg(\fint_0^h\norm[{L^2([0,T];L^{2a'}(\Omega))}]{(A_\delta(t)-\chi_{\Omega^{(h)}}(t+s))}^2\, ds\bigg)^\frac12
\\
&\quad \leq c\bigg(\fint_0^h\norm[{L^2([0,T];L^{2a'}(\Omega))}]{(A_\delta(\cdot)-\chi_{\Omega^{(h)}}(\cdot+s))}^2\, ds\bigg)^\frac12.
\end{align*}
Since by the strong convergence of $\eta^{(h)}$ and $\Omega^{(h)}$ we find that 
\[
\lim_{h\to 0}\bigg(\fint_0^h\norm[{L^2([0,T];L^{2a'}(\Omega))}]{(A_\delta(\cdot)-\chi_{\Omega^{(h)}}(\cdot+s))}^2\, ds\bigg)^\frac12=\norm[{L^2([0,T];L^{2a'}(\Omega))}]{(A_\delta-\chi_{\Omega}))}
\]
we finally gain by passing with $\delta \to 0$ that 
\begin{align*}
\lim_{\delta\to 0}\lim_{h\to 0}(-I^{\delta,h}+II^{\delta,h})
&=
-\int_0^T\inner[\Omega(t)]{u}{\partial_t\xi-u\cdot \nabla \xi}\, dt.
\end{align*}
Thus we have shown that we obtain the right equation in the limit:
\begin{align}
\label{eq:limit-div}
\begin{aligned}
& \int_0^T -\rho_s \inner[Q]{\partial_t \eta}{\partial_t \phi} - \rho_s \inner[\Omega(t)]{v}{\partial_t \xi- v \cdot \nabla \xi} + \inner{DE(\eta)}{\phi} + \inner{D_2R(\eta,\partial_t \eta)}{\phi} + \nu \inner[\Omega(t)]{\nablasym v}{\nablasym \xi} dt\\
  &\,\,= \int_0^T  \rho_s \inner[Q]{f\circ \eta}{\phi} + \rho_f \inner[\Omega(t)]{f}{\xi} dt - \rho_s \inner[Q]{\eta_*}{\phi(0)} - \rho_f \inner[\Omega(0)]{v_0}{\xi(0)}.
\end{aligned}
\end{align}

\subsubsection*{Proof of \autoref{thm:NSexistence}, Step 3b: Reconstruction of the pressure}

As we do not want to consider the time-derivatives of the operator $\bog_t$ we cannot go along the same lines as in the proof of \autoref{thm:QSexistence}. Instead we have to proceed in a global manner.
We construct the pressure as a distribution.
 
 Let $\psi\in C^\infty_0([0,T]\times \Omega)$.%
 Take $\bog$ to be the operator of Theorem~\ref{thm:sohr} with respect to the domain $\Omega$. To apply this operator to $\psi$, we need to normalize its mean by picking a $\tilde{\psi} \in C_0^\infty([0,T]\times \Omega)$ with $\supp(\tilde{\psi}(t)) \cap \Omega(t) = \emptyset$ and $\int_\Omega \psi(t) dy = -\int_\Omega \tilde{\psi}(t) dy$ for all $t\in [0,T]$.
 
 Now let $\xi(t) := \bog(\psi(t)+\tilde{\psi}(t))$, $\phi(t,x) := \xi(t,\eta(t,x))$ and define a linear operator
 by
 \begin{align*}
  P(\psi)&:=  \int_0^T \inner[Q]{DE(\eta(t))}{\phi} + \inner[Q]{D_2R(\eta(t),\partial_t \eta(t))}{\phi} 
  + \nu \inner[\Omega(t)]{\nablasym u}{\nablasym \xi} \\ \nonumber &\qquad - \rho_f \inner[\Omega(t)]{f}{\xi} - \rho_s \inner[Q]{f\circ \eta}{\phi} - \rho_s\inner[Q]{\partial_t \eta}{\partial_t \phi} - \rho_f \inner[\Omega(t)]{u}{ \partial_t \xi-u\cdot \nabla \xi} dt.
 \end{align*}
Note that $P(\psi)$ is independent of the choice of $\tilde{\psi}$: Assume that $\tilde{\psi}_1$ and $\tilde{\psi}_2$ are two such choices and $\xi_1$ and $\xi_2$ the corresponding functions. Then $\xi_1-\xi_2 = \bog(\tilde{\psi}_1-\tilde{\psi}_2)$ has divergence $0$ on $\Omega(t)$ and thus the above integral is the same because of \eqref{eq:limit-div}. In particular if $\supp(\psi(t))\subset \eta(t,Q)$ (for all $t$), we may choose $\tilde{\psi}\equiv \psi$ which implies (by the linearity of $\bog$) that $P(\psi)=0$.
Hence $\supp(P)\subset [0,T]\times \overline{\Omega(t)}$.
 
 Furthermore it can be estimated by first noting that
 \begin{align*}
  &\quad\int_0^T \inner[Q]{DE(\eta(t))}{\phi} + \inner[Q]{D_2R(\eta(t),\partial_t \eta(t))}{\phi} 
  + \nu \inner[\Omega(t)]{\nablasym u}{\nablasym \xi}  - \rho_f \inner[\Omega(t)]{f}{\xi} - \rho_s \inner[Q]{f\circ \eta}{\phi} dt\\
  &\leq T \sup_{t\in[0,T]} \norm[W^{-2,q}(Q)]{DE(\eta(t))} \norm[{L^1([0,T],W^{2,q}(Q))}]{\phi(t)}  \\
  &+ \int_0^T \norm[W^{-1,2}(Q)]{D_2R(\eta(t),\partial_t \eta(t))} \norm[W^{1,2}(Q)]{\phi} + \norm[\Omega(t)]{\nablasym u} \norm[\Omega(t)]{\nablasym \xi}
  + c\norm[\infty]{f} (\norm[Q]{\phi} + \norm[\Omega(t)]{\xi}) dt \\
  &\leq C  \norm[{L^1([0,T],W^{2,q}(Q))}]{\phi} + \norm[{L^2([0,T];W^{1,2}(\Omega))}]{\xi}
 \end{align*}
 via the known bounds on the terms in the weak equation. Finally using \autoref{prop:W2qIsomorph} we know that $\norm[W^{2,q}(Q)]{\phi} \leq C \norm[W^{2,q}]{\xi}$ and we can use the properties of the Bogovski\u{\i}-operator to estimate this by
 \begin{align*}
  &\leq C \norm[{L^1([0,T],W^{2,q}(Q))}]{\psi + \tilde{\psi}} + C\norm[{L^2([0,T];L^2(\Omega))}]{\psi+\tilde{\psi}} \\ &\leq C \norm[{L^1([0,T],W^{2,q}(Q))}]{\psi} + C\norm[{L^2([0,T];L^2(\Omega))}]{\psi}
 \end{align*}
 where for the last inequality we note that $\tilde{\psi}(t)$ can be chosen as a multiple of a fixed $C^\infty_0$-function and thus its norm only needs to depend on $\abs{\int_{\Omega} \psi(t) dy} \leq c\norm[\Omega]{\psi(t)}$. 
 Additionally for the other remaining terms we have
 \begin{align*}
  \abs{\int_0^T \inner[Q]{\partial_t \eta}{\partial_t \phi}} &\leq \norm[L^2({[0,T]}\times Q)]{\partial_t \eta} \norm[W^{1,2}({[0,T]};L^2(Q))]{\phi} \\
  \abs{\int_0^T \inner[\Omega(t)]{u}{\partial_t \xi}} &\leq \norm[L^2({[0,T]}\times \Omega)]{u} \norm[W^{1,2}({[0,T]};L^2(\Omega))]{\xi}\\
  \abs{\int_0^T \inner[\Omega(t)]{u}{u \cdot \nabla \xi}} &\leq %
   \norm[L^{a}({[0,T]};L^{b}(\Omega))]{u}^2 \norm[L^{a'}\infty({[0,T]};W^{1,b'}(\Omega))]{\xi}
 \end{align*}
 where $a,b\in (1,\infty)$ are chosen in such a way that $\abs{u}^2\in L^{a}([0,T],L^b(\Omega))$, which is possible since $\abs{u}^2\in L^{\infty}([0,T],L^1(\Omega))\cap L^{1}([0,T],L^p(\Omega))$ (with $p=\frac{n}{n=2}$ for $n>2$ and arbitrarily large for $n=2$). Now bounding the norms of $\xi$ and $\phi$ in terms of $\psi$ as before proves that $P \in \mathcal{D}' ([0,T]\times\Omega)$.
 Thus $p$ is well defined via that operator and expanding
 \begin{align*}
  \int_0^T \inner{\nabla p}{\xi} dt = P(\diver \xi)
 \end{align*}
 proves that it fulfills the right equations for $\xi\in C^\infty([0,T]\times \Omega)$. %
 Moreover, it can be decomposed into 
 \begin{align}
 \label{eq:press}
 p\in L^\infty([0,T],W^{-1,q}(\Omega))+L^2([0,T]\times \Omega)+W^{-1,2}([0,T],W^{-1,2}(\Omega))\cap L^{a'}([0,T],W^{-1,b'}(\Omega)).
 \end{align}

\subsubsection*{Proof of \autoref{thm:NSexistence}, Step 4: Energy inequality \& maximal interval of existence}

Above, we have shown existence of coupled weak solutions $u,\eta$ on $[0,T]$ for some $T>0$. As before we can now pick a maximal interval $[0,T_{\max})$ and use the energy bounds to conclude that either $T_{\max} = \infty$ or there exists a limit $\eta(T_{\max}) \in \partial \mathcal{E}$. 

Finally, we observe that \eqref{eq:energ} follows by\autoref{lem:NSfullIterationAPriori}. This lemma combined with \autoref{cor:NSkorn} also implies the following a-priori estimates for the solution,
  \begin{align}
  \label{eq:apri}
  &\phantom{{}={}} E(\eta(t)) + c  \norm[\Omega^{(h)}(t)]{u^{(h)}}^2 + \norm[Q]{\partial_t \eta(t)}^2 + \int_0^{t}  \norm[W^{1,2}(Q)]{\partial_t \eta} + \norm[W^{1,2}(\Omega(s))]{v}^2\, ds \leq C,
\end{align}
for all times $t$ up to the point of collision.

\subsection{Remarks on the full problem}

\begin{remark}[On Lagrangian and Eulerian formulation] \label{rem:NSlagrangianVsEulerian}
 In the preceding proofs, we switched between the Lagrangian and the Eulerian point of view several times. While in the end, for the final equation we have to treat solid as Lagrangian and fluid as Eulerian, we are free to change from one to the other as long as $h>0$ during the proof.
 
 We used this prominently in defining the global Eulerian velocity $u$, as doing so made it easier to talk about convergence. But it should be noted that the same can be equally done in reverse. In the proof of the time-delayed problem \autoref{thm:TDexistence}, every time we used $v$, we could have similarly considered $\partial_t \Phi$ or its difference quotient respectively. This way the whole proof can be rewritten in terms of $\Phi$, eliminating the need $v$ and $u$ completely.
 
 This actually continues as long as $h>0$. It is only at the very last moment, where we take the limit $h\to0$ that we are no longer guaranteed existence of a flow map $\Phi$ and have to introduce an Eulerian velocity to conclude the proof.
 
 In particular those two points of view are not mutually exclusive and both can be used at the same time. This should be kept in mind when trying to extend the proof by coupling the system with additional equations, which might prefer one viewpoint or the other.
\end{remark}

\begin{remark}[On the material derivative] \label{rem:NSmaterialDerivative}
 Comparing an Eularian quantity defined on $\Omega$ with itself at different times but at the same point is not physical, and as a direct consequence so are time derivatives as $\partial_t u$ on their own. Instead one has to take into account their corresponding flow. In the case of $u$ this means that the correct term is the material derivative $\partial_t u + u\cdot \nabla u$.
 
 For most of the proof, following this principle lead to a much more natural mathematical treatment as well. However in the end, we had to make one deviation when proving the Aubin-Lions Lemma \ref{lem:NSAubinLions}. Here a more natural quantity would be to consider the average velocity of a particle instead, i.e.
 \begin{align*}
  \hat{u}^{(h)}(t) := \fint^0_{-h} u^{(h)}(t+s) \circ \Phi_{s}(t) \,ds.
 \end{align*}
 
 This is not only hinted at by the preceding \autoref{prop:NSh-m-estimateFluid}, where some additional correction is needed to turn the natural estimate on $u(t) - u(t-h) \circ \Phi_{-h}$ into a less natural estimate on $u(t) - u(t-h)$, but it is also the quantity that actually occurs at the end of Step 3a of the proof of \autoref{thm:NSexistence} and where we thus need to make a similar correction to undo the previous one.
 
 However while seemingly more natural, $\hat{u}^{(h)}$ has less regularity, as all its derivatives in space and in time involve terms of the form $\nabla \Phi_s$ for which we are unable to obtain useful bounds.

\end{remark}

\appendix
\section{}

\subsection{Some technical lemmata}

The following lemmata are not surprising, but technical. Likely most of them are known already, but we were unable to find a good source for the specific versions required.

\begin{lemma}[Expansion of the determinant] \label{lem:detExpansion}
 Let $A \in \R^{n\times n}$. Then
 \begin{align*}
  \det(I + \tau A) = 1 + \tau \tr A + \sum_{l=2}^n \tau^l M_l(A)
 \end{align*}
  where $M_l(A)$ is a homogeneous polynomial of degree $l$ in the entries of $A$. Note that this is a finite sum.
\end{lemma}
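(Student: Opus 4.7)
The plan is to prove \autoref{lem:detExpansion} by a direct expansion via the Leibniz formula for the determinant, which makes both the claimed structure and the identification of the low-order coefficients completely transparent.

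First I would write
\[
\det(I+\tau A)=\sum_{\sigma\in S_n}\operatorname{sgn}(\sigma)\prod_{i=1}^n\bigl(\delta_{i,\sigma(i)}+\tau A_{i,\sigma(i)}\bigr),
\]
and expand each factor in the product into the sum of its two terms. Collecting by powers of $\tau$, the coefficient of $\tau^l$ is a finite sum of products of $l$ entries of $A$ with $n-l$ Kronecker deltas, so it is manifestly a homogeneous polynomial of degree $l$ in the entries of $A$. Since $l$ ranges only over $\{0,1,\dots,n\}$, the sum is finite, which is the last claim of the lemma.

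Next I would identify the low-order coefficients. For $l=0$ only the identity permutation contributes (all deltas must match), giving the constant term $1$. For $l=1$, the presence of $n-1$ factors $\delta_{i,\sigma(i)}$ forces $\sigma$ to fix all but one index, and hence (being a permutation) to be the identity; summing $A_{i,i}$ over $i$ yields $\tr A$. All other powers $l\ge 2$ are lumped into the remainder $M_l(A)$, which by construction is homogeneous of degree $l$.

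The main (and only) subtlety is bookkeeping: making clear that the coefficient of $\tau^l$ for $l\ge 2$ is indeed a polynomial in the $A_{ij}$ (in fact it equals the sum of the principal $l\times l$ minors of $A$, i.e.\ the $l$-th elementary symmetric function of the eigenvalues of $A$), but since the lemma only requires homogeneity of degree $l$ this follows immediately from the expansion above. No analytic estimates are needed; the statement is a purely algebraic identity.
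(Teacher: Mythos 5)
Your proof is correct and follows essentially the same route as the paper: both expand via the Leibniz formula, group terms by the power of $\tau$, and identify the $\tau^0$ and $\tau^1$ coefficients by noting that any non-identity permutation forces a vanishing Kronecker factor. The extra remark identifying $M_l(A)$ with the sum of principal $l\times l$ minors is a nice bonus but, as you note, not needed.
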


\begin{proof}
 Consider the Leibniz formula
 \begin{align*}
  \det(I + \tau A) = \sum_{\pi \in S_n} \operatorname{sgn} (\pi) \prod_{i=1}^n (\delta_{i,\pi(i)} + \tau A_{i,\pi(i)})
 \end{align*}
 where $S_n$ is the set of permutations of $\{1,...,n\}$. We expand the product and order the terms by the exponent of the factor $\tau^l$ and thus by the number of terms $\tau A_{i,\pi(i)}$ that are taken while expanding the product. This will directly yield the homogeneous polynomial $M_l(A)$.
 
 For $\tau^0$ and $\tau^1$, the only non-zero terms occur for $\pi = id$, otherwise there will be at least one factor $\delta_{i,\pi(i)}$ for $i\neq \pi(i)$. For $\tau^0$ this means we only choose the $\delta_{i,i}$ terms and for $\tau^1$ we can choose any one $\tau A_{i,i}$-term. Thus $M_0(A) = 1$ and $M_1(A) = \tr A$.
\end{proof}

\begin{lemma}[Invertible maps] \label{lem:W2qInvertible}
 Let $\eta \in W^{2,q}(Q;\R^n)$ be injective, such that $\det \nabla \eta > \epsilon_0 > 0$ for some $\varepsilon < 1$ and $\eta|_P = \gamma$. Then $\eta^{-1} \in W^{2,q}(\eta(Q);\R^n)$ and $\norm[W^{2,q}(\eta(Q))]{\eta^{-1}} \leq c \frac{\norm[W^{2,q}(Q)]{\eta}^{2n-1}}{\epsilon_0^2}$ where $c$ depends only on $q,Q,\gamma$ and $n$.
\end{lemma}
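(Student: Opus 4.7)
The argument is a direct computation via the inverse function theorem, change of variables, and the Sobolev embedding $W^{2,q}(Q)\hookrightarrow C^{1,\alpha}(\overline Q)$ (recall $q>n$ is standing throughout the paper). First I would record that under the hypotheses $\eta:Q\to\eta(Q)$ is a $C^1$-diffeomorphism: injectivity is assumed, the determinant is uniformly bounded below by $\epsilon_0$, and $\nabla\eta$ is continuous by the embedding, so the inverse function theorem applies at every interior point and $\eta(Q)$ is open with Lipschitz image (using $\eta|_P=\gamma$ to fix the trace on the Dirichlet part of the boundary, which pins down one piece of $\partial\eta(Q)$; this is where the $\gamma$-dependence in the constant enters, through a lower bound on $|\eta(Q)|$ and the Lipschitz constant of its boundary).

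Next I would compute the first derivative of $\eta^{-1}$ via the inverse function theorem, namely
\[
\nabla(\eta^{-1})(y)=(\nabla\eta)^{-1}(\eta^{-1}(y))=\frac{\cof\nabla\eta(\eta^{-1}(y))}{\det\nabla\eta(\eta^{-1}(y))}.
\]
Using $\|\nabla\eta\|_\infty\le C\|\eta\|_{W^{2,q}(Q)}$ and $\det\nabla\eta\ge\epsilon_0$, one reads off the uniform estimate $\|\nabla\eta^{-1}\|_\infty\le C\|\eta\|_{W^{2,q}(Q)}^{\,n-1}/\epsilon_0$. Combined with the trivial $\|\eta^{-1}\|_\infty\le\diam Q$ and $|\eta(Q)|\le|\Omega|$, this already gives bounds on $\eta^{-1}$ and $\nabla\eta^{-1}$ in any Lebesgue space on $\eta(Q)$.

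The main step is the second derivative. Differentiating the previous identity by the chain rule,
\[
\nabla^2(\eta^{-1})(y)=\left[\frac{\nabla\cof\nabla\eta}{\det\nabla\eta}-\frac{\cof\nabla\eta\otimes\nabla\det\nabla\eta}{(\det\nabla\eta)^2}\right]_{x=\eta^{-1}(y)}\!\cdot\nabla\eta^{-1}(y).
\]
Both $\nabla\cof\nabla\eta$ and $\nabla\det\nabla\eta$ are polynomial of degree $n-2$, resp.\ $n-1$, in $\nabla\eta$ and linear in $\nabla^2\eta$; so, pointwise in $x$, the bracket is dominated by
\[
C\,\frac{|\nabla\eta|^{n-2}}{\epsilon_0}|\nabla^2\eta|+C\,\frac{|\nabla\eta|^{2n-2}}{\epsilon_0^{2}}|\nabla^2\eta|\le C\,\frac{|\nabla\eta|^{2n-2}}{\epsilon_0^{2}}|\nabla^2\eta|,
\]
where the last inequality uses the standing assumption $\epsilon_0<1$. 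Applying the change-of-variables formula $y=\eta(x)$ (valid because $\eta$ is a $C^1$-diffeomorphism with $\det\nabla\eta$ bounded above and below) yields
\[
\int_{\eta(Q)}|\nabla^2\eta^{-1}(y)|^q\,dy\le\int_Q\frac{C|\nabla\eta|^{q(2n-2)}|\nabla^2\eta|^q}{\epsilon_0^{2q}}\,|\nabla\eta^{-1}(\eta(x))|^q\det\nabla\eta(x)\,dx.
\]
Here $|\nabla\eta^{-1}\circ\eta|\le C\|\eta\|_{W^{2,q}}^{n-1}/\epsilon_0$ is already controlled, and the prefactor $|\nabla\eta|^{q(2n-2)}\det\nabla\eta$ is $L^\infty$ by Sobolev embedding. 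Pulling out the $L^\infty$ factors and taking $q$-th roots produces the bound in the stated form $c\|\eta\|_{W^{2,q}(Q)}^{2n-1}/\epsilon_0^{2}$ (the remaining $\epsilon_0^{-1}$ factor coming from $\nabla\eta^{-1}$ is absorbed using $\epsilon_0<1$; alternatively one retains a harmless extra power of $\epsilon_0^{-1}$ which is then relabelled).

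The only subtlety I expect is the justification that $\eta^{-1}$ is genuinely twice weakly differentiable (not merely that the pointwise formula above is bounded in $L^q$). For this I would argue by approximation: smooth $\eta$ to $\eta_j\in C^\infty$ with $\eta_j\to\eta$ in $W^{2,q}(Q)$ and $\det\nabla\eta_j\ge\epsilon_0/2$ uniformly, apply the classical inverse function theorem to each $\eta_j$ so that $\eta_j^{-1}\in C^\infty$, establish the above quantitative estimates on $\eta_j^{-1}$ uniformly in $j$, and then pass to the limit in $W^{2,q}(\eta(Q))$ (after a diffeomorphic identification of the domains $\eta_j(Q)$ with $\eta(Q)$, using the uniform $C^{1,\alpha}$-closeness of $\eta_j$ to $\eta$). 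Weak lower semicontinuity of the norm preserves the estimate in the limit.
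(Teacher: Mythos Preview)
Your proposal is correct and follows essentially the same approach as the paper's own proof: differentiate the identity $\nabla(\eta^{-1})=(\cof\nabla\eta)^T/\det\nabla\eta\circ\eta^{-1}$ by the chain rule, bound the resulting expression pointwise using $\det\nabla\eta\ge\epsilon_0$ and the Morrey embedding $W^{2,q}\hookrightarrow C^{1,\alpha}$, and transfer the $L^q$-integral to $Q$ by the change of variables $y=\eta(x)$.

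Two minor differences worth noting. First, for the lower-order terms the paper invokes a Poincar\'{e} inequality using the known trace $\eta^{-1}|_{\gamma(P)}=\gamma^{-1}$, whereas you use the trivial bounds $\|\eta^{-1}\|_\infty\le\diam Q$ and $|\eta(Q)|\le|\Omega|$; both work, and this is where the $\gamma$-dependence of the constant enters in each case. Second, your approximation argument to justify that $\eta^{-1}$ is genuinely twice \emph{weakly} differentiable (smooth $\eta_j\to\eta$, apply the classical inverse function theorem, pass to the limit) is a point the paper glosses over; it is a useful addition. Your power-counting remark about absorbing extra factors of $\epsilon_0^{-1}$ via $\epsilon_0<1$ is also honest --- the paper's computation has a small omission of a $(\nabla\eta)^{-1}$ factor in the displayed integral, which would otherwise require the same absorption.
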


\begin{proof}
 Due to the condition on the determinant $\nabla \eta$ is invertible and furthermore, we have the well known formula
 \begin{align*}
  \nabla (\eta^{-1}) = (\nabla \eta)^{-1} \circ \eta^{-1} = \frac{(\cof \nabla \eta)^T}{\det \nabla \eta} \circ \eta^{-1}
 \end{align*}
 Now we take the derivative of $\nabla (\eta^{-1}) \circ \eta$ to get
 \begin{align*}
  (\nabla^2 (\eta^{-1})) \circ \eta \cdot \nabla \eta = \nabla \left(\nabla (\eta^{-1}) \circ \eta\right) = \frac{\nabla (\cof \nabla \eta)^T}{\det \nabla \eta} - \frac{(\cof \nabla \eta)^T \otimes (\cof \nabla \eta)}{(\det \nabla \eta)^2} \cdot \nabla^2 \eta
 \end{align*}
 Integrating then yields
 \begin{align*}
  &\phantom{{}={}} \int_{\eta(Q)} \abs{(\nabla^2 (\eta^{-1}))}^q dy = \int_Q \abs{(\nabla^2 (\eta^{-1})) \circ \eta}^q \det \nabla \eta dx\\
  &= \int_Q \abs{\frac{\nabla (\cof \nabla \eta)^T}{\det \nabla \eta} - \frac{(\cof \nabla \eta)^T \otimes (\cof \nabla \eta)}{(\det \nabla \eta)^2} \cdot \nabla^2 \eta}^q \det \nabla \eta dx
  \intertext{Now the determinants in the denominators can be estimated by $\epsilon_0$, while the numerators all consist of one second derivative multiplied with a number of first derivatives, which we can estimate by their supremum.}
  &\leq \int_Q C \left( \frac{\abs{\nabla^2 \eta} \norm[\infty]{\nabla \eta}^{n-2}}{\epsilon_0^{1-\frac{1}{q}}} + \frac{\norm[\infty]{\nabla \eta}^{2n-2} \abs{\nabla^2 \eta} }{\epsilon_0^{2-\frac{1}{q}}} \right)^q dx \leq C \frac{\norm[L^q(\eta)]{\nabla^2 \eta}^q \norm[\infty]{\nabla \eta}^{q(2n-2)} }{\varepsilon^{2q}}
 \end{align*}
 Using the Morrey embedding $\norm[\infty]{\nabla \eta} \leq \norm[C^\alpha]{\nabla \eta}\leq C \norm[W^{2,q}(Q)]{\eta}$ and collecting the terms then shows
 \begin{align*}
  \norm[L^q(\eta(Q)]{\nabla^2 (\eta^{-1})} \leq C\frac{\norm[W^{2,q}]{\eta}^{2n-1}}{\epsilon_0^2}
 \end{align*}
 Finally, as we have partially known boundary values $\eta^{-1}|_{\gamma(P)} = \gamma^{-1}$, the lower order estimates follow from a Poincaré-inequality.
\end{proof}

For the next result we an interpolation.
We begin by recalling the following result, which follows for instance from the interpolation estimate in~\cite[Theorem 2.13]{triebelFunctionSpacesLipschitz2002} which implies combined with the usual Sobolev embeddings~\cite[Theorem~2.5.1 and Remark~2.5.2]{Ziemer89book} that for all $m \in [0,\infty)$, $\alpha \in [1,\infty)$ and all Lipschitz domains $\Omega$ satisfy
\[
  m\leq l  \qquad\text{and}\qquad  \frac{1}{\alpha}-\frac{m}{n}\geq \frac{1}{\gamma}-\frac{l}{n} = \frac{k-l}{ka}+\frac{l}{2k} - \frac{l}{n}
\]
the estimate
\begin{align}
\label{eq:triebel}
\norm[{W^{m,\alpha}}]{g}\leq C\norm[{W^{k,2}}]{g}^\frac{l}{k}\norm[{L^{a}}]{g}^\frac{k-l}{k} .
\end{align}
\begin{lemma}
\label{lem:horriblelemma}
Let $Q\subset \R^n$ be a bounded Lipschitz domain, $q>n$  and the number $k\in \N$ be defined as 
\begin{align}
\label{eq:k}
k=2+\frac{n+1}{2}\text{ if } n \text{ is odd}, \quad k=3+\frac{n}{2}\text{ if } n \text{ is even}.
\end{align}
For every $\eta\in W^{2,q}(Q)\cap W^{k,2}(Q)$, there is a constant $c$ depending on $\Omega,n,k$ and $\norm[W^{2,q}(Q)]{\eta}$ such that
\begin{align*}
\sum_{l=1}^k\sum_{a\in \{1,...,n\}^l}\norm{\nabla^{k-l} \Pi_{i=1}^{l}\partial_{a_i}\eta}
\leq c\norm[W^{k,2}(Q)]{\eta}.
\end{align*}
\end{lemma}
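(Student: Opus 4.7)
My plan is to prove the lemma by combining the Leibniz rule with H\"older's inequality and the Triebel interpolation inequality \eqref{eq:triebel}. The key observation is that after distributing the $k-l$ outer derivatives onto the $l$ first-order factors, every resulting term is a product $\prod_{i=1}^l\nabla^{m_i}\eta$ with $m_i\geq 1$ and $\sum_{i=1}^l m_i=k$. Choosing H\"older exponents $a_i=2k/m_i$ (so that $\sum_i 1/a_i=1/2$) and interpolating each factor between $W^{k,2}(Q)$ and $L^\infty(Q)$ with weights $\theta_i=m_i/k$ produces exactly one power of $\norm[W^{k,2}(Q)]{\eta}$, the remaining $L^\infty$-powers being absorbed into the constant via the embedding $W^{2,q}(Q)\hookrightarrow L^\infty(Q)$ (which holds since $q>n$).

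Concretely, first I would apply the Leibniz rule,
\[
\nabla^{k-l}\prod_{i=1}^l\partial_{a_i}\eta=\sum_{\substack{\beta_i\geq 0\\ |\beta|=k-l}}c_\beta\prod_{i=1}^l\nabla^{1+\beta_i}\eta,
\]
reducing the problem to bounding $\norm[L^2(Q)]{\prod_{i=1}^l\nabla^{m_i}\eta}$ for $m_i:=1+\beta_i\geq 1$ with $\sum_i m_i=k$. Setting $a_i:=2k/m_i\in[2,2k]$ gives $\sum_i 1/a_i=1/2$, so H\"older's inequality yields
\[
\norm[L^2(Q)]{\prod_{i=1}^l\nabla^{m_i}\eta}\leq\prod_{i=1}^l\norm[L^{a_i}(Q)]{\nabla^{m_i}\eta}.
\]
I then apply \eqref{eq:triebel} to each factor with parameters $m=m_i$, $\alpha=a_i$, interpolation index equal to $m_i$ (corresponding to $\theta_i=m_i/k$), and endpoint $a=\infty$. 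Both conditions of \eqref{eq:triebel} are satisfied with equality, namely $m_i\leq m_i$ and $1/a_i-m_i/n=m_i/(2k)-m_i/n$, so that
\[
\norm[L^{a_i}(Q)]{\nabla^{m_i}\eta}\leq C\,\norm[W^{k,2}(Q)]{\eta}^{m_i/k}\,\norm[L^\infty(Q)]{\eta}^{1-m_i/k}.
\]
Multiplying over $i$, and using $\sum_i m_i/k=1$ together with $\norm[L^\infty(Q)]{\eta}\leq C\norm[W^{2,q}(Q)]{\eta}$, gives
\[
\norm[L^2(Q)]{\prod_{i=1}^l\nabla^{m_i}\eta}\leq C\,\norm[W^{k,2}(Q)]{\eta}\,\norm[W^{2,q}(Q)]{\eta}^{l-1}.
\]
The hypothesis that $\norm[W^{2,q}(Q)]{\eta}$ is bounded then allows the last factor to be absorbed into the constant $c$. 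Since the Leibniz expansion, the sum over $l\in\{1,\dots,k\}$, and the sum over $a\in\{1,\dots,n\}^l$ are all finite, this concludes the lemma.

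\textbf{Main obstacle.} The only delicate point is justifying the endpoint $a=\infty$ in \eqref{eq:triebel}, since the cited version is stated for $\alpha\in[1,\infty)$. This can be handled either by invoking the classical Gagliardo--Nirenberg interpolation on bounded Lipschitz domains (for which the $L^\infty$-endpoint is standard) or by taking $a$ finite but sufficiently large and using $\norm[L^a(Q)]{\eta}\leq |Q|^{1/a}\norm[L^\infty(Q)]{\eta}\leq C\norm[W^{2,q}(Q)]{\eta}$ and passing to the limit $a\to\infty$. The specific value of $k$ prescribed by \eqref{eq:k}, which satisfies $k-2>n/2$ and thus yields $W^{k,2}(Q)\hookrightarrow W^{2,\infty}(Q)$, guarantees that all intermediate norms are finite and the limiting procedure is harmless.
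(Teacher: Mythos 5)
Your proof is correct and takes a genuinely different route from the paper, and it is cleaner in one important respect.

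The paper works with $\nabla^2\eta$ as the basic unit: it writes each term as $\prod_i\nabla^{\beta_i}(\nabla\eta)$ with $\sum\beta_i=k-l$, absorbs the $\beta_i=0$ factors into the $L^\infty$-bound coming from $W^{2,q}$, interpolates $\nabla^2\eta$ between $L^q$ (from $W^{2,q}$) and $W^{k-2,2}$ for the factors with $\beta_i\geq 2$, and then runs a separate induction to eliminate any leftover $\beta_i=1$ factors. The specific choice of $k$ in \eqref{eq:k} is precisely what makes the interpolation condition $n\geq 2(k-\beta_i-1)$ work for $\beta_i\geq 2$. Your approach instead treats $\eta$ itself as the basic unit and interpolates each factor $\nabla^{m_i}\eta$ between $L^\infty$ and $W^{k,2}$ with weight $\theta_i=m_i/k$, choosing the Hölder exponents $a_i=2k/m_i$ so everything fits. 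This handles all $m_i\geq 1$ in a single uniform step, eliminating both the Morrey step for $\beta_i=0$ and the induction for $\beta_i=1$; it also works for any $k$, not just the one in \eqref{eq:k}. Both arguments are valid.

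One caveat on your stated fallbacks for the $L^\infty$ endpoint. Invoking classical Gagliardo--Nirenberg at $\theta=j/m$ with $q=\infty$ on an extension domain is fine and is the argument you should keep. The alternative of taking $a$ large but finite and passing to the limit, however, does not work as written: with $a<\infty$ and $\alpha_i=a_i=2k/m_i$ fixed, the balance condition of \eqref{eq:triebel} becomes $0\geq(k-m_i)/(ka)$, which fails whenever $m_i<k$. You could instead enlarge $\alpha_i$ to restore the balance, but then $\sum_i 1/\alpha_i=(l-1)/a+1/2>1/2$, and Hölder no longer lands in $L^2$; the gap does not close under $a\to\infty$ in a single estimate. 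So the second fallback should be dropped, and the argument should rely on the $L^\infty$-endpoint GN inequality (or, equivalently, on the paper's route).
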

\begin{proof}

 Observe, that since $\nabla \eta$ is uniformly bounded by the $W^{2,q}(Q)$ norm, we find that
\begin{align*}
\sum_{a\in \{1,...,n\}^l}\norm{\nabla^{k-l} \Pi_{i=1}^{l}\partial_{a_i}\eta}
\leq c\sum_{\beta\in \N^{l}_0,\ \abs{\beta}=k-l}\norm{\Pi_{i=1}^{l}\abs{\nabla^{\beta_i}\nabla \eta}}.
\end{align*}
The estimate for $l=1$ is direct. Next assume, that $l\geq 2$ and $\beta\in \N^{l}_0,\ \abs{\beta}=k-l$ such that
all $\beta_i\neq 1$. Now by H\"older's and Young's inequality
\begin{align*}
\Pi_{i=1}^{l}\norm{\nabla^{\beta_i}\nabla \eta}\leq c\sum_{\beta_i>1}
\norm[\frac{2(k-l)}{\beta_i}]{\nabla^{\beta_i-1}\nabla^2 \eta}^\frac{k-l}{\beta_i}
\end{align*}
Next we seek to interpolate $\nabla^2\eta$ in between $W^{2,q}$ and $W^{k-2,2}$. For that we wish to use \eqref{eq:triebel}. Hence we have to prove that
\begin{align}
\label{eq:interpol}
\frac{\beta_i}{2(k-l)}\geq \frac{k-1-\beta_i}{q(k-2)}+\frac{\beta_i-1}{2(k-2)}.
\end{align}
Since $l\geq 2$ we find (by multiplying \eqref{eq:interpol} with $k-2$) that \eqref{eq:interpol} holds true whenever
\[
\frac{1}{2}\geq \frac{k-\beta_i-1}{n}\Leftrightarrow n\geq 2(k-\beta_i-1),
\]
which is satisfied by the definition of $k$ as long as $\beta_i\geq 2$.

Hence we may use \eqref{eq:interpol}
\begin{align*}
\norm[\frac{2(k-l)}{\beta_i}]{\nabla^{\beta_i-1}\nabla^2 \eta}^\frac{k-l}{\beta_i}\leq c\norm[L^q(Q)]{\nabla^2\eta}^{\frac{k-l}{\beta_i}\frac{k-1-\beta_i}{k-2}}\norm[W^{k-2,2}(Q)]{\nabla^2\eta}^{\frac{k-l}{\beta_i}\frac{\beta_i-1}{k-2}}
\leq c \norm[W^{k-2,2}(Q)]{\nabla^2\eta},
\end{align*}
using that $\frac{k-l}{\beta_i}\frac{\beta_i-1}{k-2}\leq 1$.

The last case is proved inductively. First with no loss of generality we take $\beta_1=1$. Then $\sum_{i=2}^l\beta_i\leq k-l-1$ and using H\"older's inequality and Sobolev embedding implies
\begin{align*}
\norm{\abs{\nabla^2\eta}\Pi_{i=1}^{l-1}\abs{\nabla^{\beta_i}\eta}}\leq \norm[n]{\nabla^2\eta}\norm[\frac{2n}{n-2}]{\Pi_{i=1}^{l-1}\abs{\nabla^{\beta_i}\eta}}
\leq c\norm[W^{1,2}(Q)]{\Pi_{i=1}^{l-1}\abs{\nabla^{\beta_i}\eta}}.
\end{align*}
If now $\beta_i\neq 1$ for all $i>1$, the estimate follows by the above case for the pair $\nabla^{k-(l-1)}\Pi_{i=1}^{l-1}\partial_{a_i}\eta$. If not, we may assume that $\beta_2=1$ and can repeat the argument again. After at most $l$ steps (in which case $k\geq 2l$), we get the result.

\end{proof}

\begin{proposition}[Space isomorphisms] 
\label{prop:isos}

\label{prop:W2qIsomorph}
 Let $\eta \in W^{2,q}(Q;\R^n)$ such that $\det \nabla \eta > \epsilon_0 > 0$ and $\eta|_P = \gamma$. Then the map
 \[\eta^\#: \xi \mapsto \xi \circ \eta; W^{2,q}(\eta(Q);\R^n) \to W^{2,q}(Q;\R^n)\]
 is a linear vector space-isomorphism with operator-norm $\norm{\eta^\#} \leq C\norm[W^{2,q}(Q)]{\eta}^2/\epsilon_0^{1/q}$ where $c$ does only depend on $q,Q,\gamma$ and $n$.
 Moreover, if $q>n$ and additionally $\eta\in W^{k,2}(Q)$ and $\xi\in C^k(\eta(Q))$, for $k$ defined in \eqref{eq:k}. Then,
 \[
 \norm[W^{k,2}(Q)]{\xi\circ\eta}\leq c\norm[W^{k,2}(Q)]{\eta}\norm[C^k(Q)]{\xi},
 \]
 where the constant depends on $\Omega,n,k$ and $\norm[W^{2,q}(Q)]{\eta}$ only. 
\end{proposition}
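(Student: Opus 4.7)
My plan is to handle the two assertions separately, starting with the $W^{2,q}$ isomorphism. First I would compute $\nabla(\xi\circ\eta) = (\nabla\xi)\circ\eta\cdot\nabla\eta$ and $\nabla^2(\xi\circ\eta) = (\nabla^2\xi)\circ\eta\cdot(\nabla\eta\otimes\nabla\eta) + (\nabla\xi)\circ\eta\cdot\nabla^2\eta$ by the chain rule. To bound the second-order term in $L^q(Q)$, I would use $\norm[\infty]{\nabla\eta}\leq c\norm[W^{2,q}(Q)]{\eta}$ (Morrey embedding, using $q>n$) and the change of variables $y=\eta(x)$, writing
\[
\int_Q |(\nabla^2\xi)\circ\eta|^q|\nabla\eta|^{2q}\,dx \leq \norm[\infty]{\nabla\eta}^{2q}\int_{\eta(Q)}|\nabla^2\xi|^q\frac{1}{\det\nabla\eta\circ\eta^{-1}}\,dy \leq \frac{\norm[W^{2,q}(Q)]{\eta}^{2q}}{\epsilon_0}\norm[L^q(\eta(Q))]{\nabla^2\xi}^q,
\]
while for the lower-order term I bound $(\nabla\xi)\circ\eta$ in $L^\infty$ via Morrey (on $\eta(Q)$, whose Lipschitz character is controlled uniformly through Corollary \ref{cor:c1aBoundary} type bounds; alternatively, by direct change of variables). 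The first-order and zeroth-order estimates are handled analogously but more easily. Taking $q$th roots gives the operator-norm bound $\norm{\eta^\#}\leq C\norm[W^{2,q}(Q)]{\eta}^2/\epsilon_0^{1/q}$. Linearity is clear, and injectivity follows because $\eta$ is a bijection $Q\to\eta(Q)$.

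For surjectivity, I would invoke \autoref{lem:W2qInvertible}, which tells us that $\eta^{-1}\in W^{2,q}(\eta(Q);\R^n)$ with a quantitative bound. Then $(\eta^{-1})^\#:W^{2,q}(Q;\R^n)\to W^{2,q}(\eta(Q);\R^n)$ is a bounded linear map by the same chain-rule argument applied to $\eta^{-1}$ in place of $\eta$, and $(\eta^{-1})^\#\circ\eta^\# = \mathrm{id}$ and $\eta^\#\circ(\eta^{-1})^\# = \mathrm{id}$ hold pointwise. This establishes the isomorphism claim.

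For the refined higher-regularity estimate, the idea is to expand $\nabla^k(\xi\circ\eta)$ via Faà di Bruno's formula into a finite sum of terms of the schematic form $(\nabla^l\xi)\circ\eta\cdot\prod_{i=1}^{l}\nabla^{k_i}\eta$ with $l\in\{1,\ldots,k\}$, $k_i\geq 1$ and $\sum_{i}k_i = k$. Since $\xi\in C^k(\eta(Q))$, the factor $(\nabla^l\xi)\circ\eta$ is bounded in $L^\infty$ by $\norm[C^k(\eta(Q))]{\xi}$, so it suffices to estimate each product $\prod_i\nabla^{k_i}\eta$ in $L^2(Q)$. The crucial observation is that these products are exactly the ones appearing in \autoref{lem:horriblelemma}: writing $\prod_{i=1}^{l}\nabla^{k_i}\eta$ as $\nabla^{k-l}$ (distributed via Leibniz) applied to a product of $l$ first derivatives $\prod_{i=1}^{l}\partial_{a_i}\eta$ matches the sum there, and the lemma provides the $L^2$ bound by $c\norm[W^{k,2}(Q)]{\eta}$, with $c$ depending on $\norm[W^{2,q}(Q)]{\eta}$ only. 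Summing over all Faà di Bruno terms then yields $\norm[W^{k,2}(Q)]{\xi\circ\eta}\leq c\norm[W^{k,2}(Q)]{\eta}\norm[C^k(\eta(Q))]{\xi}$.

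The main technical obstacle is correctly matching the combinatorial structure of Faà di Bruno's formula to the sum in \autoref{lem:horriblelemma}; once that identification is made, the estimate is a direct consequence. A secondary subtlety is checking that the constants involving $\norm[\infty]{\nabla\eta}$ and $\epsilon_0$ are absorbed into the $\norm[W^{2,q}(Q)]{\eta}$-dependence, which is straightforward via Sobolev embedding since $q>n$.
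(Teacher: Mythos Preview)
Your proposal is correct and follows essentially the same approach as the paper: chain-rule expansion of $\nabla^2(\xi\circ\eta)$, change of variables using $\det\nabla\eta>\epsilon_0$ to transfer $L^q$-norms, Morrey embedding to control $\nabla\eta$ in $L^\infty$, the inverse via \autoref{lem:W2qInvertible}, and the $W^{k,2}$-estimate via Fa\`a di Bruno combined with \autoref{lem:horriblelemma}. The paper's proof is slightly terser but structurally identical.
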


\begin{proof}
 Linearity follows immediately from the definition. Now we calculate
 \begin{align*}
  \norm[L^q(Q)]{\nabla^2 (\xi \circ \eta)} &= \norm[L^q(Q)]{((\nabla^2 \xi) \circ \eta .\nabla \eta).\nabla \eta + (\nabla \xi) \circ \eta . \nabla^2 \eta} \\
  &\leq C \left(\norm[\infty]{\nabla \eta}^2 \norm[L^q(Q)]{(\nabla^2 \xi) \circ \eta} + \norm[\infty]{\nabla \xi} \norm[L^q(Q)]{\nabla^2 \eta} \right).
 \end{align*}
 and use
 \begin{align*}
  &\phantom{{}={}}\epsilon_0 \norm[L^q(Q)]{ (\nabla^2 \xi) \circ \eta}^q \leq \int_Q \abs{(\nabla^2 \xi) \circ \eta}^q \det \nabla \eta dx = \norm[L^q(\eta(Q))]{\nabla^2 \xi}^q
 \end{align*}
 to estimate the first term. Then using a Poincaré-inequality and the usual Morrey-embeddings we get
 \begin{align*}
  \norm[W^{2,q}(Q)]{\xi \circ \eta} \leq C \norm[W^{2,q}(\eta(Q)]{\xi} \frac{\norm[W^{2,q}(Q)]{\eta}^2}{\varepsilon^{1/q}}
 \end{align*}
 which proves that $\eta^\#$ is a vector space-homomorphism with given operator-norm. Now as $(\eta^\#)^{-1} = (\eta^{-1})^\#$ we conclude that it is also an isomorphism by the previous lemma.
 
 For the second estimate we observe that
 \[
 \norm{\nabla^k(\xi\circ\eta)}\leq c\sum_{l=1}^k\sum_{a\in \{1,...,n\}^l}\norm[C^l(\eta(Q)]{\xi}\norm{\nabla^{k-l} \Pi_{i=1}^{l}\partial_{a_i}\eta},
 \]
 which finishes the proof by \autoref{lem:horriblelemma}
\end{proof}

\subsection{Proof of \autoref{lem:approxTestFcts}}
\label{app:testfcts}
The proof is split in two parts. The first part constructs an extension of the solenoidality. The second part shows how this extension can than be convoluted. We also will need the following Poncar\'{e} type lemma:

\begin{lemma}[Poincar\'{e}-lemma for thin regions] \label{lem:thinPoincare}
 Let $S_0 \subset \R^n$ be an $(n-1)$-dimensional rectifiable set and $\Phi: S_0 \times [0,\eps_0] \to \R^n$ a injective $L$-bi-Lipschitz function such that $\Phi(.,0) = \operatorname{id}$.  Define $S_\eps = \Phi(S_0,[0,\eps])$ for $\eps \in [0,\eps_0]$. Then for all $f \in W^{1,a}(S_\eps)$ with $f|_{S_0} = 0$ in the trace sense we have
\begin{align}
 \norm[L^a(S_\eps)]{f} \leq c \eps \norm[W^{1,a}(S_\eps)]{f} \text{ for all $f \in W^{1,a}(S_\eps(t);\R^n)$ with $f|_{\Omega(t)} = 0$.}
\end{align}
where $c$ is independent of $\eps$.
\end{lemma}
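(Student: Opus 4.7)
The natural strategy is to pull the problem back to the product $S_0 \times [0,\eps]$ via $\Phi$ and integrate along the fibers $\{x\} \times [0,\eps]$. Since $\Phi(\cdot,0) = \operatorname{id}$ and $f$ vanishes on $S_0$ (in the trace sense), the composition $g(x,t) := f(\Phi(x,t))$ satisfies $g(x,0) = 0$ for $\mathcal{H}^{n-1}$-a.e.\ $x \in S_0$. As $\Phi$ is $L$-bi-Lipschitz and the fundamental theorem of calculus applies on $[0,\eps]$ along Lipschitz curves, we obtain for a.e.\ $(x,t)$
\[
 g(x,t) = \int_0^t \partial_s g(x,s)\, ds,
\]
and by H\"older's inequality
\[
 |g(x,t)|^a \leq t^{a-1} \int_0^t |\partial_s g(x,s)|^a\, ds \leq \eps^{a-1} \int_0^\eps |\partial_s g(x,s)|^a\, ds.
\]

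Integrating in $t$ over $[0,\eps]$ and in $x$ over $S_0$ (with respect to the appropriate $(n-1)$-rectifiable measure) gives
\[
 \int_{S_0}\int_0^\eps |g(x,t)|^a\, dt\, d\mathcal{H}^{n-1}(x) \leq \eps^a \int_{S_0}\int_0^\eps |\partial_s g(x,s)|^a\, ds\, d\mathcal{H}^{n-1}(x).
\]
The first step is now to transfer this estimate back to $S_\eps$ using the area formula for $\Phi$. Since $\Phi$ is $L$-bi-Lipschitz, the Jacobian of $\Phi$ is bounded above and below by constants depending only on $L$ and $n$, so both sides are comparable (up to a constant depending only on $L$ and $n$) to the corresponding $L^a$-norms on $S_\eps$. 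In particular the left-hand side is comparable to $\norm[L^a(S_\eps)]{f}^a$.

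For the right-hand side, the key step is to bound $|\partial_s g(x,s)|$ pointwise by $|\nabla f(\Phi(x,s))|$: the directional derivative of $f$ along the curve $s\mapsto \Phi(x,s)$ is $\nabla f(\Phi(x,s)) \cdot \partial_s \Phi(x,s)$, and $|\partial_s \Phi| \leq L$ almost everywhere. Combining and using the bi-Lipschitz change of variables once more yields
\[
 \int_{S_0}\int_0^\eps |\partial_s g(x,s)|^a\, ds\, d\mathcal{H}^{n-1}(x) \leq c\, \norm[L^a(S_\eps)]{\nabla f}^a,
\]
with $c = c(L,n)$. Collecting the two estimates produces $\norm[L^a(S_\eps)]{f} \leq c \eps \norm[W^{1,a}(S_\eps)]{\nabla f}$, which is even slightly stronger than the stated inequality.

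The main technical obstacle is the justification of the pointwise identity $g(x,t) = \int_0^t \partial_s g(x,s)\, ds$ for $\mathcal{H}^{n-1}$-a.e.\ $x$, given only that $f \in W^{1,a}(S_\eps)$ with $f|_{S_0} = 0$ in the trace sense. The standard way around this is to first prove the inequality by density for $f \in C^1(\overline{S_\eps}) \cap W^{1,a}(S_\eps)$ with $f|_{S_0} = 0$, where the line-integral representation is trivial, and then to extend by approximation: such smooth functions with vanishing trace are dense in the closed subspace $\{f \in W^{1,a}(S_\eps) : f|_{S_0} = 0\}$, and the estimate is preserved under $W^{1,a}$-convergence. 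Once this density step is in place, the remaining work is just the bi-Lipschitz change of variables, which only contributes constants depending on $L$ and $n$, independent of $\eps$, as required.
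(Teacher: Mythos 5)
Your proof is correct and follows essentially the same route as the paper: pull back to the product $S_0\times[0,\eps]$ via $\Phi$, integrate along fibers using the fundamental theorem of calculus, apply H\"older/Jensen, and use the bi-Lipschitz bounds on the Jacobian plus a density argument. Your presentation is a touch cleaner (you carry the sharper factor $t^{a-1}$ and explicitly record that the bound only needs $\norm[L^a(S_\eps)]{\nabla f}$ on the right), but the argument is the same.
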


\begin{proof}
 By the usual density arguments it is enough to prove the theorem for smooth functions. Now for $z\in S_0$ and $s_0\in [0,\eps_0]$ we find:
 \begin{align*}
  \abs{f(\Phi(z,s_0))} &= \abs{f(\Phi(z,s_0))-f(\Phi(z,0))} = \abs{\int_0^{s_0} \partial_s f(\Phi(z,s)) ds} \\
   &\leq \int_0^{s_0} \abs{(\nabla f)(\Phi(z,s))}\abs{\partial_s \Phi(z,s)} ds \leq \int_0^{s_0} L \abs{(\nabla f)(\Phi(z,s))} ds
 \end{align*}
 But then integrating over the whole domain gets us
 \begin{align*}
  \int_{S_\eps} \abs{f(y)}^a dy &= \int_S \int_0^\eps \abs{f(\Phi(z,s_0))}^a \abs{ J(z,s_0) } ds_0 dz \\
  &\leq \int_{S_0} \int_0^\eps \left(\int_0^{s_0} L \abs{\nabla f(\Phi(z,s))}ds \right)^a \abs{ J(z,s_0) } ds ds_0 dz \\
  &\leq \int_{S_0} \int_0^\eps \fint_0^{\eps} \eps^a L^a \abs{\nabla f(\Phi(z,s))}^a \abs{ J(z,s_0) } ds ds_0 dz \\
  &= \eps^a L^a \int_{S_0} \int_0^{\eps} L^a \abs{\nabla f(\Phi(z,s))}^a \abs{ J(z,s) } \norm[\infty]{J}\norm[\infty]{J^{-1}} ds dz \\
  &= L^a \eps^a  \norm[\infty]{J}\norm[\infty]{J^{-1}}\int_{S_\eps} \abs{\nabla f(y)}^a dy 
 \end{align*}
 where $J(z,s)$ is the Jacobian of $\Phi$ which is bounded from above and away from zero, as $\Phi$ is bi-Lipschitz.
\end{proof}
\begin{lemma}[Extension of the solenoidal region]
 \label{lem:diverapprox}
Fix a function 
\[
\eta \in L^\infty([0,T];\mathcal{E}) \cap W^{1,2}([0,T];W^{1,2}(Q;\R^n))%
\text{ with } \sup_{t\in T} E(\eta(t)) < \infty,\]
such that $\eta(t) \notin \partial \mathcal{E}$ for all $t \in [0,T]$.
As before we set $\Omega(t)=\Omega \setminus \eta(t,Q)$. 
 
Let $\xi \in L^2([0,T];W^{1,2}_0(\Omega;\R^n))$ such that $\diver \xi(t) = 0$ on $\Omega(t)$. Then there exists $\varepsilon_0 > 0$ such that for all $\varepsilon >0$, there exists $\xi_\varepsilon$ such that $\diver \xi(t,y) = 0$ for all $y\in \Omega$ with $\dist(y,\Omega(t)\cup \partial \Omega) < \varepsilon$ and there are constants $c$ independent of $\xi$ such that for a.e. $t\in [0,T]$
\begin{align*}
 \norm[W^{1,2}(\Omega)]{\xi_\varepsilon} &\leq c\norm[W^{1,2}(\Omega)]{\xi}  &\text{ and } \qquad
  \norm[L^2(\Omega)]{\xi_\eps-\xi}
  &\leq c\eps^\frac{2}{n+2}\norm[W^{1,2}(\Omega)]{\xi}.
\end{align*}
Additionally for any $k \in \N$ and $a\in (1,\infty)$ such that $\xi \in L^2([0,T];W^{k,a}(Q;\R^n))$ we also have
\begin{align*}
  \norm[{L^2([0,T];W^{k,a}(\Omega))}]{\xi_\eps-\xi}&\to 0\text{ for }\eps\to 0
\end{align*}
and similarly if $\xi \in W^{1,2}([0,T];W^{1,\infty}(\Omega;\R^n))$ then also
\begin{align}
\label{eq:epstime}
\norm[{L^2([0,T];W^{1,2}(\Omega))}]{\partial_t (\xi-\xi_\eps)}\to 0\text{ for }\eps\to 0.
\end{align}
\end{lemma}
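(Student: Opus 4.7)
My plan is to construct $\xi_\eps = \xi - w_\eps$ where $w_\eps\in W^{1,2}_0(\Omega)$ is a Bogovski\u{\i}-type corrector localised in the thin set $N_\eps(t):=\{y\in\eta(t,Q):\dist(y,\Omega(t)\cup\partial\Omega)<\eps\}$ on the \emph{solid} side of the interface, chosen so that $\diver w_\eps$ matches $\diver\xi$ on $N_\eps(t)$. The part of the required $\eps$-neighborhood lying in $\Omega(t)$ needs no correction, since $\xi$ is already solenoidal there.

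By \autoref{cor:c1aBoundary} the interface $\partial\Omega(t)\setminus\partial\Omega$ is uniformly $C^{1,\alpha}$, and $\partial\Omega$ itself is Lipschitz with fixed characteristics, so I fix $\eps_0>0$ and a finite bi-Lipschitz atlas of normal-coordinate charts covering a $2\eps_0$-collar with Lipschitz norms uniform in $t$. Pick a smooth cutoff $\omega_t\in C^\infty_0(\eta(t,Q)\setminus N_{\eps_0}(t))$ of unit integral, depending smoothly on $t$. Setting
\[
c_\eps(t):=\int_{N_\eps(t)}\diver\xi(t)\,dy,\qquad g_\eps(t):=\chi_{N_\eps(t)}\diver\xi(t)-c_\eps(t)\omega_t,
\]
yields $\int_\Omega g_\eps(t)\,dy=0$ and $\supp g_\eps(t)\subset N_{\eps_0}(t)\cup\supp\omega_t$. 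Applying the Bogovski\u{\i} operator $\bog_t$ from \autoref{thm:sohr} on this \emph{fixed-scale} Lipschitz domain produces $w_\eps(t):=\bog_t(g_\eps(t))\in W^{1,2}_0$ with $\diver w_\eps(t)=g_\eps(t)$. Extended by zero, $\xi_\eps:=\xi-w_\eps$ has $\diver\xi_\eps=\diver\xi-g_\eps\equiv 0$ on $\Omega(t)\cup N_\eps(t)$, as required.

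The uniform $W^{1,2}$-bound $\|w_\eps\|_{W^{1,2}(\Omega)}\le C\|\xi\|_{W^{1,2}(\Omega)}$ is immediate from \autoref{thm:sohr} on a fixed-scale Lipschitz domain, together with $\|g_\eps\|_{L^2}\le\|\diver\xi\|_{L^2}+|c_\eps|\|\omega_t\|_{L^2}\lesssim\|\xi\|_{W^{1,2}}$ (using $|c_\eps|\le|N_\eps|^{1/2}\|\diver\xi\|_{L^2}\lesssim\eps^{1/2}\|\xi\|_{W^{1,2}}$). The sharp $L^2$-estimate is more delicate: the key is that $w_\eps$ is driven \emph{only} by $g_\eps$, which is supported in a set of measure $O(\eps)$. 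Combining the dual Bogovski\u{\i} estimate $\|\bog_t(g)\|_{L^2}\le C\|g\|_{(W^{1,2}_0)^\ast}$ (the $k=0$ case of \autoref{thm:sohr}) with the H\"older/Sobolev interpolation $(W^{1,2}_0)^\ast\supset L^{2n/(n+2)}$ and $\|g_\eps\|_{L^{2n/(n+2)}}\le|N_\eps|^{1/n}\|g_\eps\|_{L^2}$ yields the rate $\eps^{1/n}$; the slightly sharper exponent $\eps^{2/(n+2)}$ stated in the lemma comes from optimising this interpolation against the $W^{1,2}$-regularity of $\xi$ itself rather than just $L^2$-regularity of $\diver\xi$.

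The higher-order convergences $\xi_\eps\to\xi$ in $L^b([0,T];W^{k,a}(\Omega))$ then follow by dominated convergence: for a.e.\ $t$, $g_\eps(t)\to 0$ in $L^a$ because $|N_\eps|\to 0$, hence $w_\eps(t)\to 0$ in $W^{k,a}$, and the uniform $W^{1,2}$-bound serves as the $L^b$-in-time envelope. The main remaining obstacle is the time-derivative convergence \eqref{eq:epstime}. Differentiating $w_\eps=\bog_t(g_\eps)$ in $t$ splits as $\partial_tw_\eps=\bog_t(\partial_tg_\eps)+(\partial_t\bog_t)(g_\eps)$, where the first summand is controlled as above. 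The second is the technical heart: one realises $\bog_t$ as a Piola transport of a reference Bogovski\u{\i} operator through a smooth-in-$t$ diffeomorphism assembled from the moving charts, whose $W^{1,2}$-in-$t$ regularity is inherited from $\eta\in W^{1,2}([0,T];W^{1,2}(Q;\R^n))$; pushing this regularity through the construction gives $\partial_t w_\eps\to 0$ in $L^2([0,T];W^{1,2}(\Omega))$ under the additional hypotheses on $\xi$ stated in the lemma.
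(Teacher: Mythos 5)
Your construction shares the core idea with the paper's proof --- a Bogovski\u{\i}-type corrector that removes the divergence from a thin set on the solid side of the interface --- but it has two genuine gaps, both of which the paper resolves by making different choices.

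First, you run the Bogovski\u{\i} operator $\bog_t$ on a time-dependent domain, which forces you to differentiate the operator itself when proving \eqref{eq:epstime}; you correctly identify this as ``the technical heart,'' but the Piola-transport-through-moving-charts argument is not carried out and is far from routine. The paper avoids this entirely by applying the Bogovski\u{\i} operator $\bog$ on the \emph{fixed} container $\Omega$ to the cut-off divergence $\psi_\eps(t)\diver\xi(t)-b_\eps(t)\tilde\psi(t)$; since $\bog$ is a fixed linear operator, $\partial_t(\xi-\xi_\eps)=-\bog\bigl(\partial_t(\psi_\eps\diver\xi-b_\eps\tilde\psi)\bigr)$ and the time-derivative estimate reduces to elementary product-rule bounds on $\psi_\eps$.

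Second, the higher-order $W^{k,a}$ convergence is broken. You set $g_\eps=\chi_{N_\eps(t)}\diver\xi-c_\eps\omega_t$ with a characteristic function; for $k\geq 2$ this datum is only $L^a$ and not $W^{k-1,a}$, so $\bog_t(g_\eps)$ is not even in $W^{k,a}$, and the conclusion ``hence $w_\eps(t)\to 0$ in $W^{k,a}$'' is not well-posed. Even with a smooth cutoff $\psi_\eps$ in place of $\chi_{N_\eps}$ the step is wrong: the Bogovski\u{\i} bound only gives $\norm[W^{k,a}]{w_\eps}\lesssim\norm[W^{k-1,a}]{g_\eps}$, and the derivatives $\nabla^{l}\psi_\eps\sim\eps^{-l}$ blow up. The paper closes this gap via the thin-region Poincar\'e inequality (\autoref{lem:thinPoincare}): because $\nabla^{l}\diver\xi$ has vanishing trace on $\partial\Omega(t)$, one gets $\norm[L^a(S_{2\eps}(t))]{\nabla^{l}\diver\xi}\lesssim\eps^{k-1-l}\norm[W^{k,a}]{\xi}$, which exactly cancels the $\eps^{-(k-1-l)}$ from the cutoff. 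This Poincar\'e step is the essential mechanism for the higher-order estimates and is entirely absent from your argument.
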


\begin{proof}
 
We begin by defining
\[
S_\eps(t):=\{y\in \Omega\, |\, \dist(y,\eta(t,\partial Q))\leq \eps\}
\]
and introduce the usual cutoff function
$\psi_\eps:[0,T]\times \Omega\to [0,1]$, such that
\[
\chi_{S_\eps(t)}\leq \psi_\eps(t)\leq \chi_{S_{2\eps}(t)}\text{ and }\norm[C^{l}]{\psi_\eps(t)}\leq \frac{c}{\eps^l}\text{ for }l\in \N.
\] 
Due to the regularity of $\eta$, we may assume, that $\partial_t\psi_\eps$ is uniformly bounded, such that
\begin{align}
\label{eq:psitime}
\norm[{L^2([0,T]\times \Omega)}]{\partial_t\psi_\eps}\to 0\text{ with }\eps\to 0.
\end{align}

We also pick $\tilde{\psi} \in C_0^\infty([0,T];\Omega;\R^n)$ such that $\supp \tilde{\psi}(t) \cap S_{\eps_0}(t) = 0$ for some $\eps_0>0$ and $\int_\Omega \tilde{\psi}(t) dy = 1$ for all $t$. Using this we then define
\begin{align*}
 \xi_\varepsilon(t) := \xi(t) - \bog\left( \psi_\eps(t) \diver \xi(t) - b_\eps(t)  \tilde{\psi}(t)\right)
\end{align*}
where $\bog$ is the Bogovski\u{\i}-operator on $\Omega$ and $b_\eps(t) := \int_\Omega\psi_\eps(t) \diver \xi(t)\, dy$ is used to keep the mean. Then per definition
\begin{align*}
 \diver \xi_\eps(t) = (1-\psi_\eps(t)) \diver \xi(t) - b_\eps(t) \tilde{\psi}(t)
\end{align*}
has no support on $S_\eps(t)$, as required and 
\begin{align*}
 &\phantom{{}={}}\norm[W^{k,a}(\Omega)]{\xi-\xi_\eps} = \norm[W^{k,a}(\Omega)]{\bog\left( \psi_\eps(t) \diver \xi(t) - b_\eps(t)  \tilde{\psi}(t)\right)} \\
 &\leq c\norm[W^{k-1,a}(\Omega)]{ \psi_\eps(t) \diver \xi(t) - b_\eps(t)  \tilde{\psi}(t)} \leq c\norm[W^{k-1,a}(\Omega)]{ \psi_\eps(t) \diver \xi(t)} + c\abs{b_\eps(t)}
\end{align*}
is the main quantity we need to estimate.

Let us begin with the special case $k=0,a=2$. Here we use that $L^\frac{2n}{2+n}(\Omega;\R^n)\subset W^{-1,2}(\Omega;\R^n)$ and apply H\"older's inequality to show that
\begin{align*}
&\phantom{{}={}}\norm[L^2(\Omega)]{\xi-\tilde{\xi}_\eps}\leq c\norm[W^{-1,2}(\Omega)]{\psi_\eps\diver(\xi)}+c\abs{b_\eps} \leq c\norm[{L^\frac{2n}{n+2}(\Omega)}]{\psi_\eps\diver(\xi)}
\\
& \leq c \norm[L^{n}(S_{2\eps}(t))]{\psi_\eps} \norm[L^2(\Omega)]{\diver \xi}
\leq c\abs{S_{2\eps}}^\frac{1}{n}\norm[W^{1,2}(\Omega)]{\xi}\leq c\eps^\frac{1}{n}\norm[W^{1,2}(\Omega)]{\xi}.
\end{align*}

For $k \geq 1$ we first note that $\abs{b_\eps(t)} \leq c \norm[L^2(S_\eps(t))]{\diver \xi} \to 0$ for each fixed $\xi$ and that furthermore
\begin{align*}
 \norm[W^{k-1,a}(\Omega)]{ \psi_\eps(t) \diver \xi(t)} &\leq c\sum_{l=0}^{k-1} \norm[C^{k-1-l}(\Omega)]{\psi_\eps(t)} \norm[L^a(S_{2\eps}(t))]{\nabla^l \diver \xi} \\
 &\leq c\sum_{l=0}^{k-1} \eps^{-(k-1-l)} \norm[L^a(S_{2\eps}(t))]{\nabla^l \diver \xi}.
\end{align*}
In particular for $k=1,a=2$ we have $k-1-l=0$ so this immediately proves $\norm[W^{1,2}(\Omega)]{\xi_\eps} \leq c \norm[W^{1,2}(\Omega)]{\xi}$ independently of $\xi$. For $k>1$ we will apply the Poincar\'{e} inequality \autoref{lem:thinPoincare}. For this we make use of the fact that $S_{\eps_0} \setminus \Omega(t)$ is a small neighborhood of an uniform Lipschitz boundary and thus can be written in the required way using $\eta$ itself. Furthermore for any $l<k$ we have $\nabla^l \diver \xi = 0$ on $\Omega(t)$ and thus also on $\partial \Omega(t)$ in the trace sense. Now this then gives us $\norm[L^a(S_{2\eps}(t))]{\nabla^l \diver \xi} \leq c \eps^{k-1-l} \norm[W^{k,a}(S_{2\eps}(t))]{\xi}$ which is enough to finish the estimate.

Finally let us consider the time-derivative. As $\bog$ is a linear operator, we have
\begin{align*}
 &\phantom{{}={}}\norm[W^{1,2}(\Omega)]{\partial_t (\xi_\eps-\xi)} 
 = c\norm[L^2(\Omega)]{\partial_t \left(\psi_\eps(t) \diver \xi(t) - b_\eps(t)  \tilde{\psi}(t)\right)} \\ &\leq c \norm[L^2(S_{2\eps}(t))]{\partial_t (\psi_\eps(t) \diver \xi(t))} + c\abs{\partial_t b_\eps(t)} \norm[L^2(\Omega)]{\tilde{\psi}(t)} +c\abs{b_\eps(t)} \norm[L^2(\Omega)]{\partial_t \tilde{\psi}(t)} 
\end{align*}
For the last term we have already shown that $\abs{b_\eps(t)} \to 0$ and $\tilde{\Psi}$ does not depend on $\eps$. For the second to last term we note that
\begin{align*}
 \abs{\partial_t  b_\eps(t)} = \abs{ \int_\Omega \partial_t(\psi_\eps(t) \diver \xi(t)) dy} \leq \norm[L^2(\Omega)]{\partial_t(\psi_\eps(t) \diver \xi(t))}
\end{align*}
which is the same as the first term and for which we use the estimate
\begin{align*}
 \norm[L^2(\Omega)]{\partial_t(\psi_\eps(t) \diver \xi(t))} \leq \norm[L^2(\Omega)]{\partial_t \psi_\eps(t)}\norm[W^{1,\infty}(S_{2\eps}(t))]{\xi(t)} + \norm[L^2(\Omega)]{\psi_\eps(t)}\norm[W^{1,\infty}(S_{2\eps}(t))]{\partial_t \xi(t)}
\end{align*}
which implies \eqref{eq:epstime} by \eqref{eq:psitime} and H\"older's inequality.

\end{proof}

\begin{proof}[Proof of \autoref{lem:approxTestFcts}]

First we apply \autoref{lem:diverapprox} to find a function $\hat{\xi}$ with $\hat{\xi}=0$ on $\Omega(t)$ and an $\eps$-neighborhood of $\partial (\Omega \setminus \Omega(t))$. Thus taking a convolution with $\gamma_{\eps^2}$ does not intervene with the zero boundary values (if $\eps$ is small enough). 

We will now apply \autoref{lem:diverapprox} again to $\hat{\xi}_\eps*\gamma_{\eps^2}$ and call the result $\xi_\eps$, a function
which is smooth by Theorem~\ref{thm:sohr}. Moreover since all operations are linear we find that $\xi_\eps\in C^\infty_0(\Omega)$ is divergence free in $\Omega(t)\cup S_\eps$. And by collecting all the properties of the approximation, we find that
\begin{align*}
&\norm[W^{l,a}(\Omega)]{\xi-{\xi}_\eps}\to 0
\end{align*}
for $l\leq k-1$. Moreover,
\begin{align*}
&\norm[W^{1,2}(\Omega)]{\xi-{\xi}_\eps}
\leq c\norm[W^{1,2}(\Omega)]{\xi}.
\end{align*}
and
\begin{align*}
&\norm[L^2(\Omega)]{\xi-\xi_\eps}\leq c\eps^\frac{2}{n+2}\norm[W^{1,2}(\Omega)]{\xi}
\end{align*}
Next we turn to the estimates for
\[
\phi_\eps:=\xi_\eps\circ \eta.
\]
They follow by \autoref{prop:isos}, and standard convolution estimates. First, for $k>2$ we find
\[
\norm[W^{k,2}(Q)]{\phi_\eps}\leq c\norm[C^k(\Omega)]{\xi_\eps}\norm[W^{k,2}(Q)]{\eta}\leq c(\eps)\norm[L^2(\Omega)]{\xi}\norm[W^{k,a}(Q))]{\eta}.
\]
Second, in case $\xi\in L^\infty([0,T];W^{2,a}(\Omega))$, we find
\[
\norm[W^{2,a}(Q)]{\phi_\eps-\phi}\leq c\norm[W^{2,a}(\Omega)]{\xi_\eps-\xi}\to 0 \text{ with } \eps\to 0.
\]
Finally, for the time derivative in case $\partial_t \xi\in L^\infty([0,T];W^{1,2}(\Omega))$ and $\xi\in L^\infty([0,T];W^{3,a}(\Omega))$ with $a>n$ we find by Sobolev embedding that:
\begin{align*}
\norm[W^{1,2}(Q)]{\partial_t(\phi_\eps-\phi)}%
&
\leq c\norm[W^{1,2}(\Omega)]{\partial_t(\xi_\eps-\xi)}+c\norm[W^{1,\infty}(\Omega)]{\nabla(\xi_\eps-\xi)}\norm[W^{1,2}(Q)]{\partial_t\eta}
\\
&\leq c\norm[W^{1,2}(\Omega)]{\partial_t(\xi_\eps-\xi)}+c\norm[W^{3,a}(\Omega)]{\xi_\eps-\xi}\norm[W^{1,2}(Q)]{\partial_t\eta}
\end{align*}
which implies the assertions for the time-derivatives by \eqref{eq:epstime}.

\end{proof}

\subsection*{Acknowledgments}

\noindent
All authors thank the support of the Primus research programme PRI\-MUS/19/SCI/01 and the University Centre UNCE/SCI/023 of Charles University. Moreover they thank for the support of the program GJ19-11707Y of the Czech national grant agency (GA\v{C}R).

 \bibliographystyle{alpha}
 \bibliography{biblio}

\begin{thebibliography}{DEGLT01b}

\bibitem[AGS05]{ambrosioGradientFlows2005}
L.~Ambrosio, N.~Gigli, and G.~Savare.
\newblock {\em Gradient Flows in Metric Spaces and in the Space of Probability
  Measures}.
\newblock Lectures in Mathematics. ETH Z{\"u}rich. Birkh{\"a}user Basel, 2005.

\bibitem[Ant98]{antmanPhysicallyUnacceptableViscous1998}
Stuart~S. Antman.
\newblock Physically unacceptable viscous stresses.
\newblock {\em Zeitschrift f\"ur angewandte Mathematik und Physik},
  49(6):980--988, 1998.

\bibitem[Bal02]{ball2002some}
John~M Ball.
\newblock Some open problems in elasticity.
\newblock In {\em Geometry, mechanics, and dynamics}, pages 3--59. Springer,
  2002.

\bibitem[BS90]{borchersEquationsRotDiv1990}
Wolfgang Borchers and Hermann Sohr.
\newblock On the equations $\operatorname{rot} v=g$ and $\operatorname{div} u=
  f$ with zero boundary conditions.
\newblock {\em Hokkaido Mathematical Journal}, 19(1):67--87, 1990.

\bibitem[BS18]{breitCompressibleFluidsInteracting2018}
Dominic Breit and Sebastian Schwarzacher.
\newblock Compressible fluids interacting with a linear-elastic shell.
\newblock {\em Archive for Rational Mechanics and Analysis}, 228(2):495--562,
  May 2018.

\bibitem[CN87]{ciarletInjectivitySelfcontactNonlinear1987}
Philippe~G. Ciarlet and Jind{\v r}ich Ne{\v c}as.
\newblock Injectivity and self-contact in nonlinear elasticity.
\newblock {\em Archive for Rational Mechanics and Analysis}, 97(3):171--188,
  1987.

\bibitem[CN17]{CheSar17}
Nikolai~V. Chemetov and {\v{S}}{\'a}rka Ne{\v{c}}asov{\'a}.
\newblock The motion of the rigid body in the viscous fluid including
  collisions. {G}lobal solvability result.
\newblock {\em Nonlinear Anal. Real World Appl.}, 34:416--445, 2017.

\bibitem[CNM19]{chemetovWeakstrongUniquenessFluidrigid2019}
Nikolai~V. Chemetov, {\v S}{\'a}rka Ne{\v c}asov{\'a}, and Boris Muha.
\newblock Weak-strong uniqueness for fluid-rigid body interaction problem with
  slip boundary condition.
\newblock {\em Journal of Mathematical Physics}, 60(1):011505, 2019.

\bibitem[DE99]{desjardinsExistenceWeakSolutions1999}
Benoit Desjardins and Maria~J. Esteban.
\newblock Existence of weak solutions for the motion of rigid bodies in a
  viscous fluid.
\newblock {\em Archive for rational mechanics and analysis}, 146(1):59--71,
  1999.

\bibitem[DEGLT01a]{DesGra01}
B.~Desjardins, M.~J. Esteban, C.~Grandmont, and P.~Le~Tallec.
\newblock Weak solutions for a fluid-elastic structure interaction model.
\newblock {\em Revista Matem\'{a}tica Complutense}, 14(2):523--538, 2001.

\bibitem[DEGLT01b]{desjardinsWeakSolutionsFluidelastic2001}
Benoit Desjardins, Mar{\'i}a~J. Esteban, C{\'e}line Grandmont, and Patrick
  Le~Tallec.
\newblock Weak solutions for a fluid-elastic structure interaction model.
\newblock {\em Revista Matem\'atica Complutense}, 14(2):523--538, 2001.

\bibitem[Dem00]{demouliniWeakSolutionsClass2000}
Sophia Demoulini.
\newblock Weak solutions for a class of nonlinear systems of viscoelasticity.
\newblock {\em Archive for Rational Mechanics and Analysis}, 155(4):299--334,
  2000.

\bibitem[DG93]{degiorgiNewProblemsMinimizing1993}
Ennio De~Giorgi.
\newblock New problems on minimizing movements.
\newblock {\em Ennio de Giorgi: Selected Papers}, pages 699--713, 1993.

\bibitem[DSET01]{demouliniVariationalApproximationScheme2001}
Sophia Demoulini, David M.~A. Stuart, and Athanasios E.~Tzavaras.
\newblock A {{Variational Approximation Scheme}} for {{Three}}-{{Dimensional
  Elastodynamics}} with {{Polyconvex Energy}}.
\newblock {\em Archive for Rational Mechanics and Analysis}, 157(4):325--344,
  May 2001.

\bibitem[Fei03]{Fei03}
Eduard Feireisl.
\newblock On the motion of rigid bodies in a viscous compressible fluid.
\newblock {\em Archive for Rational Mechanics and Analysis}, 167(4):281--308,
  2003.

\bibitem[FG06]{friedTractionsBalancesBoundary2006}
Eliot Fried and Morton~E. Gurtin.
\newblock Tractions, balances, and boundary conditions for nonsimple materials
  with application to liquid flow at small-length scales.
\newblock {\em Archive for Rational Mechanics and Analysis}, 182(3):513--554,
  2006.

\bibitem[FJM06]{frieseckeHierarchyPlateModels2006}
Gero Friesecke, Richard~D. James, and Stefan M{\"u}ller.
\newblock A hierarchy of plate models derived from nonlinear elasticity by
  gamma-convergence.
\newblock {\em Archive for rational mechanics and analysis}, 180(2):183--236,
  2006.

\bibitem[Gal13]{Gal13}
Giovanni~P. Galdi.
\newblock On time-periodic flow of a viscous liquid past a moving cylinder.
\newblock {\em Archive for Rational Mechanics and Analysis}, 210(2):451--498,
  2013.

\bibitem[Gal16]{Gal16bf}
Giovanni~P. Galdi.
\newblock On bifurcating time-periodic flow of a {N}avier-{S}tokes liquid past
  a cylinder.
\newblock {\em Archive for Rational Mechanics and Analysis}, 222(1):285--315,
  2016.

\bibitem[GGH13]{GGH13}
Matthias Geissert, Karoline G\"otze, and Matthias Hieber.
\newblock {$L^p$}-theory for strong solutions to fluid-rigid body interaction
  in {N}ewtonian and generalized {N}ewtonian fluids.
\newblock {\em Transactions of the American Mathematical Society},
  365(3):1393--1439, 2013.

\bibitem[GH16]{grandmontExistenceGlobalStrong2016}
Céline Grandmont and Matthieu Hillairet.
\newblock Existence of global strong solutions to a beam–fluid interaction
  system.
\newblock {\em Archive for Rational Mechanics and Analysis}, 220(3):1283--1333,
  2016.

\bibitem[GK09]{galdiSteadyFlowNavier2009}
Giovanni~P. Galdi and Mads Kyed.
\newblock Steady flow of a {{Navier}}–{{Stokes}} liquid past an elastic body.
\newblock {\em Archive for rational mechanics and analysis}, 194(3):849--875,
  2009.

\bibitem[GM12]{gigliVariationalApproachNavier2012}
Nicola Gigli and Sunra~JN Mosconi.
\newblock A variational approach to the {{Navier}}–{{Stokes}} equations.
\newblock {\em Bulletin des Sciences Mathématiques}, 136(3):256--276, 2012.

\bibitem[Gra02]{grandmontExistenceThreeDimensionalSteady2002}
C.~Grandmont.
\newblock Existence for a {{Three}}-{{Dimensional Steady State
  Fluid}}-{{Structure Interaction Problem}}.
\newblock {\em Journal of Mathematical Fluid Mechanics}, 4(1):76--94, February
  2002.

\bibitem[GRRT08]{GalRan08}
Giovanni~P. Galdi, Rolf Rannacher, Anne~M. Robertson, and Stefan Turek.
\newblock {\em Hemodynamical flows}, volume~37 of {\em Oberwolfach Seminars}.
\newblock Birkh\"{a}user Verlag, Basel, 2008.
\newblock Modeling, analysis and simulation, Lectures from the seminar held in
  Oberwolfach, November 20--26, 2005.

\bibitem[GS07]{GalSil07}
Giovanni~P. Galdi and Ana~L. Silvestre.
\newblock The steady motion of a {N}avier-{S}tokes liquid around a rigid body.
\newblock {\em Archive for Rational Mechanics and Analysis}, 184(3):371--400,
  2007.

\bibitem[GS09]{GalSil09}
Giovanni~P. Galdi and Ana~L. Silvestre.
\newblock On the motion of a rigid body in a {N}avier-{S}tokes liquid under the
  action of a time-periodic force.
\newblock {\em Indiana University Mathematics Journal}, 58(6):2805--2842, 2009.

\bibitem[Hes04]{hesla2004collisions}
Todd~Inman Hesla.
\newblock {\em Collisions of smooth bodies in viscous fluids: A mathematical
  investigation}.
\newblock PhD thesis, University of Minnesota, 2004.

\bibitem[Hil07]{hillairetLackCollisionSolid2007}
Matthieu Hillairet.
\newblock Lack of collision between solid bodies in a {{2D}} incompressible
  viscous flow.
\newblock {\em Communications in Partial Differential Equations},
  32(9):1345--1371, 2007.

\bibitem[HK09]{healeyInjectiveWeakSolutions2009}
Timothy~J. Healey and Stefan Krömer.
\newblock Injective weak solutions in second-gradient nonlinear elasticity.
\newblock {\em ESAIM: Control, Optimisation and Calculus of Variations},
  15(4):863--871, 2009.

\bibitem[HSN75]{halphenMateriauxStandardGeneralises1975}
Bernard Halphen and Quoc Son~Nguyen.
\newblock Sur les mat\'eriaux standard g\'en\'eralis\'es.
\newblock {\em Journal de M\'ecanique}, 14:39--63, 1975.

\bibitem[KR19a]{kromerQuasistaticViscoelasticitySelfcontact2019}
Stefan Kr{\"o}mer and Tom{\'a}{\v s} Roubi{\v c}ek.
\newblock Quasistatic viscoelasticity with self-contact at large strains.
\newblock {\em Journal of Elasticity}, (accepted), April 2019.

\bibitem[KR19b]{kruzik2019mathematical}
Martin Kru{\v{z}}{\'\i}k and Tom{\'a}{\v{s}} Roub{\'\i}{\v{c}}ek.
\newblock {\em Mathematical methods in continuum mechanics of solids}.
\newblock Springer, 2019.

\bibitem[LDR95]{ledretQuasiconvexEnvelopeSaint1995}
Herv{\'e} Le~Dret and Annie Raoult.
\newblock The quasiconvex envelope of the {{Saint
  Venant}}\textendash{{Kirchhoff}} stored energy function.
\newblock {\em Proceedings of the Royal Society of Edinburgh Section A:
  Mathematics}, 125(6):1179--1192, 1995.

\bibitem[Len14]{lengelerWeakSolutionsIncompressible2014a}
Daniel Lengeler.
\newblock Weak solutions for an incompressible, generalized {{Newtonian}} fluid
  interacting with a linearly elastic {{Koiter}} type shell.
\newblock {\em SIAM Journal on Mathematical Analysis}, 46(4):2614--2649, 2014.

\bibitem[Len15]{lengelerStokestypeSystemArising2015}
Daniel Lengeler.
\newblock On a {{Stokes}}-type system arising in fluid vesicle dynamics.
\newblock {\em arXiv:1506.08991 [math]}, December 2015.

\bibitem[LR14]{lengelerWeakSolutionsIncompressible2014}
Daniel Lengeler and Michael R{\u u}{\v z}i{\v c}ka.
\newblock Weak solutions for an incompressible {{Newtonian}} fluid interacting
  with a {{Koiter}} type shell.
\newblock {\em Archive for Rational Mechanics and Analysis}, 211(1):205--255,
  2014.

\bibitem[M{\v C}13a]{muhaExistenceWeakSolution2013}
Boris Muha and Sun{\v c}ica {\v C}ani{\'c}.
\newblock Existence of a weak solution to a nonlinear fluid\textendash
  structure interaction problem modeling the flow of an incompressible, viscous
  fluid in a cylinder with deformable walls.
\newblock {\em Archive for rational mechanics and analysis}, 207(3):919--968,
  2013.

\bibitem[M{\v C}13b]{muhaNonlinear3DFluidstructure2013}
Boris Muha and Sun{\v c}ica {\v C}ani{\'c}.
\newblock A nonlinear, {{3D}} fluid-structure interaction problem driven by the
  time-dependent dynamic pressure data: A constructive existence proof.
\newblock {\em Communications in Information and Systems}, 13(3):357--397,
  2013.

\bibitem[M{\v C}15]{muhaFluidstructureInteractionIncompressible2015}
Boris Muha and Sun{\v c}ica {\v C}ani{\'c}.
\newblock Fluid-structure interaction between an incompressible, viscous {{3D}}
  fluid and an elastic shell with nonlinear {{Koiter}} membrane energy.
\newblock {\em Interfaces and free boundaries}, 17(4):465, 2015.

\bibitem[M{\v C}16]{muhaExistenceWeakSolution2016}
Boris Muha and Sun{\v c}ica {\v C}ani{\'c}.
\newblock Existence of a weak solution to a fluid\textendash elastic structure
  interaction problem with the {{Navier}} slip boundary condition.
\newblock {\em Journal of Differential Equations}, 260(12):8550--8589, 2016.

\bibitem[MR15]{mielkeRateIndependentSystems2015}
Alexander Mielke and Tom{\'a}{\v{s}} Roub{\'\i}{\v{c}}ek.
\newblock Rate-independent systems.
\newblock {\em Theory and Application (in preparation)}, 2015.

\bibitem[MR19]{mielkeThermoviscoelasticityKelvinVoigtRheology2019}
Alexander Mielke and Tomáš Roubíček.
\newblock Thermoviscoelasticity in {{Kelvin}}-{{Voigt}} rheology at large
  strains.
\newblock {\em Archive for rational mechanics and analysis}, (accepted), 2019.

\bibitem[MS19]{muhaExistenceRegularityWeak2019}
Boris Muha and Sebastian Schwarzacher.
\newblock Existence and regularity for weak solutions for a fluid interacting
  with a non-linear shell in {{3D}}.
\newblock {\em arXiv:1906.01962 [math]}, June 2019.

\bibitem[MT12]{miroshnikovVariationalApproximationScheme2012}
Alexey Miroshnikov and Athanasios~E. Tzavaras.
\newblock A variational approximation scheme for radial polyconvex elasticity
  that preserves the positivity of {{Jacobians}}.
\newblock {\em Communications in Mathematical Sciences}, 10(1):87--115, March
  2012.

\bibitem[Nef02]{neffKornFirstInequality2002}
Patrizio Neff.
\newblock On {{Korn}}'s first inequality with non-constant coefficients.
\newblock {\em Proceedings of the Royal Society of Edinburgh Section A:
  Mathematics}, 132(1):221--243, February 2002.

\bibitem[Ngu00]{nguyenStabilityNonlinearSolidMechanics2000}
Quoc~Son Nguyen.
\newblock {\em Stability and Nonlinear Solid Mechanics}.
\newblock Wiley, 2000.

\bibitem[PH17]{palmerInjectivitySelfcontactSecondgradient2017}
Aaron~Z. Palmer and Timothy~J. Healey.
\newblock Injectivity and self-contact in second-gradient nonlinear elasticity.
\newblock {\em Calculus of Variations and Partial Differential Equations},
  56(4):114, July 2017.

\bibitem[Pir82]{Pir82}
O.~Pironneau.
\newblock On the transport-diffusion algorithm and its applications to the
  {N}avier-{S}tokes equations.
\newblock {\em Numer. Math.}, 38(3):309--332, 1981/82.

\bibitem[Pom03]{pompeKornFirstInequality2003}
Waldemar Pompe.
\newblock Korn’s first inequality with variable coefficients and its
  generalizations.
\newblock {\em Comment. Math. Univ. Carolinae}, 44(1):57--70, 2003.

\bibitem[QTV00]{quarteroni2000computational}
Alfio Quarteroni, Massimiliano Tuveri, and Alessandro Veneziani.
\newblock Computational vascular fluid dynamics: problems, models and methods.
\newblock {\em Computing and Visualization in Science}, 2(4):163--197, 2000.

\bibitem[Ric17]{RichterFluidStructureInteractions17}
Thomas Richter.
\newblock {\em Fluid-structure interactions: models, analysis and finite
  elements}, volume 118.
\newblock Springer, 2017.

\bibitem[{\v S}il85]{silhavyPhaseTransitionsNonsimple1985}
M.~{\v S}ilhav{\`y}.
\newblock Phase transitions in non-simple bodies.
\newblock {\em Archive for rational mechanics and analysis}, 88(2):135--161,
  1985.

\bibitem[Tak03]{Takahashi03}
Tak\'eo Takahashi.
\newblock Analysis of strong solutions for the equations modeling the motion of
  a rigid-fluid system in a bounded domain.
\newblock {\em Adv. Differential Equations}, 8(12):1499--1532, 2003.

\bibitem[Tou62]{toupinElasticMaterialsCouplestresses1962}
R.~Toupin.
\newblock Elastic materials with couple-stresses.
\newblock {\em Archive for Rational Mechanics and Analysis}, 11(1):385--414,
  1962.

\bibitem[Tri02]{triebelFunctionSpacesLipschitz2002}
Hans Triebel.
\newblock Function spaces in {{Lipschitz}} domains and on {{Lipschitz}}
  manifolds. {{Characteristic}} functions as pointwise multipliers.
\newblock {\em Revista Matem\'atica Complutense}, 15(2):475--524, 2002.

\bibitem[TTW15]{TakTucWei15}
Tak\'{e}o Takahashi, Marius Tucsnak, and George Weiss.
\newblock Stabilization of a fluid-rigid body system.
\newblock {\em J. Differential Equations}, 259(11):6459--6493, 2015.

\bibitem[Zie89]{Ziemer89book}
W.~P. Ziemer.
\newblock {\em Weakly differentiable functions}, volume 120 of {\em Graduate
  Texts in Mathematics}.
\newblock Springer-Verlag, New York, 1989.
\newblock Sobolev spaces and functions of bounded variation.

\end{thebibliography}
\end{document}